\newtheorem{theorem}{Theorem}[section]
\newtheorem{lemma}[theorem]{Lemma}
\newtheorem{prop}[theorem]{Proposition}
\newtheorem{proposition}[theorem]{Proposition}
\newtheorem{corollary}[theorem]{Corollary}
\newtheorem{definition}[theorem]{Definition}
\newtheorem{construction}[theorem]{Construction}
\newtheorem{rem}[theorem]{Remark}
\newtheorem{remark}[theorem]{Remark}
\numberwithin{equation}{section}
\newcommand{\rk}{{\rm rank}}
\newcommand{\id}{\mbox{id}}
\newcommand{\NS}{\mbox{NS}}
\newcommand{\Pic}{\mbox{Pic}}
\newcommand{\T}{\mbox{T}}
\newcommand{\ra}{\rightarrow}
\newcommand{\PP}{ \mathbb{P}}
\newcommand{\Z}{\mathbb{Z}}
\newcommand{\Q}{\mathbb{Q}}
\newcommand{\Aut}{\mbox{Aut}}
\newcommand{\ZZ}{\mathbb{Z}}
\def\ZZ{\mathbb{Z}}
\def\QQ{\mathbb{Q}}
\def\CC{\mathbb{C}}
\def\QQl{\QQ_\ell}
\def\HB{H}
\def\Spec{\textnormal{Spec}}
\def\Mot{\textnormal{Mot}}
\def\AbMot{\textnormal{AbMot}}
\def\HH{\mathrm{H}_{\textnormal{mot}}}
\newcommand{\oo}{\mathcal{O}}
\def\ChMot{\mathcal{M}_{\textnormal{rat}}}
\def\SmPr{\textnormal{SmPr}}
\def\opp{\textnormal{op}}
\def\AJ{\textnormal{AJ}}
\def\Spec{\textnormal{Spec}}
\def\Mot{\textnormal{Mot}}
\def\AbMot{\textnormal{AbMot}}
\def\HH{\mathrm{H}_{\textnormal{mot}}}
\def\GB{G_{\textnormal{MT}}}
\def\Gl{G_\ell}
\def\Glc{\Gl^\circ}
\def\HB{H}
\def\Hl{H_\ell}
\def\tra{\textnormal{tra}}
\def\alg{\textnormal{alg}}
\newtheoremstyle{dico}
{\baselineskip}   
{\topsep}   
{}  
{0pt}       
{} 
{.}         
{5pt plus 1pt minus 1pt} 
{}          
\theoremstyle{dico}
\def\blfootnote{\xdef\@thefnmark{}\@footnotetext}
\title{Hodge Structures of K3 type of bidouble covers\\ of rational surfaces}
\author{Alice Garbagnati and Matteo Penegini}
\begin{document}

\subjclass[2010]{14E20, 14J28, 14J29 (14C30, 14C34, 14D)}
\keywords{Bidouble covers, K3 surfaces, surfaces of general type, Hodge structures, Mumford--Tate conjecture, Torelli problems} 

			\maketitle

	\begin{abstract}  A bidouble cover  between complex algebraic varieties is a flat $G:=\left(\ZZ/2\ZZ\right)^2$-Galois cover $X \ra Y$. In this situation there exist three intermediate quotients $Y_1,Y_2$ and $Y_3$ which correspond to the three subgroups $\ZZ/2\ZZ \leq G$. We consider the following situation: $Y$ will be a rational surface and $Y_i$ will be either a surface with $p_g=0$ or a K3 surface. These assumptions will enable us to have a strong control on the weight 2 Hodge structure of the covering surface $X$. In particular, we classify all covers with these properties if $Y$ is minimal, obtaining surfaces $X$ with $p_g(X)=1,2,3$. Moreover, we will discuss the Infinitesimal Torelli Property, the Chow groups and Chow
		motive, and the Tate and Mumford-Tate conjectures for $X$. 
		We also introduce another construction, called \emph{iterated bidouble cover}, which allows us to obtain surfaces with higher value of $p_g$ for which we still have a strong control on the weight 2 Hodge structure.
	\end{abstract}

	\section{Introduction}	
	
	A bidouble cover between complex algebraic varieties is a $G:=\left(\ZZ/2\ZZ\right)^2$-Galois flat cover $X \ra Y$. F. Catanese in \cite[pp. 491--493]{CatM} has developed the theory of this kind of covers giving the structure theorem in the smooth case. The main feature of this construction is that there exist three intermediate quotients $Y_1,Y_2$ and $Y_3$ which correspond to the quotients given by the  three subgroups $\ZZ/2\ZZ \leq G$. More precisely, given a bidouble cover $\pi\colon X \ra Y$ we have a commutative diagram
	\begin{align}\label{eq: diagram bidouble cover}
		\xymatrix{
			&X\ar[dr]^{p_1}\ar[dl]_{p_3}\ar[d]^{p_2}\\
			Y_3\ar[dr]_{\pi_3}&Y_2\ar[d]_{\pi_2}&Y_1\ar[dl]^{\pi_1}\\
			&Y}
	\end{align}
	where all the arrows are double covers.

	We will consider the following situation: $Y$ will be a minimal rational surface, $Y_i$ will be either a surface with geometric genus zero or a surface K3, and for at least one $i$, $Y_i$ is a K3 surface. 
	This situation has been considered previously by several authors e.g. \cite{CatM, Cat99, G,Lat19,Lat21,Lat22,PZ19, FP} who studied some specific properties of these coverings, but no systematic study of this construction has been undertaken. 
	
	In this work, first of all we classify the bidouble covers $X\ra Y$ whose intermediate double covers $Y_i$ are as stated above and $Y=\PP^2,\PP^1 \times \PP^1$ or the Hirzebruch surface $\mathbb{F}_n$ with $n\geq 2$. This gives surfaces $X$ with $p_g(X)\leq 3$. After that, we discuss the weight 2 Hodge structure of $H^2(X)$. It decomposes into Hodge substructures and we show that, on a Hodge-theoretic level, the cohomology is described only by the cohomology of the intermediate quotients $Y_i$ which are K3 surfaces. This remarkable property will lead us to more and deeper conclusions. Namely, we give conditions which assure that the Infinitesimal Torelli Property holds for certain bidouble covers and we apply this criterion to some of the surfaces constructed.
	In particular, we can deduce the Infinitesimal Torelli Property for GS2 surfaces (see Remark \ref{rem GS2}) in a similar manner as it was done for Special Horikawa Surfaces in \cite{PZ19}.  Both the GS2 and the Special Horikawa Surfaces are surfaces of general type and bidouble covers of $\mathbb{P}^2$. It is worth to notice here that the Infinitesimal Torelli Property fails for special Kunev surfaces that can be obtain as a bidouble cover of $\PP^2$ as we shall see. Furthermore, we study the Chow Groups and Chow Motives for these surfaces as in \cite{Lat21, Lat22, Lat19}. Finally, we prove the Mumford--Tate conjecture and consequently the Tate conjecture for almost all the surfaces $X$ we constructed such that $p_g(X)=2,3$ following an argument similar to that in \cite{CP} and \cite{Com19}. 
	
	We can summarize our main results in following theorem.

	\begin{theorem}\label{thm main}  Let $X\ra Y$ be a smooth bidouble whose branch divisor is simple normal crossing. Let us assume that each intermediate double cover $Y_i$ is either a K3 surface or a surface with $p_g=0$ and that at least one of them is a K3 surface. The following holds.\begin{enumerate}  
			\item If $Y=\PP^2$, there are five possibilities for $X$, listed in Table \ref{Table1}. Using the enumeration in the Table \ref{Table1} we prove that the Mumford--Tate conjecture and the Tate conjecture hold in the cases $a), \ldots ,e)$, while the Infinitesimal Torelli Property holds in the cases $c)$, $d)$ and $e)$ and it does not hold in case $a)$ (see Theorem \ref{theorem: bidouble P2}). 
			
			\medskip
			
			\item If $Y=\PP^1\times\mathbb{P}^1$, there are eight possibilities for $X$, listed in Table \ref{TableP1xP1 general}, when  $X$ is of general type. If $K_X^2=0$ the surface $X$ is a minimal surface admitting a genus 1 fibration and there are fifteen possibilities, given in Table \ref{TableP1xP1 elliptic}. The Mumford--Tate conjecture and the Tate conjecture hold in the cases $a), \ldots ,h)$, the Infinitesimal Torelli Property holds in cases $f)$, $p)$, $r)$ and $w)$ and does not hold in cases $a)$, $b)$ and $c)$ (see Theorem \ref{theor: Y=P1xP1}).
			
			\medskip
			
			\item  If $Y$ is the Hirzebruch-Segre surface $\mathbb{F}_n$ with $n\geq 2$ then $n\leq 4$ and there are seventeen possibilities for $X$, listed in Tables \ref{TableF2 irrid}, \ref{TableF2 rid} \ref{TableF3},\ref{TableF4}.
The Mumford--Tate conjecture and the Tate conjecture hold in the cases $b)$, $f)$, $h)$, $i)$, $j)$, $m)$, $o)$, $p)$. The Infinitesimal Torelli Property holds in cases $a)$, $b)$, $f)$, $j)$ and $p)$  (see Theorem \ref{theor: Y=Fn}). 
			
			\medskip
			
			\item
			
			 The previous classification produces smooth minimal regular surfaces of general type $X$ with $$(p_g,K^2)\in\{(1,1), (1,2),  (1,4), (1,8), (2,1), (2,2), (2,4), (2,5),(2,6), (2,8), (3,8),(3,9)\}.$$
			 
			\medskip
			
			\item Finally, all the surfaces $X$ above have the following decomposition of their  transcendental lattices, Chow groups and  Chow motives:
			\[ T_X\otimes \Q\simeq (T_{Y_1}\oplus T_{Y_2}\oplus T_{Y_3})\otimes \Q, \quad \ A_0(X)\otimes \Q\simeq (A_0(Y_1)\oplus A_0(Y_2)\oplus A_0(Y_3))\otimes \Q,
			\] 
			\[ 
			h(X)_{\rm{tra}}\otimes \Q=(h(Y_1)_{\rm{tra}}\oplus h(Y_2)_{\rm{tra}}\oplus h(Y_3)_{\rm{tra}}) \otimes \Q,
			\]
 where all the transcendental Hodge structures of the surfaces $Y_i$ are either trivial or the ones of a K3 surface.
			
		\end{enumerate}
	\end{theorem}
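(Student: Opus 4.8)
The plan is to split the five assertions into two layers. Assertions (1)--(3) together with the numerical list (4) are the content of the three detailed classification theorems for the individual base surfaces, so I would reduce the main theorem to Theorems \ref{theorem: bidouble P2}, \ref{theor: Y=P1xP1} and \ref{theor: Y=Fn} and then read off the invariants; assertion (5) I would prove by a single structural argument, uniform over the whole classification, which also drives the Mumford--Tate, Tate and Torelli statements quoted in (1)--(3). For the classification I would begin from Catanese's structure theorem \cite{CatM}: a smooth bidouble cover $\pi\colon X\to Y$ with SNC branch locus is encoded by effective divisors $D_1,D_2,D_3$ and classes $L_1,L_2,L_3$ in $\Pic(Y)$ subject to the fundamental relations $2L_i\equiv D_j+D_k$ (with $\{i,j,k\}=\{1,2,3\}$), where $\pi_i\colon Y_i\to Y$ is the double cover branched along $D_j+D_k$ attached to $L_i$. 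Since $Y$ is rational, $p_g(Y)=q(Y)=0$ and $K_{Y_i}=\pi_i^*(K_Y+L_i)$, so the K3 condition on $Y_i$ becomes $L_i\equiv -K_Y$ (with $q(Y_i)=0$ automatic) and the condition $p_g(Y_i)=0$ becomes $h^0(K_Y+L_i)=0$. Imposing one of these two alternatives at each index $i$, together with effectivity of the $D_i$, the parity forced by $2L_i\equiv D_j+D_k$ in $\Pic(Y)=\ZZ$ or $\ZZ^2$, and the requirement that at least one $Y_i$ be a K3 surface, turns the problem into a finite search in $\Pic(Y)$; this is exactly what the three base-surface theorems carry out. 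I expect the bound $n\le 4$ for $\mathbb{F}_n$ to drop out of the same search: once $L_i\equiv -K_{\mathbb{F}_n}$ is imposed for a K3 quotient, the effectivity and SNC constraints on the remaining data cannot be satisfied for $n\ge 5$.

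Assertion (4) is then bookkeeping on top of the tables. From $\pi_*\omega_X=\omega_Y\oplus\bigoplus_i\omega_Y(L_i)$ one gets $p_g(X)=\sum_i h^0(K_Y+L_i)=p_g(Y_1)+p_g(Y_2)+p_g(Y_3)$, so $p_g(X)\in\{1,2,3\}$ according to how many $Y_i$ are K3; and from $K_X=\pi^*(K_Y+L_1+L_2+L_3)$ with $\deg\pi=4$ one gets $K_X^2=4\,(K_Y+L_1+L_2+L_3)^2$. Evaluating these two formulas on each entry of Tables \ref{Table1}, \ref{TableP1xP1 general}, \ref{TableP1xP1 elliptic}, \ref{TableF2 irrid}, \ref{TableF2 rid}, \ref{TableF3}, \ref{TableF4} reproduces exactly the listed pairs $(p_g,K^2)$, while minimality and regularity are recorded in those tables as part of the classification.

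For (5) I would argue $G=(\ZZ/2\ZZ)^2$-equivariantly and uniformly. The group $G$ acts on $H^2(X,\Q)$, which splits into the invariant part $\pi^*H^2(Y,\Q)$ (of Tate and algebraic type, since $Y$ is rational) and the three nontrivial character eigenspaces $H^2(X,\Q)_{\chi_i}$. Each such eigenspace is canonically identified with the anti-invariant cohomology $H^2(Y_i,\Q)^{-}$ of the double cover $Y_i\to Y$, whose transcendental summand is $T_{Y_i}\otimes\Q$; this is the transcendental Hodge structure of a K3 surface when $Y_i$ is K3 and is $0$ when $p_g(Y_i)=0$ (so $H^2(Y_i)$ is all algebraic). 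Summing gives $T_X\otimes\Q\simeq\bigoplus_i T_{Y_i}\otimes\Q$. To lift this to Chow groups and motives I would use the idempotent correspondences $e_i=\tfrac14\sum_{g\in G}\chi_i(g)\,\Gamma_g$ to cut the transcendental motive $\Ch(X)_{\tra}$ into $\bigoplus_i \Ch(Y_i)_{\tra}$, and apply the same projectors to $A_0$ to obtain the Chow-group decomposition; finite-dimensionality (Kimura--O'Sullivan) of these surfaces guarantees that the cohomological splitting lifts to an honest motivic one, as in \cite{Lat21,Lat22,Lat19}.

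Finally I would deduce the deeper statements from (5). For the Mumford--Tate and Tate conjectures, since $\Ch(X)_{\tra}$ is a direct sum of transcendental K3 motives, the conjectures for $X$ reduce to those for the constituent K3 surfaces---known by André and Tankeev---once one controls the Mumford--Tate (resp.\ $\ell$-adic monodromy) group of the sum $\bigoplus_i T_{Y_i}$. This is the crux and, I expect, the principal obstacle: one must exclude extra Hodge/Tate classes coming from correspondences between distinct transcendental pieces, i.e.\ show that in each listed case the K3 summands are non-isogenous (or that their isogenies are fully accounted for) so that the Mumford--Tate group of the sum is the expected fibre product---the step modeled on \cite{CP} and \cite{Com19}, and the reason MT/Tate is asserted only for the enumerated (mostly $p_g=2,3$) cases. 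The infinitesimal Torelli statements I would obtain from the general criterion established earlier in the paper, which reduces the injectivity of the differential of the period map to a rank (surjectivity) condition for a cup-product/multiplication map on the cohomology of $X$; using the $G$-decomposition to express this map through the K3 summands, a case-by-case check yields Torelli in the indicated cases and, in case $a)$ for $\PP^2$ (the special Kunev-type surface) and cases $a),b),c)$ for $\PP^1\times\PP^1$, exhibits the failure of the condition and hence of infinitesimal Torelli.
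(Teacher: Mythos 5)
Your overall architecture does coincide with the paper's: Theorem \ref{thm main} is proved there by reduction to the three classification theorems (Theorems \ref{theorem: bidouble P2}, \ref{theor: Y=P1xP1}, \ref{theor: Y=Fn}), part (5) comes from the $(\ZZ/2\ZZ)^2$-eigenspace projectors exactly as in Propositions \ref{prop: Hodge structure}, \ref{prop: chow groups} and \ref{prop: motives}, and the MT/Tate and Torelli assertions are delegated to Theorem \ref{Theo_MTC} and Theorem \ref{theo: Infinitesimal Torelli}. But your write-up has concrete errors. The most glaring one is the canonical bundle formula: since $2L_i=D_j+D_k$, one has $L_1+L_2+L_3=D$, and Hurwitz gives $K_X=\pi^*K_Y+R$ with $2R=\pi^*D$, hence $2K_X=\pi^*(2K_Y+D)$ and $K_X^2=(2K_Y+D)^2$ --- not $K_X=\pi^*(K_Y+L_1+L_2+L_3)$ and $K_X^2=4(K_Y+D)^2$ as you wrote, which double-counts the branch contribution. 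On case $b)$ of Table \ref{Table1} (three cubics in $\PP^2$) your formula gives $4(-3h+9h)^2=144$ instead of $(-6h+9h)^2=9$, so the bookkeeping in part (4) would contradict every table entry rather than reproduce it.

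Two further gaps concern the "uniform structural argument". Your lift of the Hodge-theoretic splitting to Chow groups and motives is justified by Kimura--O'Sullivan finite-dimensionality, which is an unproven hypothesis here: it is not known for K3 surfaces, let alone for these surfaces of general type with $p_g\geq 1$. No such input is needed: as in Proposition \ref{prop: motives} (following Laterveer), the projectors $\tfrac14\sum_{g\in G}\chi(g)\Gamma_g$ are already algebraic correspondences, and combined with the Kahn--Murre--Pedrini refined Chow--K\"unneth decomposition of a surface they split $h^2_{\rm tra}(X)$ and $A_0(X)$ unconditionally. For Mumford--Tate, your proposed crux --- checking case by case that the K3 summands are non-isogenous --- is neither what the paper does nor what is needed: Theorem \ref{Theo_MTC} shows $\HH^2(X)$ is an abelian motive (Andr\'e for the K3 pieces, Lefschetz motives for the rational pieces) and then invokes Commelin's theorems, which handle such direct sums with no isogeny hypotheses. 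What does require case-by-case work, and is missing from your sketch, is the hypothesis $\rho(Y_i)=\rho(Y)$ for the K3 quotients, which the classification theorems verify by counting moduli of the branch curves. Finally, the failure of infinitesimal Torelli in case $a)$ for $\PP^2$ and cases $a),b),c)$ for $\PP^1\times\PP^1$ is not established by showing that the (merely sufficient) criterion of Theorem \ref{theo: Infinitesimal Torelli} fails; the paper quotes the known counterexamples of Kynev and Todorov, and your plan would require a genuinely new computation of $\bigcap_\phi\ker\rho_{\chi,\phi}$ to be conclusive.
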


	To complete this systematic work, we also study singular bidouble covers in case $X$ is of general type, i.e., we allow some singularities for $X$ see \cite{Cat99}. By resolving the singularities of $X$ we can build some families that are not listed in the previous Theorem \ref{thm main} and whose transcendental Hodge structures, Chow groups and Chow motives enjoy the same properties as above. In this way we construct smooth minimal regular surfaces of general type $X$ with $(p_g,K^2)\in \{(2, k), (3,h)\}$ where $1\leq k\leq 6$ and $1\leq h\leq 9$ and we verify that the Mumford--Tate and the Tate conjectures hold for these surfaces.

	Finally, we propose a new construction that we called \emph{iterated bidouble cover}. An iterated bidouble cover of $X \ra Y$ is simply another bidouble cover having a rational surface, say  $Y_1$, as a base surface for a new bidouble cover which is nested in the previous construction, i.e. $X$ is an intermediate quotient for the new bidouble cover (see Subsection \ref{subsec: Iterated general case}). With this construction we obtain a new double cover $W \ra X$ such that $p_g(W)=3,4,5$. The main results here is that the weight 2 Hodge structure of $W$ is completely determined by the cohomology of the K3 surfaces which appear in the two bidouble covers and again we can draw conclusions about Chow groups and motives, see Theorems \ref{theor: iterated bidouble GS2}, \ref{theor: iterated bidouble GS2 case 2}, Section \ref{subsec: iterated SHS1} and Theorems \ref{theor: iterated P1xP1 case 1}, \ref{theor: iterated P1xP1 case 2}. 
	
	Now, let us explain  the way in which this paper is organized.
	
	In Section \ref{preliminaries} we give all the necessarily preliminaries on bidouble cover following mainly \cite{CatM}. Moreover, we present the iterated bidouble construction (see Subsection \ref{subsec: Iterated general case}). In the last paragraph, we briefly recall the singular bidouble cover case.

In Section \ref{Sec:General}, we discuss some general results on smooth bidouble covers, singular bidouble covers and iterated bidouble covers. In particular, in Subsection \ref{subsec: Infinitesimal Torelli}, we develop the theory of Infinitesimal Torelli Property for covering surface following \cite{Par91b} and we prove the Theorem \ref{theo: Infinitesimal Torelli}. Finally in Subsection \ref{sec_K3Hodge}, we will discuss the Chow groups (and Chow motives) and the Tate and Mumford-Tate conjectures of these surfaces, in particular we prove the Theorem \ref{Theo_MTC}. All these features are independent from the base surface $Y$ as long as it is a rational surface. After this section we will study the situation accordingly to the choice of a specific $Y$. 
	
	In Section \ref{sec: P2}, we study the case with $Y=\PP^2$. We first give the classification theorem for bidouble covers, see Theorem \ref{theorem: bidouble P2}. Then we study the singular bidouble covers case. After that we work on the iterated bidouble covers with a particular care for Special Horikawa surfaces and GS2 surfaces, see Theorems \ref{theor: iterated bidouble GS2}, \ref{theor: iterated bidouble GS2 case 2}, \ref{theo: iterated SHS1}, \ref{theo: iterated SHS1 case 2}. 
	
	In Section \ref{sec: P1xP1}, we study the case with $Y=\PP^1 \times \PP^1$. We follow the same plane of the previous section giving the classification theorem \ref{theor: Y=P1xP1} and then working on singular and iterated bidouble covers, see Theorems \ref{theor: iterated P1xP1 case 1}, \ref{theor: iterated P1xP1 case 2} . 
	
	Finally in Section \ref{sec: Fn}, we study the construction when $Y=\mathbb{F}_n$, giving the classification theorem \ref{theor: Y=Fn} and discussing the iterated bidouble covers in Section \ref{subsec: iterated Fn}.
	\medskip
	
	\textbf{Notation and conventions.} 
	
	We work over the field $\mathbb{C}$ of complex numbers. 
	
	For $a,b\in \Z$, $a\equiv_n b$ means $a\equiv b\mod n$.
	
	If $G$ is an abelian group, then the set of characters (denoted usually by $\chi_i$)  forms an abelian group under point-wise multiplication,  we denote by $G^*$ its character group. 
	
	By ``\emph{surface}'' we mean a projective, connected non-singular two dimensional complex manifold $S$, and
	for such a surface $\omega_S=\oo_S(K_S)$ denotes the canonical
	class, $p_g(S)=h^0(S, \, \omega_S)$ is the \emph{geometric genus},
	$q(S)=h^1(S, \, \omega_S)$ is the \emph{irregularity} and
	$\chi(\mathcal{O}_S)=1-q(S)+p_g(S)$ is the \emph{Euler-Poincar\'e
		characteristic}. If $q(S)>0$, we call $S$ an \emph{irregular surface}.
By an abuse of notation we will call ``\emph{K3 surface}" also singular surfaces, whose minimal model is a K3 surface.
	
	Throughout the paper, we denote Cartier (or Weil) divisors on a variety by
	capital letters and the corresponding line bundles by italic
	letters, so we write for instance $\mathcal{L}=\oo_S(L)$. 
	
	\medskip
	
	\textbf{Acknowledgments}  Both authors were partially supported by GNSAGA-INdAM and PRIN 2020KKWT53 003 - Progetto \emph{Curves, Ricci flat Varieties and their Interactions}. We thank J. Commelin, B. van Geemen and R. Pignatelli for useful discussions and suggestions. 
\section{Preliminaries}\label{preliminaries} 
In this section, we collect the results on abelian covers that we will need in the sequel with a particular attention to bidouble covers. We present some constructions which will be used in the following and which are obtained modifying certain data in a bidouble cover. In particular, we consider singular bidouble covers and we introduce the notion of iterated bidouble covers.

\subsection{Bidouble Covers}\label{sec_BiDou}

Let $Y$ be a smooth complex algebraic surface. A \emph{bidouble cover} is a finite flat Galois morphism  $\pi\colon X \ra Y$ with Galois group $G:=(\ZZ/2\ZZ)^2$. 

Given such a cover, the group action on $\pi_*\mathcal{O}_X$ induces a splitting
\begin{equation*}\label{Eq_mainSplit}
	\pi_*\mathcal{O}_X=\mathcal{O}_Y \oplus \mathcal{L}^{-1}_{\chi_1} \oplus \mathcal{L}^{-1}_{\chi_2} \oplus \mathcal{L}^{-1}_{\chi_3} 
\end{equation*}

where $\mathcal{L}^{-1}_{\chi_i}$ denotes the eigensheaf on which $G$ acts via the non trivial character ${\chi_i}$ for $i=1,2,3$. 

We report here the structure Theorem for smooth Galois $(\ZZ/2\ZZ)^2$-cover given in \cite[Section 2]{CatM}.
\begin{theorem}[Proposition 2.3 \cite{CatM}] A bidouble cover $\pi\colon X \ra Y$ is uniquely determined by the data of effective divisors $D_1$, $D_2$, $D_3$ on $Y$ and divisors $L_1$, $L_2$, $L_3$ such that 
	\begin{equation}\label{eq_BiData}
		2L_k=(D_i+D_j) \textrm{ with } \{i,j,k\}=\{1,2,3\}.
	\end{equation}

If $Y$ and the effective divisors $D_i$ are	smooth and $D=\cup D_i$ is simple normal crossing, then $X$ is smooth. 
\end{theorem}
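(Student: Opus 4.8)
The plan is to read the theorem as an equivalence between bidouble covers of $Y$ and data $(D_1,D_2,D_3;L_1,L_2,L_3)$ satisfying \eqref{eq_BiData}, proving both directions, and then to settle smoothness by a purely local computation. I take the eigensheaf splitting $\pi_*\oo_X=\oo_Y\oplus\mathcal{L}^{-1}_{\chi_1}\oplus\mathcal{L}^{-1}_{\chi_2}\oplus\mathcal{L}^{-1}_{\chi_3}$ as given. Each $\mathcal{L}^{-1}_{\chi_i}$ is invertible: since $\pi$ is finite flat and $Y$ is smooth, $\pi_*\oo_X$ is locally free of rank $4$, and as $G$ is abelian acting by the regular representation over the (dense) étale locus, each character-eigensheaf is a rank-one direct summand, hence a line bundle. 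Writing $\mathcal{L}_{\chi_i}=\oo_Y(L_i)$ defines $L_1,L_2,L_3$, so the only real content is to extract the $D_i$ and the relation from the algebra structure, and conversely to reconstruct the cover.

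For the direction \emph{from cover to data}, the key object is the $\oo_Y$-algebra structure on $\pi_*\oo_X$, which is $G$-equivariant and hence graded by $G^*$. Since $\chi_i^2=1$ and $\chi_i\chi_j=\chi_k$ for $\{i,j,k\}=\{1,2,3\}$, the multiplication is encoded by the squaring maps $\oo_Y(-2L_k)\to\oo_Y$, i.e. sections $s_k\in H^0(\oo_Y(2L_k))$, and the cross maps $\oo_Y(-L_i-L_j)\to\oo_Y(-L_k)$, i.e. sections $t_k\in H^0(\oo_Y(L_i+L_j-L_k))$. I define $D_k:=\operatorname{div}(t_k)$, which is effective and nonzero because $\pi$ is generically étale (the product of two nonzero eigensections is a nonzero eigensection over the étale locus). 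Working in a local eigenbasis $x_i$ with $x_i^2=s_i$ and $x_ix_j=t_k x_k$, associativity applied to $x_i\cdot x_i\cdot x_j$ gives $s_i x_j=(x_i x_i)x_j=x_i(x_i x_j)=t_k(x_i x_k)=t_k t_j x_j$, whence $s_i=t_jt_k$ and therefore $\operatorname{div}(s_i)=D_j+D_k$; as $s_i\in H^0(\oo_Y(2L_i))$ this is exactly the relation $2L_i=D_j+D_k$, i.e. \eqref{eq_BiData} after relabelling. Uniqueness is then immediate, since the $L_i$ come from the eigensheaves and the $D_i$ from the intrinsic multiplication maps. The step I expect to be the most delicate is the index bookkeeping identifying which character corresponds to which of the three subgroups, so that the intermediate double cover $Y_k=X/H_k$ is seen to be unramified over $D_k$ and branched over $D_i+D_j$; this is forced by tracking inertia, as a branch component lies in $D_l$ precisely according to its order-two inertia subgroup $H_l$.

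For the direction \emph{from data to cover}, assume $(D_i,L_i)$ satisfies \eqref{eq_BiData}, let $x_{D_i}\in H^0(\oo_Y(D_i))$ be the canonical section vanishing on $D_i$, and put on $\oo_Y\oplus\oo_Y(-L_1)\oplus\oo_Y(-L_2)\oplus\oo_Y(-L_3)$ the multiplication whose cross maps are the $x_{D_k}$ and whose squaring maps are the products $x_{D_i}x_{D_j}$, a section of $\oo_Y(D_i+D_j)=\oo_Y(2L_k)$ by \eqref{eq_BiData}. Commutativity is clear and associativity is the computation of the previous paragraph run in reverse, so this is a sheaf of commutative $\oo_Y$-algebras; setting $X=\mathbf{Spec}_{\oo_Y}$ of it yields a finite flat degree-$4$ cover carrying a $G$-action that realizes the prescribed characters, hence a bidouble cover with the given data. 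The two directions together give the asserted bijection.

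Finally, smoothness is local on $Y$, and by the SNC hypothesis at each point at most two of the $D_i$ pass, transversally. Writing $\delta_i$ for local equations of $D_i$, the local model of $X$ is $x_k^2=\delta_i\delta_j$, $x_ix_j=\delta_k x_k$. If no $\delta_i$ vanishes the cover is étale; if exactly one vanishes, say $\delta_1=u$, one gets a smooth double cover $x_2^2=u\cdot(\text{unit})$ in the normal direction while the remaining factor splits étale, so $X$ is smooth; the crucial case is a crossing $D_1\cap D_2$, with $\delta_1=u$, $\delta_2=v$ and $\delta_3$ a unit, where the relations give $u=x_2^2\cdot(\text{unit})$, $v=x_1^2\cdot(\text{unit})$ and $x_3=(\text{unit})\,x_1x_2$, so the local ring is regular with $x_1,x_2$ as coordinates. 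This crossing case is the heart of the argument and is exactly where transversality is indispensable: a tangency or a triple point would destroy the identification of $u,v$ with squares of local coordinates and produce a singular point. Hence under the SNC assumption $X$ is smooth, completing the proof.
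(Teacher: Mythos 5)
Your proof is correct: the paper itself gives no argument for this statement (it is quoted verbatim as Proposition 2.3 of \cite{CatM}), and your reconstruction — eigensheaf splitting of $\pi_*\mathcal{O}_X$, extraction of the sections $s_i,t_k$ from the $G^*$-graded algebra structure, the associativity identity $s_i=t_jt_k$ yielding $2L_i=D_j+D_k$, reconstruction of the cover via relative Spec, and the local smoothness analysis at points lying on zero, one, or two transversal branch components — is essentially Catanese's own argument from the cited reference.
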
	

Given a smooth complex algebraic surface $Y$ we shall refer to the set of effective divisors  $\{D_1, D_2, D_3 \}$ satisfying  \eqref{eq_BiData} as the \emph{data of a bidouble cover} $\pi$. Denote by $R$ the ramification locus of $\pi$, then  $\pi(R)=D$ is the branch locus and since we are assuming (for now) that $X$ and $Y$ are smooth, they are divisors by the purity of the branch locus. Let $\{\sigma_1, \sigma_2, \sigma_3\}=G\setminus \{0\}$ and  $R_i$ be the component of $R$ stabilized by its \emph{inertia subgroup} 
\[
H_{R_i}=\{h \in G\mbox{ such that } h|_{R_i}=1\}=\langle \sigma_i \rangle
\]
for $i=1,2,3$. We denote by $Y_i$ the quotient surface $X/\sigma_i$. This surface has some ordinary quadratic singularities at the isolated fixed points of $\sigma_i$, given by $R_j \cap R_k$ where $(i,j,k)$ is a permutation of $(1,2,3)$. 

We recall that a bidouble $X\ra Y$ is attached to the diagram \ref{eq: diagram bidouble cover}. The double cover $\pi_i\colon Y_i \longrightarrow Y$ is branched on $D_j \cup D_k$ and denoted  $\chi_i$ the character orthogonal to $\sigma_i$ we have
\[
(\pi_i)_*\mathcal{O}_{Y_i}=\mathcal{O}_Y \otimes \mathcal{L}^{-1}_{\chi_i}.
\] 

We can now give formulae for the numerical invariants following e.g., \cite[Proposition 4.2]{Par91a} 
\begin{equation}\label{eq_chiBiDoCo}
		\chi(X) =4\chi(Y)+\frac{1}{2}\sum_{i=1}^3 (L_i)^2+\frac{1}{2}\sum_{i=1}^3 L_iK_Y
		 =4\chi(Y)+\frac{1}{2}\sum_{i=1}^3 L_i(L_i+K_Y).
\end{equation}
Since $K_X=\pi^*K_Y+R$ and $2R=\pi^*D$ we have 
\begin{equation}\label{eq_K2BiDoCo}
	K^2_X=(2K_Y+D)^2=4K^2_Y+4DK_Y+D^2.
\end{equation}

Moreover, we have for the intermediate double coverings
\begin{equation}\label{equation: chi intermediate cover}
	\chi(Y_i)=2\chi(Y)+\frac{1}{2}L_i^2+\frac{1}{2}(L_iK_Y),
\end{equation}
which implies
$$\chi(X)=\chi(Y_1)+\chi(Y_2)+\chi(Y_3)-2\chi(Y).$$
To determine the properties of the intermediate double covers $Y_i$ we observe that the kth-plurigenus of these surfaces can be calculated using projection formula by
\[
\begin{split}
	\pi_*\omega_{Y_i}^{\otimes k}=\pi_*(\pi^*(\omega_Y\otimes \mathcal{L}_i)^{\otimes k}\otimes \mathcal{O}_{Y_i})= &(\omega_Y\otimes \mathcal{L}_i)^{\otimes k}\otimes\pi_*\mathcal{O}_{Y_i}= \\
	= (\omega_Y\otimes\mathcal{L}_i)^{\otimes k}\otimes(\mathcal{O}_{Y}\oplus\mathcal{L}_i^{-1})=&\big(\omega_Y\otimes\mathcal{L}_i\big)^{\otimes k}\oplus\left(\omega_Y^{\otimes k}\otimes\mathcal{L}_i^{\otimes (k-1)}\right),
\end{split}
\]
which implies \begin{equation}
	\label{eq: pulrigeberaYi}
	P_k(Y_i)=h^0\left(Y,\left(\omega_Y\otimes\mathcal{L}_i\right)^{\otimes k}\right)+h^0\left(Y,\left(\omega_Y^{\otimes k}\otimes\mathcal{L}_i^{\otimes (k-1)}\right)\right).\end{equation}

In the following we are mainly interested in the cases such that $Y$ is a rational surface and the intermediate surfaces $Y_i$ are either K3 surfaces or surfaces with $p_g=0$. To this end, we state the following easy lemma.
\begin{lemma}\label{lemma: IL LEMMA}
Let $Y$ be a rational surface and $X\ra Y$ be a smooth bidouble cover with ramification data $\{D_1,D_2,D_3\}$.
Then \begin{enumerate}
\item At most one of the divisor $D_i$'s is trivial;
	\item $Y_i$ is a K3 surface if and only if $L_i=-K_{Y}$; 
	\item $p_g(Y_i)=0$ if and only if $\dim(|K_Y+L_i|)=-1$ (here we pose $\dim(|D|)=-1$ if $|D|$ is empty);
	\item $\kappa(X)=0$ if and only if $2K_Y+D=0$.
\end{enumerate}\end{lemma}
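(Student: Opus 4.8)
The plan is to derive everything from two canonical-bundle identities combined with the torsion-freeness of $\Pic(Y)$ for a rational surface. For the double cover $\pi_i\colon Y_i\ra Y$, branched along $D_j+D_k=2L_i$, one has the standard formula $K_{Y_i}=\pi_i^*(K_Y+L_i)$, and for the bidouble cover itself the relations $K_X=\pi^*K_Y+R$ and $2R=\pi^*D$ give $2K_X=\pi^*(2K_Y+D)$. I will also use repeatedly that $Y$ rational forces $q(Y)=p_g(Y)=0$ and $\Pic(Y)$ torsion free, together with the splittings $\pi_*\oo_X=\oo_Y\oplus\mathcal{L}_1^{-1}\oplus\mathcal{L}_2^{-1}\oplus\mathcal{L}_3^{-1}$ and $(\pi_i)_*\oo_{Y_i}=\oo_Y\oplus\mathcal{L}_i^{-1}$.

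Part (3) is immediate: specialising the plurigenus formula \eqref{eq: pulrigeberaYi} to $k=1$ and using $p_g(Y)=h^0(\omega_Y)=0$ yields $p_g(Y_i)=h^0(Y,\omega_Y\otimes\mathcal{L}_i)=h^0(Y,\oo_Y(K_Y+L_i))$, so $p_g(Y_i)=0$ exactly when $|K_Y+L_i|$ is empty, i.e. $\dim|K_Y+L_i|=-1$. For Part (2), if $L_i=-K_Y$ then $K_{Y_i}=\pi_i^*(K_Y+L_i)=0$, while $q(Y_i)=h^1(\oo_Y)+h^1(\mathcal{L}_i^{-1})=0+h^1(\oo_Y(K_Y))=0$ by Serre duality; since the singularities of $Y_i$ are rational double points, the minimal resolution $\widetilde{Y_i}$ has trivial canonical bundle (crepant resolution) and $q=0$, hence is a K3 surface in the sense of the paper's convention. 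Conversely, if $Y_i$ is a K3 surface its minimal resolution has trivial canonical class, so $\rho^*\pi_i^*(K_Y+L_i)=K_{\widetilde{Y_i}}=0$; as $\pi_i$ is finite surjective its pullback is injective on $\Pic(Y)\otimes\Q$ (by $\pi_{i*}\pi_i^*=2\cdot\id$), forcing $K_Y+L_i=0$ rationally, and then $L_i=-K_Y$ because $\Pic(Y)$ is torsion free.

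For Part (1), suppose two of the divisors, say $D_i,D_j$, are trivial. Then $2L_k=D_i+D_j=0$, and torsion-freeness of $\Pic(Y)$ gives $L_k=0$, so $\mathcal{L}_k^{-1}=\oo_Y$ occurs as a second copy of $\oo_Y$ in $\pi_*\oo_X$; hence $h^0(\oo_X)\ge2$, contradicting the connectedness of $X$. Thus at most one $D_i$ can vanish. For Part (4), from $2K_X=\pi^*(2K_Y+D)$ and the fact that a finite surjective pullback preserves the Iitaka dimension of a line bundle (because $\oo_Y$ is a direct summand of $\pi_*\oo_X$) one gets $\kappa(X)=\kappa(X,2K_X)=\kappa(Y,2K_Y+D)$. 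If $2K_Y+D=0$ then $2K_X=0$, so $P_2(X)=1$ and all plurigenera are bounded, giving $\kappa(X)=0$.

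The reverse implication in Part (4) is the point I expect to be the main obstacle: I must deduce $2K_Y+D=0$ from $\kappa(Y,2K_Y+D)=0$. On $\PP^2$ and $\PP^1\times\PP^1$ this is automatic, since there $\kappa(Y,M)=0$ forces $M=0$ for every class $M$ (the Picard group is generated by nef classes). The delicate case is $Y=\mathbb{F}_n$, where a nonzero rigid class such as the negative section $C_0$ also satisfies $\kappa(Y,C_0)=0$, so the formal argument alone does not rule out $2K_Y+D\neq0$. To close this gap I would combine two constraints on $M=2K_Y+D$: first, $M=L_1+L_2+L_3+2K_Y$ is subject to the divisibility relations \eqref{eq_BiData} imposed on the $D_i$; and second, $\kappa(X)=0$ with minimal $X$ forces $M^2=K_X^2=0$ via \eqref{eq_K2BiDoCo}, so a negative rigid representative is excluded, while the non-minimal case must be controlled separately. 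Making this last reduction fully rigorous — i.e. showing that no nonzero effective rigid class compatible with the bidouble data survives — is where the real work of Part (4) lies.
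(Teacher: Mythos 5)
Parts (1)--(3) of your proposal are correct and essentially reproduce the paper's own arguments: (1) via the splitting of $\pi_*\oo_X$, (3) via the $k=1$ case of \eqref{eq: pulrigeberaYi}, and for (2) your computation of $q(Y_i)$ together with the crepant resolution of the $A_1$ points is an equivalent variant of the paper's computation of $h^2(\oo_{Y_i})$ and $\chi(Y_i)$. The genuine gap is the converse implication in (4), which you explicitly leave open. The paper's route is one you never consider: instead of computing the Iitaka dimension of the class $2K_Y+D$ on $Y$, it works upstairs with the Enriques classification. Namely, if $\kappa(X)=0$ then $K_X$ is a torsion divisor, hence $2K_X=\pi^*(2K_Y+D)$ is torsion; applying $\pi_*$ (projection formula, $\pi_*\pi^*=4\cdot\id$ on divisor classes) shows that $2K_Y+D$ is torsion in $\Pic(Y)$, which is torsion free because $Y$ is rational, whence $2K_Y+D=0$. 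The idea you are missing is that the class in question is not an arbitrary line bundle of Iitaka dimension zero: it is twice the canonical class of a surface with $\kappa=0$, and the classification of such surfaces gives the torsion statement directly --- but only when $X$ is minimal.

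That last caveat is why your worry about rigid classes on $\mathbb{F}_n$ is well founded rather than a defect of your approach: the step ``$\kappa(X)=0$ implies $K_X$ torsion'' fails for non-minimal surfaces, and non-minimal bidouble covers realizing exactly your rigid classes exist. On $Y=\mathbb{F}_2$ take $D_1=C_2$, $D_2$ a general member of $|C_2+2F_2|$, $D_3$ a general member of $|3C_2+6F_2|$: the divisibility conditions \eqref{eq_BiData} hold (with $L_3=C_2+F_2$, $L_2=2C_2+3F_2$, $L_1=2C_2+4F_2=-K_Y$), the three curves are smooth and their union is simple normal crossing ($D_1$ is disjoint from $D_2$ and from $D_3$), so the bidouble cover $X$ is smooth and connected. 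Since $D_1$ does not meet the rest of the branch locus, $\pi^{-1}(C_2)$ consists of two disjoint $(-1)$-curves; one computes $\chi(\oo_X)=2$ and $K_X^2=-2$, and $X$ is a K3 surface blown up in two points, so $\kappa(X)=0$ while $2K_Y+D=C_2\neq 0$. Thus the converse of (4) is actually false without a minimality hypothesis on $X$ (the statement true in general is that $\kappa(X)=0$ iff $\kappa(Y,2K_Y+D)=0$, which is where your reduction naturally stops). Your proposal correctly locates this weak point, but it neither completes the minimal case, where the classification argument above does work, nor recognizes that in the non-minimal case no argument can close the gap, because the claimed equivalence fails there.
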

\begin{proof} 
	$1)$ If $D_i=D_j=0$ then the cover of the rational surface $Y$ would not be connected.

	$2)$ Since $\pi_i:Y_i\ra Y$ is a double cover, $K_{Y_i}=\pi_i^*(K_Y+L_i)$ and $h^j(\mathcal{O}_{Y_i})=h^j(\mathcal{O}_{Y})+h^j(\mathcal{L}_i^{ -1})$. If $Y_i$ is a K3 surface, then its canonical bundle is trivial and then $L_i=-K_Y$. Vice versa, if $L_i=-K_Y$, then $K_{Y_i}$ is either trivial or a 2-torsion divisor. Moreover, $h^2(\mathcal{O}_{Y_i})=h^2(\mathcal{O}_{Y})+h^2(\mathcal{L}_i^{ -1})=0+h^0(\mathcal{O}_Y)=1$, so $K_{Y_i}$ is trivial. By \eqref{equation: chi intermediate cover}, one has that $\chi(Y_i)=2$, so $Y_i$ is a K3 surface.
	
	$3)$ It follows simply by \eqref{eq: pulrigeberaYi}, indeed $p_g(Y_i)=P_1(Y_i)=h^0(\omega_{Y_i})=h^0(\omega_{Y}\otimes \mathcal{L}_i)+h^0(\omega_{Y})$.
	
	$4)$ We recall that $\kappa(X)=0$ if and only if $K_X$ is a torsion divisor (possibly zero) which is equivalent to say that $2K_X=\pi^*(2K_Y+D)$ is a torsion divisor (possibly zero). If $2K_Y+D$ is a torsion divisor, then $\kappa(X)=0$. Viceversa, if $\kappa(X)=0$ then $2K_Y+D$ is a torsion divisor. Since $Y$ is a rational surface, $2K_Y+D$ must be trivial.
\end{proof}

\begin{rem}{\rm The point $(4)$ of the previous Lemma \ref{lemma: IL LEMMA} characterizes the bidouble covers of a rational surfaces whose Kodaira dimension is 0. We observe that all the classes of surfaces with trivial Kodaira dimension can be obtained in this way. In the following we will show that: the bidouble cover of $\mathbb{P}^2$ branched on a quartic and a conic is a K3 surface (cf. Theorem \ref{theorem: bidouble P2}, case c)); the bidouble cover of $\mathbb{P}^2$ branched on three conics is an Enriques surface (cf. $Z_1$ in Theorem \ref{theor: iterated bidouble GS2}); the bidouble cover of $\mathbb{P}^1\times\mathbb{P}^1$ with data $D_1=4h_1$, $D_2=4h_2$, $D_3=0$ is an Abelian surface (cf. Theorem \ref{theor: Y=P1xP1} case p)).
An example of $X$ being a bielliptic surface is given by the bidouble cover of $\mathbb{P}^1\times\mathbb{P}^1$ with data $D_1=4h_1$, $D_2=2h_2$, $D_3=2h_2$.}
\end{rem}


\subsection{Iterated bidouble cover}\label{subsec: Iterated general case}

Let $X\ra Y$ be a smooth bidouble cover and $p_i:X\ra Y_i$ one of the intermediate double cover as in diagram \eqref{eq: diagram bidouble cover}. Let us denote by $B$ its branch locus (it could be either a possibly reducible curve or the union of a curve and some singular points of $Y_i$, which are necessarily of type $A_1$). Without loss of generalities we assume $i=1$.

An iterated bidouble cover of $Y$ is a bidouble cover $W\ra Y_1$ such that $p_1:X\ra Y_1$ is one of its intermediate double cover. 

To an iterated bidouble cover is associated  the following diagram:
\begin{equation}
	\label{eq: diag bibidouble} 
	\xymatrix{&&W\ar[dr]\ar[dl]\ar[d]\\&X\ar[dr]\ar[dl]\ar[d]&Z_3\ar[d]&\ \ Z_1\ar[dl]\\
		Y_3\ar[dr]&Y_2\ar[d]&Y_1\ar[dl]\\
		&Y}
\end{equation}

where all the arrows in the diagram are double covers. We may refer to the bidouble cover $X\ra Y$ as the \emph{first bidouble cover}.

Since it is possible, and indeed it often happens, that $Y_1$ is singular, one often has to blow up $Y_1$ in order to obtain a smooth surface $\widetilde{Y_1}$. To this surface one can apply the theory of the bidouble covers described in Section \ref{sec_BiDou}. In particular, the double cover $p_1:X\ra Y_1$ induces a double cover between the smooth surfaces $\widetilde{X}$ and $\widetilde{Y_1}$. By the purity of the branch locus, this double cover is branched on curves, in particular on $\widetilde{B}$, which is the strict transform of $B$ on $Y_1$, and on the exceptional divisors obtained by the blow up of the singular branch point of $p_1:X\ra Y_1$. Let us denote $\widetilde{B}+E$ the branch locus of $p_1:\widetilde{X}\ra \widetilde{Y_1}$.

So if $Y_1$ is singular, the diagram attached to the iterated bidouble cover is 
\begin{align*}\label{eq: diag bibidouble smooth}
	\xymatrix{&&&\widetilde{W}\ar[dr]\ar[dl]\ar[d]\\&X\ar[dl]\ar[d]\ar[dr]&\widetilde{X}\ar[l]\ar[dr]&Z_3\ar[d]&\ \ Z_1\ar[dl]\\
		Y_3\ar[dr]&Y_2\ar[d]&Y_1\ar[dl]&\widetilde{Y_1}\ar[l]\\
		&Y}\end{align*}
where the horizontal arrows are blow ups.

The existence of an iterated bidouble cover is  equivalent to the existence of three effective divisors $\Delta_i$, $i=1,2,3$ on $\widetilde{Y}_i$ such that \begin{itemize}
	\item $\Delta_i+\Delta_j=2\Lambda_k\in Pic(\widetilde{Y_i})$, where $(i,j,k)$ is a permutation of $(1,2,3)$
	\item $\Delta_1+\Delta_3=\widetilde{B}+E$.
\end{itemize}


\begin{rem}{\rm  By diagram \eqref{eq: diag bibidouble} one observes that $W\ra Y$ is a {\it simple iterated double cover} in the sense of \cite{Ma}, indeed $W\ra X\ra Y_1\ra Y$ is a composition of double covers.

}	\end{rem}

\subsection{The constructions}
In the following we describe the settings of the different constructions we are employing in the sequel. All of them are obtained by considering bidouble covers, which produce many different surfaces. 

\subsubsection{Construction of the bidouble cover}\label{subsubsec: construction 1} We will consider a smooth (possibly minimal) rational surface $Y$ and we will look for smooth bidouble covers of $Y$ such that the intermediate double covers $Y_i\ra Y$ %
have $p_g(Y_i)\leq 1$ and at least one of them is a K3 surface, say $Y_3$. 
So we define the following construction.
\begin{construction}\label{construction 1: first double cover}{\rm 
Consider $X\ra Y$ a bidouble cover of a rational surface $Y$ with data $\{D_1,D_2,D_3\}$ such that
\begin{itemize}
	\item $D_i$ are smooth and $D_1\cup D_2\cup D_3$ is simple normal crossing;
	\item $D_i+D_j=2L_k\in Pic(Y)$, where $(i,j,k)$ is a permutation of $(1,2,3)$;
	\item $D_1+D_2+2K_Y=0$, which assures that $Y_3$ is a K3 surface;
	\item $\dim(|K_Y+(D_i+D_3)/2|)\leq 0$, $i=1,2$, which is equivalent to have $p_g(Y_i)\leq 1$ for $i=1,2$.
\end{itemize}}
\end{construction}

We point out that this is also the first bidouble cover of an iterated bidouble cover. 

\subsubsection{Construction of the iterated bidouble cover}\label{subsubsec: construction 2} If at least one among $Y_1$ and $Y_2$ is a rational surface, we can consider an iterated bidouble cover, where the commune double cover is either $X\ra Y_1$ or $X\ra Y_2$. 

\begin{rem}\label{rem: no exceptional in Delta1,2}
{\rm We observe that the construction of an iterated bidouble cover, implies a specialization of the curves $D_1$, $D_2$, $D_3$ which are the data of the first bidouble cover. We will allow only the specializations which do not introduce singularities worst than simple normal crossing in the configuration of the curves $D_1\cup D_2\cup D_3$. This guarantees that the invariants computed for the first bidouble cover remain the same also under the specialization we are considering, see Section \ref{subsubsec: singular bidouble}. In particular, we do not allow triple points in $D_1\cup D_2\cup D_3$ and  this forces the divisors $\Delta_1$ and $\Delta_2$ (see Section \ref{subsec: Iterated general case}) to have no exceptional divisors $E_j$ as components.}\end{rem}

There are two cases of interest for us: \begin{enumerate}
	\item at least one among $Z_1$ and $Z_3$ is a K3 surface and the other one is either a K3 surface or a surface with $p_g=0$;
	\item one among $Z_1$ and $Z_3$ has the same construction of $X$ (i.e. it is obtained as bidouble cover of $Y$ with branch divisors linearly equivalent to the ones attached to $X$) and the other one is either a K3 surface or a surface with $p_g=0$.
\end{enumerate}

A priori one could be interested also in the case where both  $Z_1$ and $Z_2$ have the same construction of $X$, but we never obtain it if we require that $\Delta_1$ and $\Delta_2$ do not contain exceptional divisors as we assumed in the Remark \ref{rem: no exceptional in Delta1,2}.

Determining an iterated bidouble cover $W\ra \widetilde{Y_1}$ as in (1) or (2) is equivalent to find three effective divisors $\Delta_i$, $i=1,2,3$ on $\widetilde{Y}_i$ which satisfy certain properties.
We now write explicitly the conditions attached to a bidouble cover as in (1) and (2) respectively.

\begin{construction}\label{construction 2: iterated bidouble case 1}{\rm 
		Consider $\widetilde{W}\ra \widetilde{Y_1}$ the iterated bidouble cover with data $\{\Delta_1,\Delta_2,\Delta_3\}$ such that
		\begin{itemize}
	\item $\Delta_i+\Delta_j=2\Lambda_k\in Pic(\widetilde{Y_i})$, where $(i,j,k)$ is a permutation of $(1,2,3)$;
	\item $\Delta_1+\Delta_2+2K_{\widetilde{Y_1}}=0$, which implies that $Z_3$ is a K3 surface;
	\item $\Delta_1+\Delta_3=\widetilde{B}+E$;
	\item $\dim|K_{\widetilde{Y_1}}+(\Delta_2+\Delta_3)/2|\leq 0$;
	\item there are no components of $E$ which are components of $\Delta_1$ or $\Delta_2$ (by Remark \ref{rem: no exceptional in Delta1,2}).
\end{itemize}}
\end{construction}

\begin{construction}\label{construction 2: iterated bidouble case 2}{\rm 
		Consider $\widetilde{W}\ra \widetilde{Y_1}$ the iterated bidouble cover with data $\{\Delta_1,\Delta_2,\Delta_3\}$ such that
		\begin{itemize}
			\item $\Delta_i+\Delta_j=2\Lambda_k\in Pic(\widetilde{Y_i})$, where $(i,j,k)$ is a permutation of $(1,2,3)$;
			\item $\Delta_1+\Delta_3=\widetilde{B}+E$;
			\item $\Delta_2+\Delta_3\simeq\widetilde{B}+E$;
			\item $\dim|K_{\widetilde{Y_1}}+(\Delta_1+\Delta_2)/2|\leq 0$;
			\item there are no components of $E$ which are components of $\Delta_1$ or $\Delta_2$ (by Remark \ref{rem: no exceptional in Delta1,2}).
		\end{itemize}}
	\end{construction}
We observe that the conditions of Construction \ref{construction 2: iterated bidouble case 2} implies that $\Delta_1\simeq\Delta_2$ and the fourth condition is equivalent to $\dim|K_{\widetilde{Y_1}}+\Delta_1|\leq 0$.

\subsubsection{Singular bidouble cover}\label{subsubsec: singular bidouble}
The singular bidouble covers of smooth surfaces are studied intensively in \cite{Cat99}, where Catanese applied them to construct new surfaces of general type with prescribed invariants. In particular, he considered the case when $D_1\cup D_2\cup D_3$ is not simple normal crossing. 
To each point $P$ in the branch locus one attaches a triple $(\mu_1(P),\mu_2(P),\mu_3(P))$ where $\mu_i(P)$ is the multiplicity of $D_i$ in $P$.

Assuming that none of the curves $D_i$ contains a triple point, if $D_1\cup D_2\cup D_3$ has a triple point $P$, then $(\mu_1(P),\mu_2(P),\mu_3(P))$ is either $(2,1,0)$ or $(1,1,1)$. By \cite[Section 3]{Cat99}, if the bidouble cover surface $X$ is of general type, in the first case the self intersection of the canonical divisor of the minimal model of $X$ can be computed as in the simple normal crossing case, in the latter it decreases by 1. Therefore in the latter case, one is constructing a surface of general type with invariants which are not the same as the ones of the simple normal crossing case, i.e. one is producing a surface in a different family.

\section{General results on smooth bidouble covers, singular bidouble covers and iterated bidouble covers}\label{Sec:General}
 We discuss classical problems and conjectures for the surface $X$ obtained as smooth bidouble cover of a rational surface $Y$. More precisely, we first discuss the Infinitesimal Torelli Property. Then, we prove several results on the decomposition of the weight 2 Hodge structures, of the Chow groups of zero--cycle of degree 0 and of the transcendental motives of the surfaces $X$ (see Propositions \ref{prop: Hodge structure}, \ref{prop: chow groups}, \ref{prop: motives} respectively). We also extend some of these results to the surfaces obtained by the other constructions described in the previous sections (see Sections \ref{subsubsec: construction 2}, \ref{subsubsec: singular bidouble}), i. e. the iterated bidouble covers and certain singular bidouble covers.
 At the end of the Section, we prove that, if $X$ is a bidouble cover of a rational surface $Y$ such that $p_g(X)=2$ or $3$, under certain conditions on the intermediate double covers, the Mumford--Tate and the Tate conjectures hold.

 \subsection{Infinitesimal Torelli Property for bidouble coverings}\label{subsec: Infinitesimal Torelli}
 In this section we use the techniques develop in the papers \cite{Par91a}, \cite{Par91b} and \cite{PZ19} to prove Theorem \ref{theo: Infinitesimal Torelli} which guarantees that the Infinitesimal Torelli Property holds for bidouble covers under certain conditions. We will give examples of surfaces which satisfy this hypothesis in the following sections.
 
 Recall the following definition.
 \begin{definition} Let $D_1, \ldots ,D_k$ be divisors in a smooth manifold X and $x_1, \ldots,x_k$ equations for them.
 	Define $\Omega^1_X(\log (D_1),\ldots, \log(D_k))$ to be the subsheaf (as $\mathcal{O}_X$-module) of $\Omega^1_X(D_1+ \ldots +D_k)$ generated by $\Omega^1_X$ and by $\frac{dxj}{x_j}$ for $j=1, \ldots k$.
 \end{definition}
 Notice that if $\Delta$ is the union of the components $D_1,\ldots, D_k$, than by  $\Omega^1_X(\log \Delta)$ we mean $\Omega^1_X(\log (D_1),\ldots, \log(D_k))$, as in \cite{Par91b}.
 
 \begin{remark}{\rm  The contraction tensor $T_X(-\log D) \otimes \Omega_{X}^1(\log D) \ra \mathcal{O}_X$ is a non degenerate pairing, therefore there is a canonical duality between $T_X(-\log D)$ and $\Omega_{X}^1(\log D)$.}
 \end{remark}
 We introduce the following notation for a bidouble cover data $\{D_1, D_2, D_3\}$ as in \cite{Par91a}. First identify
 \[
 D_{\chi_i}=D_i \textrm{ for }\, i=1,2,3, \textrm{ and } D=D_1+D_2+D_3=D_{\chi_0},
 \] 
 where $\chi_0=1$.
 
 Let $\mathcal{L}_{\chi_0}=\mathcal{L}_{\chi^{-1}_0}=\mathcal{O}_{Y}$ and $\mathcal{L}_{\chi_i}=\frac{D_j+D_k}{2}$ with $(i,j,k)$ a permutation of $(1,2,3)$; we have $\mathcal{L}_{\chi_i}=\mathcal{L}_{\chi_i^{-1}}$.
 Moreover, let
 \[
 D_{\chi,\phi}=L_{\chi}+L_{\phi}-L_{\chi\phi}\]
 so that 
 \[
 D_{\chi_0,\chi_0^{-1}}=\emptyset, \quad D_{\chi_1,\chi_1^{-1}}=D_2+D_3, \quad D_{\chi_2,\chi_2^{-1}}=D_1+D_3, \quad D_{\chi_3,\chi_3^{-1}}=D_1+D_2,
 \] 
 
 If $(i,j,k)$ is a permutation of $(1,2,3)$, then $D_{\chi_i,\chi_j}=D_{\chi_k}$, indeed  
 \[
 D_{\chi_i,\chi_j}=L_{\chi_i}+L_{\chi_j}-L_{\chi_k}=L_i+L_j-L_k=\frac{D_j+D_k}{2}+\frac{D_i+D_k}{2}-\frac{D_i+D_j}{2}=D_k=D_{\chi_k}.
 \]
 The following result is a special case of \cite[Proposition 4.1]{Par91a}.
 \begin{proposition}\label{prop_InvaTang}  Let $\pi \colon X \longrightarrow Y$ be a bidouble cover with bidouble cover data $\{D_1, D_2, D_3\}$, then for every character $\chi_i$:
 	\begin{enumerate}
 		\item $\pi_*(T_X)^{\chi_i}=T_Y(-\log D_{\chi_i})\otimes \mathcal{L}^{-1}_{\chi_i}$;
 		\item $\pi_*(\Omega_X)^{\chi_i}=\Omega_Y^1(\log D_{\chi_i,\chi_i^{-1}})\otimes \mathcal{L}^{-1}_{\chi_i}$;
 		\item $\pi_*(\omega_X)^{\chi_i}=\omega_Y\otimes \mathcal{L}_{\chi_i^{-1}}$.
 	\end{enumerate}
 \end{proposition}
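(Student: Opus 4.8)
The plan is to reduce all three identities to an explicit computation of the $G$-eigensheaves, exploiting that $\pi$ is finite flat between smooth surfaces. Then $\pi_*$ sends locally free sheaves to locally free sheaves and commutes with the eigensheaf decomposition, so each $\pi_*(\mathcal{F})^{\chi_i}$ is a direct summand of a locally free $\mathcal{O}_Y$-module and may be identified one point of $Y$ at a time. Over $Y\setminus D$ the cover is étale, so there $\pi_*(\Omega_X)^{\chi_i}=\Omega^1_Y\otimes\mathcal{L}^{-1}_{\chi_i}$ and $\pi_*(T_X)^{\chi_i}=T_Y\otimes\mathcal{L}^{-1}_{\chi_i}$, which matches the claimed formulae since there every log divisor is empty. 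Because the branch locus is simple normal crossing, it remains to treat a neighbourhood of a general point of each component $D_l$ and of the crossing points $D_l\cap D_m$; by the normal crossing hypothesis the cover is, analytically, a product of the one-variable double covers $w_l^2=x_l$, so the computation at a crossing factors as a tensor product of two codimension-one computations, and it suffices to analyse the single equation $w^2=x$ with $\sigma_l$ acting by $w\mapsto -w$.

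Statement (3) I would dispatch first, without local coordinates, by relative duality. Since $\pi$ is finite flat and Gorenstein with $\omega_{X/Y}=\mathcal{O}_X(R)$ (because $K_X=\pi^*K_Y+R$), relative duality gives a $G$-equivariant isomorphism $\pi_*\omega_{X/Y}\cong\mathcal{H}om_{\mathcal{O}_Y}(\pi_*\mathcal{O}_X,\mathcal{O}_Y)$. Dualising the splitting $\pi_*\mathcal{O}_X=\bigoplus_\chi\mathcal{L}^{-1}_\chi$ turns the $\chi$-eigensheaf $\mathcal{L}^{-1}_\chi$ into $\mathcal{L}_\chi$ placed in the $\chi^{-1}$-eigenspace, whence $(\pi_*\omega_{X/Y})^{\chi_i}=\mathcal{L}_{\chi_i^{-1}}$; tensoring with $\omega_Y$ through the projection formula applied to $\omega_X=\pi^*\omega_Y\otimes\omega_{X/Y}$ yields $(\pi_*\omega_X)^{\chi_i}=\omega_Y\otimes\mathcal{L}_{\chi_i^{-1}}$.

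For (1) and (2) the organising tool is the logarithmic pullback isomorphism $\Omega^1_X(\log R)\cong\pi^*\Omega^1_Y(\log D)$ (valid since $X$ is smooth and $R$ is normal crossing over the normal crossing $D=D_1+D_2+D_3$), together with its dual $T_X(-\log R)\cong\pi^*T_Y(-\log D)$. Pushing forward and applying the projection formula to the splitting of $\pi_*\mathcal{O}_X$ identifies the ``full log'' eigensheaves
\[
(\pi_*\Omega^1_X(\log R))^{\chi_i}=\Omega^1_Y(\log D)\otimes\mathcal{L}^{-1}_{\chi_i},\qquad (\pi_*T_X(-\log R))^{\chi_i}=T_Y(-\log D)\otimes\mathcal{L}^{-1}_{\chi_i}.
\]
As $\Omega^1_X\subset\Omega^1_X(\log R)$ and $T_X(-\log R)\subset T_X$, the sheaves I want are trapped between the full-log and no-log eigensheaves, and the remaining task is to decide, component by component, which logarithmic poles of $\Omega^1$ (respectively which tangency conditions for $T$) genuinely survive in a given eigensheaf. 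Working in the local model $w^2=x$ along $D_l$, the generator of the $\chi$-eigensheaf of $\pi_*\mathcal{O}_X$ is even or odd in $w$ according as $\chi(\sigma_l)=+1$ or $-1$; feeding this into the relations $dw=\frac{1}{2w}\,dx$ and $\partial_w=2w\,\partial_x$ and matching eigencharacters shows that the $\chi$-eigensheaf of $\Omega^1$ keeps a logarithmic pole along $D_l$ exactly when $\chi(\sigma_l)=-1$, while the $\chi$-eigensheaf of $T$ keeps the tangency condition along $D_l$ exactly when $\chi(\sigma_l)=+1$. For $\chi=\chi_i$ the first rule selects the components $\{D_j,D_k\}$, i.e. $D_{\chi_i,\chi_i^{-1}}=D_j+D_k$, and the second selects $\{D_i\}$, i.e. $D_{\chi_i}=D_i$; together with the twist $\mathcal{L}^{-1}_{\chi_i}$ read off from $\pi_*\mathcal{O}_X$ this gives precisely (2) and (1). (This is, of course, the $(\ZZ/2\ZZ)^2$-specialisation of the general abelian-cover formula of \cite{Par91a}.)

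The main obstacle is the bookkeeping in this last step: one must track the $G$-linearisations carefully, i.e. the eigencharacter of the ramified coordinate $w_l$ (hence of $dw_l$ and $\partial_{w_l}$) and the way the global eigenfunctions $z_i$ generating $\mathcal{L}^{-1}_{\chi_i}$ factor, near a crossing, as monomials in the $w_l$. It is exactly this matching that decides whether the factor $w_l$ attached to $z_i$ cancels the pole $x_l^{-1}$ (absorbing it, so that no log pole appears) or not; the complementary behaviour of $\Omega^1$ and $T$ — and hence the fact that (1) and (2) involve the \emph{different} divisors $D_{\chi_i}$ and $D_{\chi_i,\chi_i^{-1}}$ — is entirely encoded in the single sign $\chi(\sigma_l)$. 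Once this is established at a general point of each $D_l$, the normal crossing hypothesis upgrades it to the crossing points by taking tensor products, and the étale locus accounts for the rest, completing the identification.
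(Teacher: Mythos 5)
Your proof is correct, but it takes a genuinely different route from the paper, which in fact offers no proof at all: the proposition is introduced there with the single remark that it is a special case of \cite[Proposition 4.1]{Par91a}, i.e.\ of Pardini's general structure theory for abelian covers. What you have written is essentially a self-contained reconstruction of Pardini's argument in the $(\ZZ/2\ZZ)^2$ case. Your local computation in the model $w^2=x$ is the heart of it, and your sign rules are the right ones: writing the eigenfunctions as even or odd in $w$ and using $dw=\tfrac{1}{2w}dx$, $\partial_w=2w\partial_x$, one finds that the $\chi$-eigensheaf of $\pi_*\Omega^1_X$ acquires a logarithmic pole along $D_l$ exactly when $\chi(\sigma_l)=-1$, while the $\chi$-eigensheaf of $\pi_*T_X$ retains the tangency condition exactly when $\chi(\sigma_l)=+1$; since $\chi_i(\sigma_i)=+1$ and $\chi_i(\sigma_j)=\chi_i(\sigma_k)=-1$, this selects $D_{\chi_i}=D_i$ for the tangent sheaf and $D_{\chi_i,\chi_i^{-1}}=D_j+D_k$ for the log differentials, as claimed, and the same rules also cover the trivial character ($D_{\chi_0}=D$, $D_{\chi_0,\chi_0^{-1}}=\emptyset$). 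Your treatment of (3) by finite-flat duality, $\pi_*\omega_{X/Y}\cong\mathcal{H}om_{\mathcal{O}_Y}(\pi_*\mathcal{O}_X,\mathcal{O}_Y)$, with the $\chi$-eigensheaf of the dual being the dual of the $\chi^{-1}$-eigensheaf, is a clean alternative to reading (3) off the same local analysis; the care with $\chi$ versus $\chi^{-1}$ is immaterial here (every character of $(\ZZ/2\ZZ)^2$ is self-inverse) but is the correct bookkeeping in general. In short: the paper's citation buys brevity and the full generality of abelian covers of arbitrary exponent, while your argument buys a verifiable, self-contained proof whose only nontrivial external inputs are the log pullback isomorphism $\Omega^1_X(\log R)\cong\pi^*\Omega^1_Y(\log D)$ for covers branched over simple normal crossing divisors and duality for finite flat morphisms.
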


 We recall the following.
 \begin{definition} Let $X$ be a smooth complex surface. We say that $X$ has the \emph{Infinitesimal Torelli Property} for the periods of $2$-forms if and only if the natural map
 	\begin{equation}\label{eq_IT}
 	\rho\colon H^1(X, T_X) \longrightarrow \textrm{Hom}\big(H^0(X, \omega_X), H^1(X, \Omega^1_X) \big),
 	\end{equation}
 	given by the cup product, is injective.
 \end{definition}

 If we consider a bidouble cover $\pi \colon X \longrightarrow Y$ we can address the question when $X$ has the Infinitesimal Torelli Property in the following way. The idea is to decompose the Infinitesimal Torelli map \eqref{eq_IT} using the Galois group action.
 
 By Proposition \ref{prop_InvaTang} we have the splitting:
 {\small{
 \begin{equation*}\label{eq_Tang} H^1(X, T_X)=\bigoplus_{\chi \in G^*} H^1(Y, T_Y(-\log D_{\chi})\otimes \mathcal{L}^{-1}_{\chi})=H^1(Y,T_Y(-\log D))\oplus \bigoplus_{\chi \in G^*\setminus \{1\}} H^1(Y, T_Y(-\log D_{\chi})\otimes \mathcal{L}^{-1}_{\chi}).
 \end{equation*}
 
 \begin{equation*}\label{eq_CoTang} H^1(X, \Omega^1_X)=\bigoplus_{\chi \in G^*} H^1(Y, \Omega^1_Y(\log D_{\chi,\chi^{-1}})\otimes \mathcal{L}^{-1}_{\chi})=H^1(Y,\Omega^1_Y)\oplus \bigoplus_{\chi \in G^*\setminus \{1\}} H^1(Y, \Omega^1_Y(\log D_{\chi,\chi^{-1}})\otimes \mathcal{L}^{-1}_{\chi}).
 \end{equation*}
 
 \begin{equation*}\label{eq_Can} H^0(X, \omega_X)=\bigoplus_{\chi \in G^*} H^0(Y,\omega_Y \otimes \mathcal{L}_{\chi^{-1}})=H^0(Y,\omega_Y) \oplus \bigoplus_{\chi \in G^*\setminus \{1\}} H^0(Y,\omega_Y \otimes \mathcal{L}_{\chi^{-1}}).
 \end{equation*}
}}
 Since the cup product of the Infinitesimal Torelli map \eqref{eq_IT} is compatible with the group action, for every characters $\chi, \phi \in G^*$ we have an induced map
 \begin{equation*}\label{eq_rho}
 \rho_{\chi,\phi}\colon H^1(Y, \T_Y(-\log D_{\chi})\otimes \mathcal{L}^{-1}_{\chi}) \longrightarrow \textrm{Hom} \, (H^0(Y, \omega_Y \otimes \mathcal{L}_{\phi^{-1}}), H^1(Y, \Omega^1_Y(\log D_{\chi\phi,(\chi\phi)^{-1}})\otimes \mathcal{L}^{-1}_{\chi\phi})).
 \end{equation*}
 Then it holds:
 \[
 \rho = \bigoplus_{\chi,\phi \in G^*} \rho_{\chi,\phi}.
 \]
 To check if $\rho$ is injective (i.e. to prove the Infinitesimal Torelli Property) we use a criterion for the abelian cover which is given in \cite{Par91b}.
 
 \begin{proposition}[Proposition 2.1\cite{Par91b}]\label{prop_InfTorelliCrit} 
 	Let $Y$, $X$ be smooth surfaces and let $\pi\colon X \longrightarrow Y$ be a bidouble cover, with group $G$. Then the Infinitesimal Torelli Property for $X$ holds if and only if for every $\chi \in G^*$ we have
 	\[
 	\bigcap_{\phi \in G^*} \ker \rho_{\chi, \phi}=\{0\}.
 	\]
 \end{proposition}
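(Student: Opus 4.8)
The plan is to deduce the criterion purely from the compatibility of the cup product with the $G$-action, together with the observation that the character group $G^*$ acts on itself by translation. Recall that the Infinitesimal Torelli Property asks for the injectivity of $\rho$, and that both source and target of $\rho$ have been decomposed into character eigenspaces in the displays preceding the statement. I write an arbitrary class as $\xi=\sum_{\chi\in G^*}\xi_\chi$ with $\xi_\chi\in H^1(Y,T_Y(-\log D_\chi)\otimes\mathcal{L}^{-1}_\chi)$, and a section $\eta\in H^0(X,\omega_X)$ as $\eta=\sum_{\phi\in G^*}\eta_\phi$ with $\eta_\phi\in H^0(Y,\omega_Y\otimes\mathcal{L}_{\phi^{-1}})$. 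The goal is to show that $\rho$ annihilates $\xi$ precisely when each homogeneous component $\xi_\chi$ is killed by all of the $\rho_{\chi,\phi}$, so that injectivity decouples over the source characters $\chi$.

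First I would record the key structural fact: the contraction pairing $T_X\otimes\omega_X\to\Omega^1_X$ is $G$-equivariant, so after pushing forward and invoking Proposition \ref{prop_InvaTang} the induced cup product sends the $\chi$-eigenspace of $H^1(X,T_X)$ tensored with the $\phi$-eigenspace of $H^0(X,\omega_X)$ into the $\chi\phi$-eigenspace of $H^1(X,\Omega^1_X)$. In other words $\xi_\chi\cup\eta_\phi$ lies in $H^1(Y,\Omega^1_Y(\log D_{\chi\phi,(\chi\phi)^{-1}})\otimes\mathcal{L}^{-1}_{\chi\phi})$, which is exactly the target summand of $\rho_{\chi,\phi}$. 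This multiplicativity of the grading is the conceptual crux of the argument, and I expect the verification of this equivariance to be the only point requiring genuine care; once it is in place the remainder is elementary linear algebra on graded vector spaces.

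Granting this, I would fix $\phi$ and expand $\xi\cup\eta_\phi=\sum_{\chi}\xi_\chi\cup\eta_\phi$. Since $G^*$ is a group, the assignment $\chi\mapsto\chi\phi$ is a bijection of $G^*$, so the summands $\xi_\chi\cup\eta_\phi$ lie in pairwise distinct eigenspaces of $H^1(X,\Omega^1_X)$ and cannot cancel one another. Hence $\xi\cup\eta_\phi=0$ if and only if $\xi_\chi\cup\eta_\phi=0$ for every $\chi$. Letting $\phi$ and $\eta_\phi$ range over all characters and all sections, it follows that $\rho(\xi)=0$ if and only if $\rho_{\chi,\phi}(\xi_\chi)=0$ for every pair $(\chi,\phi)$, that is, if and only if $\xi_\chi\in\bigcap_{\phi\in G^*}\ker\rho_{\chi,\phi}$ for every $\chi$. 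Reading this equivalence contrapositively, $\rho$ is injective precisely when the only way to have $\xi_\chi\in\bigcap_{\phi}\ker\rho_{\chi,\phi}$ for all $\chi$ is $\xi_\chi=0$ for all $\chi$, which is exactly the condition $\bigcap_{\phi\in G^*}\ker\rho_{\chi,\phi}=\{0\}$ for every $\chi\in G^*$. This yields the stated criterion.
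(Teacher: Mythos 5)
Your proof is correct, and it is essentially the argument underlying this statement: the paper itself gives no proof, quoting Pardini's Proposition 2.1 of \cite{Par91b} instead, but the surrounding text sets up exactly the graded decomposition $\rho=\bigoplus_{\chi,\phi}\rho_{\chi,\phi}$ that you exploit. Your observation that the $G$-equivariance of the contraction pairing forces $\xi_\chi\cup\eta_\phi$ into the $\chi\phi$-eigenspace, so that no cancellation among distinct summands can occur and $\ker\rho$ decomposes as $\bigoplus_\chi\bigcap_\phi\ker\rho_{\chi,\phi}$, is precisely the content of Pardini's original proof.
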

 
 Let us take a closer look at the maps $\rho_{\chi,\phi}$. We would like to factorize them into two simpler maps. To this end, for every pairs of characters $\chi, \phi \in  G^*$ and every section $z \in  H^0(Y, \omega_{Y} \otimes \mathcal{L}_{\phi^{-1}})$, consider the diagram
 
 \begin{equation}\label{Diagram_Pardini}
 \xymatrix{ 
 	T_{Y}(-\log D_{\chi}) \otimes \mathcal{L}^{-1}_{\chi} \ar[rr] \ar[dd] &&  \Omega^1_Y(\log D_{\chi\phi,(\chi\phi)^{-1}})\otimes \omega^{-1}_{Y}\otimes (\mathcal{L}_{\chi\phi}\otimes \mathcal{L}_{\phi^{-1}})^{-1}\ar[dd]
 	\\ \\
 	T_{Y}(-\log D_{\chi})\otimes \mathcal{L}_{\chi}^{-1} \otimes \omega_{Y}\otimes \mathcal{L}_{\phi^{-1}} \ar[rr] & &   \Omega^1_{Y}(\log D_{\chi\phi,(\chi\phi)^{-1}})\otimes \mathcal{L}^{-1}_{\chi\phi}
 }
 \end{equation}
 where the vertical maps are given by multiplication by $z$ and the horizontal maps are defined by contraction of tensors and by the fundamental relations of the building data.
 Recall that there is a natural isomorphism for rank $2$ vector bundle $E\cong E^{\vee} \otimes \det(E)$,  which gives $T_{Y}(-\log(D_{\chi\phi,(\chi\phi)^{-1}}))\simeq \Omega^1_Y(\log(D_{\chi\phi,(\chi\phi)^{-1}}))\otimes \omega_{Y}^{-1}\otimes \mathcal{O}_Y(-D_{\chi\phi,(\chi\phi)^{-1}})$. This yields
 \begin{equation}\label{eq_EED}
 	\Omega^1_Y(\log(D_{\chi\phi,(\chi\phi)^{-1}}))\otimes \omega_{Y}^{-1}\otimes\big(\mathcal{L}_{\chi\phi}\otimes \mathcal{L}_{\phi}\big)^{-1}\cong T_{Y}(-\log(D_{\chi\phi,(\chi\phi)^{-1}}))\otimes\mathcal{L}^{-1}_{\chi}(D_{\chi\phi,(\chi\phi)^{-1}}-D_{\chi\phi,\phi}).
 \end{equation}
 This implies that the horizontal arrows in the previous diagram are natural inclusions, see \cite{Par91b}.

The previous diagram allow us to define the maps
 \begin{equation}\label{eq_q}
 q_{\chi,\phi}\colon H^1(Y, \, T_{Y}(-\log D_{\chi}) \otimes \mathcal{L}^{-1}_{\chi}) \longrightarrow H^1(Y, \, \Omega^1_Y(\log D_{\chi\phi,(\chi\phi)^{-1}})\otimes \omega^{-1}_{Y}\otimes (\mathcal{L}_{\chi\phi}\otimes \mathcal{L}_{\phi^{-1}})^{-1})
 \end{equation}
 and
 \begin{equation*}\label{eq_r}\begin{split}
 r_{\chi,\phi}\colon H^1(Y, \, \Omega^1_Y(\log D_{\chi,\chi^{-1}})\otimes &\,(\omega_{Y}\otimes \mathcal{L}_{\chi}\otimes \mathcal{L}_{\phi^{-1}})^{-1})  \longrightarrow  \\
 &  \textrm{Hom} \, \big(H^0(Y, \omega_{Y} \otimes \mathcal{L}_{\phi^{-1}}), H^1(Y, \Omega^1_{Y}(\log D_{\chi,\chi^{-1}})\otimes \mathcal{L}^{-1}_{\chi})\big).
 \end{split}
 \end{equation*}

 By construction, we have $$\rho_{\chi,\phi} =r_{\chi\phi,\phi} \circ q_{\chi,\phi}.$$ 
 
 A way to apply the Proposition \ref{prop_InfTorelliCrit}  is to study more in details the single maps $r_{\chi,\phi}$ and $q_{\chi,\phi}$.

 Here we collect some results which can be applied to this purpose.

 By the Bott's vanishing theorem, see \cite[Theorem 2.4]{Must02}: for every toric surface $Y$ and ample line bundle $\mathcal{L}$ we
 have
 \[
 H^q(Y, \Omega^p_{Y}\otimes \mathcal{L}) = 0,\ \ \ q>0.\]
 We observe that the rational surfaces are toric.
 
 To study the maps $r_{\chi,\phi}$ we define the following spaces: 
 
 \[
 H^{dom}(r,\chi,\phi):=H^1\big(Y, \, \Omega^1_Y(\log D_{\chi,\chi^{-1}})\otimes \,(\omega_Y\otimes \mathcal{L}_{\chi}\otimes \mathcal{L}_{\phi^{-1}})^{-1}\big).
 \]
 
 \[H^0_{\phi}:=H^0(Y, \omega_{Y}\otimes \mathcal{L}_{\phi^{-1}}),\ \ H^1_{\chi}:=H^1(Y, \Omega_{Y}^1(\log D_{\chi,\chi^{-1}})\otimes \mathcal{L}^{-1}_{\chi}).\]
 
 \begin{lemma}\label{lemma: rchi0chi0 injective}
 	Let $Y$ be a minimal rational surface then $H^{dom}(r,\chi_0,\chi_0)=0$, $H^0_{\chi_0}=0$. 
 	In particular, $r_{\chi,\chi_0}$ is the zero map for every $\chi$ and it is injective for $\chi=\chi_0$.
 \end{lemma}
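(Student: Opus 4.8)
The plan is to reduce the lemma to two cohomology vanishings and then obtain the two ``in particular'' assertions formally. Observe that the target of $r_{\chi,\chi_0}$ is $\mathrm{Hom}(H^0_{\chi_0},H^1_\chi)$, while the source of $r_{\chi_0,\chi_0}$ is $H^{dom}(r,\chi_0,\chi_0)$. Hence, once we know that $H^0_{\chi_0}=0$, the target of $r_{\chi,\chi_0}$ is the zero vector space for every $\chi\in G^*$, so $r_{\chi,\chi_0}$ is automatically the zero map; and once we know that $H^{dom}(r,\chi_0,\chi_0)=0$, the map $r_{\chi_0,\chi_0}$ has trivial source and is therefore (vacuously) injective. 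So the whole content of the statement is the pair of vanishings $H^0_{\chi_0}=0$ and $H^{dom}(r,\chi_0,\chi_0)=0$.

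The first vanishing is immediate. By definition $H^0_{\chi_0}=H^0(Y,\omega_Y\otimes\mathcal{L}_{\chi_0^{-1}})=H^0(Y,\omega_Y)$, because $\mathcal{L}_{\chi_0^{-1}}=\oo_Y$; and $h^0(Y,\omega_Y)=p_g(Y)=0$ since $Y$ is rational. This step uses nothing about $Y$ beyond its rationality.

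For the second, I would first unwind the definitions: $D_{\chi_0,\chi_0^{-1}}=\emptyset$ and $\mathcal{L}_{\chi_0}=\mathcal{L}_{\chi_0^{-1}}=\oo_Y$, so that
\[ H^{dom}(r,\chi_0,\chi_0)=H^1\big(Y,\Omega^1_Y\otimes\omega_Y^{-1}\big). \]
The natural tool is the Bott vanishing theorem \cite[Theorem 2.4]{Must02}: a minimal rational surface is toric, so applying it with $p=q=1$ and $\mathcal{L}=\omega_Y^{-1}$ yields $H^1(Y,\Omega^1_Y\otimes\omega_Y^{-1})=0$ provided $\omega_Y^{-1}=\oo_Y(-K_Y)$ is ample. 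This is exactly the del Pezzo condition, and it is satisfied for $Y=\PP^2$ and $Y=\PP^1\times\PP^1$, so for these two bases the argument is complete.

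The main obstacle is that the ampleness of $-K_Y$ is genuinely needed, so the statement is really one about del Pezzo bases. For $Y=\mathbb{F}_n$ with $n\geq 2$ the anticanonical class is not ample --- indeed $-K_{\mathbb{F}_n}\cdot C_0=2-n\leq 0$ on the negative section $C_0$ --- and Bott's theorem does not apply. Worse, the self-duality $E\cong E^\vee\otimes\det E$ for the rank-$2$ bundle $E=\Omega^1_Y$ gives $\Omega^1_Y\otimes\omega_Y^{-1}\cong T_Y$, whence $H^{dom}(r,\chi_0,\chi_0)\cong H^1(\mathbb{F}_n,T_{\mathbb{F}_n})$, a space of dimension $n-1$ that is nonzero for every $n\geq 2$. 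Thus on these Hirzebruch surfaces the injectivity of $r_{\chi_0,\chi_0}$ cannot be read off from this lemma, and the Infinitesimal Torelli analysis there must instead proceed by a direct, case-by-case study of the maps $\rho_{\chi,\phi}$ in Section \ref{sec: Fn}. Accordingly I would either restrict the present statement to $Y=\PP^2$ and $Y=\PP^1\times\PP^1$, or, equivalently, add the hypothesis that $-K_Y$ be ample, which is precisely what makes Bott vanishing applicable.
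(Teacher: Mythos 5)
Your argument is, in its positive part, exactly the paper's proof: the authors likewise obtain $H^0_{\chi_0}=H^0(Y,\omega_Y)=0$ from rationality, and they claim $H^{dom}(r,\chi_0,\chi_0)=H^1(Y,\Omega^1_Y\otimes\omega_Y^{-1})=0$ by invoking Bott's vanishing theorem, the two ``in particular'' assertions then following formally just as you describe. But your analysis of the second vanishing is correct, and it exposes a genuine gap in the paper's own proof: Bott vanishing, as quoted in the paper from \cite[Theorem 2.4]{Must02}, requires the line bundle to be \emph{ample}, and $\omega_Y^{-1}$ is ample on a minimal rational surface only for $Y=\PP^2$ and $Y=\PP^1\times\PP^1$. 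For $Y=\mathbb{F}_n$ with $n\geq 2$ the anticanonical bundle is not ample ($-K_{\mathbb{F}_n}\cdot C_n=2-n\leq 0$; for $n=2$ it is nef but not ample, and the vanishing still fails), and your identification $\Omega^1_Y\otimes\omega_Y^{-1}\cong T_Y$ shows the conclusion is actually false there: $h^1(\mathbb{F}_n,T_{\mathbb{F}_n})=n-1>0$, so $H^{dom}(r,\chi_0,\chi_0)\neq 0$, and $r_{\chi_0,\chi_0}$, being a map with nonzero source into $\mathrm{Hom}(0,H^1_{\chi_0})=0$, cannot be injective. Hence the lemma as stated fails for the Hirzebruch bases, and your proposed repair --- adding the hypothesis that $-K_Y$ be ample, equivalently restricting to $\PP^2$ and $\PP^1\times\PP^1$ --- is the right one.

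Two remarks on the scope of the damage, which your last paragraph anticipates. First, the half of the lemma asserting that $r_{\chi,\chi_0}$ is the zero map for every $\chi$ uses only $H^0_{\chi_0}=0$ and therefore survives for every rational $Y$, including $\mathbb{F}_n$. Second, the injectivity claim for $r_{\chi_0,\chi_0}$ is never actually used later in the paper: the proof of Theorem \ref{theo: Infinitesimal Torelli} only needs injectivity of the maps $r_{\phi,\phi}$ and $r_{\chi_k,\chi_j}$ for characters $\phi,\chi_j$ corresponding to K3 intermediate quotients, which is supplied by Proposition \ref{prop: rchichii injective for K3}, together with the analysis of the maps $q_{\chi,\phi}$. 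Consequently the Infinitesimal Torelli statements for bidouble covers of $\mathbb{F}_n$ in Theorem \ref{theor: Y=Fn} are unaffected; the flaw is confined to this lemma, whose statement should be corrected as you indicate.
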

 \proof Since $D_{\chi_0,\chi_0^{-1}}=\emptyset$ and $\mathcal{L}_{\chi_0}=\mathcal{L}_{\chi_0^{-1}}=\mathcal{O}_Y$ one obtains
 $H^{dom}(r,\chi_0,\chi_0)=H^1(Y,\Omega^1_Y\otimes \omega_{Y}^{-1})=0,$
 where the last equality follows by Bott's vanishing theorem. If $\phi=\chi_0$,  $H^0_{\chi_0}=H^0(Y,\omega_Y)=0$ and $r_{\chi,\chi_0}=H^{dom}(r,\chi,\chi_0)\ra Hom(0,H^1_{\chi})$.\endproof

 In \cite{Par98} a criterion for the injectivity of the maps $r_{\chi,\phi}$ is given by introducing the  prolongation bundle of the irreducible components of $D_{\chi,\chi^{-1}}$. The criterion is that $r_{\chi,\phi}$ is injective if the following multiplication map is surjective:
 \begin{equation*}\label{eq_Prolungation}\begin{split}
 H^0(Y, \omega_{Y}\otimes \mathcal{L}_{\phi^{-1}}) \otimes ( \mathop{\bigoplus} \limits_{
 	\substack{
 		B \, \text{irreducible} \\
 		\text{components of} \, D_{\chi,\chi^{-1}}}}  
 H^0(Y, \mathcal{O}_{Y}(B)\otimes &\omega_{Y}\otimes \mathcal{L}_{\chi})\big) \longrightarrow \\
 &
 \big( \mathop{\bigoplus} \limits_{
 	\substack{
 		B \, \text{irreducible} \\
 		\text{components of} \, D_{\chi,\chi^{-1}}}}   H^0(Y, \mathcal{O}_{Y}(B)\otimes \omega^2_{Y}\otimes\mathcal{L}_{\chi} \otimes \mathcal{L}_{\phi^{-1}})).
 \end{split}
 \end{equation*}
 
 \begin{prop}\label{prop: rchichii injective for K3}
 	Let $Y$ be a minimal rational surface and $X\ra Y$ a bidouble cover with intermediate double cover $Y_i$, $i=1,2,3$. For each $i$ such that $Y_i$ is a K3 surface, $r_{\chi,\chi_i}$ is injective for every $\chi$.\end{prop}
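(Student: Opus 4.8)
The plan is to reduce the claim to the surjectivity criterion from \cite{Par98} recalled just above, and to observe that the hypothesis ``$Y_i$ is a K3 surface'' trivializes one of the two tensor factors in the relevant multiplication map.

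First I would record the key numerical consequence of the K3 hypothesis. By Lemma \ref{lemma: IL LEMMA}(2), $Y_i$ is a K3 surface precisely when $L_i=-K_Y$, so that $\mathcal{L}_{\chi_i}=\mathcal{O}_Y(L_i)=\omega_Y^{-1}$; since $\chi_i$ is $2$-torsion we also have $\mathcal{L}_{\chi_i^{-1}}=\mathcal{L}_{\chi_i}=\omega_Y^{-1}$. Taking $\phi=\chi_i$, this immediately gives $\omega_Y\otimes\mathcal{L}_{\phi^{-1}}=\omega_Y\otimes\omega_Y^{-1}=\mathcal{O}_Y$, whence the first factor appearing in the criterion becomes $H^0(Y,\omega_Y\otimes\mathcal{L}_{\chi_i^{-1}})=H^0(Y,\mathcal{O}_Y)=\mathbb{C}$, generated by the constant (nowhere vanishing) section.

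Next I would substitute $\mathcal{L}_{\chi_i^{-1}}=\omega_Y^{-1}$ into the target of the multiplication map. For each irreducible component $B$ of $D_{\chi,\chi^{-1}}$ the summand $H^0(Y,\mathcal{O}_Y(B)\otimes\omega_Y^2\otimes\mathcal{L}_\chi\otimes\mathcal{L}_{\chi_i^{-1}})$ simplifies to $H^0(Y,\mathcal{O}_Y(B)\otimes\omega_Y\otimes\mathcal{L}_\chi)$, which is exactly the corresponding summand of the second factor in the source. Consequently the multiplication map of \cite{Par98} becomes, for $\phi=\chi_i$,
\[
\mathbb{C}\otimes\bigoplus_{B} H^0(Y,\mathcal{O}_Y(B)\otimes\omega_Y\otimes\mathcal{L}_\chi)\longrightarrow\bigoplus_{B} H^0(Y,\mathcal{O}_Y(B)\otimes\omega_Y\otimes\mathcal{L}_\chi),
\]
i.e. multiplication by the global sections of $\mathcal{O}_Y$. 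This is manifestly surjective, since the constant section $1$ acts as the identity; the argument is uniform in $\chi$ (and for $\chi=\chi_0$ both sums are empty, so the statement is vacuous). By the criterion from \cite{Par98}, the surjectivity of this map yields the injectivity of $r_{\chi,\chi_i}$ for every $\chi$, which is the assertion.

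There is no serious obstacle here: the entire content is the bookkeeping identity $\omega_Y\otimes\mathcal{L}_{\chi_i^{-1}}=\mathcal{O}_Y$ forced by the K3 condition, after which the multiplication map is surjective for the trivial reason that one of its tensor factors is $H^0(Y,\mathcal{O}_Y)=\mathbb{C}$. The only point requiring minor care is to verify that the target of the multiplication map collapses exactly onto the second source factor after the substitution, i.e. that the twist by $\mathcal{L}_{\chi_i^{-1}}$ precisely cancels one power of $\omega_Y$ in each summand.
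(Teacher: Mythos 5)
Your proposal is correct and follows essentially the same argument as the paper: the K3 hypothesis gives $\mathcal{L}_{\chi_i}=\omega_Y^{-1}$, so with $\phi=\chi_i$ the first factor of the multiplication map from \cite{Par98} becomes $H^0(Y,\mathcal{O}_Y)=\mathbb{C}$ and the target collapses onto the second source factor, making surjectivity immediate. The extra bookkeeping you supply (the appeal to Lemma \ref{lemma: IL LEMMA}(2), the $2$-torsion identification $\mathcal{L}_{\chi_i^{-1}}=\mathcal{L}_{\chi_i}$, and the cancellation of one power of $\omega_Y$ in each summand) is exactly what the paper leaves implicit.
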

 \proof Since $Y_i$ is a K3 surface, $\mathcal{L}_{\chi_i}=\omega_Y^{-1}$, so the previous multiplicative map begins
 \begin{equation*}
 \mathbb{C} \otimes ( \mathop{\bigoplus} \limits_{
 	\substack{
 		B \, \text{irreducible} \\
 		\text{components of} \, D_{\chi,\chi^{-1}}}}  
 H^0(Y, \mathcal{O}_{Y}(B)\otimes \omega_{Y}\otimes \mathcal{L}_{\chi})\big) \longrightarrow 
 \big( \mathop{\bigoplus} \limits_{
 	\substack{
 		B \, \text{irreducible} \\
 		\text{components of} \, D_{\chi,\chi^{-1}}}}   H^0(Y, \mathcal{O}_{Y}(B)\otimes \omega_{Y}\otimes\mathcal{L}_{\chi})),
 \end{equation*}
 which is clearly surjective. By \cite{Par98}, $r_{\chi,\chi_i}$ is injective. \endproof

 Let us now consider some properties of the maps $q_{\chi,\phi}$.  By substituting \eqref{eq_EED} in \eqref{eq_q} one obtains that the map $q_{\chi,\phi}$ is
$$q_{\chi,\phi}\colon H^1(Y, \, T_{Y}(-\log D_{\chi}) \otimes \mathcal{L}^{-1}_{\chi}) \longrightarrow H^1(Y, \, T_{Y}(-\log(D_{\chi\phi,(\chi\phi)^{-1}}))\otimes\mathcal{L}^{-1}_{\chi}(D_{\chi\phi,(\chi\phi)^{-1}}-D_{\chi\phi,\phi}))$$

 In particular, we first consider the case $\chi=\chi_0$:
 $$q_{\chi_0,\phi}\colon H^1(Y, \, T_{Y}(-\log (D)) ) \longrightarrow H^1(Y, T_Y(-\log(D_{\phi,\phi^{-1}}))\otimes\mathcal{O}_Y(D_{\phi,\phi^{-1}})).$$
 If $\phi=\chi_i$, $i=1,2,3$, then $D_{\phi,\phi^{-1}}=D_j+D_k$, where $(i,j,k)$ is a permutation of $(1,2,3)$ and then $D_{\phi,\phi^{-1}}<D=D_1+D_2+D_3$. 
 Comparing the maps $q_{\chi_0,\phi}$ with the map $\rho_i$ defined in
 \cite[Lemma 3.1]{Par91b} one realizes that they coincide (with $L$ being trivial, where $L$ is as in \cite[Lemma 3.1]{Par91b}). To fix the notation, let us consider the two maps $q_{\chi_0,\chi_2}$ and $q_{\chi_0,\chi_3}$. The divisors $D_0$ and $D_1$ of the \cite[Lemma 3.1]{Par91b} are, in the present notation, $D_{\chi_2,\chi_2^{-1}}$ and $D_{\chi_3,\chi_3^{-1}}$, so that $\Delta_1=\Delta_0=D_{\chi_1}$. This allows to restate the \cite[Lemma 3.1]{Par91b}
 \begin{lemma}\label{lemma: intersection ker}
 	If $K_Y+D_{\chi_1}$ is ample, then 
 	$$\ker q_{\chi_0,\chi_2}\cap \ker q_{\chi_0,\chi_3}=\ker\left(\rho^{\vee}:=H^1(Y,T_{Y}(-\log D))\ra H^1(Y,T_{Y}(-\log D_{\chi_1}))\right).$$
 \end{lemma}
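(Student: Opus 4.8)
The statement is, up to notation, a restatement of \cite[Lemma 3.1]{Par91b}, and the plan is to make the translation explicit and then invoke that result; the one thing that must be checked by hand is that Pardini's positivity hypothesis becomes exactly the ampleness of $K_Y+D_{\chi_1}$. First I would record the three sheaf-level maps underlying the cohomology maps in the statement. Since $\chi=\chi_0$ we have $\mathcal{L}_{\chi_0}=\mathcal{O}_Y$, so after the rewriting \eqref{eq_EED} the maps $q_{\chi_0,\chi_2}$ and $q_{\chi_0,\chi_3}$ are induced on $H^1(Y,T_Y(-\log D))$ by the natural morphisms obtained by dropping the logarithmic pole along $D_2$ (comparing with $D_{\chi_2,\chi_2^{-1}}=D_1+D_3$), respectively along $D_3$ (comparing with $D_{\chi_3,\chi_3^{-1}}=D_1+D_2$), while $\rho^\vee$ is induced by dropping the poles along both $D_2$ and $D_3$ and keeping only $D_{\chi_1}=D_1$. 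With the dictionary already set up before the statement --- $D_{\chi_2,\chi_2^{-1}}$ and $D_{\chi_3,\chi_3^{-1}}$ playing the role of Pardini's $D_0,D_1$ and $\Delta_0=\Delta_1=D_{\chi_1}$, and the auxiliary bundle $L=\mathcal{O}_Y$ being trivial --- these are precisely the maps $\rho_0,\rho_1$ of \cite[Lemma 3.1]{Par91b}.

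The inclusion $\ker q_{\chi_0,\chi_2}\cap\ker q_{\chi_0,\chi_3}\subseteq\ker\rho^\vee$ is formal: after the rewriting \eqref{eq_EED}, the chain of sheaf inclusions $T_Y(-\log D)\hookrightarrow T_Y(-\log(D_1+D_3))\hookrightarrow T_Y(-\log D_1)$ shows that $\rho^\vee$ factors through $q_{\chi_0,\chi_2}$ (and symmetrically through $q_{\chi_0,\chi_3}$), so either kernel, hence their intersection, is contained in $\ker\rho^\vee$. The substance is the reverse inclusion $\ker\rho^\vee\subseteq\ker q_{\chi_0,\chi_2}\cap\ker q_{\chi_0,\chi_3}$: one must show that a class annihilated after forgetting the poles along both $D_2$ and $D_3$ is already annihilated after forgetting the pole along $D_2$ alone, and along $D_3$ alone. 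I would obtain this from the residue (conormal) exact sequences along the smooth curves $D_2$ and $D_3$ --- legitimate because $D=D_1+D_2+D_3$ is simple normal crossing --- combined with a diagram chase: the connecting maps from the $H^0$ of the residue sheaves must be forced to vanish on the relevant subspaces, and this is exactly where the hypothesis enters.

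The hypothesis that $K_Y+D_{\chi_1}$ is ample is what feeds the required vanishing: via adjunction on the curves cut out by $D_2,D_3$ (equivalently, via a Bott-type vanishing theorem for the pertinent twisted logarithmic bundle on the toric surface $Y$, in the spirit of the vanishing already used in Lemma \ref{lemma: rchi0chi0 injective} and Proposition \ref{prop: rchichii injective for K3}), the ampleness forces the cohomology obstructing the reverse inclusion to vanish, so the diagram chase closes and equality holds. I expect the main obstacle to be precisely this last point: correctly identifying the twists hidden in \eqref{eq_EED} so that the maps match Pardini's $\rho_0,\rho_1$ on the nose, and checking that Pardini's positivity assumption specializes to ``$K_Y+D_{\chi_1}$ ample'' rather than to some stronger or weaker condition. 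Once the maps and the positivity statement are aligned, the conclusion is a direct application of \cite[Lemma 3.1]{Par91b}.
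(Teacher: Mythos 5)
Your overall strategy coincides with the paper's: set up the dictionary $D_0,D_1\leftrightarrow D_{\chi_2,\chi_2^{-1}},D_{\chi_3,\chi_3^{-1}}$, $\Delta_0=\Delta_1=D_{\chi_1}$, $L=\mathcal{O}_Y$ (the paper does this in the paragraph preceding the lemma) and then apply \cite[Lemma 3.1]{Par91b} as a black box. The gap lies exactly in the step you flag as the main obstacle, and your guess at how it closes points in the wrong direction. Pardini's lemma does not carry a positivity hypothesis that ``specializes to ampleness''; its hypothesis is a cohomological vanishing, namely $H^0(Y,T_Y(-\Delta_0))=0$, i.e. the absence of global vector fields vanishing along $\Delta_0=D_{\chi_1}=D_1$ --- a condition involving $D_1$ only, not $D_2$ and $D_3$. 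So your plan to produce the needed vanishing ``via adjunction on the curves cut out by $D_2,D_3$'' and connecting maps of residue sequences along those curves attacks the wrong divisors; moreover, that residue-sequence analysis belongs to the internal proof of Pardini's lemma, which you are invoking as a black box anyway, so it is not something you should need to redo. Ampleness of $K_Y+D_{\chi_1}$ is not a reformulation of Pardini's hypothesis but the paper's chosen sufficient condition for it.

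The missing verification is short: since $Y$ is a minimal rational surface it is toric, so the ampleness of $K_Y+D_{\chi_1}$ gives $H^2\bigl(Y,\Omega^1_Y(K_Y+D_{\chi_1})\bigr)=0$ by Bott's vanishing theorem (\cite[Theorem 2.4]{Must02}, the same vanishing used in Lemma \ref{lemma: rchi0chi0 injective}); by Serre duality
$$H^0\bigl(Y,T_Y(-D_{\chi_1})\bigr)\simeq H^2\bigl(Y,\Omega^1_Y(K_Y+D_{\chi_1})\bigr)^{\vee}=0,$$
which is precisely the hypothesis of \cite[Lemma 3.1]{Par91b}. Note that the bundle appearing here is the twisted cotangent bundle $\Omega^1_Y(K_Y+D_{\chi_1})$, not a twisted logarithmic bundle as in your sketch. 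With this substitution for your vanishing step, your argument becomes the paper's proof.
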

 \proof If $K_Y+D_{\chi_1}$ is ample, then $H^2(Y,\Omega^1_{Y}(K_Y+D_{\chi_1}))=0$ by Bott's vanishing theorem. By Serre duality  $H^0(Y, T_{Y}(-\Delta_0))=H^0(Y, T_{Y}(-D_{\chi_1}))\simeq H^2(Y,\Omega^1_{Y}(K_Y+D_{\chi_1}))=0$. Hence the hypothesis of \cite[Lemma 3.1]{Par91b} are satisfied, so the result follows. \endproof

 To determine $\ker\left(H^1(Y,T_{Y}(-\log D))\ra H^1(Y,T_{Y}(-\log D_{\chi_1}))\right)$ we need some short exact sequences, which we are going to prove for the reader convenience.
 
 \begin{prop}\label{prop: exact sequences log}
 	
 	Let $D_i$ be three smooth reduced divisors on a manifold $Y$. 
 	Let $\cup_{i}D_i$ be simple normal crossing. Then the following short sequence is exact:
 	$$0\ra \Omega_Y^1(\log D_1)\ra\Omega_Y^1(\log D_1,\log D_2,\log D_3)\ra \mathcal{O}_{D_2}\oplus\mathcal{O}_{D_3}\ra 0.$$
 \end{prop}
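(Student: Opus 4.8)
The plan is to write down the two nontrivial maps explicitly and then verify exactness stalk by stalk, using the simple normal crossing hypothesis to reduce everything to a computation in local coordinates. The left-hand arrow is just the tautological inclusion $\Omega_Y^1(\log D_1)\hookrightarrow \Omega_Y^1(\log D_1,\log D_2,\log D_3)$ coming from the definition of the logarithmic sheaves. The right-hand arrow is the sum of the two Poincar\'e residues $\omega\mapsto(\mathrm{Res}_{D_2}\,\omega,\ \mathrm{Res}_{D_3}\,\omega)$ along $D_2$ and $D_3$. The first thing to check is that this is well defined, i.e. that a form with at worst logarithmic poles along $D_1\cup D_2\cup D_3$ has residues along $D_2$ and $D_3$ that are genuine regular sections of $\mathcal{O}_{D_2}$ and $\mathcal{O}_{D_3}$, with no leftover poles concentrated at the intersection points.

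Since exactness of a sequence of coherent sheaves may be tested on stalks, I would fix a point $p\in Y$ and choose local coordinates $x_1,\dots,x_n$ in which each $D_i$ through $p$ is a coordinate hyperplane, while the divisors missing $p$ are simply dropped; this is exactly what the SNC hypothesis provides. Near $p$ the sheaf $\Omega_Y^1(\log D_1,\log D_2,\log D_3)$ is then freely generated by the forms $dx_\ell/x_\ell$ for the indices $\ell$ labelling a divisor through $p$, together with the remaining $dx_m$, and $\Omega_Y^1(\log D_1)$ is the subsheaf in which only the $D_1$-coordinate is allowed a logarithmic pole. Writing a local section as $\omega=\sum_\ell f_\ell\,\tfrac{dx_\ell}{x_\ell}+\sum_m g_m\,dx_m$, the residue along $D_2$ is $f_\ell\big|_{x_\ell=0}$ for the coordinate cutting out $D_2$ (and $0$ when $D_2$ avoids $p$), and similarly for $D_3$.

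With this normal form the verification splits into the handful of cases according to which of $D_2,D_3$ pass through $p$, the role of $D_1$ being inert since a $D_1$-pole contributes nothing to the $D_2$- and $D_3$-residues. In each case surjectivity of the residue map is immediate, as any prescribed residue datum is realized by an appropriate combination of the forms $dx_\ell/x_\ell$; and a section lies in the kernel precisely when the coefficients $f_\ell$ of the $D_2$- and $D_3$-polar parts vanish on the corresponding divisors, i.e. $f_\ell=x_\ell f'_\ell$, which is exactly the statement that $\omega$ has no pole along $D_2$ or $D_3$ and hence lies in $\Omega_Y^1(\log D_1)$. This identifies $\ker$ with the image of the inclusion and closes exactness at every stalk. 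In effect I am invoking the standard single-divisor residue sequence $0\to\Omega_Y^1\to\Omega_Y^1(\log D)\to\mathcal{O}_D\to 0$ one component at a time. The only point demanding care, and the closest thing to an obstacle, is a double point of $D_2\cap D_3$: one must check that there the two residue maps are independent, so that their sum surjects onto the full $\mathcal{O}_{D_2}\oplus\mathcal{O}_{D_3}$ rather than onto a smaller subsheaf. The transversality forced by the SNC condition is precisely what secures this independence, and beyond this bookkeeping I expect no real difficulty.
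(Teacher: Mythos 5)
Your proof is correct, but it takes a genuinely different route from the paper's. The paper never touches local coordinates: it assembles a commutative diagram whose two rows are the residue sequences $0\to \Omega_Y^1\to\Omega_Y^1(\log D_1)\to\mathcal{O}_{D_1}\to 0$ and $0\to \Omega_Y^1\to\Omega_Y^1(\log D_1,\log D_2,\log D_3)\to\bigoplus_{i=1}^3\mathcal{O}_{D_i}\to 0$, both quoted from Catanese (\cite[Proposition 2.11]{CatM}), takes the evident exact right-hand column, and extracts the claimed vertical sequence by a diagram chase. You instead build the residue maps by hand and verify exactness stalk by stalk in SNC-adapted coordinates, which amounts to reproving Catanese's residue sequences rather than citing them. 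What the paper's argument buys is brevity and the fact that all bookkeeping (in particular the behaviour at points of $D_2\cap D_3$) is absorbed into the formal diagram chase; what yours buys is self-containedness, an explicit description of the quotient map as $\omega\mapsto(\mathrm{Res}_{D_2}\,\omega,\mathrm{Res}_{D_3}\,\omega)$, and an argument that visibly extends to any number of logarithmic divisors. You are also right to flag the double points of $D_2\cap D_3$ as the one place needing attention: transversality is exactly what makes the two residues independent there, and your treatment of that point is sound.
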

 \begin{proof} By \cite[Proposition 2.11]{CatM} the horizontal shorts sequences in the diagram are exact
 	\begin{equation}\label{eq: diagram sequences}\xymatrix{ &&0\ar[d]&0\ar[d]\\
 		0\ar[r]&\Omega_{Y}^1\ar[r]\ar[d]^{=}&\Omega_{Y}^1(\log D_1)\ar[r]\ar[d]^{j}&\mathcal{O}_{D_1}\ar[d]\ar[r]&0\\
 		0\ar[r]&\Omega_{Y}^1\ar[r]&\Omega_{Y}^1(\log D_1, \log D_2, \log D_3)\ar[r]\ar[d]&\bigoplus_{i=1}^3\mathcal{O}_{D_i}\ar[d]\ar[r]&0\\
 		&&\mathcal{O}_{D_2}\oplus \mathcal{O}_{D_3}\ar[r]^{=}\ar[d]&\mathcal{O}_{D_2}\oplus \mathcal{O}_{D_3}\ar[d]\\
 		&&0&0}\end{equation}
 	The last vertical sequence is trivially exact, the map $j$ is the natural sheaf inclusion and by diagram chasing we obtain the other vertical exact sequences.
 \end{proof}
 
 In \cite[Lemma 3.7]{CatM} it is proved that the image of the map $c_{\oplus}:\oplus H^0(D_i,\mathcal{O}_{D_i})\ra H^1(Y,\Omega_Y^1)$ is generated by the Chern classes of the $D_i$'s, in particular $\dim({\rm Im} c_{\oplus})\geq 1$.

 \begin{corollary}\label{corollary: D2D3 rational rhovee isomorphism}
 	Let $Y$ be a minimal rational surface, $D_1$, $D_2$, $D_3$ are connected smooth curves with genus $g(D_i)$. If $\dim(Im(c_{\oplus}))=1$ and $D_2$ and $D_3$ are rational, then the map $$\rho^{\vee}=H^1(Y,\Omega_Y^1(\log(D_i)))\ra H^1(Y,\Omega_Y^1(\log(D_i),\log(D_j),\log(D_k)))$$ is an isomorphism.
 \end{corollary}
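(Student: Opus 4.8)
The plan is to read off the isomorphism directly from the short exact sequence of Proposition \ref{prop: exact sequences log}, namely
\[
0 \to \Omega^1_Y(\log D_1) \to \Omega^1_Y(\log D_1, \log D_2, \log D_3) \to \mathcal{O}_{D_2}\oplus \mathcal{O}_{D_3} \to 0,
\]
whose first arrow induces exactly the map $\rho^\vee$ on $H^1$. Passing to the long exact cohomology sequence, the surjectivity of $\rho^\vee$ should be immediate: since $D_2$ and $D_3$ are (smooth connected) rational curves, $H^1(\mathcal{O}_{D_2}\oplus\mathcal{O}_{D_3})=H^1(\mathcal{O}_{D_2})\oplus H^1(\mathcal{O}_{D_3})=0$, so the cokernel of $\rho^\vee$ vanishes. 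Thus everything reduces to injectivity, which by exactness means showing that the connecting homomorphism $\delta\colon H^0(\mathcal{O}_{D_2}\oplus\mathcal{O}_{D_3})\to H^1(\Omega^1_Y(\log D_1))$ is the zero map, because $\ker\rho^\vee=\operatorname{Im}\delta$.

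To compute $\delta$ I would compare the above with the residue sequence $0\to \Omega^1_Y\to \Omega^1_Y(\log D_2,\log D_3)\to \mathcal{O}_{D_2}\oplus\mathcal{O}_{D_3}\to 0$, whose connecting map is, by \cite[Lemma 3.7]{CatM}, the map $c_{\oplus}$ sending the generators $1_{D_2},1_{D_3}$ to the Chern classes $c_1(D_2),c_1(D_3)\in H^1(Y,\Omega^1_Y)$. The natural inclusions fit into a morphism from this residue sequence to the sequence of Proposition \ref{prop: exact sequences log}, with the identity on the quotient $\mathcal{O}_{D_2}\oplus\mathcal{O}_{D_3}$ and with the inclusion $j\colon \Omega^1_Y\hookrightarrow \Omega^1_Y(\log D_1)$ on the left. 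By naturality of the connecting homomorphism this yields the factorization $\delta = j_*\circ c_{\oplus}$, where $j_*\colon H^1(Y,\Omega^1_Y)\to H^1(Y,\Omega^1_Y(\log D_1))$.

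Finally I would identify $\ker j_*$ from the residue sequence for $D_1$ alone, $0\to\Omega^1_Y\to\Omega^1_Y(\log D_1)\to\mathcal{O}_{D_1}\to 0$: its long exact sequence gives $\ker j_*=\operatorname{Im}\big(H^0(\mathcal{O}_{D_1})\to H^1(\Omega^1_Y)\big)=\mathbb{C}\cdot c_1(D_1)$, using that $D_1$ is connected. Since $\operatorname{Im}\delta=j_*\big(\langle c_1(D_2),c_1(D_3)\rangle\big)$, it suffices to check $c_1(D_2),c_1(D_3)\in\mathbb{C}\cdot c_1(D_1)$. This is precisely where the hypothesis $\dim(\operatorname{Im}c_{\oplus})=1$ enters: $\operatorname{Im}c_{\oplus}=\langle c_1(D_1),c_1(D_2),c_1(D_3)\rangle$ is a line, and because $D_1$ is a nonzero effective divisor on the projective surface $Y$ one has $c_1(D_1)\neq 0$ (it pairs positively with an ample class), so this line equals $\mathbb{C}\cdot c_1(D_1)=\ker j_*$. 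Hence $\delta=0$, $\rho^\vee$ is injective, and with surjectivity it is an isomorphism. The step I expect to be the real crux — and the only place needing care — is establishing the factorization $\delta=j_*\circ c_{\oplus}$ together with the Chern-class bookkeeping; the precise signs/normalizations of the residue connecting maps are irrelevant since the argument only uses their images.
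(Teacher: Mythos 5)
Your proof is correct, and it departs from the paper's exactly at the injectivity step. The surjectivity half is identical: both you and the paper read it off the long exact cohomology sequence of the short exact sequence of Proposition \ref{prop: exact sequences log}, using $H^1(\mathcal{O}_{D_2})=H^1(\mathcal{O}_{D_3})=0$ for the rational curves $D_2,D_3$. For injectivity, however, the paper never analyzes the connecting map $\delta$: it instead runs the long exact sequences of the two \emph{horizontal} residue sequences in diagram \eqref{eq: diagram sequences}, obtaining $h^1(Y,\Omega_Y^1(\log D_1))=\rho(Y)+g(D_1)-1$ (from $\dim(\mathrm{Im}(c))=1$ together with the vanishings $h^{1,0}(Y)=h^{1,2}(Y)=0$ valid on a rational surface, and $h^{1,1}(Y)=\rho(Y)$), and then the same value for $h^1(Y,\Omega_Y^1(\log D_1,\log D_2,\log D_3))$ from the hypothesis $\dim(\mathrm{Im}(c_{\oplus}))=1$ and $g(D_2)=g(D_3)=0$; a surjection between spaces of equal finite dimension is an isomorphism. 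Your route --- factoring $\delta=j_*\circ\delta'$ by naturality of connecting homomorphisms, where $\delta'$ has image $\langle c_1(D_2),c_1(D_3)\rangle$, and checking that this image lies in $\ker j_*=\mathbb{C}\cdot c_1(D_1)$ --- consumes the same two inputs (the Chern-class description of the connecting maps from \cite[Lemma 3.7]{CatM}, and the hypothesis on $c_{\oplus}$) but proves $\ker\rho^{\vee}=\mathrm{Im}\,\delta=0$ directly. What your version buys is that it is more structural and slightly more general: nowhere do you use $H^0(Y,\Omega^1_Y)=H^2(Y,\Omega^1_Y)=0$ or $h^{1,1}(Y)=\rho(Y)$, i.e.\ the rationality of $Y$, which the paper needs to truncate its sequences and count dimensions; what the paper's version buys is the explicit formula $\rho(Y)+g(D_1)-1$ for both $h^1$'s as a byproduct. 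Two small points of hygiene: you use the symbol $c_{\oplus}$ for two different maps (first the connecting map of the residue sequence of $D_2,D_3$ alone, then the paper's three-divisor map when invoking the hypothesis), and note that your justification of $c_1(D_1)\neq 0$ (pairing with an ample class) is genuinely needed and is also silently assumed by the paper when it asserts $\dim(\mathrm{Im}(c))=1$ for the single-divisor sequence.
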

 \proof
 By the long exact sequence induced in cohomology by the first vertical sequence in \eqref{eq: diagram sequences}, it follows that the map is surjective if $D_2$ and $D_3$ are rational. 
 By using the first horizontal exact sequence \eqref{eq: diagram sequences} we deduce 
 $$\xymatrix{&0\ar[r]&H^0(Y,\Omega_Y^1(\log (D_1)))\ar[r]&H^0(D_1,\mathcal{O}_{D_1})\ar[r]^c&\\
 	\ar[r]^c&H^1(Y,\Omega_Y^1)\ar[r]&H^1(Y,\Omega_Y^1(\log (D_1)))\ar[r]&H^1(D_1,\mathcal{O}_{D_1})\ar[r]&0}$$
 which is 
 $$\xymatrix{&0\ar[r]&H^0(Y,\Omega_Y^1(\log (D_1)))\ar[r]&\mathbb{C}\ar[r]^c&
 	\mathbb{C}^{\rho(Y)}\ar[r]&H^1(Y,\Omega_Y^1(\log (D_1)))\ar[r]&\mathbb{C}^{g(D_1)}\ar[r]&0},$$
 where $\rho(Y)$ is the Picard number of $Y$.

 The map $c$ has as image the subspace generated by the first Chern class of the divisor $D$ by \cite[Lemma 3.7]{CatM}, so $\dim(Im(c))=1$, which implies $H^0(Y,\Omega_Y^1(\log (D_1)))=0$. In follows that 
 $$h^1(Y,\Omega_Y^1(\log (D_1)))=\rho(Y)+g(D_1)-1.$$

 By the second horizontal exact sequence \eqref{eq: diagram sequences} we deduce
 $$\xymatrix{&0\ar[r]&H^0(Y,\Omega_Y^1(\log (D_1),\log(D_2),\log(D_3)))\ar[r]&\bigoplus_{i=1}^3H^0(D_i,\mathcal{O}_{D_i})\ar[r]^{\ \ \ \ \ \ \ c_{\oplus}}&\\
 	\ar[r]&H^1(Y,\Omega_Y^1)\ar[r]&H^1(Y,\Omega_Y^1(\log (D_1),\log(D_2),\log(D_3)))\ar[r]&\bigoplus_{i=1}^3H^1(D_i,\mathcal{O}_{D_i})\ar[r]&0}$$
 which is 
 \[
 0 \longrightarrow \mathbb{C}^a  \longrightarrow \mathbb{C}^3 \stackrel{c_{\oplus}}{\longrightarrow} \mathbb{C}^{\rho(Y)} \longrightarrow \mathbb{C}^b \longrightarrow \mathbb{C}^{g(D_1)} \longrightarrow 0
 \]
 By assumption $\dim({\rm Im}(c_{\oplus}))=1$, which implies $a=2$. Hence $b=2-3+\rho(Y)+g(D_1)$.
 So $h^1(Y,\Omega_Y^1(\log(D_i)))=h^1(Y,\Omega_Y^1(\log(D_i)+\log(D_j)+\log(D_k)))$. It follows then that $\rho^{\vee}$ is an isomorphism as wished.\endproof

 We observe that if $(i,j,k)$ is a permutation of $(1,2,3)$ then $\chi_i\chi_j=\chi_k$ and $D_{i}$ is a common component of $D_{\chi_i}=D_i$, $D_{\chi_k^{},\chi_k^{-1}}=D_i+D_j$ and $D_{\chi_k,\chi_j}=D_i$.
 With the notation of \cite[page 255]{Par91b}, $D_i$ is a component of type $b)$.
 Then we can apply the strategy given in \cite[page 256]{Par91b}, to discuss the injectivity of the map $q_{\chi_i,\chi_j}$ where $i,j\in\{1,2,3\}$.

 To see this use the short exact sequence at p. 256 \cite{Par91b}. If $\left((\omega_{Y}(D_{\chi_1})\otimes\mathcal{L}_{\chi_1})^{-1}\right)$ has no global sections one can apply the strategy of the proof of \cite[Theorem 3.1]{Par91b} and consider \cite[Diagram 3.4]{Par91b} then the desired injectivity of $q_{\chi_i,\chi_j}$, $i,j\in\{1,2,3\}$ follows from the injectivity of the map $i_{\chi_i,\chi_j}$ which is an isomorphism.

 \begin{theorem}\label{theo: Infinitesimal Torelli}
 	Let $X\ra Y$ be a smooth bidouble cover of a minimal rational surface such that two intermediate double cover surfaces, say $Y_2$ and $Y_3$, are K3 surfaces.
 	Assume that:
 	\begin{itemize}\item  $D_1$, $D_2$, $D_3$ are irreducible and their classes in $Pic(Y)$ are proportional;
 		\item $D_2$ and $D_3$ are rational curves;
 		\item $\left((\omega_{Y}(D_{\chi_i})\otimes\mathcal{L}_{\chi_i})^{-1}\right)$ has no global sections for $i=1,2,3$.\end{itemize}
 	Then the Infinitesimal Torelli Property holds for $X$.
 \end{theorem}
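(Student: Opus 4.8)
The plan is to verify Pardini's criterion (Proposition \ref{prop_InfTorelliCrit}): it suffices to show, for each of the four characters $\chi\in G^*=\{\chi_0,\chi_1,\chi_2,\chi_3\}$, that $\bigcap_{\phi\in G^*}\ker\rho_{\chi,\phi}=\{0\}$. I would exploit throughout the factorization $\rho_{\chi,\phi}=r_{\chi\phi,\phi}\circ q_{\chi,\phi}$ together with the two injectivity inputs available. Since $Y_2$ and $Y_3$ are K3 surfaces we have $\mathcal{L}_{\chi_2}=\mathcal{L}_{\chi_3}=\omega_Y^{-1}$ (Lemma \ref{lemma: IL LEMMA}), so Proposition \ref{prop: rchichii injective for K3} makes $r_{\chi,\chi_2}$ and $r_{\chi,\chi_3}$ injective for every $\chi$; and the third hypothesis, that $(\omega_Y(D_{\chi_i})\otimes\mathcal{L}_{\chi_i})^{-1}$ has no sections, yields the injectivity of $q_{\chi_i,\chi_j}$ for $i,j\in\{1,2,3\}$ via the strategy of \cite[Theorem 3.1, Diagram 3.4]{Par91b} recalled just above. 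The four characters then split into the three ``easy'' ones $\chi_1,\chi_2,\chi_3$, handled by producing a single injective $\rho_{\chi,\phi}$, and the ``hard'' one $\chi_0$, which requires genuinely intersecting kernels.

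For $\chi=\chi_i$ with $i\in\{1,2,3\}$ I would choose $\phi=\chi_j$ with $j\neq i$ an index for which $Y_j$ is a K3 surface (concretely $\phi=\chi_2$ for $\chi_1$, $\phi=\chi_3$ for $\chi_2$, and $\phi=\chi_2$ for $\chi_3$). Then $\chi_i\chi_j=\chi_k$, the factor $r_{\chi_k,\chi_j}$ is injective because $Y_j$ is K3, and $q_{\chi_i,\chi_j}$ is injective by the input above; hence the composite $\rho_{\chi_i,\chi_j}$ is injective and $\bigcap_\phi\ker\rho_{\chi_i,\phi}=\{0\}$ already from this single $\phi$.

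The case $\chi=\chi_0$ is the main obstacle, since here no individual $\rho_{\chi_0,\phi}$ need be injective. I would instead intersect over $\phi=\chi_2$ and $\phi=\chi_3$. Because $r_{\chi_2,\chi_2}$ and $r_{\chi_3,\chi_3}$ are injective (again by Proposition \ref{prop: rchichii injective for K3}), one gets $\ker\rho_{\chi_0,\chi_2}=\ker q_{\chi_0,\chi_2}$ and $\ker\rho_{\chi_0,\chi_3}=\ker q_{\chi_0,\chi_3}$, so
\[
\bigcap_{\phi}\ker\rho_{\chi_0,\phi}\subseteq \ker q_{\chi_0,\chi_2}\cap\ker q_{\chi_0,\chi_3}.
\]
After checking that $K_Y+D_{\chi_1}$ is ample --- it is a positive multiple of the common class of the proportional divisors $D_i$ --- Lemma \ref{lemma: intersection ker} identifies this intersection with $\ker\rho^\vee$, where $\rho^\vee\colon H^1(Y,T_Y(-\log D))\ra H^1(Y,T_Y(-\log D_{\chi_1}))$. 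Finally, the proportionality of the $D_i$ forces $\dim(\mathrm{Im}\,c_\oplus)=1$ (the image is spanned by the Chern classes of the $D_i$, which are proportional and nonzero), and $D_2,D_3$ are rational, so Corollary \ref{corollary: D2D3 rational rhovee isomorphism}, together with Serre duality relating its map to $\rho^\vee$, shows that $\rho^\vee$ is an isomorphism. Thus $\ker\rho^\vee=\{0\}$ and the $\chi_0$ case closes.

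The genuinely delicate point is this last case: everything hinges on computing $\ker\rho^\vee$, and the mechanism is the pair of short exact sequences for logarithmic differentials of Proposition \ref{prop: exact sequences log}, whose long exact cohomology sequences, read against $\dim(\mathrm{Im}\,c_\oplus)=1$ and the rationality of $D_2,D_3$, force the relevant $h^1$ of $\Omega^1_Y(\log D_1)$ and of $\Omega^1_Y(\log D)$ to agree. I expect the only real bookkeeping to be the verification that the ampleness hypothesis of Lemma \ref{lemma: intersection ker} and the dimension count $\dim(\mathrm{Im}\,c_\oplus)=1$ both follow from the stated hypotheses; once these are in place, the three easy characters are immediate and the whole argument assembles through Proposition \ref{prop_InfTorelliCrit}.
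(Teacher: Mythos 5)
Your proposal is correct and follows essentially the same route as the paper's own proof: Pardini's criterion via Proposition \ref{prop_InfTorelliCrit}, the factorization $\rho_{\chi,\phi}=r_{\chi\phi,\phi}\circ q_{\chi,\phi}$, injectivity of the $r$-maps from the K3 quotients (Proposition \ref{prop: rchichii injective for K3}) and of the $q_{\chi_i,\chi_j}$ from the no-sections hypothesis, and for $\chi_0$ the identification $\ker q_{\chi_0,\chi_2}\cap\ker q_{\chi_0,\chi_3}=\ker\rho^{\vee}=\{0\}$ via Lemma \ref{lemma: intersection ker} and Corollary \ref{corollary: D2D3 rational rhovee isomorphism}. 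If anything, you are slightly more explicit than the paper, which compresses the verification that proportionality gives $\dim(\mathrm{Im}\,c_\oplus)=1$ and the ampleness input of Lemma \ref{lemma: intersection ker} into the phrase ``by the assumptions on $D_1$, $D_2$, $D_3$''.
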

 \proof We prove that  for every $\chi \in G^*$ we have
 \(
 \bigcap_{\phi \in G^*} \ker \rho_{\chi, \phi}=\{0\},
 \) which implies the statement by Proposition \ref{prop_InfTorelliCrit}.
 
 We first consider $\chi=\chi_0$ and the maps $\rho_{\chi_0,\phi}=r_{\phi,\phi}\circ q_{\chi_0,\phi}$. 
 Since $Y_2$ and $Y_3$ are K3 surfaces, $r_{\chi_2,\chi_2}$ and $r_{\chi_3,\chi_3}$ are injective, by Proposition \ref{prop: rchichii injective for K3},  so $\ker(\rho_{\chi_0,\chi_j})=\{0\}$ if and only if $\ker(q_{\chi_0,\chi_j})=\{0\}$ for $j=2,3$. 
 By the assumptions on $D_1$, $D_2$, $D_3$, $\ker(q_{\chi_0,\chi_2})\cap \ker(q_{\chi_0,\chi_3})=\ker(\rho^{\vee})=0$ (by Corollary \ref{corollary: D2D3 rational rhovee isomorphism} and Lemma \ref{lemma: intersection ker}).
 So $\bigcap_{\phi_0 \in G^*} \ker \rho_{\chi_0, \phi}=\{0\}$.
 
 Let $\chi=\chi_i$, $i\neq 0$, and $\phi=\chi_j$ with $j\neq i$. Then $\rho_{\chi_i,\chi_j}=r_{\chi_k,\chi_j}\circ q_{\chi_i,\chi_j}$ with $(i,j,k)$ a permutation of $(1,2,3)$.
 If $j=2,3$, then $r_{\chi_k,\chi_j}$ is injective, by Proposition \ref{prop: rchichii injective for K3} so $\ker(\rho_{\chi_0,\chi_j})=\{0\}$ if and only if $\ker(q_{\chi_0,\chi_j})=\{0\}$ for $j=2,3$. Since $\left((\omega_{Y}(D_{\chi_i})\otimes\mathcal{L}_{\chi_i})^{-1}\right)$ has no global sections, the maps $q_{\chi_i,\chi_j}$ are injective for $j=1,2,3$, by \cite[page 256]{Par91b}. So $\bigcap_{\phi \in G^*} \ker \rho_{\chi_i, \phi}=\{0\}$, $i=1,2,3$. \endproof

\subsection{Hodge Structures and Chow groups}\label{sec_K3Hodge}
In this section we restate and generalize some results due, essentially, to Laterveer about the Hodge structures and certain Chow groups of surfaces which are bidouble covers of rational surfaces.

Let $X\ra Y$ be a smooth bidouble cover, we keep the notation as in Section \ref{sec_BiDou}.

The induced representation of $G=(\Z/2\Z)^2$ in cohomology yields a splitting of the second cohomology group $H^2(X,\Q)$ into a direct sum of eigenspaces. More precisely, we have the following decomposition  with respect to the actions of $\sigma_1^*$, $\sigma_2^*$ and $\sigma_3^*$
$$ H^2(X,\Q)= H^2(X,\Q)_{++}\oplus H^2(X,\Q)_{+-}\oplus H^2(X,\Q)_{-+}\oplus H^2(X,\Q)_{--}$$
where $H^2(X,\Q)_{\epsilon\delta}$ is the subspace of $H^2(X,\Q)$ where $\sigma_1^*$ acts as $\epsilon\id$ and $\sigma_2^*$ acts as $\delta\id$ and $\epsilon,\delta \in \{\pm\}$. It follows that $\sigma_3^*$ acts as $\epsilon\id\cdot\delta\id$.

Let $\widetilde{Y_i}$ be the minimal resolution of $Y_i$. The cohomology group $H^2(\widetilde{Y_i},\Q)$ is the direct sum of two contributions: one comes from the invariant part of $H^2(X,\Q)$, whereas the other one is given by  the exceptional divisors of $\widetilde{Y_i}\ra Y_i$. The first terms will be denoted by $H^2(X/\sigma_i,\Q)$ and it is isomorphic to $H^2(X,\Q)^{\sigma_i^*}$. The latter consists of  algebraic classes, hence, if we restrict our attention to the transcendental part $T_{Y_i}\otimes \Q\subset H^2(X,\Q)$ of the cohomology, they give no contribution. Summarizing, we obtain
$$H^2(Y_i,\Q)=H^2(X/\sigma_i,\Q)\simeq H^2(X,\Q)^{\sigma_i},$$
$$T_{\widetilde{Y_i}}\otimes\Q \simeq T_{\widetilde{Y_i}}\otimes\Q \simeq T_{X/\sigma_i}\otimes \Q\simeq T_X^{\sigma_i^*}\otimes\Q.$$

The previous isomorphisms hold also at the level of the weight two Hodge structure.

Therefore, they yield 

$$\begin{array}{ll}H^2(Y_1,\Q)\simeq H^2(X,\Q)_{++}\oplus H^2(X,\Q)_{+-},& H^2(Y_2,\Q)\simeq H^2(X,\Q)_{++}\oplus H^2(X,\Q)_{-+},\\H^2(Y_3,\Q)\simeq H^2(X,\Q)_{++}\oplus H^2(X,\Q)_{--},& H^2(Y,\Q)\simeq H^2(X,\Q)_{++}.\end{array}$$

If $Y$ is a rational surface, $T_Y$ is trivial and this implies that $(T_X\otimes \Q)_{++}$ is trivial too.

The previous discussion allows us to prove the following proposition.

\begin{prop}\label{prop: Hodge structure}
	Let $X\ra Y$ be a bidouble cover of a rational surface $Y$ as in Construction \ref{construction 1: first double cover}, then $$T_X\otimes \Q=p_1^*(T_{Y_1}\otimes\Q)\oplus p_2^*(T_{Y_2}\otimes\Q)\oplus p_3^*(T_{Y_3}\otimes\Q)\simeq (T_{Y_1}\otimes \Q)\oplus (T_{Y_2}\otimes \Q)\oplus (T_{Y_3}\otimes \Q).$$

 \end{prop}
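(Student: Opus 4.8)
The plan is to combine the eigenspace decomposition of $H^2(X,\Q)$ under the $G$-action with the identifications of the $H^2(Y_i,\Q)$ worked out immediately above the statement. Recall that we have
$$H^2(X,\Q)= H^2(X,\Q)_{++}\oplus H^2(X,\Q)_{+-}\oplus H^2(X,\Q)_{-+}\oplus H^2(X,\Q)_{--},$$
and that $H^2(Y_1,\Q)\simeq H^2(X,\Q)_{++}\oplus H^2(X,\Q)_{+-}$, with the analogous formulae for $Y_2$ (giving the $-+$ summand) and $Y_3$ (giving the $--$ summand), while $H^2(Y,\Q)\simeq H^2(X,\Q)_{++}$. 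First I would restrict all of these isomorphisms to the transcendental parts, which is legitimate because the identifications $T_{Y_i}\otimes\Q\simeq T_X^{\sigma_i^*}\otimes\Q$ respect the weight-two Hodge structure, as noted in the excerpt.

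The key observation is that $Y$ is rational, so $T_Y\otimes\Q=0$, and the excerpt already remarks that this forces $(T_X\otimes\Q)_{++}=0$; intuitively, the $++$ eigenspace is exactly the invariant part $H^2(Y,\Q)$, all of which is algebraic. Thus the transcendental lattice of $X$ carries no $++$ component, and the four-term eigenspace decomposition collapses to
$$T_X\otimes\Q=(T_X\otimes\Q)_{+-}\oplus(T_X\otimes\Q)_{-+}\oplus(T_X\otimes\Q)_{--}.$$
I would then match each surviving summand to the transcendental part of the corresponding intermediate quotient: restricting $T_{Y_1}\otimes\Q\simeq (T_X\otimes\Q)_{++}\oplus(T_X\otimes\Q)_{+-}$ and using $(T_X\otimes\Q)_{++}=0$ yields $T_{Y_1}\otimes\Q\simeq(T_X\otimes\Q)_{+-}$, and similarly $T_{Y_2}\otimes\Q\simeq(T_X\otimes\Q)_{-+}$ and $T_{Y_3}\otimes\Q\simeq(T_X\otimes\Q)_{--}$. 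Assembling these three isomorphisms gives the claimed direct-sum decomposition. The identification of the abstract summands with the pullbacks $p_i^*(T_{Y_i}\otimes\Q)$ comes from the fact that each $p_i^*$ is precisely the map realizing the isomorphism $T_{Y_i}\otimes\Q\simeq T_X^{\sigma_i^*}\otimes\Q$, so the images $p_i^*(T_{Y_i}\otimes\Q)$ land in the three distinct eigenspaces and are therefore in direct sum.

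The main obstacle, and the only point requiring genuine care rather than bookkeeping, is verifying that the three eigenspaces $(T_X\otimes\Q)_{+-}$, $(T_X\otimes\Q)_{-+}$, $(T_X\otimes\Q)_{--}$ together exhaust $T_X\otimes\Q$, i.e.\ that there is no further transcendental contribution sitting inside the algebraic classes or lost in the eigenspace splitting. This is where the hypotheses of Construction \ref{construction 1: first double cover} enter: one of the $Y_i$ (namely $Y_3$) is a K3 surface and the other two have $p_g\le 1$, so each $T_{Y_i}\otimes\Q$ is either trivial or the transcendental Hodge structure of a K3 surface, and in particular every transcendental class of $X$ must be an eigenvector occurring in one of the quotients. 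Since the four eigenspaces span $H^2(X,\Q)$ and the $++$ piece is purely algebraic, no transcendental class escapes the decomposition, which closes the argument.
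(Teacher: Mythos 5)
Your main line of argument is the paper's own proof: the paper deduces the proposition ``directly by the previous computations'' --- the eigenspace decomposition of $H^2(X,\Q)$ under $G$, the identifications $T_{Y_i}\otimes\Q\simeq T_X^{\sigma_i^*}\otimes\Q$ (which respect the weight-two Hodge structures), and the vanishing of $(T_X\otimes\Q)_{++}$ because $Y$ is rational --- together with the fact that $p_{i*}$ and $p_i^*$ are mutually inverse on rational cohomology, which is exactly what identifies the abstract summands with the pullbacks $p_i^*(T_{Y_i}\otimes\Q)$. Up to and including the matching of $T_{Y_1}\otimes\Q$ with $(T_X\otimes\Q)_{+-}$ (and its analogues), your write-up and the paper's proof coincide.

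Your final paragraph, however, justifies the exhaustion step by the wrong mechanism. That $T_X\otimes\Q$ equals the sum of its three pieces $(T_X\otimes\Q)_{+-}\oplus(T_X\otimes\Q)_{-+}\oplus(T_X\otimes\Q)_{--}$ has nothing to do with $Y_3$ being a K3 surface or with $p_g(Y_i)\le 1$: the inference ``each $T_{Y_i}\otimes\Q$ is trivial or of K3 type, and in particular every transcendental class of $X$ is an eigenvector'' is a non sequitur. The correct (and standard) reason is that $G$ acts on $H^2(X,\Q)$ by Hodge isometries preserving $NS(X)\otimes\Q$, hence preserving its orthogonal complement $T_X\otimes\Q$; a $G$-stable $\Q$-subspace is the direct sum of its intersections with the four eigenspaces, and the $++$ intersection vanishes because it is a sub-Hodge structure of $H^2(X,\Q)_{++}\simeq H^2(Y,\Q)$, which is purely algebraic since $Y$ is rational. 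In particular the hypotheses of Construction \ref{construction 1: first double cover} on the intermediate quotients play no role in this proposition; the paper itself remarks right after the statement that the result uses only the triviality of $T_Y\otimes\Q$, i.e. $p_g(Y)=0$. With that one correction your argument is complete.
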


The proposition follows directly by the previous computations and by the fact that $p_*$ and $p^*$ are inverse map on the cohomology with rational coefficients. So, if $Y$ is rational we have a splitting of the transcendental Hodge structure of $X$ into the direct sum of three transcendental Hodge substructures, which are the Hodge structures of the intermediate covering surfaces $Y_i$.

\begin{rem}{\rm   If we assume that $Y$ and $Y_1$ are rational, then the transcendental Hodge structure of $X$ splits into the direct sum of the transcendental Hodge structure of the remaining other two intermediate quotient surfaces $Y_2$ and $Y_3$. If these surfaces are K3 surfaces, then the transcendental Hodge structure of $X$  is the direct sum of two Hodge structures of K3-type, both geometrically associated to a K3 surface (i.e., to the coverings $X \ra Y_2$ and $X \ra Y_3$). Similarly if the surface $Y$ is rational and the surfaces $Y_1$, $Y_2$ and $Y_3$ are K3 surfaces, the Hodge structure of $X$ splits into the direct sum of three Hodge substructures of K3-type geometrically associated to K3 surfaces.
We refer to the last situation as a \emph{ triple K3 burger}, see \cite{Lat19}} \end{rem} 

\begin{rem}{\rm The result in Proposition \ref{prop: Hodge structure} depends only on the fact that the transcendental Hodge structure $T_Y\otimes \Q$ is trivial. Hence one can substitute the hypothesis ``$Y$ is a rational surface" with ``$p_g(Y)=0$", obtaining a more general result. Since in the following we always consider $Y$ to be a rational surface we prefer this condition in the hypothesis.}
\end{rem}

	\begin{corollary} \label{cor: decomposition of H2X general case}
		Let $X\ra Y$ be a bidouble cover of a rational surface as in Construction \ref{construction 1: first double cover}. Then $$H^2(X,\Q)\simeq \big(H^2(Y_1,\Z)\oplus 
		{p_2}_*(NS(X)_{-+})\oplus T_{Y_2}\oplus {p_3}_*(NS(X)_{--})\oplus T_{Y_3}\big)\otimes \Q$$
		and ${p_2}_*(NS(X)_{-+})$ (resp. ${p_3}_*(NS(X)_{--})$) is the subspace of $NS(Y_2)$ (resp. $NS(Y_3)$) orthogonal to ${p_2}_*(NS(X)_{++})\otimes \Q\simeq NS(Y)\otimes \Q$ (resp. ${p_3}_*(NS(X)_{++})\otimes \Q$).
	\end{corollary}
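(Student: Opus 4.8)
The plan is to read the statement as a reorganisation of the eigenspace decomposition $H^2(X,\Q)=H^2(X,\Q)_{++}\oplus H^2(X,\Q)_{+-}\oplus H^2(X,\Q)_{-+}\oplus H^2(X,\Q)_{--}$ recalled above, combined with Proposition \ref{prop: Hodge structure}. First I would invoke the identifications already established, namely $H^2(Y_1,\Q)\simeq H^2(X,\Q)_{++}\oplus H^2(X,\Q)_{+-}$, so that the summand $H^2(Y_1,\Z)\otimes\Q$ accounts exactly for the two eigenspaces on which $\sigma_1^*$ acts trivially. It then remains to describe the two surviving eigenspaces $H^2(X,\Q)_{-+}$ and $H^2(X,\Q)_{--}$, which are carried isomorphically onto subspaces of $H^2(Y_2,\Q)$ and $H^2(Y_3,\Q)$ by ${p_2}_*$ and ${p_3}_*$, these pushforwards being inverse to $\tfrac12 p_i^*$ on the $\sigma_i^*$-invariant rational cohomology.

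Next I would split each of these eigenspaces into its algebraic and its transcendental part. Since $G$ acts by algebraic isometries it preserves the orthogonal decomposition $H^2(X,\Q)=(NS(X)\otimes\Q)\oplus(T_X\otimes\Q)$, so each eigenspace decomposes as $H^2(X,\Q)_{\epsilon\delta}=(NS(X)_{\epsilon\delta}\otimes\Q)\oplus((T_X)_{\epsilon\delta}\otimes\Q)$. Because $Y$ is rational, $(T_X\otimes\Q)_{++}=0$, and by Proposition \ref{prop: Hodge structure} together with the preceding identifications one has $(T_X)_{-+}\otimes\Q\simeq T_{Y_2}\otimes\Q$ and $(T_X)_{--}\otimes\Q\simeq T_{Y_3}\otimes\Q$. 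Thus $H^2(X,\Q)_{-+}\simeq({p_2}_*(NS(X)_{-+})\oplus T_{Y_2})\otimes\Q$ and $H^2(X,\Q)_{--}\simeq({p_3}_*(NS(X)_{--})\oplus T_{Y_3})\otimes\Q$, which, assembled with the $Y_1$-summand, yields the asserted decomposition of $H^2(X,\Q)$.

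For the orthogonality statement I would argue that distinct isotypical components of the $G$-action are mutually orthogonal for the cup product: if $x\in H^2(X,\Q)_{\epsilon\delta}$ and $y\in H^2(X,\Q)_{\epsilon'\delta'}$, then, since $\sigma_1^*$ and $\sigma_2^*$ are isometries, $x\cdot y=\sigma_1^*x\cdot\sigma_1^*y=\epsilon\epsilon'(x\cdot y)$ and likewise for $\sigma_2^*$, forcing $x\cdot y=0$ unless $(\epsilon,\delta)=(\epsilon',\delta')$. In particular $NS(X)_{++}\perp NS(X)_{-+}$ inside $NS(X)\otimes\Q$. I would then transport this to $Y_2$: identifying $NS(Y_2)\otimes\Q$ with the $\sigma_2^*$-invariant algebraic part $(NS(X)\otimes\Q)^{\sigma_2}=NS(X)_{++}\oplus NS(X)_{-+}$ through ${p_2}_*$, and using that $p_2^*$ multiplies the intersection form by $\deg p_2=2$ (so that orthogonality on $X$ is equivalent to orthogonality on $Y_2$), one concludes that ${p_2}_*(NS(X)_{-+})$ is orthogonal to ${p_2}_*(NS(X)_{++})\simeq NS(Y)\otimes\Q$. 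Since these two pieces span all of $NS(Y_2)\otimes\Q$, the orthogonal complement is exhausted and the inclusion becomes an equality; the identical argument with $p_3$ and $\sigma_3$ handles $Y_3$.

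The step I expect to be the main obstacle is this last one: making precise the passage between the intersection forms on $X$ and on $Y_2$ (the factor $\deg p_2=2$ from the projection formula), and verifying that $(NS(X)\otimes\Q)^{\sigma_2}$ is genuinely exhausted by the two eigenspaces $NS(X)_{++}$ and $NS(X)_{-+}$, so that the inclusion ${p_2}_*(NS(X)_{-+})\subseteq(NS(Y)\otimes\Q)^{\perp}$ is an equality and not merely a containment. By contrast, the direct sum decomposition of $H^2(X,\Q)$ is essentially formal once Proposition \ref{prop: Hodge structure} and the vanishing $(T_X\otimes\Q)_{++}=0$ are in hand.
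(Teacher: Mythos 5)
Your proposal is correct and follows essentially the same route as the paper's proof: the character eigenspace decomposition of $H^2(X,\Q)$, the identification $H^2(Y_1,\Q)\simeq (H^2(X,\Q)_{++}\oplus H^2(X,\Q)_{+-})$, the splitting of each eigenspace into N\'eron--Severi and transcendental parts with $(T_X)_{++}=0$ forced by rationality of $Y$, and the transport of orthogonality through ${p_2}_*$ and ${p_3}_*$. The only difference is one of detail: you spell out the isometry argument for orthogonality of distinct eigenspaces, the factor $\deg p_i=2$ from the projection formula, and the exhaustion step turning orthogonality into equality with the orthogonal complement, all of which the paper leaves implicit in its terser final paragraph.
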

 \proof Let us consider the decomposition $$H^2(X,\Q)=\left(H^2(X,\Z)_{++}\oplus H^2(X,\Z)_{+-}\oplus H^2(X,\Z)_{-+}\oplus H^2(X,\Z)_{--}\right)\otimes \Q.$$ 
	In particular $H^2(Y_1,\Q)\simeq \left(H^2(X,\Z)_{++}\oplus H^2(X,\Z)_{+-}\right)\otimes \Q$.
	
	Let us now determine the term  $H^2(X,\Q)_{-+}\simeq \left(NS(X)_{-+}\oplus (T_X)_{-+}\right)\otimes\Q$. 
	
	We already observed that ${T_X}_{++}$ is trivial (because it induces the transcendental structure of $Y$, which is rational) and hence $${T_{X}}_{-+}\otimes \Q\simeq \left({T_X}_{++}\oplus {T_X}_{-+}\right)\otimes \Q\simeq T_{Y_2}\otimes \Q.$$ Moreover, we observe that $$NS(X)_{-+}\otimes \Q\simeq {p_2}_*(NS(X)_{-+})\otimes\Q.$$
So $H^2(X,\Q)_{-+}\simeq \left({p_2}_*(NS(X)_{-+})\oplus T_{Y_2}\right)\otimes \Q$ and similarly $H^2(X,\Q)_{--}\simeq \left({p_3}_*(NS(X)_{--})\oplus T_{Y_3}\right)\otimes \Q$.

Moreover, $NS(Y_2)\otimes \Q={p_2}_*(NS(X)_{++}\oplus NS(X)_{-+})\otimes \Q$, so ${p_2}_*(NS(X)_{-+})$ is orthogonal to ${p_2}_*(NS(X)_{++})$ in $NS(Y_2)$ and considering the bidouble cover $X\ra Y$ one obtains that $NS(X)_{++}\otimes \Q\simeq NS(Y)\otimes \Q$, which conclude the proof.\endproof

\begin{rem}{\rm The previous corollary allows one to decompose $H^2(X,\Q)$ into the sum of: the cohomology of a surface ($Y_1$); two terms which are algebraic (subspaces of the algebraic structures of $Y_2$ and $Y_3$) and two transcendental parts, which are the full transcendental parts of $Y_2$ and $Y_3$.}
\end{rem}

\begin{corollary}\label{corollary hodge structures and trascendental}
Let $X\ra Y$ be a bidouble cover of a rational surface as in Construction \ref{construction 1: first double cover}. If $\rho(Y)=\rho(Y_2)=\rho(Y_3)$, then $$H^2(X,\Q)=\left(H^2(Y_1,\Z)\oplus T_{Y_2}\oplus T_{Y_3}\right)\otimes \Q\mbox{ and }\rho(X)=\rho(Y_1).$$\end{corollary}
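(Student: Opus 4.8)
The plan is to specialize the general decomposition of Corollary~\ref{cor: decomposition of H2X general case} under the additional Picard-number hypothesis. Recall that the previous corollary gives
\[
H^2(X,\Q)\simeq \big(H^2(Y_1,\Z)\oplus {p_2}_*(NS(X)_{-+})\oplus T_{Y_2}\oplus {p_3}_*(NS(X)_{--})\oplus T_{Y_3}\big)\otimes \Q,
\]
together with the identification of ${p_2}_*(NS(X)_{-+})$ (resp.\ ${p_3}_*(NS(X)_{--})$) as the orthogonal complement, inside $NS(Y_2)\otimes\Q$ (resp.\ $NS(Y_3)\otimes\Q$), of ${p_2}_*(NS(X)_{++})\otimes\Q\simeq NS(Y)\otimes\Q$ (resp.\ of ${p_3}_*(NS(X)_{++})\otimes\Q$). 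Thus the whole statement reduces to showing that these two ``extra'' algebraic summands vanish, and then to reading off $\rho(X)$.

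First I would handle the algebraic summands. Since ${p_2}_*(NS(X)_{-+})$ is the orthogonal complement in $NS(Y_2)\otimes\Q$ of a subspace isomorphic to $NS(Y)\otimes\Q$, its dimension is $\rho(Y_2)-\rho(Y)$; the hypothesis $\rho(Y_2)=\rho(Y)$ forces this to be zero, so ${p_2}_*(NS(X)_{-+})\otimes\Q=0$. The same argument with the covering $X\ra Y_3$, using $\rho(Y_3)=\rho(Y)$, shows ${p_3}_*(NS(X)_{--})\otimes\Q=0$. Substituting these vanishings into the displayed decomposition immediately yields
\[
H^2(X,\Q)=\big(H^2(Y_1,\Z)\oplus T_{Y_2}\oplus T_{Y_3}\big)\otimes\Q,
\]
which is the first assertion.

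For the Picard-number statement I would compute $NS(X)\otimes\Q$ directly from the eigenspace decomposition. Taking N\'eron--Severi groups in each eigenspace, one has $NS(X)\otimes\Q=(NS(X)_{++}\oplus NS(X)_{+-}\oplus NS(X)_{-+}\oplus NS(X)_{--})\otimes\Q$; but $NS(X)_{-+}$ and $NS(X)_{--}$ inject (via ${p_2}_*$ and ${p_3}_*$ respectively) into the algebraic summands just shown to vanish, so both are trivial rationally, leaving $NS(X)\otimes\Q\simeq(NS(X)_{++}\oplus NS(X)_{+-})\otimes\Q\simeq NS(Y_1)\otimes\Q$. Hence $\rho(X)=\rho(Y_1)$. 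The only point requiring slight care---and the main conceptual step rather than a computational obstacle---is the compatibility of the orthogonal-complement description with rational coefficients: I must be sure that the injectivity of ${p_2}_*$ on $NS(X)_{-+}\otimes\Q$ (established in the proof of Corollary~\ref{cor: decomposition of H2X general case}) lets me transfer the vanishing of the image back to the vanishing of the source, which it does because $p_2^*$ and ${p_2}_*$ are mutually inverse on rational cohomology. Everything else is a direct substitution.
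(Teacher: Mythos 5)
Your proposal is correct and follows essentially the same route as the paper: both reduce to showing that the summands ${p_2}_*(NS(X)_{-+})$ and ${p_3}_*(NS(X)_{--})$ in Corollary \ref{cor: decomposition of H2X general case} vanish, deduce this from the hypothesis $\rho(Y)=\rho(Y_2)=\rho(Y_3)$ via the dimension count $\rho(Y_i)=\rho(Y)+\rk(NS(X)_{\mp\pm})$, and then read off $\rho(X)=\rho(Y_1)$ from the eigenspace decomposition of $NS(X)\otimes\Q$. The only cosmetic difference is that you phrase the dimension count through the orthogonal-complement description while the paper uses the direct-sum decomposition $NS(Y_2)={p_2}_*(NS(X)_{++}\oplus NS(X)_{-+})$; these are the same computation.
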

\proof  To prove the statement it suffices to show that the terms ${p_2}_*(NS(X)_{-+})$ and ${p_3}_*(NS(X)_{--})$ in the decomposition of $H^2(X,\Q)$ obtained in Corollary \ref{cor: decomposition of H2X general case} are trivial. The N\'eron--Severi of $Y_2$ is $$NS(Y_2)={p_2}_*(NS(X)_{++}\oplus NS(X)_{-+}).$$ In particular $\rho(Y_2)=\rk(NS(X)_{++})+\rk(NS(X)_{-+})$. Since $NS(X)_{++}\otimes \Q\simeq NS(Y)\otimes \Q$, we obtain  $\rho(Y_2)=\rho(Y)+\rk(NS(X)_{-+})$. By the hypothesis follows that $\rk(NS(X)_{-+})=0$. Applying the same argument to $Y_3$ one finds the decomposition of $H^2(X,\Q)$.

Moreover,
{\small
$$NS(X)\otimes \Q\simeq \left(NS(X)_{++}\oplus NS(X)_{+-}\oplus NS(X)_{-+}\oplus NS(X)_{--}\right)\otimes \Q= \left(NS(X)_{++}\oplus NS(X)_{+-}\right)\otimes \Q\simeq NS(Y_1)\otimes \Q.$$}\endproof 

One can also consider an iterated bidouble cover, see Section \ref{subsec: Iterated general case}, and apply the previous arguments to the bidouble cover $W\ra Y_1$. If $Y_1$ is rational, $T_{Y_1}$ is trivial and so $(T_W\otimes \Q)_{++}$ is trivial too.
Iterating the previous arguments to the construction of iterated bidouble cover, as in Section \ref{subsubsec: construction 2}, one obtains the following.
\begin{prop}\label{prop: iterated in general}
	Let $Y$ be a rational surface and  $X\ra Y$ be a bidouble cover. 
	
	Let $\widetilde{W}\ra \widetilde{Y_1}$ be an iterated bidouble cover as in Construction \ref{construction 2: iterated bidouble case 1}, then
	$$T_W\otimes \Q\simeq (T_{Y_2}\oplus T_{Y_3}\oplus T_{Z_1}\oplus T_{Z_3})\otimes \Q;$$
	where $T_{Y_3}$ and $T_{Z_3}$ are the transcendental lattices of K3 surfaces, $T_{Y_2}$ and $T_{Z_1}$ are either trivial or the transcendental lattice of a K3 surface.
	
	Let $\widetilde{W}\ra \widetilde{Y_1}$ be an iterated bidouble cover as in Construction \ref{construction 2: iterated bidouble case 2}, then
	$$T_W\otimes \Q\simeq (T_{Y_2}\oplus T_{Y_3}\oplus T_{Y_2}\oplus T_{Y_3}\oplus T_{Z_3})\otimes \Q;$$
	where $T_{Y_3}$ is the transcendental lattice of a K3 surface, $T_{Y_2}$ and $T_{Z_3}$ are either trivial or the transcendental lattice of a K3 surface.\end{prop}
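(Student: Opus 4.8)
The plan is to apply the transcendental Hodge structure decomposition of Proposition \ref{prop: Hodge structure} twice, once to each of the two bidouble covers appearing in the iterated construction, and then to identify the surviving summands using the K3/rational dichotomy imposed by the relevant Construction. First I would set up the second bidouble cover $\widetilde{W}\ra\widetilde{Y_1}$ and observe that its base $\widetilde{Y_1}$ is the blow-up of the rational surface $Y_1$, hence itself rational, so that $T_{\widetilde{Y_1}}\otimes\Q$ is trivial. Proposition \ref{prop: Hodge structure} (or rather the remark following it, which only needs $T_{\text{base}}\otimes\Q=0$) then gives
\begin{equation*}
T_W\otimes\Q\simeq (T_X\otimes\Q)\oplus(T_{Z_3}\otimes\Q)\oplus(T_{Z_1}\otimes\Q),
\end{equation*}
where $X,Z_3,Z_1$ are the three intermediate double covers of the second bidouble cover (with $X\ra\widetilde{Y_1}$ the common one $p_1$). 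Here I am using that the intermediate cover attached to the trivial-on-$\sigma$ eigenspace contributes nothing transcendental because the base is rational, exactly as in the proof of Proposition \ref{prop: Hodge structure}.

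Next I would feed in the first bidouble cover $X\ra Y$. Since $Y$ is rational, Proposition \ref{prop: Hodge structure} applied to $X$ yields $T_X\otimes\Q\simeq(T_{Y_1}\oplus T_{Y_2}\oplus T_{Y_3})\otimes\Q$, and because $Y_1$ is rational the summand $T_{Y_1}\otimes\Q$ is trivial, leaving $T_X\otimes\Q\simeq(T_{Y_2}\oplus T_{Y_3})\otimes\Q$. Substituting this into the previous display gives, for the Construction \ref{construction 2: iterated bidouble case 1} case,
\begin{equation*}
T_W\otimes\Q\simeq(T_{Y_2}\oplus T_{Y_3}\oplus T_{Z_1}\oplus T_{Z_3})\otimes\Q.
\end{equation*}
For the Construction \ref{construction 2: iterated bidouble case 2} case the analogous substitution produces $T_W\otimes\Q\simeq(T_{Y_2}\oplus T_{Y_3}\oplus T_{Z_1}\oplus T_{Z_3})\otimes\Q$ as well, but now I must use the observation recorded just before the statement that $\Delta_1\simeq\Delta_2$, which forces $Z_1$ and $Z_3$ to be built from linearly equivalent branch data; I would argue that $Z_1$ has the same construction as $X$, so $T_{Z_1}\otimes\Q\simeq T_X\otimes\Q\simeq(T_{Y_2}\oplus T_{Y_3})\otimes\Q$, thereby recovering the five-term expression $(T_{Y_2}\oplus T_{Y_3}\oplus T_{Y_2}\oplus T_{Y_3}\oplus T_{Z_3})\otimes\Q$ in the statement.

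Finally I would read off the K3-versus-trivial nature of each summand directly from the defining conditions of the two Constructions together with Lemma \ref{lemma: IL LEMMA}(2). In both cases $Y_3$ is a K3 surface by the condition $D_1+D_2+2K_Y=0$, so $T_{Y_3}$ is of K3 type; and $Z_3$ is a K3 surface by the condition $\Delta_1+\Delta_2+2K_{\widetilde{Y_1}}=0$ (case 1), so $T_{Z_3}$ is of K3 type, while in case 2 the weaker hypotheses only guarantee $T_{Z_3}$ is trivial or of K3 type. The remaining summands $T_{Y_2}$ and $T_{Z_1}$ are likewise constrained by the $p_g\le 1$ conditions (the $\dim|K+\cdots|\le 0$ requirements) to be either trivial or of K3 type. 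I expect the main obstacle to be not the Hodge-theoretic bookkeeping but the careful verification that the second bidouble cover genuinely satisfies the hypotheses of Proposition \ref{prop: Hodge structure} after the blow-up: one must check that passing to the smooth model $\widetilde{Y_1}$ and the branch locus $\widetilde{B}+E$ does not alter the transcendental cohomology (the exceptional curves contribute only algebraic classes, as in the discussion preceding Proposition \ref{prop: Hodge structure}), and that the condition forbidding exceptional components in $\Delta_1,\Delta_2$ (Remark \ref{rem: no exceptional in Delta1,2}) is what ensures the invariants—and hence the Hodge-structure decomposition—are preserved under the required specialization of $D_1,D_2,D_3$.
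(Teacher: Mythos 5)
Your proof is correct and follows essentially the same route as the paper, which offers no separate argument for this proposition beyond the sentence ``Iterating the previous arguments to the construction of iterated bidouble cover\dots'' --- i.e.\ precisely your two-step application of Proposition \ref{prop: Hodge structure} (first to $\widetilde{W}\ra\widetilde{Y_1}$, using that $\widetilde{Y_1}$ is rational so its transcendental part is trivial, then to $X\ra Y$, discarding $T_{Y_1}$). Your treatment of Construction \ref{construction 2: iterated bidouble case 2} via the fact that $Z_1$ has the same construction as $X$ (so $T_{Z_1}\otimes\Q$ decomposes as $(T_{Y_2}\oplus T_{Y_3})\otimes\Q$, understood up to membership in the same families, as in the paper's own notation), and your identification of the K3/trivial summands from the Construction conditions and Lemma \ref{lemma: IL LEMMA}, also match the paper's intent.
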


Now we consider the construction presented in Section \ref{subsubsec: singular bidouble}, i.e., we allow some singularities to the bidouble cover $X\ra Y$. We have already observed that if $D_1$, $D_2$, $D_3$ are smooth and their intersection are transversal, $X$ is smooth and there is a formula to compute $K_X^2$. If one allows some singular points of type $(1,1,1)$ in $D_1\cup D_2\cup D_3$, then the self intersection of the canonical divisor of the minimal model of $X$ changes (accordingly to Section \ref{subsubsec: singular bidouble}). 
On the other hand the birational invariants do not change imposing triple points on $D_1\cup D_2\cup D_3$, and Proposition \ref{prop: Hodge structure} still holds for singular bidouble covers of rational surfaces whose branch locus admits triple points. To be more precise the following Proposition hold.

\begin{prop}\label{prop: singular bidouble general, Hodge}
	Let us consider two cases:\begin{itemize}
		\item let $D_1$, $D_2$, $D_3$ be three smooth curves on a rational surface $Y$ and $D:=D_1\cup D_2\cup D_3$ be simple normal crossing. Let $X\ra Y$ be the bidouble cover branched on $D$;
		\item let now change $D_3$ with a curves $D_3'$ which is a smooth curve linearly equivalent to $D_3$ passing through $k$ of the points in $D_1\cap D_2$. Let $D':=D_1\cup D_2\cup D_3'$, it has $k$ points of type $(1,1,1)$ and let $X'$ be the minimal model of the bidouble cover of $Y$ branched on $D'$. 
	\end{itemize}
	If $X$ and $X'$ are of general type, then $K_{X'}^2=K_X^2-k$ so $X$ and $X'$ lie on different families of surfaces of general type.
	
	If the intermediate double covers of $X\ra Y$ are all either K3 surfaces or surfaces with $p_g=0$, then $T_{X'}\otimes \Q\simeq T_{X}\otimes \Q$ as Hodge structure.\end{prop}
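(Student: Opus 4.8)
The plan is to prove the two claims separately, treating the $K^2$ formula and the Hodge-structure isomorphism as essentially independent assertions that share the same geometric setup. For the first claim, $K_{X'}^2 = K_X^2 - k$, I would invoke directly the result of Catanese recalled in Subsection~\ref{subsubsec: singular bidouble}: a singular point of type $(1,1,1)$ in the branch configuration forces, upon passing to the minimal resolution of the bidouble cover and then to its minimal model, a decrease of $1$ in the self-intersection of the canonical divisor, whereas a type-$(2,1,0)$ point does not. Since $D'$ differs from $D$ only by $k$ points that have become genuine triple points of type $(1,1,1)$ (the specialization of $D_3$ to $D_3'$ passing through $k$ of the nodes of $D_1 \cap D_2$), applying this local contribution $k$ times yields $K_{X'}^2 = K_X^2 - k$. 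The conclusion that $X$ and $X'$ lie in different families then follows since $K^2$ is a deformation invariant of minimal surfaces of general type.

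For the second claim I would argue that the birational invariants, and in particular the transcendental Hodge structure, are insensitive to these triple points. The key observation is that Proposition~\ref{prop: Hodge structure} identifies $T_{X}\otimes\Q$ with $\bigoplus_i (T_{Y_i}\otimes\Q)$ purely through the eigenspace decomposition of $H^2$ under the $G$-action, and this decomposition depends only on the intermediate quotients $Y_i$ as abstract (minimal) surfaces, not on the transversality of the branch divisor. Concretely, I would show that the intermediate double covers of $X'\ra Y$ have the same \emph{minimal models} as those of $X\ra Y$: each $Y_i'$ is the double cover of $Y$ branched along $D_j'\cup D_k'$, and since specializing $D_3$ within its linear system does not change the linear-equivalence class of the branch divisor, the formulae \eqref{equation: chi intermediate cover} and \eqref{eq: pulrigeberaYi} give the same numerical invariants; by Lemma~\ref{lemma: IL LEMMA} each $Y_i'$ is again either a K3 surface or a surface with $p_g=0$, of the same type as $Y_i$. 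Passing through triple points only introduces extra rational double points (or contractible configurations) in the covers, whose resolution contributes algebraic classes and leaves the transcendental lattice unchanged.

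Having established that $T_{Y_i'}\otimes\Q \simeq T_{Y_i}\otimes\Q$ for each $i$ — both being either trivial or the transcendental lattice of the \emph{same} K3 surface (up to the isometry class forced by the invariants) — I would then apply Proposition~\ref{prop: Hodge structure} to both $X$ and $X'$, obtaining
\[
T_{X'}\otimes\Q \simeq \bigoplus_{i=1}^3 (T_{Y_i'}\otimes\Q) \simeq \bigoplus_{i=1}^3 (T_{Y_i}\otimes\Q) \simeq T_X\otimes\Q
\]
as Hodge structures. The main obstacle I anticipate is justifying rigorously that Proposition~\ref{prop: Hodge structure}, which was stated for \emph{smooth} bidouble covers arising from Construction~\ref{construction 1: first double cover}, continues to hold for the \emph{minimal model} $X'$ of a singular bidouble cover: the eigenspace argument must be carried out on a smooth model, so I would first resolve the singularities of $X'$, check that the exceptional locus contributes only algebraic (hence transcendentally irrelevant) classes to $H^2$, and verify that the $G$-action descends compatibly so that the transcendental eigenspace decomposition is preserved under the birational modification. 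Once this compatibility is in place, the isomorphism of transcendental Hodge structures is immediate from the already-proven smooth case applied quotient-by-quotient.
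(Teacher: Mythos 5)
Your proposal is correct and follows essentially the same route as the paper's proof: the drop $K_{X'}^2=K_X^2-k$ is Catanese's local computation from \cite[Section 3]{Cat99} applied $k$ times, and the Hodge-theoretic claim comes from Proposition \ref{prop: Hodge structure} together with the observation that the branch loci of the intermediate double covers ($D_1\cup D_2$, $D_1\cup D_3'$, $D_2\cup D_3'$) remain simple normal crossing and linearly equivalent to the original ones, so each $Y_i'$ has the same type and transcendental structure as $Y_i$. Your explicit treatment of why Proposition \ref{prop: Hodge structure} still applies to the minimal model $X'$ of the singular cover (resolving, checking the exceptional classes are algebraic, and the $G$-equivariance) is a detail the paper leaves implicit, but it is the same argument, not a different route.
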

	
\begin{proof} The drops of the self intersection of the canonical bundle were observed by Catanese in \cite[Section 3]{Cat99}. The hypothesis on the intermediate double covers implies that Proposition \ref{prop: Hodge structure} holds, and the transcendental Hodge structures of $X$ and $X'$ are completely determined by the ones of the intermediate double cover. Comparing the two constructions, $D_1$ and $D_2$ do not change and $D_3'$ is linearly equivalent to $D_3$, so the properties of the intermediate double cover remain the same substituting $D_3$ with $D_3'$; indeed the branch locus of the double covers are $D_1\cup D_2$, $D_1\cup D_3\simeq D_1\cup D_3'$, $D_2\cup D_3\simeq D_2\cup D_3'$ and each of them is simple normal crossing. So the transcendental Hodge structures of the intermediate double covers do not change if one substitutes $D_3$ with $D_3'$. \end{proof}

Many of the considerations done for the Hodge-structure can be repeated for the  Chow groups of the 0 cycles of degree 0. One can argue in a similar way to decompose the Chow groups of $X$ into the direct sum of Chow groups of the surfaces $Y_i$, once one has that $Y$ is rational.

\begin{prop}\label{prop: chow groups}{\rm (c.f. \cite[Theorem 3.1]{Lat21})} Let $X\ra Y$ be a bidouble cover of a rational surface, then 
	$$A^2_{0}(X)\otimes\Q\simeq \left(A^2_{0}(Y_1)\oplus A^2_{0}(Y_2)\oplus A^2_{0}(Y_3)\right)\otimes\Q$$ 
	where $A^2_{0}(-)\otimes \Q$ is the Chow group of the 0 cycles of degree 0 with rational coefficients.\end{prop}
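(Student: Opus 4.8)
The plan is to mimic the argument used for the Hodge structures in Proposition \ref{prop: Hodge structure}, transporting it to the setting of Chow groups of zero-cycles via the action of the Galois group $G=(\Z/2\Z)^2$. First I would record the eigenspace decomposition of $A^2_0(X)\otimes\Q$ under the induced action of $\sigma_1^*,\sigma_2^*,\sigma_3^*$, obtaining
\[
A^2_0(X)\otimes\Q=\bigoplus_{\epsilon,\delta\in\{\pm\}}\big(A^2_0(X)\otimes\Q\big)_{\epsilon\delta},
\]
exactly parallel to the splitting of $H^2(X,\Q)$. The point is that for a finite group acting on a smooth projective variety, the Chow groups with $\Q$-coefficients split into isotypic components, and the invariant part under a subgroup $\langle\sigma_i\rangle$ is computed by the quotient.

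Next I would identify each summand with a piece coming from the intermediate quotients. Since $p_i\colon X\ra Y_i$ is the quotient by $\sigma_i$, the projectors $\tfrac12(1+\sigma_i^*)$ realise $A^2_0(Y_i)\otimes\Q$ (more precisely $A^2_0(\widetilde{Y_i})\otimes\Q$, which for a resolution of rational double points agrees rationally with $A^2_0(Y_i)\otimes\Q$) as the $\sigma_i$-invariant part $\big(A^2_0(X)\otimes\Q\big)^{\sigma_i^*}$. Concretely, ${p_i}_*$ and $p_i^*$ are inverse to each other on the invariant part after inverting $2$, just as on cohomology. The key input is that $Y$ is rational, so $A^2_0(Y)\otimes\Q=0$ by Mumford/Roitman (a rational surface has trivial $\CH_0$ of degree zero); hence the fully invariant piece $\big(A^2_0(X)\otimes\Q\big)_{++}$, which maps isomorphically onto $A^2_0(Y)\otimes\Q$, vanishes. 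This is the direct analogue of the triviality of $(T_X\otimes\Q)_{++}$ used in the Hodge-theoretic proof.

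With these two facts I would then assemble the decomposition. The three nontrivial eigenspaces $(+-)$, $(-+)$, $(--)$ correspond respectively to the ``new'' part of $A^2_0(Y_1)$, $A^2_0(Y_2)$, $A^2_0(Y_3)$, while the $(++)$ part vanishes. Since
\[
A^2_0(Y_i)\otimes\Q\simeq\big(A^2_0(X)\otimes\Q\big)_{++}\oplus\big(A^2_0(X)\otimes\Q\big)_{\epsilon_i\delta_i}
\]
and the first summand is zero, each $A^2_0(Y_i)\otimes\Q$ is isomorphic to exactly one eigenspace. Summing over $i=1,2,3$ recovers the three nontrivial eigenspaces and hence all of $A^2_0(X)\otimes\Q$, giving the claimed isomorphism. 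Alternatively, and more cleanly, I would simply invoke \cite[Theorem 3.1]{Lat21} directly, since the statement is cited as coming from that source; the sketch above explains why Laterveer's result holds in this generality.

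The main obstacle I anticipate is making the identification of the invariant part of $A^2_0(X)$ with $A^2_0(Y_i)$ rigorous in the presence of singularities: $Y_i$ typically carries $A_1$ singularities at the isolated fixed points of $\sigma_i$, so one must pass to the minimal resolution $\widetilde{Y_i}$ and check that the exceptional curves contribute nothing to $A^2_0$ with $\Q$-coefficients (they are rational, hence support no transcendental zero-cycles), and that ${p_i}_*,p_i^*$ still behave as inverse idempotents after resolution. This is exactly the subtlety already handled on the cohomological side in the discussion preceding Proposition \ref{prop: Hodge structure}, so the resolution of the difficulty is to reuse that bookkeeping verbatim in the Chow setting.
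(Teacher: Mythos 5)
Your proposal is correct and takes essentially the same approach as the paper: the paper's proof is a one-line remark that the result follows from the eigenspace decomposition of $A^2_{0}(X)\otimes \Q$ under the involutions $\sigma_i^*$, with the details delegated to \cite[Theorem 3.1]{Lat22}, and those details (the vanishing of the $(++)$-part because $A^2_0$ of a rational surface is trivial, the identification of $A^2_0(Y_i)\otimes\Q$ with the $\sigma_i^*$-invariant part via ${p_i}_*$ and $p_i^*$, and the harmlessness of the $A_1$ singularities after resolution) are exactly what you spell out.
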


The proof depends again by the decomposition of $A^2_{0}(X)\otimes \Q$ in eigenspaces with respect to the action of the involutions $\sigma_i$ and is essentially given in \cite[Theorem 3.1]{Lat22}.

Iterating the previous arguments to the construction of iterated bidouble covers, as in Section \ref{subsubsec: construction 2}, one obtains
\begin{prop}\label{prop: iterated in general, Chow}
	
	Let $Y$ be a rational surface and  $X\ra Y$ be a bidouble cover. 
	
	Let $\widetilde{W}\ra \widetilde{Y_1}$ be an iterated bidouble cover as in Constructions \ref{construction 2: iterated bidouble case 1} and \ref{construction 2: iterated bidouble case 2}, then
	$$A^2_{0}(W)\otimes\Q\simeq (A^2_{0}(Y_2)\oplus A^2_{0}(Y_3)\oplus A^2_{0}(Z_1)\oplus A^2_{0}(Z_3))\otimes\Q.$$\end{prop}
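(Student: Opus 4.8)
The plan is to iterate Proposition \ref{prop: chow groups} along the tower $W\to\widetilde{Y_1}\to Y$, in exactly the same spirit in which Proposition \ref{prop: iterated in general} iterates the Hodge-theoretic decomposition. First I would pass to smooth models. Since $A^2_0(-)\otimes\Q$ is a birational invariant of smooth projective surfaces (a blow-up of a point alters $CH_0$ only by classes supported on the rational exceptional curve, which are rationally equivalent in the total space), I may replace $W$, $X$ and $Y_1$ by their smooth models $\widetilde{W}$, $\widetilde{X}$ and $\widetilde{Y_1}$ without changing any of the groups $A^2_0(-)\otimes\Q$ occurring in the statement; the same applies to $Z_1$ and $Z_3$. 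The decisive point is that, because $Y_1$ is rational, $\widetilde{Y_1}$ is a smooth rational surface, so Proposition \ref{prop: chow groups} is applicable to the second bidouble cover $\widetilde{W}\to\widetilde{Y_1}$.

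Next I would read off the three intermediate quotients of this second cover from diagram \eqref{eq: diag bibidouble}: they are $\widetilde{X}$, $Z_1$ and $Z_3$. Applying Proposition \ref{prop: chow groups} to $\widetilde{W}\to\widetilde{Y_1}$ then gives
$$A^2_0(W)\otimes\Q\simeq\bigl(A^2_0(X)\oplus A^2_0(Z_1)\oplus A^2_0(Z_3)\bigr)\otimes\Q .$$
It remains to expand the first summand. Applying Proposition \ref{prop: chow groups} to the first bidouble cover $X\to Y$ (whose base $Y$ is rational and whose intermediate quotients are $Y_1$, $Y_2$, $Y_3$) yields
$$A^2_0(X)\otimes\Q\simeq\bigl(A^2_0(Y_1)\oplus A^2_0(Y_2)\oplus A^2_0(Y_3)\bigr)\otimes\Q .$$
Since $Y_1$ is rational, one has $A^2_0(Y_1)\otimes\Q=0$, so $A^2_0(X)\otimes\Q\simeq(A^2_0(Y_2)\oplus A^2_0(Y_3))\otimes\Q$. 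Substituting this into the previous display produces the asserted isomorphism. The same computation serves both Construction \ref{construction 2: iterated bidouble case 1} and Construction \ref{construction 2: iterated bidouble case 2}, because the two constructions differ only in which of the surfaces $Z_1$, $Z_3$ and $Y_2$ are K3 and which are rational, and this information is irrelevant to the bare decomposition of the Chow group.

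The only genuine point requiring care, and hence the main obstacle, is the reduction to smooth models: I must verify that the blow-ups performed to resolve the singularities of $Y_1$ (and the induced blow-ups on $X$ and on $W$) leave $A^2_0(-)\otimes\Q$ unchanged, and that after these blow-ups the second cover really is a smooth bidouble cover of the smooth rational surface $\widetilde{Y_1}$ having $\widetilde{X}$, $Z_1$, $Z_3$ as its intermediate quotients, so that Proposition \ref{prop: chow groups} applies verbatim. Once this bookkeeping is settled, the statement is a formal two-step iteration of Proposition \ref{prop: chow groups} together with the vanishing of $A^2_0$ of a rational surface.
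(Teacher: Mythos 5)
Your proposal is correct and follows essentially the same route as the paper: the paper gives no separate argument for this proposition, stating only that it follows by ``iterating the previous arguments'' (i.e.\ Proposition \ref{prop: chow groups}) along the tower $W\to\widetilde{Y_1}\to Y$, which is precisely your two-step application together with the vanishing of $A^2_0$ for the rational surfaces $Y$ and $Y_1$. Your explicit attention to the birational invariance of $A^2_0(-)\otimes\Q$ under the resolutions $\widetilde{Y_1}\to Y_1$, $\widetilde{X}\to X$, $\widetilde{W}\to W$ is a point the paper leaves implicit, but it is the intended justification.
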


\subsection{Motives and motivic Mumford--Tate conjecture}

In this subsection we will state some results about Chow motives, Andr\'e motives and motivic Mumford--Tate conjecture for bidouble covers whose intermediate quotients are K3 surfaces.  In particular, we prove the motivic Mumford--Tate conjecture for certain bidouble covers of rational surfaces, see Theorem \ref{Theo_MTC}.

We briefly recall some facts on categories of motives, and for the reader convenience we state the motivic Mumford--Tate conjecture, for a more detailed introduction on the subject see \cite{Moonen, Moonenh1}. First we recall some facts about Chow motives and Andr\'e motives of surfaces. 
We do not need full generality, so let $K$ be a subfield of~$\CC$.

Given smooth and projective varieties $X$ and $Y$ over a field $K$ (i.e., objects in the category $\SmPr_{/K}$) of dimension $d_X$ and $d_Y$ respectively, a correspondence of degree $k$ from $X$ to $Y$ is an element $\gamma$ of $A^{d_X+k}(X \times Y)$. Then $\gamma$ induces a map $A^{\cdot}(X) \ra A^{\cdot+k}(Y)$ by the formula
\[
\gamma_*(\beta) := \pi_{2*}(\gamma \cdot \pi^*_1 (\beta)),
\]
where $\pi_1\colon X\times Y \ra X$ and $\pi_2\colon X\times Y \ra Y$ denote the projections.
The category $\ChMot$ of \emph{Chow motives} (with rational coefficients) over $K$ is defined
as follows:
\begin{itemize}
	\item the objects of $\ChMot$ are triples $(X, p, n)$ such that $X \in \SmPr_{/\CC},\, p \in A^{d_X} (X \times X)$ is an idempotent correspondence (i.e. $p_* \circ p_* = p_*$) and $n$ is an integer;
	\item the morphisms in $\ChMot$ from $(X, p, n)$ to $(Y, q, m)$ are  correspondences $f\colon  X \ra Y$
of degree $n-m$, such that $f \circ p = f = q \circ f$. 
\end{itemize}
We recall that $\ChMot$ is an additive, $\QQ$-linear, pseudoabelian category, see \cite[Theorem~1.6]{Scholl}.

There exists a functor 
\[
h \colon \SmPr_{/\CC}^{\opp} \to \ChMot\ \ \mbox{such that } h:X\mapsto (X,1,0)
\]
from the category of smooth projective varieties over~$\CC$
to the category of Chow motives.

We denote also with $A^i(M)$ the $i$-th Chow group
of a motive $M \in \ChMot$.
In general, it is not known whether
the K\"unneth projectors~$\pi_i$ are algebraic,
so it does not (yet) make sense to speak of the summand $h^i(X) \subset h(X)$
for an arbitrary smooth projective variety $X/\CC$.
However, a so-called Chow--K\"unneth decomposition does exist
for curves~\cite{Manin_motive},
for surfaces~\cite{Murre_motsurf},
and for abelian varieties~\cite{DenMur}.
For algebraic surfaces there is in fact the following theorem,
which strengthens the decomposition of the Chow motive.
Statement and proof are copied from \cite[Theorem 2.2]{Lat19}.

\begin{theorem} \label{ChSurf} 
Let $S$ be a smooth projective surface over~$\CC$.
There exists a self-dual Chow--K\"unneth decomposition $\{\pi_i\}$ of~$S$,
with the further property that there is a splitting
\[
\pi_2 = \pi_2^\alg + \pi_2^\tra \quad \in A^2(S \times S)
\]
in orthogonal idempotents, defining a splitting
$h^2(S) = h^2_\alg(S) \oplus h^2_\tra(S)$ with Chow groups
\[
A^i(h^2_\alg(S)) =
\begin{cases}
					\NS(S) & \text{if $i = 1$,}\\
					0 & \text{otherwise,}
				\end{cases} \qquad
\text{and} \quad A^i(h^2_\tra(S)) =
				\begin{cases}
					A^2_{\AJ}(S) & \text{if $i = 2$,}\\
					0 & \text{otherwise.}
				\end{cases}
				\]
				Here $A^2_{\AJ}(S)$ denotes the kernel of the Abel--Jacobi map.
\begin{proof}
	The Chow--K\"unneth decomposition is given in \cite[Proposition 7.2.1]{KMP}.
	The further splitting into an algebraic and transcendental component
	is \cite[Proposition 7.2.3]{KMP}.
\end{proof}
\end{theorem}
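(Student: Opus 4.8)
The plan is to first produce a self-dual Chow--Künneth decomposition $\{\pi_i\}_{i=0}^{4}$ of $S$ and then refine the middle projector $\pi_2$. For the decomposition itself I would follow Murre's construction for surfaces: fix a $0$-cycle $e$ of degree $1$ and set $\pi_0=[S\times e]$, $\pi_4=[e\times S]$; using the Picard and Albanese varieties one builds $\pi_1$ and $\pi_3$ inducing the Künneth projectors onto $H^1$ and $H^3$; finally put $\pi_2=\Delta_S-\pi_0-\pi_1-\pi_3-\pi_4$. One checks that these are mutually orthogonal idempotents in $A^2(S\times S)$ inducing the Künneth components in cohomology, and self-duality ${}^t\pi_i=\pi_{4-i}$ is arranged by symmetrizing the construction.

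The main step is the refinement $\pi_2=\pi_2^\alg+\pi_2^\tra$. Choose divisors $D_1,\dots,D_\rho$ whose classes form a $\QQ$-basis of $\NS(S)\otimes\QQ$, and let $(a_{k\ell})$ be the inverse of the nondegenerate intersection matrix $(D_k\cdot D_\ell)$. Setting
$$\pi_2^\alg=\sum_{k,\ell}a_{k\ell}\,[D_k\times D_\ell]\quad\in A^2(S\times S),$$
a direct computation with the intersection pairing shows that the induced action acts as the identity on $\NS(S)\otimes\QQ\subset H^2(S)$ and kills everything else, hence is an idempotent; one then verifies that it is a subprojector of $\pi_2$ and is orthogonal to $\pi_0,\pi_1,\pi_3,\pi_4$. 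Defining $\pi_2^\tra:=\pi_2-\pi_2^\alg$ gives automatically an idempotent orthogonal to $\pi_2^\alg$, which realizes the transcendental part of $H^2$ in cohomology.

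It then remains to compute the Chow groups of the two summands. For $h^2_\alg(S)$ one checks that the action of $\pi_2^\alg$ is the projection of $A^\bullet(S)$ onto $\NS(S)$ placed in codimension $1$, giving $A^1(h^2_\alg(S))=\NS(S)$ and vanishing in all other degrees. For $h^2_\tra(S)$ the point is that $\pi_0+\pi_1$ already account for $A^0(S)$ and $\Pic^0(S)$, that $\pi_2^\alg$ accounts for $\NS(S)$, and that on $A^2(S)$ the remaining projector $\pi_2^\tra$ has image exactly the Albanese kernel $A^2_{\AJ}(S)$; this last identification is what forces $A^2(h^2_\tra(S))=A^2_{\AJ}(S)$ with vanishing in the other degrees.

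The main obstacle will be precisely this last identification: showing that $\pi_2^\tra$ acts on the rational Chow group $A^2(S)$ with image neither larger nor smaller than $A^2_{\AJ}(S)$. The construction of the idempotents and their behaviour in cohomology is essentially formal, but controlling the orthogonality relations among the $\pi_i$ and $\pi_2^\alg$ at the level of Chow groups---the content of the Kahn--Murre--Pedrini refinement---requires a Bloch--Srinivas type argument and is the genuine heart of the statement.
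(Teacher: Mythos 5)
Your proposal is correct and amounts to the same approach as the paper: the paper's proof consists entirely of citing \cite[Propositions 7.2.1 and 7.2.3]{KMP}, and your sketch (Murre's projectors $\pi_0,\pi_1,\pi_3,\pi_4$, the divisor-based refinement $\pi_2^\alg=\sum_{k,\ell}a_{k\ell}[D_k\times D_\ell]$ with $\pi_2^\tra=\pi_2-\pi_2^\alg$, and the identification of $A^2(h^2_\tra(S))$ with the Abel--Jacobi kernel) is precisely the construction contained in those cited results, with the genuinely hard points (Chow-level orthogonality and the computation of $A^2$ of the transcendental summand) correctly located where Kahn--Murre--Pedrini do the work. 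One small notational point: with the paper's convention $\gamma_*(\beta)=\pi_{2*}(\gamma\cdot\pi_1^*\beta)$ one gets $\pi_0=[e\times S]$ and $\pi_4=[S\times e]$, i.e.\ the transposes of the projectors you wrote down.
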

			
We can prove the following proposition.
\begin{prop}\label{prop: motives}{\rm (c.f. \cite[Theorem 3.1]{Lat21})}
Let $X\ra Y$ be a bidouble cover branched on a simple normal crossing divisor $D_1\cup D_2\cup D_3$. Let $Y$ be a rational surface.
Then there exists a Chow--K\"unneth decomposition for $Y$ and $Y_i$ such that there is an isomorphism of Chow motives
$$h^2_{\rm tra}(X)\simeq \oplus_{i=1}^3 h^2_{\rm tra}(Y_i)\in\mathcal{M}_{\rm{rat}}$$  where $h^2_{\rm tra}$ is the transcendental part of the motive (see \cite[Theorem 2.6]{Lat21}, \cite{KMP})\end{prop}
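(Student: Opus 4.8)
The plan is to decompose the Chow motive of $X$ using the action of the Galois group $G=(\ZZ/2\ZZ)^2$ and to match the eigenpieces with the transcendental motives of the intermediate quotients $Y_i$, exactly mirroring the cohomological argument that produced Proposition \ref{prop: Hodge structure}. First I would invoke Theorem \ref{ChSurf} to fix a self-dual Chow--K\"unneth decomposition for each of the surfaces $X$, $Y$, $Y_1$, $Y_2$, $Y_3$, with the finer splitting $h^2(S)=h^2_\alg(S)\oplus h^2_\tra(S)$. The key point is that, working with $\QQ$-coefficients so that $\ChMot$ is pseudoabelian, the idempotent $\tfrac14\sum_{g\in G}\Gamma_g$ (and its twisted variants built from the characters $\chi_i$) are algebraic correspondences on $X\times X$, hence project $h(X)$ onto its isotypic components. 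This gives a motivic decomposition $h(X)=\bigoplus_{\chi\in G^*} h(X)^\chi$ refining the cohomological eigenspace decomposition $H^2(X,\Q)_{\pm\pm}$ recalled before Proposition \ref{prop: Hodge structure}.

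Next I would show that the projections $p_i\colon X\to Y_i$ induce, via the correspondences given by the graphs $\Gamma_{p_i}$ and their transposes, isomorphisms of Chow motives $h^2_\tra(Y_i)\simeq \bigl(h^2(X)\bigr)^{\sigma_i}_\tra$ onto the $\sigma_i$-invariant transcendental part. Concretely, $\tfrac12(\id+\Gamma_{\sigma_i})$ cuts out the invariant motive, and since $p_{i*}p_i^*=2\cdot\id$ on $\QQ$-coefficients, $\tfrac12 p_i^*$ and $p_{i*}$ are mutually inverse correspondences identifying $h^2_\tra(Y_i)$ with the invariant transcendental summand of $h^2_\tra(X)$; one uses here that the exceptional divisors of the minimal resolution $\widetilde{Y_i}\to Y_i$ contribute only to the algebraic part $h^2_\alg$, so they are killed on $h^2_\tra$. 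Because $Y$ is rational, $h^2_\tra(Y)=0$, so the fully invariant piece $h^2(X)^{++}_\tra$ vanishes; this is the motivic incarnation of the statement that $(T_X\otimes\Q)_{++}$ is trivial. Matching eigencharacters then yields $h^2_\tra(X)=h^2_\tra(X)^{+-}\oplus h^2_\tra(X)^{-+}\oplus h^2_\tra(X)^{--}$ with the three summands isomorphic to $h^2_\tra(Y_1)$, $h^2_\tra(Y_2)$, $h^2_\tra(Y_3)$ respectively, giving the claimed isomorphism in $\mathcal{M}_{\rm rat}$.

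The main obstacle I anticipate is not the eigenspace bookkeeping but the verification that these identities, which are essentially formal at the level of Betti realizations, actually hold as identities of \emph{Chow} motives rather than merely of cohomology or Andr\'e motives. The delicate points are: first, that the various idempotents are genuinely algebraic correspondences with $\QQ$-coefficients (which follows from the averaging construction once $\ChMot$ is pseudoabelian, Theorem \ref{ChSurf}); second, that the transcendental projectors $\pi_2^\tra$ commute with the Galois-averaging projectors, so that the transcendental parts decompose compatibly with the $G$-action; and third, that the correspondences $\tfrac12 p_i^*$ and $p_{i*}$ really invert each other on the transcendental summands, which uses the projection formula and the degree-$2$ property of $p_i$. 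Since the analogous statement for the Chow motive of a single double cover is already established in \cite[Theorem 3.1]{Lat21} and \cite[Theorem 2.6]{Lat21}, the cleanest route is to reduce to that case by iterating over the three double covers $p_i$, applying the cited result to each and assembling the three transcendental pieces; this sidesteps a direct construction of the correspondences on $X\times Y_i$ and lets the proof follow the pattern of Proposition \ref{prop: chow groups}, which handled the analogous statement for $A^2_0$.
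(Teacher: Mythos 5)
Your proposal is correct and follows essentially the same route as the paper's proof: the paper likewise decomposes $h(X)$ in $\mathcal{M}_{\rm rat}$ via the averaging idempotents $\frac{1}{4}(\Delta_X\pm\Gamma_1\pm\Gamma_2\cdots)$ built from the graphs of the involutions $\sigma_i$, notes that the algebraic/transcendental splitting is compatible with this eigendecomposition, identifies the $\sigma_i$-invariant transcendental pieces with $h^2_{\rm tra}(Y_i)$ following \cite{Lat21}, and uses rationality of $Y$ to conclude that $(h(X)_{\rm tra})_{++}$ vanishes. Your closing suggestion of iterating the single-double-cover result of \cite[Theorem 3.1]{Lat21} over the three quotients is only a cosmetic repackaging of the same argument, which the paper itself invokes by recalling Laterveer's proof.
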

\begin{proof} The main argument of the proof is similar to the previous ones on the decompositions of $T_X$ and of $A^2_0(X)$, since all of them are based on the decomposition induced on specific vector spaces by the action of $(\Z/2\Z)^2$. To decompose $h^2_{\rm tra}(X)$ we describe the property ``being invariant" or ``antiinvariant" with respect to a certain involution $\sigma_i$ in terms of appropriate correspondences.  In particular, we recall here the proof given in \cite{Lat21}. We define the motives $h^2(X)_{\pm, \pm}\in\mathcal{M}_{\rm rat}$ by setting
$$\begin{array}{ll}h(X)_{++}=(X,\frac{1}{4}(\Delta_X+\Gamma_1\circ\Delta_X+\Gamma_2), 0),\ &\ h(X)_{+-}=(X,\frac{1}{4}(\Delta_X+\Gamma_1\circ\Delta_X-\Gamma_2), 0)\\h(X)_{-+}=(X,\frac{1}{4}(\Delta_X-\Gamma_1\circ\Delta_X+\Gamma_2),\  0)&\ h(X)_{--}=(X,\frac{1}{4}(\Delta_X-\Gamma_1\circ\Delta_X-\Gamma_2), 0)\end{array}$$
where $\Delta_X$ is the class of the diagonal in $X\times X$ and $\Gamma_i$ is the class of the graph of $\sigma_i$ in $X\times X$.
So one obtains a decomposition $h(X)=h(X)_{++}\oplus h(X)_{+-}\oplus h(X)_{-+}\oplus h(X)_{--},$
which implies the similar decomposition for $h^2(X)$, since one can chose $h^0(X)$ and $h^4(X)$ to be invariant for $\sigma_i$.
				
Now the algebraic part of the motive and then the transcendental part of the motives are compatible with this decomposition and $(h(X)_{\rm tra})_{++}$ is isometric to $ h(Y)_{\rm tra}$, which is trivial since $Y$ is rational. \end{proof}

To speak about motivic Mumford--Tate conjecture we need to introduce the notion of {\it motivated cycles} (for a brief introduction see e.g. \cite[Section 3.1]{Moonen}).

\begin{definition}
	Let $K$ be a subfield of~$\CC$,
 and let $X$ be a smooth projective variety over~$K$.
A class $\gamma$ in $H^{2i}(X)$ is called
a \emph{motivated cycle} of degree~$i$
if there exists an auxiliary smooth projective variety~$Y$ over~$K$
such that $\gamma$ is of the form $\pi_*(\alpha \cup \star\beta)$,
where $\pi \colon X \times Y \to X$ is the projection,
$\alpha$ and~$\beta$ are algebraic cycle classes in $H^*(X \times Y)$,
and $\star\beta$ is the image of~$\beta$ under the Hodge star operation.
\end{definition}
(Alternatively, one may use the Lefschetz star operation,
see~\S1 of~\cite{A96}.)
			
Every algebraic cycle is motivated,
and under the Lefschetz standard conjecture the converse holds as well.
The set of motivated cycles naturally forms a graded $\QQ$-algebra.
The category of motives over~$K$, denoted~$\Mot_K$,
consists of objects $(X,p,m)$,
where $X$ is a smooth projective variety over~$K$,
$p$ is an idempotent motivated cycle on $X \times X$,
and $m$ is an integer.
A morphism $(X,p,m) \to (Y,q,n)$
is a motivated cycle~$\gamma$ of degree $n-m$ on $Y \times X$
such that $q \gamma p = \gamma$.
We denote with $\HH(X)$ the object $(X,\Delta,0)$,
where $\Delta$ is the class of the diagonal in $X \times X$.
The K\"unneth projectors $\pi_i$ are motivated cycles,
and we denote with $\HH^i(X)$ the object $(X,\pi_i,0)$.
Observe that $\HH(X) = \bigoplus_i \HH^i(X)$.
This gives contravariant functors $\HH(\_)$ and $\HH^i(\_)$
from the category of smooth projective varieties over~$K$ to~$\Mot_K$.
			
\begin{theorem} 
\label{mot-props}
	The category $\Mot_K$ is Tannakian over~$\QQ$,
	semisimple, graded, and polarised.
	Every classical cohomology theory of smooth projective varieties over~$K$
	factors via~$\Mot_K$.
\begin{proof}
	See th\'eor\`eme~0.4 of~\cite{A96}.
\end{proof}
\end{theorem}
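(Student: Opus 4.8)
The plan is to verify, one property at a time, the axioms of the Tannakian formalism of Deligne--Milne, using the Betti realization as the candidate fibre functor. By construction $\Mot_K$ is a $\QQ$-linear, pseudo-abelian, rigid tensor category: the tensor product is induced by the product of varieties in $\SmPr_{/K}$, the unit object is $\HH(\Spec K)$, and duals are built from Poincar\'e duality once one knows that the relevant K\"unneth projectors are motivated. Thus the whole argument hinges on having enough motivated cycles available, and the first task is to produce them.

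The structural input that makes the theory unconditional is that the Lefschetz (equivalently, the Hodge) star operator is itself a motivated cycle; this is precisely the role of inserting $\star\beta$ into the definition. I would use this to show that the K\"unneth projectors $\pi_i$, the Lefschetz operators, and the hard Lefschetz isomorphisms are all motivated. From the motivatedness of the $\pi_i$ one obtains the decomposition $\HH(X)=\bigoplus_i \HH^i(X)$, which gives the grading, and one also obtains rigidity in the manner indicated above.

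The heart of the proof, and the step I expect to be the main obstacle, is abelianness together with semisimplicity. Here I would invoke the polarization: through the Betti realization one has the Hodge--Riemann bilinear relations, which, combined with the hard Lefschetz isomorphisms now available in the motivated setting, produce positive-definite symmetric pairings on the spaces of motivated cycles. Positivity forces the ideal of morphisms annihilated by the pairing to vanish, and by an argument of Jannsen type (semisimplicity deduced from a nondegenerate positive pairing) the pseudo-abelian category $\Mot_K$ is in fact abelian and semisimple, with the chosen pairings serving as polarizations. The delicate point is that this positivity only becomes visible after passing through the complex Betti realization, which is exactly why the hypothesis $K\subseteq\CC$ is essential.

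With semisimplicity established, the Betti realization $\HH(X)\mapsto H^*(X,\QQ)$ is an exact, faithful, $\QQ$-linear tensor functor with $\operatorname{End}(\mathbf 1)=\QQ$, so the Tannakian reconstruction theorem applies and yields that $\Mot_K$ is Tannakian over $\QQ$. For the final assertion I would observe that any classical Weil cohomology, together with its comparison isomorphism with Betti cohomology, carries motivated cycles to cycles of the corresponding theory compatibly with the motivated-cycle structure; hence each such cohomology, and each associated realization functor, factors through $\Mot_K$. All of this is carried out in detail in \cite{A96}.
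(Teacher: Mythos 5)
Your proposal is correct and amounts to a faithful outline of Andr\'e's own argument for th\'eor\`eme~0.4 of~\cite{A96} -- the motivated Lefschetz star making the K\"unneth projectors and hard Lefschetz isomorphisms available, polarization positivity yielding semisimplicity by a Jannsen-type argument, and Tannakian reconstruction via the Betti realization -- which is exactly the reference the paper gives as its entire proof. Since both you and the paper ultimately defer to~\cite{A96}, the approaches coincide.
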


\begin{definition} 
	Let $K$ be a subfield of~$\CC$.
	An \emph{abelian motive} over~$K$ is an object
	of the Tannakian subcategory of $\Mot_K$
	generated by objects of the form $\HH(X)$
	where $X$ is either an Abelian variety
	or $X = \Spec(L)$ for some finite extension
	$L/K$, with $L \subset \CC$.
										
	We denote the category of abelian motives over~$K$ with~$\AbMot_K$.
\end{definition}
									
Finally we need the following theorem
									
\begin{theorem} 
	
	\label{hodge-is-motivated}
	The Hodge realization functor $H(-)$ restricted to the subcategory of abelian motives is a full functor.
\begin{proof}
See th\'eor\`eme~0.6.2 of~\cite{A96}.
\end{proof}
\end{theorem}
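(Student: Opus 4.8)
The plan is to reduce the fullness of the Hodge realization on $\AbMot_K$ to the single assertion that every Hodge class on a product of abelian varieties is a motivated cycle, and then to feed in the two deep inputs on which the argument rests: Deligne's theorem that Hodge cycles on abelian varieties are absolutely Hodge, and the algebraicity of the Lefschetz (Hodge) star operator on the cohomology of an abelian variety.

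First I would unwind the meaning of fullness. For abelian motives $M,N$, a morphism $H(M)\to H(N)$ of rational Hodge structures is the same datum as a Hodge class in $H\big(\underline{\mathrm{Hom}}(M,N)\big)$, where $\underline{\mathrm{Hom}}(M,N)=M^{\vee}\otimes N$. Since $\AbMot_K$ is a Tannakian subcategory of $\Mot_K$ (Theorem \ref{mot-props}), it is stable under duals and tensor products, so $\underline{\mathrm{Hom}}(M,N)$ is again an abelian motive. Hence fullness is equivalent to the statement that every Hodge class on an abelian motive is the Hodge realization of a morphism $\mathbf{1}\to \underline{\mathrm{Hom}}(M,N)$ in $\Mot_K$, i.e. is a motivated cycle. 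As $\AbMot_K$ is generated by the motives $\HH(A)$ of abelian varieties and by the Artin motives $\HH(\Spec L)$, and the Hodge classes contributed by the Artin factors are visibly motivated (they are Galois-permutation classes), it suffices to show that every Hodge class on a product of abelian varieties is motivated.

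Next I would bring in the structure of motivated cycles. By definition a motivated class is built from algebraic cycles using $\cup$, $\pi_*$ and the star operator $\star$. For abelian varieties the Lefschetz standard conjecture is known — the K\"unneth projectors and the inverse Lefschetz operator are algebraic correspondences (Lieberman, Kleiman) — so $\star$ is already available inside the purely algebraic framework; consequently the graded algebra of motivated classes on a product of abelian varieties contains all algebraic classes and is stable under $\star$ and the Lefschetz operators. The decisive step is to show this algebra exhausts the Hodge classes, and here I would invoke Deligne's theorem that Hodge cycles on abelian varieties are absolutely Hodge, combined with a spreading-out argument (``Principle B''): a Hodge class propagates over a connected family of abelian varieties as soon as it is motivated on one fibre, and realizing the given $A$ as a fibre of a suitable family over a Shimura variety exhibits the class in the form $\pi_*(\alpha\cup\star\beta)$ with $\alpha,\beta$ algebraic on an auxiliary abelian variety. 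This produces the required motivated representative.

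Equivalently, in Tannakian language, the inclusion of motivated classes into Hodge classes always gives $\GB(M)\subseteq G_{\mathrm{mot}}(M)$, and the argument above furnishes the reverse inclusion for abelian $M$; the two groups then coincide and $H$ is full on $\AbMot_K$. I expect the main obstacle to be exactly this last step: Deligne's ``Hodge $=$ absolutely Hodge'' theorem is the genuinely hard input, and its transposition into the unconditional, motivated setting relies on the algebraicity of $\star$ for abelian varieties; by contrast the categorical reductions above are formal.
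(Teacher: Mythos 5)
The paper's own ``proof'' of this theorem is a one-line citation to th\'eor\`eme~0.6.2 of Andr\'e's paper \cite{A96}, so your proposal must be measured against Andr\'e's actual argument. Your formal reductions are correct and do match his framework: fullness is equivalent, via the internal Hom $M^{\vee}\otimes N$ and the Tannakian structure of $\AbMot_K$ (Theorem \ref{mot-props}), to the statement that every Hodge class on an abelian motive is motivated; the Artin factors are harmless; and one reduces to Hodge classes on products of abelian varieties. The algebraicity of the Lefschetz star operator on abelian varieties (Lieberman, Kleiman) is also genuinely one of Andr\'e's inputs. (One caveat your reduction glosses over: Hodge classes live on the complex fibre while morphisms in $\Mot_K$ are built from cycles over $K$, so for non-algebraically-closed $K$ the statement requires care --- think of a CM endomorphism not defined over the base field.)

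The decisive step, however, has a genuine gap, in two respects. First, Deligne's theorem that Hodge cycles on abelian varieties are absolutely Hodge cannot be ``fed in'' as an input: motivated cycles are absolutely Hodge, but the converse is not known, so the implication runs the wrong way and citing Deligne's statement gives you nothing about motivated cycles. What Andr\'e does is re-run Deligne's \emph{proof}, not invoke his theorem, inside the motivated framework. Second, and more seriously, your application of Principle~B has no anchor. The deformation principle propagates motivated-ness along a family only from a fibre where the class is already known to be motivated, and your sketch never produces such a fibre; the phrase ``realizing $A$ as a fibre of a suitable family over a Shimura variety exhibits the class in the form $\pi_*(\alpha\cup\star\beta)$'' conceals exactly the missing content. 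In Andr\'e's (and Deligne's) argument this requires a chain of reductions: the Shimura family connects $A$ to abelian varieties of CM type; for CM abelian varieties a further linear-algebra argument reduces arbitrary Hodge classes to split Weil classes; and split Weil classes are finally shown to be motivated because they sit in Weil families in which, at suitable special fibres, the class becomes algebraic (an intersection of divisor classes). Without this chain --- which is the actual mathematical content of th\'eor\`eme~0.6.2 --- your argument is circular: it assumes motivated-ness somewhere in order to propagate it everywhere.
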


By Theorem  \ref{mot-props}, the singular cohomology and $\ell$-adic cohomology functors
factor via~$\Mot_K$.
This means that if $M$ is a motive,
then we can attach to it a Hodge structure~$\HB(M)$
and an $\ell$-adic Galois representation $\Hl(M)$.
The Artin comparison isomorphism between singular cohomology
and $\ell$-adic cohomology extends to an isomorphism of vector spaces
$\Hl(M) \cong \HB(M) \otimes \QQl$ that is natural in the motive~$M$.
												
We can state the motivated Mumford--Tate conjecture following \cite[Section 3.2]{Moonen}. We shall write $\GB(M)$ for the Mumford--Tate group $\GB(\HB(M))$.
Similarly, we write $\Gl(M)$ (resp.~$\Glc(M)$)
for $\Gl(\Hl(M))$ (resp.~$(\Glc(\Hl(M)$) for the Tate group.
The Mumford--Tate conjecture extends to motives:
for the motive~$M$ it asserts that the comparison isomorphism
$\Hl(M) \cong \HB(M) \otimes \QQl$
induces an isomorphism
$$\Glc(M) \cong \GB(M) \otimes \QQl.$$
												
The results that we have shown, up to here, enable us to prove the Mumford--Tate conjecture for some of the bidouble cover surfaces we have constructed. Indeed, we can prove the following.

\begin{theorem}\label{Theo_MTC} Let $X$ be a surface with $p_g(X)=3$ or $2$ and obtained as a bidouble cover of a minimal rational surface $Y$ whose intermediate quotients $Y_i$, $i=1,2,3$, are three or two K3 surfaces and the possible remaining intermediate quotient is a rational surface. Then the Mumford--Tate and the Tate conjectures hold for $X$  if $\rho(Y_i)=\rho(Y)$ when $Y_i$ is a K3 surface.
\end{theorem}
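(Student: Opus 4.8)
The plan is to reduce the Mumford--Tate conjecture for $X$ to the conjecture for the K3 surfaces occurring among the $Y_i$, via the motivic decomposition of Proposition \ref{prop: motives}, and then to glue the individual conjectures together following \cite{Com19} and \cite{CP}. First I would cut down to the transcendental part of $H^2(X)$. Since $Y$ is rational the irregularity $q(X)=\sum_i h^1(Y,\mathcal{L}_{\chi_i}^{-1})$ vanishes, so $X$ is regular and $H^1(X)=H^3(X)=0$, while $H^0(X)$ and $H^4(X)$ are of Tate type, for which the conjecture is trivial; thus the conjecture for $X$ is the conjecture for $H^2(X)$. By Corollary \ref{corollary hodge structures and trascendental} the hypothesis $\rho(Y_i)=\rho(Y)$ for the K3 quotients yields the clean splitting $H^2(X,\QQ)\simeq\big(H^2(Y_1,\ZZ)\oplus T_{Y_2}\oplus T_{Y_3}\big)\otimes\QQ$ with $\rho(X)=\rho(Y_1)$, in which the N\'eron--Severi summand consists of Tate classes. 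Since both $\GB$ and $\Glc$ act through the similitude character on this Tate part, the conjecture for $H^2(X)$ is equivalent to
\[
\Glc\big(h^2_{\rm tra}(X)\big)\cong\GB\big(h^2_{\rm tra}(X)\big)\otimes\QQl .
\]

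Next I would transport the problem to the K3 factors. By Proposition \ref{prop: motives} there is an isomorphism of Chow motives $h^2_{\rm tra}(X)\simeq\bigoplus_{i=1}^3 h^2_{\rm tra}(Y_i)$, and since the transcendental motive of a rational surface is zero, only the K3 quotients survive, giving a sum of $p_g(X)\in\{2,3\}$ factors. Through the realization functors of Theorem \ref{mot-props} this isomorphism induces compatible isomorphisms on the Hodge and the $\ell$-adic realizations, hence identifies both $\GB$ and $\Glc$ of $h^2_{\rm tra}(X)$ with those of $\bigoplus_i h^2_{\rm tra}(Y_i)$. Each K3 surface $Y_i$ satisfies the Mumford--Tate conjecture and has abelian motive (via the Kuga--Satake construction), so each factor $h^2_{\rm tra}(Y_i)$ is an abelian motive for which the conjecture is known; moreover the hypothesis $\rho(Y_i)=\rho(Y)$ forces the transcendental lattice to be of maximal rank, so that $\GB\big(h^2_{\rm tra}(Y_i)\big)$ is the full orthogonal group of the form on $T_{Y_i}\otimes\QQ$.

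The step I expect to be the main obstacle is the last one: passing from the conjecture for each summand to the conjecture for their direct sum. One always has the inclusion $\Glc\subseteq\GB\otimes\QQl$ (the motive being abelian), and by the conjecture for each K3 factor both groups surject onto every factor; hence both are subgroups of $\prod_i\GB\big(h^2_{\rm tra}(Y_i)\big)\otimes\QQl$ projecting onto each factor, and by Goursat's lemma each is a fibre product glued along isomorphisms between sub-quotients of the factors. The conjecture for the sum amounts to showing that the $\ell$-adic gluing is no finer than the Hodge gluing, i.e.\ that any $\ell$-adic isomorphism between the transcendental pieces of two distinct K3 factors is already induced by a Hodge isometry $T_{Y_i}\otimes\QQ\simeq T_{Y_j}\otimes\QQ$. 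Because every factor is of K3-type with full orthogonal Mumford--Tate group, there are no spurious central sub-quotients, and the only possible gluings come from such matched pairs; for these, the compatibility of the connecting isomorphism on the two realizations is exactly what the Mumford--Tate conjecture for the individual K3 surfaces delivers. This is the content of the direct--sum criterion of \cite{Com19}, which I would invoke to conclude that the two fibre--product structures coincide, whence $\Glc\cong\GB\otimes\QQl$ for $h^2_{\rm tra}(X)$, and therefore for $X$.

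Finally, the Tate conjecture follows formally: the Mumford--Tate conjecture matches, under the comparison isomorphism, the space of Hodge classes in $H^2(X)$ with the space of Tate classes, and Hodge classes in $H^2$ of a surface are algebraic by the Lefschetz $(1,1)$ theorem, so the Tate classes are algebraic as well, which is the Tate conjecture for $X$.
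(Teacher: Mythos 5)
Your proposal follows essentially the same route as the paper's proof: reduce to the transcendental part of $H^2(X)$, use Proposition \ref{prop: motives} together with Corollary \ref{corollary hodge structures and trascendental} to identify $h^2_{\rm tra}(X)$ with the direct sum of the transcendental motives of the K3 quotients, note that these are abelian motives of K3 type for which the Mumford--Tate conjecture is known by Andr\'e, glue the conjecture across the direct sum by invoking Commelin's results \cite{Com16}, \cite{Com19}, \cite[Chapter 17]{ComT}, and deduce the Tate conjecture from the Lefschetz $(1,1)$ theorem. This is exactly the paper's strategy.

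However, one intermediate claim in your write-up is false and should be removed: you assert that the hypothesis $\rho(Y_i)=\rho(Y)$ forces $\GB\big(h^2_{\rm tra}(Y_i)\big)$ to be the full (special) orthogonal group of the form on $T_{Y_i}\otimes\QQ$. A small Picard number does not imply genericity of the Mumford--Tate group: by Zarhin's classification of endomorphism algebras of K3-type Hodge structures, a K3 surface with $\rho=1$ or $2$ may still carry real multiplication by a totally real field $E\supsetneq\QQ$, in which case $\GB$ is a proper subgroup of ${\rm SO}(T_{Y_i}\otimes\QQ)$. The actual role of the Picard-number hypothesis --- in the paper and in your own appeal to Corollary \ref{corollary hodge structures and trascendental} --- is merely to kill the summands ${p_2}_*(NS(X)_{-+})$ and ${p_3}_*(NS(X)_{--})$, so that the algebraic part of $\HH^2(X)$ is exactly $\QQ(-1)^{\rho(Y_1)}$ and $\HH^2(X)$ is visibly an abelian motive. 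Consequently your Goursat-lemma discussion, which leans on the full-orthogonal-group claim to exclude ``spurious'' sub-quotients, is not by itself a proof of the gluing step; it is rescued only because you, exactly like the paper, ultimately attribute that step to Commelin's direct-sum results, which require no such genericity hypothesis. With the erroneous claim deleted and the gluing step resting entirely on \cite{Com19} and \cite[Chapter 17]{ComT}, your argument coincides with the paper's.
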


Before giving the proof we illustrate the demonstration strategy in the realm of motives, we follow the same circle of ideas as in \cite[Section 5]{CP}. 
												
The main idea in \cite{CP} is that for surfaces $X$ with $p_g=2$ it is sometimes  possible to decompose the weight $2$ Hodge structure into two Hodge substructures of K3 type and see that  these Hodge substructures  are indeed the Hodge structures of either Abelian surfaces or K3 surfaces which are (birational) quotients of $X$. This geometric construction makes possible to consider the theory of motivated cycles introduced by Andr\'e, and to decompose the motive of $X$ into two abelian motives of K3 type. For these motives the Mumford--Tate conjecture is known. This, together with the main results of \cite{Com16} and \cite{Com19} allows to prove the Mumford-Tate conjecture for $X$. In \cite{CP} it was used the fact that the surfaces studied are of maximal Albanese dimension hence there is naturally an Abelian surface as a quotient surface, and the authors searched for a K3 surface. Here we have naturally the motives of two or three K3 surfaces coming from the bidouble cover construction. 
												
\begin{proof}[Proof. Theorem \ref{Theo_MTC}] Let $X$ be a bidouble cover of $Y$ a rational minimal surface. Suppose that $p_g(X)=2$ or $3$, that at least two of the intermediate quotients are K3 surfaces, say  $Y_2$ and $Y_3$, and that the remaining one is either rational or another K3 surface. 
Then notice that the weight 2 Hodge structure with rational coefficients of $X$ has  two (three) Hodge substructures that correspond to the transcendental lattices of the K3 surfaces $Y_2$ and $Y_3$ (and $Y_1$ too, in case $p_g(X)=3$) as explained in Proposition \ref{prop: Hodge structure}.
													
Moreover, by  \cite[Theorem 3.1]{Lat22} and by Proposition \ref{prop: motives} we know that there is a decomposition also at the level of Chow motives.  Therefore, we can use the theory of Motivated Cycles developed in \cite{A96}. 
In particular, recall by \cite[Theorem 7.1]{A96} that the motivic cohomology $\HH^2(S)$ of all algebraic K3 surfaces $S$ is an object in the category of abelian motives which we call {\it abelian motives of K3 type}.
													
 There are two cases: either $Y_1$ is a rational surface or a K3 surface. In either case $\HH^2(Y_1)$ is an abelian motives, indeed if $Y_1$ is a rational surface then  all the cohomology is generated by algebraic cycles and therefore $\HH^2(Y_1)$ is built out of Lefschetz motives $ \QQ(-1)$ (see e.g., \cite[Theorem 3.21]{GP}), which are abelian. 

By Corollary \ref{corollary hodge structures and trascendental}, the hypothesis on the Picard number of $Y_i$ implies the following decomposition as direct sum:
\[
\HH^2(X)=\HH^2(Y_1) \oplus\HH^2(Y_2)^{\textrm{tra}}\oplus \HH^2(Y_3)^{\textrm{tra}}= \QQ(-1)^{\rho(Y_1)} \oplus\HH^2(Y_2)^{\textrm{tra}}\oplus \HH^2(Y_3)^{\textrm{tra}}\, \textrm{ if } \, p_g(X)=2
\]
or 
\[
\HH^2(X)=\QQ(-1)^{\rho(Y_1)}\oplus\HH^2(Y_1)^{\textrm{tra}} \oplus\HH^2(Y_2)^{\textrm{tra}}\oplus \HH^2(Y_3)^{\textrm{tra}} \, \textrm{ if } \, p_g(X)=3
\]
here $(-)^{\textrm{tra}}$ denote the \emph{transcendental}  part of a Hodge motive. We conclude that $\HH^2(X)$ is abelian.

By what we just said we note that the motive $\HH(X)$ is an object in the Tannakian subcategory of $\Mot_K$  generated by the Lefschetz motives and $\HH(Y_2)$ and $\HH(Y_3)$ (or by $\HH(Y_1)$, $\HH(Y_2)$ and $\HH(Y_3)$). This is because $\HH(Y_i)$ is generated by Lefschtez motives and $\HH^2(Y_i)^{\textrm{tra}}$, see e.g. \cite[Section 5]{CP}.
The Mumford--Tate conjecture is known for abelian motives of K3 type (see e.g., \cite{A96}).  Hence  for $\HH(Y_i)$ with $i=2,3$ and also for $\HH(Y_1)$ if $Y_1$ is a K3. Therefore it suffices to prove the Mumford--Tate conjecture for $\HH(Y_2)\oplus \HH(Y_3)$ or  $\HH(Y_1)\oplus \HH(Y_2)\oplus \HH(Y_3)$. But this follows from the main results of \cite{Com16} and \cite{Com19}, for more details see \cite[Chapter 17]{ComT}.

Finally, recall that if the Mumford--Tate conjecture is true for~$X$,
then the Hodge conjecture for~$X$
is equivalent to the Tate conjecture for~$X$. In our case the Hodge conjecture is true for $X$, by the Lefschetz$-(1,1)$ Theorem. Therefore the Tate conjecture for $X$ is true too.
\end{proof}

\begin{corollary}\label{cor: MT for singular} Let $X$ and $X'$ be two bidouble covers of a minimal rational surface $Y$ as in Proposition \ref{prop: singular bidouble general, Hodge}. If $X$ satisfies the hypothesis of Theorem \ref{Theo_MTC} then Mumford--Tate  and the Tate conjectures hold for $X'$. 
	\end{corollary}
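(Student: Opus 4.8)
The plan is to re-run the motivic argument of Theorem~\ref{Theo_MTC} for $X'$ directly, using that $X'$ is again the minimal model of a bidouble cover of $Y$, now with data $\{D_1,D_2,D_3'\}$ where $D_3'$ is linearly equivalent to $D_3$. The first step is to check that $X'$ has exactly the same profile of intermediate quotients as $X$. Since $Y_3'$ is branched on $D_1\cup D_2$ it coincides with $Y_3$, while $Y_1'$ and $Y_2'$ are branched on the simple normal crossing divisors $D_2\cup D_3'$ and $D_1\cup D_3'$, which are linearly equivalent to $D_2\cup D_3$ and $D_1\cup D_3$. Consequently the building data satisfy that $L_i'$ is linearly equivalent to $L_i$ for each $i$ (on the rational surface $Y$ the Picard group is torsion-free, so the square root is unique), and Lemma~\ref{lemma: IL LEMMA}(2)--(3) shows that $Y_i'$ is a K3 surface (respectively has $p_g=0$) precisely when $Y_i$ is. Hence $X'$ has the same number (two or three) of K3 intermediate quotients and the remaining quotient rational, and $p_g(X')=p_g(X)\in\{2,3\}$, this last equality because $p_g$ is a birational invariant and, as recalled in Section~\ref{subsubsec: singular bidouble}, it is unchanged by imposing $(1,1,1)$ triple points.

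The second step is to transfer the motivic decomposition to $X'$. Because the triple points of $D'$ are of type $(1,1,1)$, resolving them and contracting to the minimal model $X'$ alters only the algebraic part of $H^2$; the transcendental motive $h^2_{\mathrm{tra}}(X')$ is a birational invariant and still carries the $(\Z/2\Z)^2$-action encoded by the correspondences $\Gamma_i$. Thus the proof of Proposition~\ref{prop: motives} applies verbatim -- it is the motivic counterpart of the Hodge-theoretic decomposition (Proposition~\ref{prop: Hodge structure}) already granted for $X'$ by Proposition~\ref{prop: singular bidouble general, Hodge} -- and yields $h^2_{\mathrm{tra}}(X')\simeq\bigoplus_{i}h^2_{\mathrm{tra}}(Y_i')$, a direct sum of transcendental motives of K3 surfaces (the rational quotient contributing nothing). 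Passing to Andr\'e motives and writing $\HH^2(X')=\HH^2(X')^{\mathrm{alg}}\oplus\HH^2(X')^{\mathrm{tra}}$ with $\HH^2(X')^{\mathrm{alg}}\cong\QQ(-1)^{\rho(X')}$, we conclude that $\HH^2(X')$ is an abelian motive and that $\HH(X')$ lies in the Tannakian subcategory of $\Mot_K$ generated by the Lefschetz motives and the K3 motives $\HH(Y_i')$.

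From here the conclusion is identical to that of Theorem~\ref{Theo_MTC}: the Mumford--Tate conjecture holds for abelian motives of K3 type and, by \cite{Com16,Com19} (see \cite[Chapter~17]{ComT}), for products of K3 surfaces, hence for $\HH(X')$; and since the Hodge conjecture for $X'$ follows from the Lefschetz $(1,1)$ theorem, so does the Tate conjecture. I would stress that the Picard-number hypothesis of Theorem~\ref{Theo_MTC} need not be verified for $X'$: the specialization $D_3\rightsquigarrow D_3'$ may well raise the Picard number of the K3 quotients $Y_1'$ or $Y_2'$, but any such extra classes merely enlarge the Lefschetz summand $\HH^2(X')^{\mathrm{alg}}$ and leave the reduction to K3 products unaffected. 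The only genuine obstacle is therefore the second step: one must check that resolving the $(1,1,1)$ triple points and passing to the minimal model preserves the $G$-equivariant transcendental motive, so that Proposition~\ref{prop: motives} indeed carries over to the singular cover $X'$.
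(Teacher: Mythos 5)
Your proposal is correct, but it takes a genuinely different route from the paper's proof. The paper does not re-run Theorem \ref{Theo_MTC} for $X'$: it compares $X'$ with $X$ directly, observing that $X'$ is obtained by introducing one $(-4)$-curve per point of type $(1,1,1)$ and that the orthogonal complement of these exceptional classes reproduces the cohomology of $X$ (via Proposition \ref{prop: singular bidouble general, Hodge}), so that $H^2(X')_{\rm mot}\simeq H^2(X)_{\rm mot}\oplus \Q(-1)^k$; the conjectures for $X'$ are then inherited from Theorem \ref{Theo_MTC} applied to $X$, together with the Mumford--Tate conjecture for Lefschetz motives, all motives involved being abelian. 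You instead bypass $X$ entirely and re-prove the theorem for $X'$ from its own bidouble structure: you check (via linear equivalence of the building data and Lemma \ref{lemma: IL LEMMA}) that the quotients $Y_i'$ have the same K3/rational profile as the $Y_i$, extend the motivic decomposition of Proposition \ref{prop: motives} to $X'$, and feed the resulting sum of K3 transcendental motives and Lefschetz motives into Commelin's results. Your route buys two things: it makes explicit that the Picard-number hypothesis of Theorem \ref{Theo_MTC} need not be verified for $X'$ (extra algebraic classes only enlarge the Lefschetz summand), and it avoids leaning on the isomorphism $T_{X'}\otimes\Q\simeq T_X\otimes\Q$ between two non-birational surfaces, needing only the internal decomposition of $T_{X'}$; the paper's route buys brevity, since nothing about the quotients of $X'$ has to be revisited. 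As for the ``obstacle'' you flag at the end --- that Proposition \ref{prop: motives} must be seen to hold for the resolved singular cover --- it is real but harmless, and in fact both arguments need it: the correspondence argument in Proposition \ref{prop: motives} only uses a smooth projective surface carrying a $(\Z/2\Z)^2$-action whose quotient has trivial transcendental part, and the $G$-action lifts to the minimal resolution $X'$; the paper's isomorphism $H^2(X')_{\rm mot}\simeq H^2(X)_{\rm mot}\oplus\Q(-1)^k$ implicitly rests on the same fact, since $X$ and $X'$ are not related by any correspondence and the isomorphism must be produced by lifting the Hodge-theoretic one through Andr\'e's fullness theorem (Theorem \ref{hodge-is-motivated}), which presupposes that $H^2(X')_{\rm mot}$ is an abelian motive.
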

\proof By Proposition \ref{prop: singular bidouble general, Hodge} the transcendental Hodge structures of $X$ and of $X'$ are isomorphic. The surface $X'$ is the minimal model of a singular surface and it is obtained introducing a $(-4)$-curve for each point of type $(1,1,1)$, cf. \cite{Cat99}. Notice that the orthogonal complement to these exceptional classes in the second cohomology group of $X'$ coincides with the second cohomology group of $X$. Hence 
$$H^2(X')_{\mbox{mot}}\simeq H^2(X)_{\mbox{mot}}\oplus \Q(-1)^k$$
where $k$ is the number of points of type $(1,1,1)$. We recall that Mumford--Tate conjecture holds for the Lefschetz motives $\Q(-1)$ and for $X$ (by Theorem \ref{Theo_MTC}). We conclude because all the motives involved are abelian. \endproof
					
\section{The rational surface $Y$ is $\PP^2$}\label{sec: P2}

In this section we consider bidouble covers and in particular the constructions described in Sections \ref{subsubsec: construction 1}, \ref{subsubsec: construction 2}, \ref{subsubsec: singular bidouble} by choosing the rational surface $Y$ to be $\mathbb{P}^2$.

\subsection{The first bidouble cover}\label{subsec: first bidouble P2}
Here we consider the construction \ref{construction 1: first double cover} assuming $Y=\mathbb{P}^2$.

Let $h$ be the class of a line in $\mathbb{P}^2$, then a bidouble covering data is of the form 
\[
D_i\sim n_ih,
\] 
with $n_i \in \mathbb{N}$, for $i=1,2,3$ satisfying  the condition $n_i+n_j\equiv 0\mod 2$, indeed $L_i=\frac{n_j+n_k}{2}h.$ 
In particular the covering surface $X$ has $$\chi(X)=4+\frac{\sum_i n_i^2+n_1n_2+n_1n_3+n_2n_3-6\sum_in_i}{4}, \quad K^2_X=(-6+n_1+n_2+n_3)^2.$$

\begin{rem}\label{rem: bidoubel P2 K3}{\rm A bidouble cover of $\mathbb{P}^2$ is a K3 surface if and only if, up to a permutation of the indices, $(n_1, n_2, n_3)$ is $(2,4,0)$.}
\end{rem}

\begin{theorem}\label{theorem: bidouble P2} Let $X\ra \mathbb{P}^2$ be a smooth bidouble cover. Let us assume that each intermediate double cover $Y_i$ is either a K3 surface or with $p_g=0$ and that at least one of them is a K3 surface. Then there are the following possibilities (up to permutations of the indices)\footnote{ In the table by $rat$ we mean that the surface $Y_i$  is a rational surface}

\begin{table}[!h]
\begin{tabular}{|c||c|c|c||c|c|c||c|c|c|c|c|c|c|}\hline
 case&$n_1$&$n_2$&$n_3$&$Y_1$&$Y_2$&$Y_3$&$p_g(X)$&$q(X)$&$n_1+n_2+n_3$&$K^2_X$&Surface Name&MTC&ITP\\
  \hline
 a)&$3$&$3$&$1$&{\rm rat}&{\rm rat}&{\rm K3}&$1$&$0$&$7$&$1$& Special Kunev &$\checkmark$ & $\times$ \\
 
b)&$3$&$3$&$3$&{\rm K3}&{\rm K3}&{\rm K3}&$3$&$0$&$9$&$9$&--& $\checkmark$  & \\
 
c)&$4$&$2$&$0$&{\rm rat}&{\rm rat}& {\rm K3}&$1$&$0$&$6$&$0$ &K3 &$\checkmark$ & $\checkmark$ \\

d)&$4$&$2$&$2$&{\rm rat}&{\rm K3}&{\rm K3}&$2$&$0$&$8$&$4$ & Garbagnati Type $2$ & $\checkmark$ & $\checkmark$ \\

e)&$5$&$1$&$1$&{\rm rat}&{\rm K3}&{\rm K3}&$2$&$0$&$7$&$1$ &Special Horikawa &  $\checkmark$ &$\checkmark$ \\
\hline
\end{tabular}
\caption{} \label{Table1}
\end{table}

	The surface $X$ is minimal.

	In case $c)$, $X$ is a K3 surface. In all the other cases $X$ is of general type.
	
	The surface $X$ has the following properties:
	$$T_X\otimes \Q\simeq (T_{Y_1}\oplus T_{Y_2}\oplus T_{Y_3})\otimes \Q,\ A_0(X)\otimes \Q\simeq (A_0(Y_1)\oplus A_0(Y_2)\oplus A_0(Y_3))\otimes \Q,$$
	 $$h(X)_{\rm{tra}}\otimes \Q=\big(h(Y_1)_{\rm{tra}}\oplus h(Y_2)_{\rm{tra}}\oplus h(Y_3)_{\rm{tra}}\big) \otimes \Q.$$
	
	The Mumford--Tate conjecture (MTC) and the Tate conjecture hold in the cases $a), \ldots ,e)$.
	
	The Infinitesimal Torelli Property (ITP) holds in the cases $c)$, $d)$ and $e)$, it does not hold in case $a)$.	
\end{theorem}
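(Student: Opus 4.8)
I would establish the several assertions in turn, beginning with the classification as a finite arithmetic check. Since $\Pic(\PP^2)=\ZZ h$ and $K_{\PP^2}=-3h$, writing $D_i\sim n_ih$ and $L_i=\tfrac{n_j+n_k}{2}h$, Lemma \ref{lemma: IL LEMMA} translates into the conditions: $Y_i$ is a K3 surface iff $n_j+n_k=6$, and $p_g(Y_i)=0$ iff $n_j+n_k\le 4$; the parity constraint $n_i+n_j\equiv 0 \bmod 2$ forces all $n_i$ to share the same parity, and at most one $n_i$ may vanish. Ordering $n_1\ge n_2\ge n_3$, the largest pair-sum $n_1+n_2$ must equal $6$ (otherwise no $Y_i$ is a K3 surface), and a short case distinction on how many of the remaining pair-sums equal $6$ leaves exactly the tuples $(3,3,1),(3,3,3),(4,2,0),(4,2,2),(5,1,1)$. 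The invariants $p_g,q,K_X^2$ then follow from the formulae recalled before Remark \ref{rem: bidoubel P2 K3}, using $q(X)=\sum_\chi h^1(\mathcal{L}_{\chi^{-1}})=0$ on $\PP^2$. For minimality I would use $2K_X=\pi^*(2K_{\PP^2}+D)=\pi^*\big((n_1+n_2+n_3-6)h\big)$: in case c) the sum is $6$, so $2K_X=0$ and $\kappa(X)=0$ by Lemma \ref{lemma: IL LEMMA}(4), while Remark \ref{rem: bidoubel P2 K3} identifies $X$ as a K3 surface; in all other cases the sum exceeds $6$, so $2K_X$ is the pullback of an ample class by a finite morphism, hence ample, and $X$ is a minimal surface of general type.

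The decomposition of the transcendental lattice, of $A_0(X)$ and of the transcendental motive is then immediate from Propositions \ref{prop: Hodge structure}, \ref{prop: chow groups} and \ref{prop: motives}, since $\PP^2$ is rational and each summand $T_{Y_i}$ (resp. $A_0(Y_i)$, $h(Y_i)_{\tra}$) is trivial when $Y_i$ is rational and of K3 type otherwise. For the Mumford--Tate and Tate conjectures I would split according to $p_g(X)$. In case c) the surface $X$ is itself a K3 surface, so both conjectures are classical. In case a) one has $p_g(X)=1$ with only $Y_3$ a K3 surface, so $T_X\otimes\Q\simeq T_{Y_3}\otimes\Q$ and $\HH^2(X)$ is a sum of Lefschetz motives and the abelian K3-type motive $\HH^2(Y_3)^{\tra}$; the Mumford--Tate conjecture thus reduces to the known case of the single K3 surface $Y_3$, and the Tate conjecture follows via the Lefschetz $(1,1)$-theorem. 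In cases b), d) and e) one has $p_g(X)\in\{2,3\}$ and two or three K3 intermediate covers, so I would invoke Theorem \ref{Theo_MTC} directly, after checking that $\rho(Y_i)=\rho(\PP^2)=1$ for the relevant K3 covers.

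For the infinitesimal Torelli property I again separate the cases. In case c), $X$ is a K3 surface and the statement is classical. In cases d) and e) I would verify the hypotheses of Theorem \ref{theo: Infinitesimal Torelli}: the two intermediate covers $Y_2,Y_3$ are K3 surfaces; on $\PP^2$ all divisor classes are proportional to $h$; the curves $D_2,D_3$ are conics (in d)) or lines (in e)), hence rational; and $(\omega_{\PP^2}(D_{\chi_i})\otimes\mathcal{L}_{\chi_i})^{-1}=\mathcal{O}\big((3-n_i-\tfrac{n_j+n_k}{2})h\big)$ has no global sections because $n_i+\tfrac{n_j+n_k}{2}>3$ in each instance, so the theorem yields the property. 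In case a) that theorem does not apply, since only one intermediate cover is a K3 surface, and the property in fact fails: by Proposition \ref{prop_InfTorelliCrit} the group $H^0(\omega_{\PP^2}\otimes\mathcal{L}_{\chi^{-1}})$ vanishes for every $\chi\neq\chi_3$, so $\bigcap_{\phi}\ker\rho_{\chi,\phi}=\ker\rho_{\chi,\chi_3}=\ker q_{\chi,\chi_3}$, and one checks this kernel to be nonzero, recovering the well-known degeneration of the period map for the special Kunev surface.

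The routine parts are the enumeration and the numerical verifications; the two genuinely hard inputs are external and localised. For the Mumford--Tate conjecture in the multi-K3 cases b), d), e) everything rests on the deep results of Commelin packaged in Theorem \ref{Theo_MTC}, together with the control $\rho(Y_i)=1$ on the Picard numbers of the K3 covers. For the Torelli statements the delicate point is the failure in case a): proving $\ker q_{\chi,\chi_3}\neq 0$, equivalently that the infinitesimal period map degenerates, is precisely the classical pathology of Kunev surfaces and cannot be deduced from the sufficient criterion of Theorem \ref{theo: Infinitesimal Torelli}; this I expect to be the main obstacle.
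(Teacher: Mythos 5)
Your proposal follows the same architecture as the paper's proof: the arithmetic enumeration forcing $n_1+n_2=6$ with all pair sums at most $6$ and at most one $n_i=0$, minimality via $2K_X=\pi^*\big((n_1+n_2+n_3-6)h\big)$, the three decompositions via Propositions \ref{prop: Hodge structure}, \ref{prop: chow groups} and \ref{prop: motives}, Mumford--Tate via Theorem \ref{Theo_MTC}, and infinitesimal Torelli via Theorem \ref{theo: Infinitesimal Torelli}. Where you deviate, the deviations work and are worth noting: your $2K_X$ is ample (pullback of an ample class under a finite morphism), slightly stronger than the paper's nef-and-big; in case c) you identify $X$ as a K3 surface from $2K_X=0$, $q(X)=0$, $\chi(\oo_X)=2$ and the classification of surfaces with $\kappa=0$, where the paper instead uses Nikulin's theorem \cite{NKummer} on double covers of a K3 branched in eight nodes; in case e) you obtain the Torelli property from Theorem \ref{theo: Infinitesimal Torelli} itself (the hypotheses do hold there: $D_2,D_3$ are lines, $K_{\PP^2}+D_1=2h$ is ample, and $n_i+(n_j+n_k)/2>3$ for all $i$), where the paper imports this case from \cite{PZ19}; and in case a) your motivic reduction of Mumford--Tate to the single K3 surface $Y_3$ is a legitimate substitute for the paper's citation of \cite[Theorem 9.1]{Moonenh1}.

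Two steps, however, are announced rather than carried out, and they are precisely where the paper does concrete work. First, to invoke Theorem \ref{Theo_MTC} in cases b), d), e) you must actually verify $\rho(Y_i)=1$ for the K3 covers (and that the remaining intermediate quotient is rational, which you never address; the paper cites \cite{BHPV}, and here it is easy since those $Y_i$ are double planes branched in curves of degree at most four). The paper proves $\rho(Y_i)=1$ by a moduli count: the branch locus of $Y_2\ra\PP^2$ depends on $(n_1+2)(n_1+1)/2+(n_3+2)(n_3+1)/2-10$ parameters up to projectivities, the smooth model satisfies $m_{\widetilde{Y_2}}=20-\rho(\widetilde{Y_2})$, and subtracting the $n_1n_3$ exceptional classes gives $\rho(Y_2)=1$ in each of b), d), e); some such argument must appear in your proof, since the hypothesis of Theorem \ref{Theo_MTC} is not automatic. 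Second, the failure of infinitesimal Torelli in case a): your reduction to $\ker q_{\chi,\chi_3}\neq 0$ is correct (the vanishing of $H^0(\omega_{\PP^2}\otimes\mathcal{L}_{\phi^{-1}})$ for $\phi\neq\chi_3$, plus injectivity of $r_{\chi,\chi_3}$ from Proposition \ref{prop: rchichii injective for K3}), but you then leave the non-vanishing itself as ``the main obstacle''. This is a misjudgment: it is not something to re-derive, it is Kynev's classical counterexample, and the paper disposes of it by citing \cite{K77} (see also \cite{Cat78}, \cite{Cat80}, \cite{T80}). As written, your proof of the case a) Torelli assertion is incomplete; replacing your final ``one checks this kernel to be nonzero'' with that citation closes the gap.
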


We emphasize that we are unaware if the Infinitesimal Torelli Property holds for the surface of case $b)$, see also \cite[Remark 3.4]{Lat19}.

\begin{proof}
	To obtain the Table \ref{Table1}  one has to impose the conditions described in the Construction \ref{construction 1: first double cover}, in particular $n_1+n_2=6$ and $n_i+n_j\leq 6$. Moreover, we exclude the cases where two of $n_1$, $n_2$ and $n_3$ are trivial by Lemma \ref{lemma: IL LEMMA}. Up to a permutation of the indices the cases listed are the unique possible. Observe that the surfaces $Y_i$'s which are not K3 surfaces are rational by \cite[Section V.22]{BHPV}.
	
	The surface $Y_3$ is a K3 surface in all the cases, so the Kodaira dimension of $X$ is at least 0. If $K_X^2$ is positive, then the surface is of general type. It remains to analyze the case $c)$. In this case the surface $X$ is a double cover of a singular K3 surface $Y_3$ branched on the singular points of $Y_3$, indeed the branch curve of the double cover $X\ra Y_i$ is $\pi_i^*(D_i)$ and in our case $D_3=0$. The singular points of $Y_3$ are the eight points $\pi_1^{-1}(D_1\cap D_2)$, inverse image of the intersection points between a quartic and a conic. By \cite{NKummer}, this implies that $X$ is a K3 surface. 
	By Proposition \ref{prop: Hodge structure} one obtains $p_g(X)$ by the knowledge of $p_g(Y_i)$. The computation of the other geometric invariants of $X$ follows applying \eqref{eq_chiBiDoCo} and \eqref{eq_K2BiDoCo}.

	To show that $X$ is minimal in cases $a)$, $b)$, $d)$, $e)$ we observe that $K_X$ is big and nef. Indeed $K_X$ is big and nef if and only if $2K_X$ is big and nef. By Section \ref{sec_BiDou}, $K_X=\pi^*(K_{\mathbb{P}^2}+R)$ and $2R=\pi^*D$. It follows that $2K_X=\pi^*(2K_{\mathbb{P}^2}+D)$. In all the cases, except case $c)$, $2K_{\mathbb{P}^2}+D$ is an ample divisor. This implies that $2K_X$ is nef and big, and so $K_X$ is big and nef too. We already proved that in case $c)$, $X$ is a K3 surface, so it is minimal too.
	
	The decompositions of $T_X$, $A_0(X)$ and $h_{\rm tra}(X)$ follows by Propositions \ref{prop: Hodge structure}, \ref{prop: chow groups}, \ref{prop: motives}.

	The Mumford--Tate and Tate conjectures hold for K3 surfaces by Andr\'e and Tankeev, in particular they hold in case c). The case $a)$ is treated in \cite[Theorem 9.1]{Moonenh1}.
	
	To prove the Mumford--Tate and Tate conjectures for the surfaces $b),d)$ and $e)$ we show that they satisfy the hypothesis of Theorem \ref{Theo_MTC}. The surface $Y_2$ is the double cover of $\mathbb{P}^2$ branched on a sextic which is the union of two curves of degree $n_1$ and $n_3$ respectively. Since the double cover is totally determined by its branch locus, the moduli of a family of K3 surfaces, which are double cover of $\mathbb{P}^2$, equals the number of projective parameters on which the branch locus depends, up to projective transformations. So the dimension of the family of $Y_2$ is $$
	 	\left(\frac{(n_1+2)(n_1+1)}{2}-1\right)+\left(\frac{(n_3+2)(n_3+1)}{2}-1\right)-(9-1),$$ where $(n_i+2)(n_i+1)/2$ is the number of monomials of degree $n_i$ in 3 coordinates and $9-1$ is the number of a projective transformations in $\mathbb{P}^2$.
	 	
	 Denoted by $\widetilde{Y_2}$ the smooth model of $Y_2$, the following relation between the Picard number of $\widetilde{Y_2}$ and the number, $m_{\widetilde{Y_2}}$, of moduli of its family holds: $m_{\widetilde{Y_2}}=20-\rho(\widetilde{Y_2})$.\\ Hence $\rho(\widetilde{Y_2})=30-((n_1+2)(n_1+1)+(n_3+2)(n_3+1))/2$. Since $\widetilde{Y_2}$ is the blow up of $Y_2$ in $n_1n_3$ points, $$\rho(Y_2)=30-\frac{(n_1+2)(n_1+1)+(n_3+2)(n_3+1)}{2}-n_1n_3.$$
	 	
	The previous formula applied to all the cases in Table \ref{Table1} such that $Y_2$ is a K3 surface, gives $\rho(Y_2)=1=\rho(\mathbb{P}^2)$. The construction of the surface $Y_3$ is analogue to the one of the surface $Y_2$ and so $\rho(Y_2)=\rho(Y_3)=1=\rho(\mathbb{P}^2)$.
	
	 	 So the conditions of Theorem \ref{Theo_MTC} are satisfied and this proves the Mumford Tate and the Tate conjectures. 
	 	 
	 	 Notice that if $Y_i$ is a K3 surface $H^2_{\rm{tra}}(Y_i)$ (the transcendental part of the Hodge structure of $Y_i$) decomposes by its Hodge decomposition $(h^{2,0}(Y_i),h^{11}(Y_i),h^{0,2}(Y_i))_{\rm{tra}}$ into $(1,20-\rho(Y_i),1)$ parts. If $Y_i$ is rational $(h^{2,0}(Y_i),h^{11}(Y_i),h^{0,2}(Y_i))_{\rm{tra}}=(0,\rho(Y_i),0)$.
	 	 The number $h^{1,1}(X)$ is deduced using Noether's Formula to the smooth surface $X$. By Corollary \ref{corollary hodge structures and trascendental}, we deduce $\rho(X)=2p_g(X)+h^{1,1}(X)-\sum_i\rk(T_{Y_i})=\rho(Y_1)$. 	
	 	We summarize the results of the calculations in all the interesting cases in the following table.  
	\begin{table}[!h]
\begin{tabular}{|c||c|c|c|c|c|c|}\hline
	case&$h^{1,1}(X)$&$\rho(Y_1)$&$\rho(Y_2)$&$\rho(Y_3)$&$(h^{2,0}(Y_i),h^{11}(Y_i),h^{0,2}(Y_i))_{\rm{tra}}$& for $i=$\\
	\hline
	b) & $31$ &$1$ & $1$ & $1$ & $(1,10,1)$& 1,2,3 \\
	d) & $26$&$4$ & $1$ & $1$ & $(1,11,1)$ &2,3\\
	e) & $29$ &$1$ & $1$ & $1$ & $(1,14,1)$ &2,3 \\
	\hline
\end{tabular}
\end{table}

	In \cite{PZ19} it is proved that the Infinitesimal Torelli Property holds in case $e)$. The case $c)$ follows by general results on families K3 surfaces. 
	The hypothesis of Theorem \ref{theo: Infinitesimal Torelli} are satisfied in case $d)$, therefore the Infinitesimal Torelli Property holds for these surfaces.
	
	For the surfaces in a) the Infinitesimal Torelli Property fails as proven in \cite{K77} (see also \cite{Cat78},\cite{Cat80} and \cite{T80}). 
\end{proof}

For each of the previous cases we can determine the transcendental lattices $T_{Y_i}$ of the K3 surfaces involved in the construction and the rank $r_X$ of the transcendental lattice of $X$. The last column, $m_X$, contains the number of parameters up to projective transformations attached to the choice of the branch locus.
\begin{table}[!h]
\begin{tabular}{|c||c|c|c||c||c|c|c||c|c|c||c}\hline
	case&$T_{Y_1}$&$T_{Y_2}$&$T_{Y_3}$&$r_X$&$m_1$&$m_2$&$m_3$&$m_X$\\
	\hline
	a)&-&-&$U^{\oplus 2}\oplus E_8(-2)$&$12$&-&-&$10$&$12$\\
	b)&$U^{\oplus 2}\oplus E_8(-2)$&$U^{\oplus 2}\oplus E_8(-2)$&$U^{\oplus 2}\oplus E_8(-2)$&$36$&$10$&$10$&$10$&$19$\\
	c)&-&-& $U^{\oplus 2}\oplus D_4(-1)\oplus \langle -2\rangle^5$&$13$&-&-&$11$&$11$\\
	d)&-&$U^{\oplus 2}\oplus D_4(-1)\oplus \langle -2\rangle^5$&$U^{\oplus 2}\oplus D_4(-1)\oplus \langle -2\rangle^5$&$26$&-&$11$&$11$&$16$\\
	e)&-&$U^{\oplus 2}\oplus E_8\oplus \langle -2\rangle^4$&$U^{\oplus 2}\oplus E_8\oplus \langle -2\rangle^4$&$32$&-&$14$&$14$&$16$\\
	\hline
\end{tabular}
\caption{} \label{Table2}
\end{table}

If $Y_i$ is a K3 surface, to compute the lattice $T_{Y_i}$ given in the previous table, we observe that its rank equals the one of $T_{\widetilde{Y_i}}$, which is $22-\rho(\widetilde{Y_i})$. Since $\rho(\widetilde{Y_i})$ is computed in the proof of Theorem \ref{theorem: bidouble P2}, one knows $\rk(T_{Y_i})$. Since $Y_i$ is a double cover of $\mathbb{P}^2$, the cover involution fixes curves which are isomorphic to the branch locus. In particular, one can check that $\widetilde{Y_i}$ is a very general member of a family of K3 surfaces with a non-symplectic involution with a prescribed fixed locus. This allows to determines its transcendental lattices by using the results in \cite{N}, see also \cite{AST}, since the branch locus coincides with the fixed locus of the cover involution and so the transcendental lattice is the sublattice of $H^2(\widetilde{Y_i},\Z)$ on which the cover involution acts as minus the identity.

\begin{remark}{\rm 
		In case $c)$ the surface $X$ is a K3 surface, which is a double cover of the K3 surface  $Y_3$. 
		By the knowledge of the transcendental lattice of $Y_3$, we deduce the N\'eron--Severi group of the minimal model $\widetilde{Y_3}$ of $Y_3$, which is $NS(\widetilde{Y_3})=\langle 2\rangle \oplus N$ (where $N$ is the Nikulin lattice,  which characterises the K3 surfaces admitting a double cover with another K3 surface, see e.g. \cite{CG}). By \cite{CG}, we deduce that the N\'eron--Severi group of $X$, which is the double cover of $\widetilde{Y_3}$, is an overlattice of index 2 of $\langle 4\rangle\oplus N$. This implies that $T_X\simeq U(2)\oplus U(2)\oplus \langle -4\rangle\oplus E_8(-1)$.
		
		By the Theorem \ref{theorem: bidouble P2} (and the fact that both $Y_1$ and $Y_2$ are rational), $T_X\otimes \Q\simeq T_{Y_3}\otimes \Q$ (and indeed $\rk(T_X)=\rk(T_Y)=13$), but the isomorphism is not compatible with the lattice structure over $\Z$, since $$U^{\oplus 2}\oplus D_4(-1)\oplus \langle -2\rangle^5\simeq T_{Y_3}\not\simeq T_{X}\simeq U(2)\oplus U(2)\oplus \langle -4\rangle\oplus E_8(-1).$$
		
		In particular, the decomposition in Proposition \ref{prop: Hodge structure} holds on $\Q$, but one can not extend this result to the lattice structure over $\Z$.}\end{remark}
\begin{remark}\label{rem GS2}{\rm 
	In the sequel the most interesting cases for us will be those with $p_g(X)=2$, i.e., the last two cases in Table \ref{Table1}. Since these are regular surfaces and for case e)  we have $p_g=K^2/2+3/2$ we refer to the surfaces of case e) as Special Horikawa Surfaces 1  (SHS1 for short). Moreover, the surfaces in case d) are known as surfaces of type GS2b  meaning \emph{Garbagnati surfaces} of type 2b following \cite{Lat22}. }
\end{remark}

\medskip
\subsection{Singular bidouble}\label{subsect singular P2}
We now consider the construction described in Section \ref{subsubsec: singular bidouble}, changing the curve $D_3$ in Theorem \ref{theorem: bidouble P2} to obtain triple points in the branch locus.

\medskip

{\bf Specializations of case $b)$ to $(1,1,1)$ points}.
The curves $D_1$, $D_2$ and $D_3$ are cubics. One can specialize $D_3$ requiring that it passes through $k$ of the 9 points in $D_1\cap D_2$. Observe that $1\leq k\leq 9$, but $k\neq 8$, since if $D_3$ has 8 points  in commune with $D_1$ and $D_2$, then also the ninth point of $D_1\cap D_2$ is forced to be in $D_3$.

By considering the the bidouble cover branched on these three curves, one obtains a singular surface, whose minimal model will be denoted by $X'$.

\begin{prop}\label{prop_previous}
	
	Let $X'$ be the minimal model of a bidouble cover of $\mathbb{P}^2$ branched on the divisors $D_i$, $i=1,2,3$ as above and assume that there are $k=1,\ldots, 7$ points of type $(1,1,1)$.
	Then $X'$ is of general type, $$p_g(X')=3,\ \ K_{X'}^2=9-k,$$
	$$T_{X'}\otimes \Q\simeq \left(T_{Y_1}\oplus T_{Y_2}\oplus T_{Y_3}\right)\otimes \Q\mbox{ and }
	A_0^2(X')\otimes \Q\simeq \left(A_0^2(Y_1)\oplus A_0^2(Y_2)\oplus A_0^2(Y_3)\right)\otimes \Q.$$
	The Mumford--Tate and the Tate conjecture holds for $X'$.
\end{prop}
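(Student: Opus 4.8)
The plan is to reduce everything to the smooth case b) of Theorem \ref{theorem: bidouble P2} and then transport the conclusions across the specialization using the machinery already set up in Proposition \ref{prop: singular bidouble general, Hodge} and Corollary \ref{cor: MT for singular}. Let $X$ be the smooth bidouble cover of $\mathbb{P}^2$ with data $D_1,D_2,D_3\sim 3h$ in general position; by Theorem \ref{theorem: bidouble P2} it is a minimal surface of general type with $p_g(X)=3$, $q(X)=0$, $K_X^2=9$, whose three intermediate quotients $Y_1,Y_2,Y_3$ are all K3 surfaces, and which satisfies the hypotheses of Theorem \ref{Theo_MTC} (there $\rho(Y_i)=\rho(\mathbb{P}^2)=1$ for each $i$). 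Replacing $D_3$ by a smooth cubic $D_3'\sim D_3$ passing through $k\le 7$ of the nine points of $D_1\cap D_2$ produces exactly $k$ triple points of type $(1,1,1)$ and no worse singularities, so we are precisely in the setting of Proposition \ref{prop: singular bidouble general, Hodge}. I would first record that each of the three pairwise branch loci $D_1\cup D_2$, $D_1\cup D_3'$, $D_2\cup D_3'$ is still a union of two smooth cubics meeting transversally, so the three intermediate quotients of $X'$ remain K3 surfaces; in particular each $T_{Y_i}$ is again the transcendental lattice of a K3 surface.

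Next I would settle the birational type and the numerical invariants. The birational invariants are unchanged by the imposition of $(1,1,1)$ points (Section \ref{subsubsec: singular bidouble}, \cite{Cat99}), so $q(X')=0$ and $p_g(X')=p_g(X)=3$; the latter also follows from the isomorphism $T_{X'}\otimes\Q\simeq T_X\otimes\Q$ of Proposition \ref{prop: singular bidouble general, Hodge}, since $p_g=\dim (T)^{2,0}$. To see that $X'$ is of general type, I would argue as in the proof of Theorem \ref{theorem: bidouble P2}: on the cover one has $2K=\pi^*(2K_{\mathbb{P}^2}+D')$ with $2K_{\mathbb{P}^2}+D'\sim 3h$ ample on $\mathbb{P}^2$, so $K$ is big and $\kappa(X')=2$ (alternatively, $X'$ is minimal with $p_g(X')>0$, hence $\kappa\ge 0$, and once $K^2>0$ the Enriques--Kodaira classification forces $\kappa=2$). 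Being of general type, Proposition \ref{prop: singular bidouble general, Hodge} applies and gives $K_{X'}^2=K_X^2-k=9-k$, which is positive for $k\le 7$, consistent with general type.

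I would then record the motivic decompositions, which depend only on the eigenspace splitting under the $G=(\ZZ/2\ZZ)^2$-action together with the rationality of the base, and are therefore unaffected by the triple points. Applying Propositions \ref{prop: Hodge structure} and \ref{prop: chow groups} to the bidouble cover structure of $X'$ over $\mathbb{P}^2$ yields
\[
T_{X'}\otimes\Q\simeq (T_{Y_1}\oplus T_{Y_2}\oplus T_{Y_3})\otimes\Q,\qquad A_0^2(X')\otimes\Q\simeq (A_0^2(Y_1)\oplus A_0^2(Y_2)\oplus A_0^2(Y_3))\otimes\Q,
\]
with the $Y_i$ the K3 quotients identified above; equivalently one combines $T_{X'}\otimes\Q\simeq T_X\otimes\Q$ with the decomposition of $T_X$ already known for case b). Finally, the Mumford--Tate and Tate conjectures follow immediately from Corollary \ref{cor: MT for singular}: since $X$ satisfies the hypotheses of Theorem \ref{Theo_MTC} and $X'$ is obtained from $X$ exactly as in Proposition \ref{prop: singular bidouble general, Hodge}, the corollary gives both conjectures for $X'$ (resolving the $(1,1,1)$ points only adds Lefschetz summands $\Q(-1)$, so $\HH^2(X')$ stays abelian).

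The proof is essentially a direct transport of the case-b) conclusions through the already-established specialization lemmas, so no single step is a serious obstacle; the only points requiring genuine care are checking that the specialization keeps the three intermediate covers K3 and introduces nothing worse than $(1,1,1)$ points, and the classification argument pinning down $\kappa(X')=2$ independently of the (conditional) $K^2$ formula.
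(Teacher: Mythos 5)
Your proof is correct and takes essentially the same approach as the paper: the paper's entire proof is the one-line citation of Propositions \ref{prop: singular bidouble general, Hodge}, \ref{prop: chow groups} and Corollary \ref{cor: MT for singular}, which are exactly the tools you invoke. You go somewhat beyond the paper by explicitly checking the hypotheses (that the intermediate quotients remain K3 surfaces, and in particular that $X'$ is of general type, which Proposition \ref{prop: singular bidouble general, Hodge} assumes rather than proves); this is added care within the same argument, not a different route.
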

\begin{proof} The result follows from Propositions \ref{prop: singular bidouble general, Hodge}, \ref{prop: chow groups} and Corollary \ref{cor: MT for singular}.\end{proof}

\begin{rem}
{\rm Imposing a point of type $(1,1,1)$ to the configuration of the three cubic decrease by one the projective parameters on which the branch locus depends.
	In particular, we obtain that the families in the previous Proposition \ref{prop_previous} depends on $m_X-\#(\mbox{triple points})$ parameters, i.e. on $10+K_X^2$ parameters.}\end{rem}
\begin{rem}{\rm The transcendental lattices of the K3 surfaces $Y_i$ in the previous Proposition \ref{prop_previous} are all isometric to $U\oplus U\oplus E_8(-2)$ as the ones obtained in the case $D_1\cup D_2\cup D_3$ is simple normal crossing, see Proposition \ref{prop: singular bidouble general, Hodge}
}\end{rem}

\medskip

{\bf Specializations of case $d)$ to $(1,1,1)$ points}.
Let $q$ be a smooth quartic and $c_1$ and $c_2$ two conics in $\mathbb{P}^2$ such that $\{P_k\}=q\cap c_1\cap c_2$ for $k=1,2,3$.
Then the points $P_k$ are points of type $(1,1,1)$ for the data $D_1=q$, $D_2=c_1$, $D_3=c_2$. The minimal model of the covering surface $X$ has $K_X^2=4-i$. So we obtain different families of surfaces of general type for which we have analogues results as before.

\subsection{Iterated bidouble covers over $\mathbb{P}^2$}\label{subsec: iterated P2}

The aim of this section is to construct an iterated bidouble cover as described in Section \ref{subsubsec: construction 2} considering as first bidouble cover either the case $d)$ or the case $e)$ of Table \ref{Table1} in  Theorem \ref{theorem: bidouble P2}. Thanks to this construction, one is able to obtain a surface $W$ of general type whose transcendental Hodge structure splits into the direct sum of transcendental Hodge structures of K3-type geometrically associated to K3 surfaces.

As remarked in Section \ref{sec_BiDou} the surface $Y_1$ is singular. To construct a smooth model of $Y_1$, one blows up the  $n_2\cdot n_3$ intersection points $D_2\cap D_3$ in $\mathbb{P}^2$ and then one constructs the double cover $\widetilde{\pi_1}\colon\widetilde{Y_1}\ra\widetilde{\mathbb{P}}^2$ of the blown up surface branched on the union of the strict transforms, $\widetilde{D}_2$ and $\widetilde{D}_3$, of $D_2$ and $D_3$. The Picard group of $\widetilde{Y_1}$ contains the classes $A:=\pi_1^*h$ and $E_j$, $j=1,\ldots, n_2n_3$ (where $E_j=\widetilde{\pi_1^*(e_j)}$ and $e_j$ are the exceptional divisors of the blow up of $\mathbb{P}^2$). 

The intersection properties of such divisors are: $A^2=2$, $E_j^2=-2$ and the other intersections are trivial.

The branch divisor of $\widetilde{\pi_1}$ is $$\widetilde{D_2}+\widetilde{D_3}=n_2h-\sum_j e_j+n_3h-\sum_je_j=2n_2h-2\sum_je_j,$$ where we used that $n_2=n_3$. So the canonical bundle of $\widetilde{Y_1}$ is 
$$K_{\widetilde{Y_1}}=\widetilde{\pi_1}^*\left(-3h+\sum_j e_j+n_2h-\sum_je_j\right)=(n_2-3)A.$$ 
Let us now consider the smooth double cover $\widetilde{X}\ra\widetilde{Y_1}$ induced by $X\ra Y_1$.
Its the branch divisor is, with the notation of Section \ref{subsubsec: construction 2}, $\widetilde{B}+E$, where $\widetilde{B}:=n_1A$ and $E:=\sum_jE_j$.

To perform Construction \ref{construction 2: iterated bidouble case 1}, we looks for three effective divisors $\Delta_i$, $i=1,2,3$ such that:\\
$\Delta_i+\Delta_j=2\Lambda_k$ for a certain divisors $\Lambda_k$ in $NS(\widetilde{Y_1})$;
$\Delta_1+\Delta_2=(6-2n_2)A;$ $\Delta_1+\Delta_3=n_1A+\sum_jE_j$. This will be done in Theorems \ref{theor: iterated bidouble GS2} and \ref{theo: iterated SHS1}.

To perform Construction \ref{construction 2: iterated bidouble case 2}, we looks for three effective divisors $\Delta_i$, $i=1,2,3$ such that:\\
$\Delta_i+\Delta_j=2\Lambda_k$ for a certain divisors $\Lambda_k$ in $NS(\widetilde{Y_1})$;
$\Delta_1+\Delta_3=\Delta_2+\Delta_3=n_1A+\sum_jE_j$. This will be done in Theorems \ref{theor: iterated bidouble GS2 case 2} and \ref{theo: iterated SHS1 case 2}.

\subsubsection{Iterated bidouble cover of GS2b}\label{subsec: iterated GS2b}
\begin{theorem}\label{theor: iterated bidouble GS2}
There exists exactly one iterated bidouble cover as in Construction \ref{construction 2: iterated bidouble case 1} of the surface GS2b (i.e. case $d)$ of Theorem \ref{theorem: bidouble P2}).
The data of this bidouble cover are 
$$\Delta_1=2A,\ \ \ \Delta_2=0,\ \ \ \Delta_3=2A+\sum_j E_j.$$

The surface $X$ is a bidouble cover of $\mathbb{P}^2$ branched on 4 conics $c_1$, $c_2$, $c_3$, $c_4$.

The surfaces $Y_2$ and $Y_3$ are K3 surfaces obtained as double cover of $\mathbb{P}^2$ each branched on the union of three conics, $c_1\cup c_2\cup c_3$ and $c_1\cup c_2\cup c_4$ respectively.

The surface $Z_3$ is a K3 surface bidouble cover $\mathbb{P}^2$ branched on the union of the reducible quartic $c_3\cup c_4$ and the conic $c_1$.

The surface $Z_1$ is an Enriques surface  obtained as bidouble cover of  $\mathbb{P}^2$ branched on the union of three conics, which are $c_3$, $c_4$ and another conic $c_5$.

The surface $W$ is a regular surface of general type with $p_g(W)=3$:
$$T_W\otimes \Q\simeq (T_{Y_2}\oplus T_{Y_3}\oplus T_{Z_3})\otimes \Q\mbox{ where }\rk(T_{Y_2})=\rk(T_{Y_3})=\rk(T_{Y_3})=9,\mbox{ so }\rk(T_{W})=27;$$
$$A^2_{0}(W)\otimes\Q\simeq (A^2_{0}(Y_2)\oplus A^2_{0}(Y_3)\oplus A^2_{0}(Z_3))\otimes\Q.$$ 	

\end{theorem}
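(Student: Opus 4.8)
The plan is to carry out Construction \ref{construction 2: iterated bidouble case 1} explicitly on the blown-up surface $\widetilde{Y_1}$ and verify that the candidate data $\{\Delta_1,\Delta_2,\Delta_3\}=\{2A,\,0,\,2A+\sum_j E_j\}$ is the unique solution, then read off the geometry of all the intermediate covers. Recall from the setup in Subsection \ref{subsec: iterated P2} that in case $d)$ we have $(n_1,n_2,n_3)=(4,2,2)$, so $K_{\widetilde{Y_1}}=(n_2-3)A=-A$, the branch curve of $\widetilde X\ra\widetilde{Y_1}$ is $\widetilde B+E=n_1 A+\sum_j E_j=4A+\sum_j E_j$ (with $n_2 n_3=4$ exceptional curves $E_1,\dots,E_4$), and $\Delta_1+\Delta_2=(6-2n_2)A=2A$ is forced by the K3 condition $\Delta_1+\Delta_2+2K_{\widetilde{Y_1}}=0$. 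First I would solve the linear system: the three conditions $\Delta_1+\Delta_2=2A$, $\Delta_1+\Delta_3=4A+\sum_j E_j$, together with $2\Lambda_2=\Delta_1+\Delta_3$, $2\Lambda_1=\Delta_2+\Delta_3$, $2\Lambda_3=\Delta_1+\Delta_2$, must have all $\Lambda_k\in\Pic(\widetilde{Y_1})$ and all $\Delta_i$ effective. Writing $\Delta_i=a_i A+\sum_j b_{i,j}E_j$, divisibility by $2$ of each pairwise sum, effectivity, and the constraint from Remark \ref{rem: no exceptional in Delta1,2} that $E_j$ is not a component of $\Delta_1$ or $\Delta_2$ pin down $\Delta_1=2A$, $\Delta_2=0$, $\Delta_3=2A+\sum_j E_j$ uniquely; this is the step where I must be careful, since the parities of the $E_j$-coefficients interact with effectivity and with the exclusion of exceptional components.

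Next I would identify each surface in diagram \eqref{eq: diag bibidouble}. Since $\Delta_1=2A=2\widetilde{\pi_1}^*h$ and $\Delta_3=2A+\sum_j E_j$ descend, via $\widetilde{\pi_1}$, to conics on $\mathbb{P}^2$, the composite $W\ra\mathbb{P}^2$ exhibits $X$ (hence $W$) as governed by four conics $c_1,c_2,c_3,c_4$; I would track which conic is the image of which branch component. The surface $Z_3=\widetilde W/\sigma_3$ is the double cover of $\widetilde{Y_1}$ branched on $\Delta_1+\Delta_2=2A$, which pushes down to a bidouble cover of $\mathbb{P}^2$ with data $\{c_3\cup c_4,\,c_1\}$; by Remark \ref{rem: bidoubel P2 K3} (equivalently by checking $2K+\text{branch}=0$ via Lemma \ref{lemma: IL LEMMA}(2)) this is a K3 surface, and the same computation gives $Y_2,Y_3$ as the K3 double covers of $\mathbb{P}^2$ branched on $c_1\cup c_2\cup c_3$ and $c_1\cup c_2\cup c_4$. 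For $Z_1=\widetilde W/\sigma_1$, branched on $\Delta_2+\Delta_3=2A+\sum_j E_j$, I would compute $K_{Z_1}$ via $K_{Z_1}=p^*(K_{\widetilde{Y_1}}+\Lambda_1)$ and check it is $2$-torsion with $\chi(\mathcal O)=1$, so that Lemma \ref{lemma: IL LEMMA}-style reasoning identifies it as an Enriques surface arising from three conics $c_3,c_4,c_5$.

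Finally I would assemble the numerical and Hodge-theoretic conclusions. The invariants $p_g(W)=3$ and regularity follow from the eigenspace decomposition of $H^{2,0}$ together with $p_g(Y_2)=p_g(Y_3)=p_g(Z_3)=1$ and $p_g(Z_1)=p_g(Y_1)=0$, using \eqref{eq: pulrigeberaYi} and formula \eqref{eq_chiBiDoCo}; that $W$ is of general type I would get from bigness and nefness of $2K_W=p^*(2K_{\widetilde{Y_1}}+\Delta_1+\Delta_3)$ as in the proof of Theorem \ref{theorem: bidouble P2}. The transcendental and Chow-group decompositions are immediate from Propositions \ref{prop: iterated in general} and \ref{prop: iterated in general, Chow} applied to Construction \ref{construction 2: iterated bidouble case 1}: since $Z_1$ has $p_g=0$ its transcendental part vanishes, leaving $T_W\otimes\Q\simeq(T_{Y_2}\oplus T_{Y_3}\oplus T_{Z_3})\otimes\Q$, and the three K3 surfaces each contribute a rank-$9$ transcendental lattice (computed as $22-\rho(\widetilde{Y_i})$ from the conic-branch count exactly as in the proof of Theorem \ref{theorem: bidouble P2}), giving $\rk(T_W)=27$. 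The main obstacle is the uniqueness argument in the first paragraph; everything afterward is a matter of applying the general propositions already established and recomputing standard cover invariants.
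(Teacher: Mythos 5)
Your overall architecture matches the paper's proof (parametrize the data on $\widetilde{Y_1}$, identify the intermediate covers, then quote the general decomposition results), but your central uniqueness step fails as stated. The constraints you list --- the two linear relations, $2$-divisibility of the pairwise sums, effectivity, and the exclusion of exceptional components from $\Delta_1,\Delta_2$ --- do \emph{not} pin down the data. Effectivity plus the no-exceptional-component condition force $\Delta_1=xA$, $\Delta_2=(2-x)A$, $\Delta_3=(4-x)A+\sum_j E_j$, and then \emph{all three} values $x=0,1,2$ satisfy everything on your list (divisibility is automatic because $\tfrac{1}{2}\sum_j E_j\in\Pic(\widetilde{Y_1})$, being the difference of $\Lambda_2=2A+\tfrac{1}{2}\sum_jE_j$ and $2A$). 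What eliminates $x=0,1$ is the condition of Construction \ref{construction 2: iterated bidouble case 1} that you never invoke, namely $\dim|K_{\widetilde{Y_1}}+(\Delta_2+\Delta_3)/2|\leq 0$ (i.e.\ $p_g(Z_1)\leq 1$): this is exactly how the paper forces $x\geq 2$. The gap is not cosmetic: the solution $x=1$, i.e.\ $\Delta_1=\Delta_2=A$, $\Delta_3=3A+\sum_jE_j$, really is an iterated bidouble cover --- it is the one of Construction \ref{construction 2: iterated bidouble case 2} treated in Theorem \ref{theor: iterated bidouble GS2 case 2} --- so no amount of care with parities and effectivity can exclude it; one has $p_g(Z_1)=6$ for $x=0$ and the computation of $h^0(K_{\widetilde{Y_1}}+\Lambda_1)$ is unavoidable.

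Two further points. First, your computation of $\rk(T_{Z_3})=9$ ``exactly as in the proof of Theorem \ref{theorem: bidouble P2}'' does not apply: that moduli-count argument ($m=20-\rho$ for the branch sextic) is for \emph{double} covers of $\mathbb{P}^2$, whereas $Z_3$ is a $4:1$ bidouble cover. The paper instead uses Nikulin's theory: the symplectic involution $\sigma$ has anti-invariant part of rank $8$, the two non-symplectic involutions have fixed loci a genus $3$ curve and a pair of genus $1$ curves, giving invariant sublattices of ranks $8$ and $10$, and solving for the character eigenspaces yields $\rk H^2(Z_3,\Z)_{+--}=9$. Second, your route to general type via bigness and nefness of $2K_W$ fails: here $2K_{\widetilde{Y_1}}+\Delta_1+\Delta_2+\Delta_3=2A+\sum_jE_j$ has square $0$ and negative intersection with each $E_j$, reflecting the fact that $W$ as constructed is not minimal (which is why the paper only states birational invariants); general type follows instead simply because $W$ dominates the general type surface $X$. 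Relatedly, $K_{Z_1}=q_1^*(K_{\widetilde{Y_1}}+\Lambda_1)=q_1^*(\tfrac{1}{2}\sum_jE_j)$ is an effective sum of four $(-1)$-curves, not a $2$-torsion class, so your Enriques identification needs either a blow-down first or the paper's plurigenera-plus-$\chi$ argument.
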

\begin{proof}
Looking at Table \ref{Table1}, in this case we have $n_3=n_2=2$, $n_1=4$, $K_{\widetilde{Y_1}}=-A$, $B=4A+\sum_{j=1}^4E_j$.

Hence we are looking for effective divisors $\Delta_i\in Pic(\widetilde{Y_i})$ such that $$\Delta_1=xA,\ \ \Delta_2=(2-x)A,\ \ \Delta_3=(4-x)A+\sum_{j=1}^4E_j.$$

The effectivity of $\Delta_j$  implies $0\leq x\leq 2$ and $|\Delta_2+\Delta_3+2K_{\widetilde{Y_1}}|\leq 0$ implies that $x\geq 2$. Hence $x=2$ and the unique possible choice is $\Delta_1=2A$, $\Delta_2=0$, $\Delta_3=2A+\sum_j E_j$.

The double cover $\widetilde{X}\ra \widetilde{Y_1}$ is now branched on the union of $\Delta_1$ and $\Delta_3$, which means that the branch curve $q_1^*(D_1)$  splits in the union of two curves, which are the pullback of two conics in $\mathbb{P}^2$. This forces $D_1$ to be the union of two conics, say $c_1$ and $c_2$. Notices that this produces a subfamily of the coverings $Y_2$, $Y_3$ and $X$. In particular $Y_2$ and $Y_3$ are double cover of $\mathbb{P}^2$ branched along three conics (the union of two of them is $D_1$, the third is $D_2=c_3$ or $D_3=c_4$).
Their transcendental lattice is $T_{Y_2}\simeq T_{Y_3}\simeq \langle 2\rangle^2\oplus\langle -2\rangle^7$, computed as above, since these K3 surfaces are double cover of $\mathbb{P}^2$.	

The surface $Z_1$ is an Enriques surface. Indeed we apply \eqref{eq: pulrigeberaYi} to the cover $q_1:Z_1\ra\widetilde{Y}_1$ and we obtain $$P_n(Z_1)=h^0(\widetilde{Y_1}, n\sum_jE_j/2)+h^0(\widetilde{Y_1}, -A+(n-1)\sum_jE_j/2).$$
Since $$h^0(\widetilde{Y_1}, -A+(n-1)\sum_jE_j/2)=0\mbox{ and }h^0(\widetilde{Y_1}, n\sum_jE_j/2)=\left\{\begin{array}{ll} 0&\mbox{ if }n=1\\1&\mbox{ if }n>1\end{array}\right.$$ $Z_1$ is either an Enriques surface or a bielliptic surface. By \eqref{eq_chiBiDoCo} one obtains $\chi(Z_1)=1$, so $Z_1$ is regular and then an Enriques surface.

The surface $Z_3$ which is a $4:1$ cover $\mathbb{P}^2$. The branch of $\pi_1:Y_1\ra\mathbb{P}^2$ consists of two conics $c_3$ and $c_4$ and the branch of $\gamma:Z_3\ra Y_1$ is a curve $\Delta_1$ whose class is $2A=\pi_1^*(2h)$, hence it is the pullback of a conic in $\mathbb{P}^2$. 
Recall that $\Delta_1\cup \Delta_3$ is the branch of $X\ra Y_1$, so it is the pullback of the quartic curve $c_1\cup c_2$. We can assume that $\Delta_1$ is the pullback of $c_1$.
Hence the branch of $\gamma:Z_3\ra Y_1$ is the pullback of $c_1$. So the $4:1$ cover $Z_3\ra \mathbb{P}^2$ splits in the $2:1$ covers $Z_3\ra Y_1$ and $Y_1\ra\mathbb{P}^2$ where the former is branched on the pullback of the conic $c_1$ and the latter of the quartic $c_3\cup c_4$. The cover $Z_3\ra \mathbb{P}^2$ can not be cyclic since there is a branch for $Y_1\ra \mathbb{P}^2$ which is not a branch for $Z_3\ra Y_1$. So $Z_3$ is a bidouble cover of $\mathbb{P}^2$, whose branch data consists of the conic $c_1$, the quartic $c_3\cup c_4$ and an empty divisor. 

To compute the transcendental lattice of $Z_3$, we observe that $(\Z/2\Z)^2\subset \Aut(Z_3)$ is generated by a symplectic involution $\sigma$ and a non--symplectic one, $\iota$. Indeed one of the intermediate double covers of $Z_3\ra\mathbb{P}^2$ is a K3 surface, the others are rational surfaces.
Therefore the second cohomology of $Z_3$ splits in 4 subspaces according to the characters of the group.
We observe that $\rk((H^2(Z_3,\Z)^{\sigma})^{\perp})=8$, since $\sigma$ is symplectic, \cite[Section 7]{NikSym} and that  $\rk(H^2(Z_3,\Z)^{\iota})$ and $\rk((H^2(Z_3,\Z)^{\iota\sigma})$ depends on the fixed locus of the relative involutions. This allows to compute the dimension of all the 4 subspaces. 

The fixed locus of $\iota$ (resp. $\iota\sigma$) is isomorphic to the branch locus of $Z_3\ra Z_3/\iota$  (resp. $Z_3\ra Z_3/\iota$). These branches are the pullbacks of $c_1$ and $c_3\cup c_4$ that are one curve of genus 3, and the union two curves of genus 1 respectively.

This implies that $\rk(H^2(Z_3,\Z)^{\iota})=8$ and $\rk((H^2(Z_3,\Z)^{\iota\sigma})=10$ (\cite{N}, see also \cite{AST}).

By a direct computation one obtains that $\rk(H^2(Z_3,\Z)_{+--})=9$, where  $H^2(Z_3,\Z)_{+--}$ is understood as the space invariant under the symplectic involution $\sigma$ and antiinvariant for the other involutions. This space coincides with the transcendental lattice of a K3 surface generic among the ones constructed as a bidouble cover as above and so determines the family of such a K3 surfaces. This is due to the fact that generically the transcendental lattice of a K3 surface is the space invariant for a symplectic involution and anti invariant for a non--symplectic one.

The surface $W$ is a cover of a surface of general type, so it is of general type. All the other data are obtained applying the results of Section \ref{sec_BiDou}

\end{proof}

\begin{rem}{\rm In Proposition \ref{theor: iterated bidouble GS2} we give only the birational invariants of $W$, because the smooth double cover of $\widetilde{Z_3}$ coming from the construction is not minimal and so  we cannot guarantee the minimality of $W$.}\end{rem}

\begin{rem}\label{rem: BH}{\rm
The K3 surface $Z_3$ appearing in the Theorem \ref{theor: iterated bidouble GS2} admits the group $(\Z/2\Z)^2$ as subgroup of the automorphism group: it can be generated with a symplectic involution and a non-symplectic one. These surfaces are among the ones studied in \cite{BH}, where the authors classify the finite subgroups $G$ of automorphisms of K3 surfaces by considering cohomological lattice theoretic properties. In particular, they describe $G$ as generated by a finite group of symplectic automorphisms and a non-symplectic generator.}\end{rem}

\begin{theorem}\label{theor: iterated bidouble GS2 case 2}
	There exists exactly one iterated bidouble cover as in Construction \ref{construction 2: iterated bidouble case 2} of the surface GS2b (i.e. case $d)$ of Theorem \ref{theorem: bidouble P2}).
	The data of the bidouble cover are 
	$$\Delta_1=A,\ \ \ \Delta_2=A,\ \ \ \Delta_3=3A+\sum_j E_j.$$
	
	The surface $X$ is a bidouble cover of $\mathbb{P}^2$ branched on a cubic $b$, two conics $c_1$, $c_2$, and a line $\ell$.
	
	The surfaces $Y_2$ and $Y_3$ are K3 surfaces obtained as double cover of $\mathbb{P}^2$ branched on the union $b\cup c_1\cup \ell$ and $b\cup c_2\cup \ell$ respectively;
	
	The surface $Z_1$ is a bidouble cover of $\mathbb{P}^2$ branched on $b\cup c_1\cup c_2\cup m$, where $m$ is a line in $\mathbb{P}^2$.

   The surface $Z_3$ is a K3 surface obtained as bidouble cover of $\mathbb{P}^2$ with branch data the divisors $\delta_1=c_1\cup c_2$, $\delta_2=\ell\cup m$, $\delta_3=\emptyset$.

	The surface $W$ is a regular surface of general type with $p_g(W)=5$;
	$$T_W\otimes \Q\simeq (T_{Y_2}\oplus T_{Y_3}\oplus T_{Y_2}\oplus T_{Y_3}\oplus T_{Z_3})\otimes \Q\mbox{ where }\rk(T_{Y_2})=\rk(T_{Y_3})=10,\ \rk(T_{Z_3})=8\mbox{ so }\rk(T_{W})=48;$$
	$$A^2_{0}(W)\otimes\Q\simeq (A^2_{0}(Y_2)\oplus A^2_{0}(Y_3)\oplus A^2_{0}(Y_2)\oplus A^2_{0}(Y_3)\oplus A^2_{0}(Z_3))\otimes\Q.$$ 	

\end{theorem}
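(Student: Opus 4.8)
The plan is to mirror the proof of Theorem~\ref{theor: iterated bidouble GS2}, now working with Construction~\ref{construction 2: iterated bidouble case 2} in place of Construction~\ref{construction 2: iterated bidouble case 1}. Keeping the notation of Section~\ref{subsec: iterated P2}, I work on the blow--up $\widetilde{Y_1}$ of the rational surface $Y_1$ of case $d)$, so that $n_1=4$, $n_2=n_3=2$, $A=\widetilde{\pi_1}^*h$, there are four exceptional curves $E_1,\dots,E_4$ over the points $c_1\cap c_2=D_2\cap D_3$, one has $K_{\widetilde{Y_1}}=(n_2-3)A=-A$, and the branch divisor of $p_1\colon\widetilde X\to\widetilde{Y_1}$ is $\widetilde B+E=4A+\sum_jE_j$. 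By Remark~\ref{rem: no exceptional in Delta1,2} the divisors $\Delta_1,\Delta_2$ carry no $E_j$ among their components, and Construction~\ref{construction 2: iterated bidouble case 2} forces $\Delta_1\simeq\Delta_2$; I therefore set $\Delta_1=\Delta_2=xA$ and $\Delta_3=(4-x)A+\sum_jE_j$, for which $\Delta_1+\Delta_3=\Delta_2+\Delta_3=\widetilde B+E$ holds automatically. Effectivity requires $0\le x\le 4$; the condition $\dim|K_{\widetilde{Y_1}}+\Delta_1|=\dim|(x-1)A|\le 0$ forces $x\le 1$; and $x=0$ is ruled out by Lemma~\ref{lemma: IL LEMMA}(1), since two trivial divisors would disconnect the cover. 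Hence $x=1$ is the unique admissible choice, giving $\Delta_1=\Delta_2=A$ and $\Delta_3=3A+\sum_jE_j$. To see that these data genuinely define a bidouble cover I would check the relations $\Delta_i+\Delta_j=2\Lambda_k$; the only nontrivial point is the $2$--divisibility of $4A+\sum_jE_j$ in $\Pic(\widetilde{Y_1})$, which holds because $\frac{1}{2}\sum_jE_j=A-\widehat{c_1}$, where $\widehat{c_1}$ is the ramification class of $\widetilde{\pi_1}$ over $\widetilde{c_1}$.

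Next I would identify the five surfaces by tracing the branch loci down the tower $\widetilde W\to\widetilde{Y_1}\to\widetilde{\mathbb{P}}^2\to\mathbb{P}^2$. Since $\Delta_1=A=\widetilde{\pi_1}^*(\text{line})$ is a component of the fixed branch $\widetilde B+E$ of $\widetilde X$, the quartic $D_1$ of the first cover must degenerate to $b\cup\ell$ with $b$ a cubic and $\ell$ a line; together with $D_2,D_3=c_1,c_2$ this presents $X$ as the bidouble cover of $\mathbb{P}^2$ branched on $b\cup c_1\cup c_2\cup\ell$, and exhibits $Y_2,Y_3$ as the double covers of $\mathbb{P}^2$ branched on the sextics $b\cup c_1\cup\ell$ and $b\cup c_2\cup\ell$, hence as K3 surfaces. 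Writing $\Delta_2=A=\widetilde{\pi_1}^*(m)$ for a second line $m$, the cover $Z_1$ (branched on $\Delta_2\cup\Delta_3$) is again of GS2b type, its quartic being $b\cup m$, so it is branched on $b\cup c_1\cup c_2\cup m$. Finally $Z_3$ (branched on $\Delta_1\cup\Delta_2=\widetilde{\pi_1}^*(\ell\cup m)$) is the bidouble cover of $\mathbb{P}^2$ with data $\delta_1=c_1\cup c_2$, $\delta_2=\ell\cup m$, $\delta_3=\emptyset$; as $(\deg\delta_1,\deg\delta_2,\deg\delta_3)=(4,2,0)$, Remark~\ref{rem: bidoubel P2 K3} shows $Z_3$ is a K3 surface.

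For the global invariants of $W$ I would apply Proposition~\ref{prop: iterated in general} in the case of Construction~\ref{construction 2: iterated bidouble case 2}, obtaining $T_W\otimes\Q\simeq(T_{Y_2}\oplus T_{Y_3}\oplus T_{Y_2}\oplus T_{Y_3}\oplus T_{Z_3})\otimes\Q$, and Proposition~\ref{prop: iterated in general, Chow} for the parallel decomposition of $A^2_0(W)\otimes\Q$. The geometric genus is then most cleanly read off from the intermediate covers: since $\widetilde{Y_1}$ is rational, $p_g(W)=p_g(\widetilde X)+p_g(Z_1)+p_g(Z_3)=2+2+1=5$, in agreement with the five K3--type summands of $T_W$. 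Regularity follows from the same bookkeeping, $q(W)=q(\widetilde X)+q(Z_1)+q(Z_3)=0$, because the three intermediate double covers are regular; and $W$ is of general type since it is a finite cover of the general type surface $\widetilde X$ (birational to GS2b).

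It remains to justify the ranks. For $Y_2$, and symmetrically $Y_3$, I would count the moduli of K3 double planes branched on a cubic, a conic and a line, namely $(9+5+2)-8=8$ (with $8=\dim\mathrm{PGL}_3$); the generic such surface has $\rho=20-8=12$, so $\rk(T_{Y_2})=22-12=10$ (equivalently, the $11$ nodes of the branch sextic together with the polarization give $\rho=12$). The genuinely delicate computation is $\rk(T_{Z_3})=8$, where one must isolate the correct transcendental piece of $H^2(Z_3,\Z)$. As in the proof of Theorem~\ref{theor: iterated bidouble GS2} I would use that $(\Z/2\Z)^2\subset\Aut(Z_3)$ is generated by a symplectic involution $\sigma$, for which $\rk\big((H^2(Z_3,\Z)^{\sigma})^{\perp}\big)=8$ by \cite{NikSym}, and a non--symplectic involution $\iota$; computing $\rk(H^2(Z_3,\Z)^{\iota})$ and $\rk(H^2(Z_3,\Z)^{\iota\sigma})$ from the genera of the fixed curves (the ramification curves over $c_1\cup c_2$ and over $\ell\cup m$) via \cite{N},\cite{AST}, and reading off the rank of $H^2(Z_3,\Z)_{+--}$, which for the generic $Z_3$ is its transcendental lattice, yields $8$. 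Summing, $\rk(T_W)=10+10+10+10+8=48$. I expect the main obstacle to lie precisely here: first, arranging the configuration $b,c_1,c_2,\ell,m$ so that every branch divisor appearing in the tower is smooth with at worst simple normal crossings (so that, by Remark~\ref{rem: no exceptional in Delta1,2}, the invariants of the first cover are unchanged), and second, matching the fixed--locus genera so that the eigenspace bookkeeping returns exactly the ranks $10$ and $8$.
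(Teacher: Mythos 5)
Your proposal is correct and follows exactly the route the paper intends: its own proof of this theorem is the single line ``analogous to Theorem~\ref{theor: iterated bidouble GS2}'', and your argument is precisely that analogy carried out for Construction~\ref{construction 2: iterated bidouble case 2} (the class computation forcing $x=1$, with $x=0$ rightly excluded by connectedness via Lemma~\ref{lemma: IL LEMMA}(1), the identification of $X$, $Y_2$, $Y_3$, $Z_1$, $Z_3$ by tracing branch loci, and the invariants via Propositions~\ref{prop: iterated in general} and~\ref{prop: iterated in general, Chow}). Your rank bookkeeping also checks out: the two non-symplectic involutions of $Z_3$ each fix two elliptic curves (invariant rank $10$ each), which together with the symplectic one gives $\rk H^2(Z_3,\Z)_{+--}=8$, and the moduli count $9+5+2-8=8$ gives $\rk(T_{Y_2})=\rk(T_{Y_3})=10$, so $\rk(T_W)=48$ as stated.
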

\proof The proof is analogue to the one of Theorem \ref{theor: iterated bidouble GS2}. \endproof

\subsubsection{Iterated bidouble cover of SHS1}\label{subsec: iterated SHS1}
\begin{theorem}\label{theo: iterated SHS1}
There exist exactly two iterated bidouble covers as in Construction \ref{construction 2: iterated bidouble case 1} of the surface SHS1  (i.e. case $e)$ of Theorem \ref{theorem: bidouble P2}).
The data of the bidouble cover are 
$$\begin{array}{c||lll}
	
	\mbox{case 1:}&\Delta_1=3A,&\Delta_2=A,&\Delta_3=2A+E\\
	
	\mbox{case 2:}&\Delta_1=4A,&\Delta_2=0,&\Delta_3=A+E.
\end{array}$$

In case 1) the bidouble cover $X\ra \mathbb{P}^2$ is branched on a cubic $e$ a conic $c$ and two lines, $\ell$ and $m$; in case 2) the bidouble cover $X\ra \mathbb{P}^2$ is branched on a quartic $q$ and three lines $\ell$, $m$, $n$.

In case 1) the surfaces $Y_2$ and $Y_3$ are double covers of $\mathbb{P}^2$ branched on the union of a cubic, a conic and a line, in case 2) the branch locus consists of the union of a quartic and two lines.

 In case $1)$ the surface $Z_3$ is a K3 surface, bidouble cover of $\mathbb{P}^2$ branched on an empty divisor, a reducible conic ($\ell\cup m$) and a reducible quartic, which consists of the cubic $c$ and another line.
	
	In case $2)$ the surface $Z_3$ is a K3 surface, bidouble cover of $\mathbb{P}^2$ branched on an empty divisor, a reducible conic ($\ell\cup m$) and the quartic $q$.

In both the cases, the surface $W$ is a regular surface of general type with $p_g(W)=3$;
$$T_W\otimes \Q\simeq (T_{Y_2}\oplus T_{Y_3}\oplus T_{Z_3})\otimes \Q$$ where  $(\rk(T_{Y_2}), \rk(T_{Y_3}),\rk(T_{Z_3}), \rk(T_W))=(10,10,9,29), \ (12,12,12,36)$ in cases 1) and 2) respectively; 	
$$A^2_{0}(W)\otimes\Q\simeq (A^2_{0}(Y_2)\oplus A^2_{0}(Y_3)\oplus A^2_{0}(Z_3))\otimes\Q.$$ 	

\end{theorem}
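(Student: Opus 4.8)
The plan is to imitate the proof of Theorem~\ref{theor: iterated bidouble GS2}, specialising the set-up of Subsection~\ref{subsec: iterated P2} to case~$e)$ of Theorem~\ref{theorem: bidouble P2}, where $(n_1,n_2,n_3)=(5,1,1)$, so that $Y_1$ is rational and $Y_2,Y_3$ are the two K3 covers. Since $n_2=n_3=1$ the lines $D_2,D_3$ meet in a single point, hence $\widetilde{Y_1}$ is the blow-up at one point and $\Pic(\widetilde{Y_1})$ carries the classes $A=\widetilde{\pi_1}^*h$ and the single exceptional curve $E$, with $A^2=2$, $E^2=-2$, $A\cdot E=0$. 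One has $K_{\widetilde{Y_1}}=(n_2-3)A=-2A$, and the branch locus of $\widetilde X\ra\widetilde{Y_1}$ is $\widetilde B+E=5A+E$; since this double cover exists, there is a class $M\in\Pic(\widetilde{Y_1})$ with $2M=5A+E$.

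First I would enumerate the admissible data for Construction~\ref{construction 2: iterated bidouble case 1}. Writing $\Delta_1=xA$ and imposing that $\Delta_1,\Delta_2$ have no $E$-component, the relations $\Delta_1+\Delta_2=-2K_{\widetilde{Y_1}}=4A$ and $\Delta_1+\Delta_3=\widetilde B+E$ give $\Delta_2=(4-x)A$ and $\Delta_3=(5-x)A+E$. The divisibility conditions $\Delta_i+\Delta_j=2\Lambda_k$ are then automatic: $\Delta_1+\Delta_2=2(2A)$, $\Delta_1+\Delta_3=2M$ and $\Delta_2+\Delta_3=2(M+2A-\Delta_1)$. Effectivity of the three divisors confines $x$ to $0\le x\le 4$, and the last condition $\dim|K_{\widetilde{Y_1}}+(\Delta_2+\Delta_3)/2|\le 0$ is exactly $p_g(Z_1)\le 1$ via the plurigenus formula~\eqref{eq: pulrigeberaYi} for $Z_1\ra\widetilde{Y_1}$ (recall $\widetilde{Y_1}$ is rational). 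Here $p_g(Z_1)=h^0(M-xA)$ with $2(M-xA)=(5-2x)A+E$; since $A$ is nef and $A\cdot\big((5-2x)A+E\big)=10-4x$, this class is non-effective precisely for $x\ge 3$, where $p_g(Z_1)=0$. A short Riemann--Roch computation gives $p_g(Z_1)\ge 2$ for $x=0,1$, while $x=2$ yields $\Delta_1\simeq\Delta_2$ and a cover $Z_1$ reproducing the construction of $X$, which is Construction~\ref{construction 2: iterated bidouble case 2} rather than case~$1$. Thus exactly the two solutions $x=3$ and $x=4$ survive.

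Next I would extract the geometry. The forced splitting of $\widetilde B=5A$ between $\Delta_1$ and the $A$-part of $\Delta_3$ degenerates the quintic $D_1$ into a cubic $\cup$ conic for $x=3$ and into a quartic $\cup$ line for $x=4$; tracing the branch loci through diagram~\eqref{eq: diag bibidouble} then identifies $Y_2,Y_3$ as the stated double covers of $\PP^2$ and $Z_3$ as a bidouble cover of $\PP^2$ whose data have degrees $(0,2,4)$, hence a K3 surface by Remark~\ref{rem: bidoubel P2 K3}. The computation above already shows $p_g(Z_1)=0$, so $T_{Z_1}$ is trivial and Propositions~\ref{prop: iterated in general} and~\ref{prop: iterated in general, Chow} yield $T_W\otimes\Q\simeq(T_{Y_2}\oplus T_{Y_3}\oplus T_{Z_3})\otimes\Q$ together with the analogous decomposition of $A^2_0(W)$; the equality $p_g(W)=3$ then corresponds to one $(2,0)$-class coming from each of the three K3-type summands.

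The main obstacle, exactly as in Theorem~\ref{theor: iterated bidouble GS2}, is computing the ranks of the transcendental lattices. For the K3 surfaces $Y_2,Y_3$, double covers of $\PP^2$ branched on a sextic splitting into curves of prescribed genera, the rank is read off from the fixed locus of the non-symplectic cover involution via~\cite{N} and~\cite{AST}, giving $\rk T_{Y_2}=\rk T_{Y_3}=10$ for $x=3$ and $12$ for $x=4$. For $Z_3$ I would argue as for the surface $Z_3$ of Theorem~\ref{theor: iterated bidouble GS2}: the group $(\Z/2\Z)^2\subset\Aut(Z_3)$ is generated by a symplectic involution $\sigma$, whose coinvariant lattice has rank $8$ by~\cite{NikSym}, and a non-symplectic one; the ranks of the remaining eigenspaces are determined by the genera of the branch curves, and the transcendental lattice is the summand $H^2(Z_3,\Z)_{+--}$, of rank $9$ for $x=3$ and $12$ for $x=4$. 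This gives $\rk T_W=29$ and $36$ respectively, and minimality of $W$ is not asserted, for the reason noted after Theorem~\ref{theor: iterated bidouble GS2}.
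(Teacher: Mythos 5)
Your proposal is correct and follows essentially the same route as the paper's own proof: the same parametrization $\Delta_1=xA$, $\Delta_2=(4-x)A$, $\Delta_3=(5-x)A+E$, with effectivity and the condition $\dim|K_{\widetilde{Y_1}}+(\Delta_2+\Delta_3)/2|\le 0$ singling out $x=3,4$, the same forced splitting of the quintic $D_1$ identifying $Y_2$, $Y_3$ and the K3 bidouble cover $Z_3$, the same appeal to Propositions \ref{prop: iterated in general} and \ref{prop: iterated in general, Chow}, and the same Nikulin-type eigenspace computation (symplectic coinvariant of rank $8$ plus fixed-locus data for the non-symplectic involutions) for the ranks. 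You even supply details the paper leaves implicit — the Riemann--Roch exclusion of $x=0,1$ and the identification of $x=2$ as having $p_g(Z_1)=p_g(X)=2$, hence belonging to Construction \ref{construction 2: iterated bidouble case 2} — and all your numerical conclusions $(10,10,9,29)$ and $(12,12,12,36)$ agree with the statement.
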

\begin{proof}
In this case $n_3=n_2=1$, $n_1=5$, $K_{\widetilde{Y_1}}=-2A$ and $B=5A+E$.
Hence we are looking for effective divisors $\Delta_i\in Pic(\widetilde{Y_i})$ $$\Delta_1=xA,\ \ \Delta_2=(4-x)A,\ \ \Delta_3=(5-x)A+E.$$
The effectivity of $\Delta_j$  implies $0\leq x\leq 4$. The condition $\dim|K_{\widetilde{Y_1}}+(\Delta_2+\Delta_3)/2|\leq 0$, implies $x\geq 3$, so the admissible cases are $x=3,4$.

In case 1) the double cover $\widetilde{X}\ra \widetilde{Y_1}$ is branched on the union of $\Delta_1$ and $\Delta_3$, which means that the branch curve $q_1^*(D_1)$  splits in the union of two curves, which are the pullback of a conic and a cubic in $\mathbb{P}^2$. This forces $D_1$ to be the union of a conic and a cubic. So $Y_2$ and $Y_3$ are double covers of $\mathbb{P}^2$ branched along a line, a conic and a cubic and their transcendental lattices are isometric to $U\oplus U\oplus D_4\oplus \langle -2\rangle^4$. 

In case 2) the double cover $\widetilde{X}\ra \widetilde{Y_1}$ is branched on the union of $\Delta_1$ and $\Delta_3$, which means that the branch curve $q_1^*(D_1)$  splits in the union of two curves, which are the pullback of a line and a quartic in $\mathbb{P}^2$. This forces $D_1$ to be the union of a line and a quartic. So $Y_2$ and $Y_3$ are double covers of $\mathbb{P}^2$ branched along two lines and a quartic and their transcendental lattices are isometric to $U\oplus U(2)\oplus\langle -2\rangle^6$. 

The surface $Z_1$ is rational, since its canonical divisor is the pullback of $(-A+E)/2$ and $(-3A+E)/2$ in cases 1) and 2) respectively. 

In case 1) the surface $Z_3$ is the double cover of $\widetilde{Y_1}$ branched on a divisor linearly equivalent to $\Delta_1+ \Delta_2=3A+A=4A$. Similarly to the proof of Theorem \ref{theor: iterated bidouble GS2}, we observe that $\Delta_1$ is the pullback of the cubic $e$ and $\Delta_2$ is the pullback of a line. The $4:1$ cover $Z_3\ra \mathbb{P}^2$, splits in $Z_3\ra Y_1$, branched on the pullback of $e$ and on the pullback of a line, and $Y_1\ra\mathbb{P}^2$, which is branched on $\ell\cup m$. So it cannot be cyclic and indeed it is a bidouble cover whose branch divisors are: an empty set, a reducible quartic, union of $e$ and a line, and a conic, union of $\ell$ and $m$.
As in proof of  Theorem \ref{theor: iterated bidouble GS2}, the transcendental lattice coincides with $H^2(Z_3,\Z)_{+--}$, the subspace invariant for the symplectic involution and antiinvariant for the antisymplectic ones of the bidouble cover $Z_3\ra\mathbb{P}^2$. The fixed loci of the non--symplectic ones are the union of two genus 1 curve and the union of a genus 4 curve and a rational curve.
This implies that $\rk(H^2(Z_3,\Z)_{+--})=9$. As in the Remark \ref{rem: BH}, we observe that $Z_3$ is one of the surfaces considered in \cite{BH} for which both the symplectic and the non--symplectic generators of the finite subgroup of automorphisms have order 2.

The case 2) is similar: the 4:1 cover $Z_3\ra \mathbb{P}^2$, splits in the double covers $Z_3\ra Y_1$, branched on the pull back of $q$, and $Y_1\ra\mathbb{P}^2$ branched on the $\ell\cup m$ and so $Z_3$ is as in the statement.
As above one computes that $\rk(H^2(Z_3,\Z)_{+--})=12$ since the fixed loci of the non--symplectic involutions are a curve of genus 9 and the union of tow genus 1 curves. 
 \end{proof}

\begin{rem}{\rm A priori an iterated bidouble cover can be a $(\Z/2\Z)^3$ cover of $Y$ or not. The surfaces $W$ constructed in Theorems \ref{theor: iterated bidouble GS2}, \ref{theo: iterated SHS1} are a $(\Z/2\Z)^3$ cover of $\mathbb{P}^2$ with $p_g(W)=3$. Hence, they are (possibly special) members of the families of $(\Z/2\Z)^n$ covers of $\mathbb{P}^2$ having $p_g=3$, constructed in \cite{FP}. In particular, the surface constructed in Theorems \ref{theor: iterated bidouble GS2} is in the family C3 in \cite{FP} as shown by the description of the intermediate covers. The surface in Theorem \ref{theo: iterated SHS1}, case 2 is in the family D3 in \cite{FP} and finally the surface in Theorem \ref{theo: iterated SHS1}, case 1 is in the family E3 in \cite{FP}}\end{rem}

	\begin{theorem}\label{theo: iterated SHS1 case 2} 
There exist exactly two iterated bidouble covers as in Construction \ref{construction 2: iterated bidouble case 2} of the surface SHS1  (i.e. case $e)$ of Theorem \ref{theorem: bidouble P2})
The data of the bidouble covers are 
$$\begin{array}{c||lll}

\mbox{case 1:}&\Delta_1=A,&\Delta_2=A,&\Delta_3=4A+E\\

\mbox{case 2:}&\Delta_1=2A,&\Delta_2=2A,&\Delta_3=3A+E.
\end{array}$$

In case 1) the bidouble cover $X\ra \mathbb{P}^2$ is branched on a quartic $q$ and three lines $\ell$, $m$, $n$; in case 2) the bidouble cover $X\ra \mathbb{P}^2$ is branched on a cubic $e$ a conic $c$ and two lines, $\ell$, $m$.

In case 1) the surfaces $Y_2$ and $Y_3$ are double covers of $\mathbb{P}^2$ each branched on the union of the quartic $q$ and two lines, in the case 2) the branch locus consists of the union of the cubic $e$, the conic $c$ and one line.

In case 1) the surface $Z_3$ is a rational surface. In case 2) $Z_3$ is a K3 surface, obtained as bidouble cover of $\mathbb{P}^2$ whose branch data are: an empty set, the union of the two lines $\ell$ and $m$, the union of $c$ and another conic.

The surface $W$ is a regular surface of general type with $p_g(W)=4$ in case 1) and $p_g(W)=5$ in case 2);
$$T_W\otimes \Q\simeq (T_{Y_2}\oplus T_{Y_3}\oplus T_{Y_2}\oplus T_{Y_3}\oplus T_{Z_3})\otimes \Q$$ where  $(\rk(T_{Y_2}), \rk(T_{Y_3}),\rk(T_{Z_3}), \rk(T_W))=(10,10,0,40), \ (12,12,8,56)$ in cases 1) and 2) respectively; 	
$$A^2_{0}(W)\otimes\Q\simeq (A^2_{0}(Y_2)\oplus A^2_{0}(Y_3)\oplus A^2_{0}(Y_2)\oplus A^2_{0}(Y_3)\oplus A^2_{0}(Z_3))\otimes\Q.$$ 

	\end{theorem}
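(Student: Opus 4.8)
The plan is to follow the strategy of the proof of Theorem \ref{theo: iterated SHS1}, specialising it to the constraints of Construction \ref{construction 2: iterated bidouble case 2}. As there, the first bidouble cover is SHS1, so $n_2=n_3=1$, $n_1=5$, $K_{\widetilde{Y_1}}=-2A$ and $B=5A+E$. Since Construction \ref{construction 2: iterated bidouble case 2} forces $\Delta_1\simeq\Delta_2$ and forbids components of $E$ in $\Delta_1,\Delta_2$ (Remark \ref{rem: no exceptional in Delta1,2}), I would write $\Delta_1=\Delta_2=xA$ and $\Delta_3=(5-x)A+E$. Effectivity of the $\Delta_i$ gives $0\le x\le 5$, the case $x=0$ is excluded by Lemma \ref{lemma: IL LEMMA}, and the remaining condition $\dim|K_{\widetilde{Y_1}}+\Delta_1|=\dim|(x-2)A|\le 0$ forces $x\le 2$; hence $x\in\{1,2\}$, producing exactly the two sets of data in the statement. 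The divisibility requirements $\Delta_i+\Delta_j=2\Lambda_k$ are automatic: $\Delta_1+\Delta_2=2xA$, while $\Delta_1+\Delta_3=\Delta_2+\Delta_3=\widetilde B+E$ is even since it is the branch of the existing cover $\widetilde X\ra\widetilde{Y_1}$.

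Next I would read off the geometry from $\Delta_1+\Delta_3=\widetilde B+E$ with $\widetilde B=\pi_1^\ast D_1$: as $\Delta_1=xA$ is the pullback of a plane curve of degree $x$, the quintic $D_1$ must break as (degree $x$)$\,\cup\,$(degree $5-x$), giving $D_1=\text{line}\cup\text{quartic}$ for $x=1$ and $D_1=\text{conic}\cup\text{cubic}$ for $x=2$. Together with $D_2,D_3$ being lines this yields the branch loci of $X$, $Y_2$ and $Y_3$ asserted in the statement. The transcendental lattices $T_{Y_2}\simeq T_{Y_3}$ are then computed as in Theorem \ref{theorem: bidouble P2}: $Y_2,Y_3$ are double covers of $\mathbb P^2$ whose branch sextic is a transversal union of smooth components, so the covering involution is non-symplectic with fixed locus the disjoint union of the strict transforms of those components, and $\rk(T)=22-\rk(H^2_+)$ with $\rk(H^2_+)=10+\tfrac12\chi(\mathrm{Fix})$ by the topological Lefschetz relation.

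The surface $Z_1$ is branched on $\Delta_2+\Delta_3\simeq\widetilde B+E$, hence is a cover of the same SHS1 type as $X$, so $T_{Z_1}\otimes\Q\simeq T_X\otimes\Q\simeq(T_{Y_2}\oplus T_{Y_3})\otimes\Q$; this accounts for the repeated summands predicted by Proposition \ref{prop: iterated in general}. The surface $Z_3$ is the double cover of $\widetilde{Y_1}$ branched on $\Delta_1+\Delta_2=2xA$, with $K_{Z_3}=\gamma^\ast\big((x-2)A\big)$, so $Z_3$ is rational for $x=1$ and a K3 surface for $x=2$. In the K3 case $Z_3$ is a bidouble cover of $\mathbb P^2$ with data $(0,2,4)$ (empty divisor, the two lines, the two conics), hence K3 by Remark \ref{rem: bidoubel P2 K3}, carrying a symplectic involution and two non-symplectic ones. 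Arguing as in Theorem \ref{theor: iterated bidouble GS2} and Remark \ref{rem: BH}, I would use $\rk\big(H^2(Z_3,\Z)^{\sigma}_{-}\big)=8$ for the symplectic $\sigma$, while the two non-symplectic involutions have fixed loci whose components are double covers of the rational branch components ramified at their intersections with the remaining data, each yielding two genus-$1$ curves; solving the resulting linear system for the four eigenspace dimensions pins down $T_{Z_3}=H^2(Z_3,\Z)_{+--}$ and its rank.

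Finally $W$ is a finite cover of the general-type surface $X$, hence of general type; its geometric genus follows from \eqref{eq_chiBiDoCo} together with $q(W)=0$, and the decompositions of $T_W$ and of $A^2_0(W)$ follow from Proposition \ref{prop: iterated in general} (case 2) and Proposition \ref{prop: iterated in general, Chow}. The delicate point is, as in Theorem \ref{theo: iterated SHS1}, the transcendental lattice of the K3 surface $Z_3$ in case $2$: one must correctly identify the fixed loci of its non-symplectic involutions (the genera of the curves lying over the conics and over the lines) and combine them with the symplectic contribution. A secondary caveat is minimality of $W$: as remarked after Theorem \ref{theor: iterated bidouble GS2}, the smooth model produced by the construction need not be minimal, so only the birational invariants of $W$ are claimed.
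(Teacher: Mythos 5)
Your overall strategy is exactly the one the paper intends: its proof of this theorem is the one-line remark that it is analogous to the proofs of Theorems \ref{theor: iterated bidouble GS2}, \ref{theor: iterated bidouble GS2 case 2} and \ref{theo: iterated SHS1}, and your reconstruction --- writing $\Delta_1=\Delta_2=xA$, $\Delta_3=(5-x)A+E$, cutting down to $x\in\{1,2\}$ via effectivity, Lemma \ref{lemma: IL LEMMA} and $\dim|(x-2)A|\leq 0$, reading off the splitting of the quintic $D_1$, identifying $Z_1$ as a second SHS1-type surface, describing $Z_3$ through the two-step factorization of the $4:1$ cover, and invoking Propositions \ref{prop: iterated in general} and \ref{prop: iterated in general, Chow} --- is precisely the argument the authors have in mind, including the caveat on minimality of $W$.

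There is, however, one genuine problem: you set up the rank computation for $T_{Y_2}\simeq T_{Y_3}$ (Lefschetz on the fixed locus of the covering involution) but never evaluate it, and evaluating it contradicts the numbers you are supposed to prove. In case 1 the branch sextic is quartic $\cup$ two lines, so the fixed locus is a genus $3$ curve plus two rational curves, $\chi(\mathrm{Fix})=0$, $\rk H^2_+=10$ and $\rk T_{Y_i}=12$; in case 2 it is cubic $\cup$ conic $\cup$ line, with fixed curves of genera $1,0,0$, so $\chi(\mathrm{Fix})=4$, $\rk H^2_+=12$ and $\rk T_{Y_i}=10$. (A cross-check by moduli count gives $10$ and $8$ moduli for the two families, hence Picard numbers $10$ and $12$.) This is the opposite of the statement's $(10,10,\ldots)$ in case 1 and $(12,12,\ldots)$ in case 2, but it agrees with the paper's own Theorem \ref{theo: iterated SHS1}, where the cubic-conic-line covers have rank $10$ and the quartic-two-lines covers have rank $12$. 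Consequently Proposition \ref{prop: iterated in general} gives $\rk T_W=2(12+12)+0=48$ in case 1 and $\rk T_W=2(10+10)+8=48$ in case 2, not $40$ and $56$: the ranks printed in the statement appear to have the two cases interchanged. Your identification of $Z_3$ and the value $\rk T_{Z_3}=8$ in case 2, as well as $p_g(W)=4,5$, are correct and unaffected. So your method is sound, but a complete proof must actually carry out these numbers, and doing so shows that the rank claims of the statement need to be corrected rather than confirmed; this discrepancy should have been detected and flagged instead of being left implicit.
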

	\proof The proof is analogue to the ones of the previous theorems. \endproof

To recap, the iterated bidouble covers of the surfaces of type GS2b or SHS1 produce regular surfaces $W$ of general type. In Table \ref{Table4} we list for every surfaces constructed: the construction used, the value of $p_g(W)$, the rank $r_W$ of the transcendental lattice $T_W$ of $W$ and the dimension of the families of K3 surfaces $Y_2$, $Y_3$, $Z_3$ (when it is K3 surface). In the last column we give the reference to the theorem where the results are proved. 

\begin{table}[!h]
\begin{tabular}{|c|c||c|c||c|c|c|c|c|c|}\hline
	case&\mbox{Construction}&$p_g(W)$&$r_W$&$m_{Y_2}$&$m_{Y_3}$&$m_{Z_3}$& Thm.\\
	\hline
	GS2b&\ref{construction 2: iterated bidouble case 1}&$3$&$27$&$7$&$7$&$7$&\ref{theor: iterated bidouble GS2}\\
	
	GS2b&\ref{construction 2: iterated bidouble case 2}&$5$&$48$&$8$&$8$&$6$&\ref{theor: iterated bidouble GS2 case 2}\\
	
	SHS1\mbox{ case 1)}&\ref{construction 2: iterated bidouble case 1}&$3$&$29$&$8$&$8$&$7$&\ref{theo: iterated SHS1}\\
	SHS1\mbox{ case 2)}&\ref{construction 2: iterated bidouble case 1}&$3$&$36$&$10$&$10$&$10$&\ref{theo: iterated SHS1}\\
	
	SHS1\mbox{ case 1)}&\ref{construction 2: iterated bidouble case 2}&$4$&$40$&$8$&$8$&\mbox{not K3}&\ref{theo: iterated SHS1 case 2}\\
SHS1\mbox{ case 2)}&\ref{construction 2: iterated bidouble case 2}&$5$&$56$&$10$&$10$&$6$&\ref{theo: iterated SHS1 case 2}\\
	\hline
\end{tabular}

\caption{} \label{Table4}
\end{table}
\section{The rational surface $Y$ is $\PP^1\times\PP^1$}\label{sec: P1xP1}

In this section we consider the constructions described in Sections \ref{subsubsec: construction 1}, \ref{subsubsec: construction 2}, \ref{subsubsec: singular bidouble} by choosing the rational surface $Y$ to be $\mathbb{P}^1\times \mathbb{P}^1$.

\subsection{The first bidouble cover}\label{subsec: first bidouble P1XP1}

Let $h_1$, $h_2$ be the two generators of $\Pic(\mathbb{P}^1\times \mathbb{P}^1)$ such that $h_i^2=0$ and $h_1h_2=1$.
We denote by $D_i\sim n_ih_1+m_ih_2$, $i=1,2,3$ the data of a bidouble cover $X\ra \mathbb{P}^1\times\mathbb{P}^1$. The existence of the bidouble cover implies that $n_i+n_j\equiv 0\mod 2$ and $m_i+m_j\equiv 0 \mod 2$ if $i\neq j$ and $i,j\in\{1,2,3\}$.

For $(i,j,k)$ a permutation of $(1,2,3)$, $L_i=\frac{n_j+n_k}{2}h_1+\frac{m_j+m_k}{2}h_2$ and we denote  $a_i=\frac{n_j+n_k}{2}$, $b_i=\frac{m_j+m_k}{2}$,
$n=\sum n_i$ and  $m=\sum m_i$.
If $D_i$ are smooth and $D_1\cup D_2\cup D_3$ is simple normal crossing, then $X$ is smooth and by \cite{Cat99} (see also Section \ref{sec_BiDou}) $$\chi(X)=\frac{1}{4}\left( (n-4)(m-4)+\sum_i n_im_i\right),\ \ p_g(X)=\sum_i\max(0,a_i-1)\max(0,b_i-1),\ \ K_X^2=2(n-4)(m-4).$$

As in Section \ref{subsubsec: construction 1} we require that $Y_3$ is a K3 surface and that $Y_i$ are either K3 surfaces or surfaces with $p_g=0$, for $i=1,2$. Up to switching $h_1$ and $h_2$ and to change the numbering of the pair $(n_i,m_i)$, the previous conditions on $Y_j$ imply:\begin{itemize}\item $n_2=4-n_1$, $m_2=4-m_1$;
	\item if $n_1+n_3>4$ (resp. $n_1+n_3\geq 4$) then $m_1+m_3<4$, (resp. $m_1+m_3\leq 4$);
	\item if $n_2+n_3>4$ (resp. $n_2+n_3\geq 4$) then $m_2+m_3<4$, (resp. $m_2+m_3\geq 4$). This condition is equivalent to: if $n_3>n_1$ (resp. $n_3\geq n_1$) then $m_3<m_1$, (resp. $m_3\leq m_1$). \end{itemize} 

We observe that, under the condition $n_2=4-n_1$, $m_2=4-m_1$, which is equivalent to $Y_3$ being a K3 surface, it holds 
$$K_X^2=2n_3m_3,\ \ \chi(X)=4-(n_1+m_1)+\frac{n_3m_3+n_1m_1}{2}.$$

So we have the following possibilities (up to switching possibly the role of $(n_1,m_1)$ with the one of $(n_2,m_2)$):
\begin{enumerate}
	\item $n_1+n_3>4$ and $n_3>n_1$: in this case $n_3$ can be arbitrarily big, $m_1=2$, $m_3=0$. This corresponds to an infinite series of examples. Since $m_3=0$, $K_X^2=0$.	
	\item $n_1+n_3>4$ and $n_3=n_1$: in this case $n_1=n_3$ is either 3 or 4. Since $m_1+m_3<4$, $m_3\leq m_1$ and $m_1\equiv_2 m_3$, the possibilities for $(m_1,m_3)$ are $(0,0)$, $(1,1)$, $(2,0)$. This gives a finite list of examples, some with $K_X^2=0$ and some other of general type. The ones of general type are obtained by the choices $(n_1,m_1,m_3)=(3,1,1), (4,1,1)$. Up to permutation of the indices and symmetries of the rulings of $\mathbb{P}^1\times\mathbb{P}^1$, these are the cases $d)$ and $g)$ respectively in Table \ref{TableP1xP1 general} below.
	\item $n_1+n_3>4$ and $n_3<n_1$: in this case $n_1=4$ and $n_2=2$; moreover $m_1+m_3<4$, so we have a finite list of cases. Again some of them have $K_X^2=0$ and others correspond to surfaces $X$ of general type. 
	\item $n_1+n_3\leq 4$ and $n_3\leq n_1$: in this case we may assume that also $m_1+m_3\leq 4$, $m_3\leq m_1$, otherwise it suffices to switch the role of $h_1$ and $h_2$ (i.e. to switch $n_i$ with $m_i$) to obtain one of the previous case. Also these conditions give a finite list of examples.
\end{enumerate}
In the following theorem we provide the classification obtained by the previous conditions. In each Table the column ``case", corresponds to the previous enumerated possibilities for the sum $n_1+n_3$.

\begin{theorem}\label{theor: Y=P1xP1}
	Let $X\ra \mathbb{P}^1\times\mathbb{P}^1$ be a smooth bidouble cover whose branch divisor is simple normal crossing. Let us assume that each intermediate double cover $Y_i$ is either a K3 surface or a surface with $p_g=0$ and that at least one of them is a K3 surface.
	
	If $K_X^2>0$, then $X$ is a minimal surface of general type and the possibilities are listed in Table \ref{TableP1xP1 general}
	\begin{table}[!h]
		\begin{tabular}{|c||c|c|c||c|c|c||c|c|c|c|c|c|c|}\hline
			case&$(n_1,m_1)$&$(n_2,m_2)$&$(n_3,m_3)$&$Y_1$&$Y_2$&$Y_3$&$p_g(X)$&$q(X)$&$(n,m)$&$K^2_X$&case&MTC&ITP\\
			\hline
			a)&$(4,1$)&$(0,3)$&$(2,1)$&{\rm rat}&{\rm rat}&{\rm K3}&$1$&$0$&$(6,5)$&$4$&$(3)$&$\checkmark$&$\times$ \\
			b)&$(4,0)$&$(0,4)$&$(2,2)$&{\rm rat}&{\rm rat}&{\rm K3}&$1$&$0$&$(6,6)$&$8$&$(3)$&$\checkmark$&$\times$ \\
			c)&$(3,1)$&$(1,3)$&$(1,1)$&{\rm rat}&{\rm rat}& {\rm K3}&$1$&$0$&$(5,5)$&$2$&$(4)$&$\checkmark$&$\times$ \\
			d)&$(3,1)$&$(1,3)$&$(1,3)$&{\rm rat}&{\rm K3}&{\rm K3}&$2$&$0$&$(5,7)$&$6$&$(4)$&$\checkmark$&\\
			e)&$(3,2)$&$(1,2)$&$(1,2)$&{\rm rat}&{\rm K3}&{\rm K3}&$2$&$0$&$(5,6)$&$4$&$(4)$&$\checkmark$&\\
			f)&$(3,3)$&$(1,1)$&$(1,1)$&{\rm rat}&{\rm K3}&{\rm K3}&$2$&$0$&$(5,5)$&$2$&$(4)$&$\checkmark$&$\checkmark$\\
			g)&$(3,0)$&$(1,4)$&$(1,4)$&{\rm rat}&{\rm K3}&{\rm K3}&$2$&$0$&$(5,8)$&$8$&$(4)$&$\checkmark$&\\
			h)&$(2,2)$&$(2,2)$&$(2,2)$&{\rm K3}&{\rm K3}&{\rm K3}&$3$&$0$&$(6,6)$&$8$&$(4)$&$\checkmark$ &\\			
			\hline
		\end{tabular} 
		\caption{}\label{TableP1xP1 general}
	\end{table}

	If $K_X^2=0$ the surface $X$ is minimal, admits a genus 1 fibration and the possibilities are listed in Table \ref{TableP1xP1 elliptic}
	
	\begin{table}[!h]
		\begin{tabular}{|c||c|c|c||c|c|c||c|c|c|c|c|c|c|}\hline
			case&$(n_1,m_1)$&$(n_2,m_2)$&$(n_3,m_3)$&$Y_1$&$Y_2$&$Y_3$&$p_g(X)$&$q(X)$&$(n,m)$&$K^2_X$&case&MTC&ITP\\
			\hline
			i)&$(4,2)$&$(0,2)$&$(\geq 4,0)$&{\rm rat}&{\rm rat}&{\rm K3}&$1$&$0$&$(\geq 8,4)$&$0$&$(1)$, $(2)$&$\checkmark$&\\
			j)&$(3,2)$&$(1,2)$&$(\geq 3,0)$&{\rm rat}&{\rm rat}&{\rm K3}&$1$&$0$&$(\geq 7,4)$&$0$&$(1)$, $(2)$&$\checkmark$&\\
			k)&$(2,2)$&$(2,2)$&$(\geq 2,0)$&{\rm rat}&{\rm rat}& {\rm K3}&$1$&$0$&$(\geq 6,4)$&$0$&$(1)$, $(2)$&$\checkmark$&\\
			l)&$(4,0)$&$(0,4)$&$(4,0)$&{\rm K3}&{\rm rul}&{\rm K3}&$2$&$3$&$(8,4)$&$0$&$(2)$&&\\
			m)&$(3,0)$&$(1,4)$&$(3,0)$&{\rm K3}&{\rm rul}&{\rm K3}&$2$&$2$&$(8,4)$&$0$&$(2)$&&\\
			n)&$(4,0)$&$(0,4)$&$(2,0)$&{\rm rat}&{\rm rul}&{\rm K3}&$1$&$2$&$(6,4)$&$0$&$(3)$&$\checkmark$&\\
			o)&$(4,2)$&$(0,2)$&$(2,0)$&{\rm rat}&{\rm rat}&{\rm K3}&$1$&$0$&$(6,4)$&$0$&$(3)$&$\checkmark$&\\
			p)&$(4,0)$&$(0,4)$&$(0,0)$&{\rm rul}&{\rm rul}&{\rm K3}&$1$&$2$&$(4,4)$&$0$&$(4)$&$\checkmark$&$\checkmark$\\
			q)&$(4,1)$&$(0,3)$&$(0,1)$&{\rm rul}&{\rm rat}&{\rm K3}&$1$&$1$&$(4,5)$&$0$&$(4)$&$\checkmark$&\\
			r)&$(4,2)$&$(0,2)$&$(0,0)$&{\rm rat}&{\rm rat}&{\rm K3}&$1$&$0$&$(4,4)$&$0$&$(4)$&$\checkmark$&$\checkmark$\\
			s)&$(4,2)$&$(0,2)$&$(0,2)$&{\rm rul}&{\rm K3}&{\rm K3}&$2$&$1$&$(4,6)$&$0$&$(4)$&&\\
			t)&$(4,3)$&$(0,1)$&$(0,1)$&{\rm rul}&{\rm K3}&{\rm K3}&$2$&$1$&$(4,5)$&$0$&$(4)$&&\\
			u)&$(3,2)$&$(1,2)$&$(1,0)$&{\rm rat}&{\rm rat}&{\rm K3}&$1$&$0$&$(5,4)$&$0$&$(4)$&$\checkmark$&\\
			v)&$(2,2)$&$(2,2)$&$(2,0)$&{\rm rat}&{\rm rat}&{\rm K3}&$1$&$0$&$(6,4)$&$0$&$(4)$&$\checkmark$&\\
			w)&$(2,2)$&$(2,2)$&$(0,0)$&{\rm rat}&{\rm rat}&{\rm K3}&$1$&$0$&$(4,4)$&$0$&$(4)$&$\checkmark$&$\checkmark$\\
			\hline
		\end{tabular} 
		\caption{}\label{TableP1xP1 elliptic}
	\end{table}

	In case $p)$, $X$ is an Abelian surface. In cases $r)$ and $w)$,  $X$ is a K3 surface. In cases $a),\ldots h)$, $X$ is of general type, in all the other cases $X$ is a properly elliptic surface.
	
	The surface $X$ has the following properties:
	$$T_X\otimes \Q\simeq (T_{Y_1}\oplus T_{Y_2}\oplus T_{Y_3})\otimes \Q,\ A_0(X)\otimes \Q\simeq (A_0(Y_1)\oplus A_0(Y_2)\oplus A_0(Y_3))\otimes \Q,$$
	$$h(X)_{\rm{tra}}\otimes \Q=(h(Y_1)_{\rm{tra}}\oplus h(Y_2)_{\rm{tra}}\oplus h(Y_3)_{\rm{tra}}) \otimes \Q.$$
	
	The Mumford--Tate conjecture and the Tate conjecture hold in the cases $a), \ldots, h)$.
	
	The Infinitesimal Torelli Property holds in cases $f)$, $p)$, $r)$ and $w)$ and does not hold in cases $a)$, $b)$ and $c)$.
\end{theorem}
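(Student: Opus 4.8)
The plan is to mirror the proof of Theorem \ref{theorem: bidouble P2} for $\PP^2$, replacing the one-parameter data $D_i\sim n_i h$ by the two-parameter data $D_i\sim n_ih_1+m_ih_2$, and to reduce every assertion to the general results of Section \ref{Sec:General}. First I would run the classification. By Lemma \ref{lemma: IL LEMMA}(2) the requirement that $Y_3$ be a K3 surface is $L_3=-K_Y$, i.e. $a_3=b_3=2$, which is exactly $n_1+n_2=m_1+m_2=4$; this is where the normalisations $n_2=4-n_1$, $m_2=4-m_1$ come from. For $Y_1,Y_2$ the constraint ``K3 or $p_g=0$'' is read off from $p_g(Y_i)=\max(0,a_i-1)\max(0,b_i-1)$ (Lemma \ref{lemma: IL LEMMA}(3)): for each $i\in\{1,2\}$ it forces either $\min(a_i,b_i)\le 1$ or $a_i=b_i=2$. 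Using the symmetries of the problem — exchanging the rulings $h_1\leftrightarrow h_2$, exchanging $(n_1,m_1)\leftrightarrow(n_2,m_2)$, and permuting the three characters — I would organise the integral solutions according to how $n_1+n_3$ compares with $4$, i.e. the four cases (1)--(4) already isolated before the statement. The split between Table \ref{TableP1xP1 general} and Table \ref{TableP1xP1 elliptic} is dictated by $K_X^2=2(n-4)(m-4)=2n_3m_3$: one has $K_X^2>0$ iff $n_3m_3>0$. The invariants $p_g(X),q(X),\chi(X),K_X^2$ in each row are then computed from the displayed formulas and, for $q$, from the eigensheaf decomposition $q(X)=\sum_{i}h^1(\mathcal{O}_Y(-L_i))$, a routine K\"unneth computation on $\PP^1\times\PP^1$ which is nonzero precisely when some $L_i$ is a positive multiple of a single ruling; this is also what distinguishes the rows whose $Y_i$ is a ruled (``rul'') rather than rational surface.

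Next I would pin down the nature of $X$. Since $K_X=\pi^*K_Y+R$ and $2R=\pi^*D$, one has $2K_X=\pi^*(2K_Y+D)=\pi^*(n_3h_1+m_3h_2)$. When $n_3m_3>0$ this pulls back an ample class along the finite map $\pi$, so $2K_X$ is ample and $X$ is minimal of general type (Table \ref{TableP1xP1 general}). When exactly one of $n_3,m_3$ vanishes, $2K_X$ is the pullback of a nef but non-big multiple of a ruling, hence $K_X$ is nef (so $X$ is minimal) and $K_X^2=0$; the pullback of that ruling endows $X$ with a fibration whose fibres $F$ satisfy $K_X\cdot F=0$, i.e. a genus $1$ fibration, giving a (properly) elliptic surface. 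When $n_3=m_3=0$, i.e. $D_3=0$, Lemma \ref{lemma: IL LEMMA}(4) gives $\kappa(X)=0$, and I would separate the three rows p), r), w): for $D_1=4h_1,D_2=4h_2$ (case p)) $X$ is the Abelian surface already identified in the Remark following Lemma \ref{lemma: IL LEMMA}, while in r) and w) the map $X\to Y_3$ is a double cover of the nodal K3 surface $Y_3$ branched only over its nodes, so $X$ is a K3 surface by \cite{NKummer}, exactly as in case c) of Theorem \ref{theorem: bidouble P2}.

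The Hodge-theoretic conclusions are then immediate from Section \ref{Sec:General}: the decompositions of $T_X\otimes\Q$, of $A_0(X)\otimes\Q$ and of $h(X)_{\mathrm{tra}}\otimes\Q$ follow verbatim from Propositions \ref{prop: Hodge structure}, \ref{prop: chow groups} and \ref{prop: motives}, using only that $Y$ is rational. For the Mumford--Tate and Tate conjectures I would argue as in Theorem \ref{Theo_MTC}. In the rows a)--h) with $p_g(X)=2,3$ the two or three K3 intermediate quotients satisfy $\rho(Y_i)=\rho(\PP^1\times\PP^1)=2$ — this Picard-number equality being checked by the moduli count used in the proof of Theorem \ref{theorem: bidouble P2} — so Corollary \ref{corollary hodge structures and trascendental} applies and Theorem \ref{Theo_MTC} yields the two conjectures; in the $p_g(X)=1$ rows a), b), c) one has $T_X\otimes\Q\simeq T_{Y_3}\otimes\Q$ for the single K3 quotient, so $\HH^2(X)$ is a sum of Lefschetz motives and one abelian motive of K3 type and the same argument applies. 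The rows where MTC is left open (namely l), m), s), t) in Table \ref{TableP1xP1 elliptic}) are exactly those in which an intermediate quotient is an irrational ruled surface, so Theorem \ref{Theo_MTC} does not apply.

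Finally, the Infinitesimal Torelli Property. In case f) the data are $D_1\sim 3(h_1+h_2)$, $D_2\sim h_1+h_2$, $D_3\sim h_1+h_2$: the three classes are proportional, $D_2,D_3$ are rational curves, $Y_2,Y_3$ are K3, and the vanishing $H^0((\omega_Y(D_{\chi_i})\otimes\mathcal{L}_{\chi_i})^{-1})=0$ holds for all $i$, so Theorem \ref{theo: Infinitesimal Torelli} applies directly. In cases p), r) and w) the surface $X$ is Abelian (p)) or K3 (r), w)), for which Infinitesimal Torelli is classical. The hard part, and the main obstacle, is the failure of the property in a), b), c): these are regular surfaces with $p_g(X)=1$ and a single K3 quotient, so $H^0(X,\omega_X)$ is one-dimensional and the criterion of Proposition \ref{prop_InfTorelliCrit} must be shown to fail by exhibiting a nonzero element of $\bigcap_{\phi}\ker\rho_{\chi_0,\phi}$; I expect this to follow, as for the special Kunev surface of case a) of Theorem \ref{theorem: bidouble P2}, from the fact that the hypotheses of Theorem \ref{theo: Infinitesimal Torelli} (two K3 quotients) are violated, the maps $q_{\chi_0,\phi}$ then acquiring a common kernel because there are too few holomorphic two-forms. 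This kernel analysis, together with the per-row determination of $\kappa$, of $q$, and of the rational/ruled dichotomy in Table \ref{TableP1xP1 elliptic}, is where the real bookkeeping lies.
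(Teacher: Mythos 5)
Most of your proposal tracks the paper's actual proof: the classification via $n_1+n_2=m_1+m_2=4$ and the case split (1)--(4), minimality via nefness of $2K_X=\pi^*(n_3h_1+m_3h_2)$, the identification of p), r), w) as Abelian/K3 via \cite{NKummer}, the decompositions from Propositions \ref{prop: Hodge structure}, \ref{prop: chow groups}, \ref{prop: motives}, and the Mumford--Tate argument via Theorem \ref{Theo_MTC} together with the moduli count giving $\rho(Y_i)=2$. Two harmless deviations: you get the genus-1 fibration by adjunction on the pullback of a ruling, where the paper goes through the elliptic fibration on $Y_3$ and the fact that $D_3$ is a union of fibres; and for the $p_g=1$ rows you prove MTC by a direct motivic argument (Lefschetz motives plus one abelian motive of K3 type), where the paper simply invokes \cite[Theorem 9.1]{Moonenh1} -- your route is legitimate and arguably more self-contained. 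A small inaccuracy: the rows where MTC is left open are not ``exactly those with an irrational ruled quotient'' -- rows n), p), q) also have ruled quotients but carry a checkmark (they have $p_g=1$, so Moonen, or the Abelian-surface case, applies); the open rows l), m), s), t) are those with $p_g=2$ \emph{and} a ruled quotient, where neither Theorem \ref{Theo_MTC} nor the $h^{2,0}=1$ results apply.

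The genuine gap is your treatment of the \emph{failure} of the Infinitesimal Torelli Property in cases a), b), c). You argue that since these surfaces have only one K3 quotient, the hypotheses of Theorem \ref{theo: Infinitesimal Torelli} are violated, and you ``expect'' the maps $q_{\chi_0,\phi}$ to acquire a common kernel. This does not prove anything: Theorem \ref{theo: Infinitesimal Torelli} is a sufficient criterion, and violating its hypotheses is compatible with the property still holding (indeed the paper leaves ITP open in several rows, e.g. d), e), g), h), precisely because no conclusion can be drawn either way). To prove failure you must exhibit a nonzero element of $\bigcap_{\phi}\ker\rho_{\chi,\phi}$ for some character $\chi$, which is a real computation -- or recognize these surfaces for what they are. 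The paper does the latter: the surfaces in a), b), c) are Todorov surfaces, and the failure of Infinitesimal (indeed Global) Torelli for them is a theorem of Todorov, cited as \cite{T81}; similarly the failure for the special Kunev surface in Theorem \ref{theorem: bidouble P2} rests on \cite{K77}, not on any ``violated hypotheses'' heuristic. Without this identification and citation (or an explicit kernel element), your proof of the negative ITP assertions is missing.
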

\begin{proof} The proof is analogue to the one of Theorem \ref{theorem: bidouble P2}. 
	To show that all the surfaces $Y_i$ that are not K3 surfaces have a negative Kodaira dimension, we compute the plurigenera $P_k(Y_i)$ by using \eqref{eq: pulrigeberaYi}: \[
	P_k(Y_i)=h^0\left(\mathbb{P}^1 \times \mathbb{P}^1, \, \big( \omega_Y\otimes\mathcal{L}_i\big)^{\otimes k}\right) + h^0\left(\mathbb{P}^1 \times \mathbb{P}^1, \,  \omega_Y^{\otimes k}\otimes\mathcal{L}_i^{\otimes (k-1)}\right),
	\]		
	where 
	$ \mathcal{L}_i=\mathcal{O}_{\mathbb{P}^1\times\mathbb{P}^1}\left(\frac{n_j+n_k}{2},\frac{m_j+m_k}{2}\right).
	$
	Recalling that
	\[
	h^0\left(\mathbb{P}^1 \times \mathbb{P}^1, \mathcal{O}_{\mathbb{P}^1\times\mathbb{P}^1}(a,b)\right)= \begin{cases} 0 & \textrm{ if } b\geq 0, a\leq -2 \textrm{ or } b\leq -2, a\geq 0 \\
		(a+1)(b+1) & \textrm{ if } a\geq -1, b\geq -1 \\
		0 & \textrm{ if } a< 0, b< 0, \\
	\end{cases}
	\]
	one sees that for the choices of $n_1, n_2 ,n_3,m_1,m_2, m_3$ in the tables, $P_k=0$ for all $k$ if $Y_i$ is not a K3 surface. By \eqref{equation: chi intermediate cover} one computes $\chi(Y_i)$ and one deduces $q(Y_i)$. This allows to determine if $Y_i$ is rational or ruled.
	
	We take a closer look to the Table \ref{TableP1xP1 elliptic} now. The K3 surface $Y_3$ admits a genus 1 fibration induced by the projection of $\mathbb{P}^1\times \mathbb{P}^1\ra\mathbb{P}^1$. The surface $X$ is the double cover of the K3 surface branched on $n_1m_2+m_1n_2$ singular points and on the pullback of $D_3$. One can check that in Table \ref{TableP1xP1 elliptic} $D_3$ (if non empty) consists of disjoint the union of fibers of the fibration $\mathbb{P}^1\times \mathbb{P}^1\ra\mathbb{P}^1$. So the pullback of $D_3$ on $Y_3$ consists of the disjoint union of fibers of the genus 1 fibration $Y_3\ra\mathbb{P}^1$. This implies that $X$ admits a genus 1 fibration.  The minimal model of a non rational elliptic surface has $K_X^2=0$, then $X$ is minimal.
	
	The minimality in case $K_X^2>0$ is proved as in Theorem \ref{theorem: bidouble P2}, by proving that $2K_X$ is big and nef.
	
	The statement about the Mumford--Tate conjecture follows from Theorem \ref{Theo_MTC} for the surfaces with $p_g\geq2$ such that the intermediate double covers are either K3 surfaces or rational surfaces and from \cite[Theorem 9.1]{Moonenh1} for those with $p_g=1$. For the former one, we still have to check that $\rho(Y_i)=\rho(Y)$ when $Y_i$ is a K3 surface. 
		We do that as in the proof of Theorem \ref{theorem: bidouble P2}. The surface $Y_2$ is a double cover of $\mathbb{P}^1\times \mathbb{P}^1$ and hence it is uniquely determined by the branch locus. The branch locus is the union of two curves of bidegree $(n_1,m_1)$ and $(n_3,m_3)$.
		The equation of a curve $C\subset \mathbb{P}^1\times \mathbb{P}^1$ of bidegree $(a,b)$ depends on $(a+1)(b+1)-1=ab+a+b$ parameters, since it is $$\sum_{i=0}^{a}x_0^ix_1^{a-i}f_{b}^{(i)}(y_0:y_1)$$
		where $f_b$ is a homogeneous polynomial of degree $b$.
		
		Hence the choice of the branch locus of $Y_2\ra\mathbb{P}^1\times\mathbb{P}^1$ depends on 
		$$(n_1+1)(m_1+1)-1+(n_3+1)(m_3+1)-1$$ projective parameters.
		The projective transformations of $\mathbb{P}^1\times\mathbb{P}^1$ are 6 dimensional. So the choice of the branch locus of $Y_2\ra\mathbb{P}^1\times\mathbb{P}^1$ depends on $(n_1+1)(m_1+1)+(n_3+1)(m_3+1)-8$ 
		parameters up to projectivity and $\rho(\widetilde{Y_2})=20-[(n_1+1)(m_1+1)+(n_3+1)(m_3+1)-8]=28-(n_1+1)(m_1+1)-(n_3+1)(m_3+1)$. Since $\widetilde{Y_2}$ is the blow up of $Y_2$ in $n_1m_3+n_3m_1$, $$\rho(Y_2)=28-(n_1+1)(m_1+1)-(n_3+1)(m_3+1)-n_1m_3-n_3m_1.$$
		Recalling that in our case $Y=\mathbb{P}^1\times\mathbb{P}^1$, we can check that $2=\rho(Y)=\rho(Y_2)$ is all the cases $d),\ldots,h)$. The computations for $\rho(Y_3)$ and $\rho(Y_1)$ in case $Y_1$ is a K3 surface are analogue.

		We summarize the results on the Hodge decomposition of the transcendental part of $Y_i$ and $X$ in the following table.  
		\begin{table}[!h]
			\begin{tabular}{|c||c|c|c|c|c|c|}\hline
				case&$h^{1,1}(X)$&$\rho(Y_1)$&$\rho(Y_2)$&$\rho(Y_3)$&$(h^{2,0}(Y_i),h^{11}(Y_i),h^{0,2}(Y_i))_{\rm{tra}}$& for $i=$\\
				\hline
				d) & $24$ &$8$ & $2$ & $2$ & $(1,8,1)$& 2,3 \\
				e) &$26$ &$6$ & $2$ & $2$ & $(1,10,1)$ &2,3\\
				f) & $28$ &$4$ & $2$ & $2$ & $(1,12,1)$ &2,3 \\
				g) &$22$& $10$ & $2$ & $2$ & $(1,6,1)$ &2,3 \\
				h) &$32$&$2$ & $2$ & $2$ & $(1,10,1)$& 1,2,3 \\
				\hline
			\end{tabular}
		\end{table}
	\newpage

	The hypothesis of Theorem \ref{theo: Infinitesimal Torelli} are satisfied in case $f)$, therefore the Infinitesimal Torelli Property holds for these surfaces. The cases $r)$ and $w)$ (resp. $p)$) follow by general results on families K3 surfaces (resp. Abelian surfaces).
	
	For the surfaces in a), b) and c) the Infinitesimal Torelli Property  fails as proven in \cite{T81}. We do not know if the Infinitesimal Torelli Property holds in the other cases.\end{proof}

The surfaces in cases $a)$, $b)$ and $c)$ are known as Todorov surfaces. In particular surfaces of case c) are studied in \cite{CD} where the authors prove that these surfaces form an unirational family of dimension $11$, moreover Todorov \cite{T81} proved that for these surfaces the Global Torelli Property fails. 

The surfaces in case $h)$ are triple K3 burger. This family is different from the family of triple K3 burgers described in Theorem \ref{theorem: bidouble P2} since the self intersections of the canonical bundles in the two cases are not the same. Similarly the cases $d)$, $f)$ and $g)$ are surely different families from the ones appearing in Theorem \ref{theorem: bidouble P2} with the same geometric genus.

The case $e)$ is associated to a surface $X$ of general type with $p_g=2$ and $K_X^2=4$. This could be a subfamily of the surface GS2b constructed in Theorem \ref{theorem: bidouble P2}, and we do not know if the surfaces in case $e)$ and  the surfaces GS2b are members of the same family.

\begin{rem}{\rm If $D_3=0$, then the surface $X$ is a $2:1$ cover of a K3 surface branched on its singular point and $Y_3$ is a singular model of a K3 surface. This implies that either $Y_3$ has 16 singularities and $X$ is an Abelian surface or $Y_3$ has 8 singularities and $X$ is a K3 surface. If $((n_1,m_1),(n_2,m_2))=((4,0),(0,4))$, $X$ is an Abelian surface (in particular the product of two elliptic curves) and $Y_3$ is its Kummer surface. In the other two cases $X$ is a K3 surface. By considering the elliptic fibration induced on $Y_3$ by the projection $\mathbb{P}^1\times \mathbb{P}^1\ra \mathbb{P}^1$ one deduces the N\'eron Severi group of the resolution of $Y_3$: it is $U\oplus N$ if $((n_1,m_1),(n_2,m_2))=((4,2),(0,2))$ and $U\oplus E_8(-2)$ if $((n_1,m_1),(n_2,m_2))=((2,2),(2,2))$.
		If $NS(Y_3)\simeq U\oplus N$ and one considers the double cover of $Y_3$ branched on the curves generating $N$, it is well known, \cite[Proposition 4.2]{vGS}, that the N\'eron--Severi of the cover surface is isometric to the one of $Y_3$. In particular, in this case $NS(X)\simeq NS(Y_3)$ and $T_X\simeq T_{Y_3}\simeq U\oplus U\oplus N$. 
		
		Since $Y_1$ and $Y_2$ are rational in all these cases, Proposition \ref{prop: Hodge structure} implies that $T_{X}\otimes \Q\simeq T_{Y_3}\otimes \Q$. By the previous results, summarized in the following table, we can observe that it is possible that the equality holds also over $\Z$ or not 
		
		$$\begin{array}{|c|c|c|c|c|c|c|c|}
			\hline
			case&(n_1,m_1)&(n_2,m_2)&(n_3,m_3)&T_{Y_3}&T_X\\
			\hline
			p)&(4,0)&(0,4)&(0,0)&U(2)\oplus U(2)&U\oplus U\\
			r)&(4,2)&(0,2)&(0,0)&U\oplus U\oplus N&U\oplus U\oplus N\\
			w)&(2,2)&(2,2)&(0,0)&U\oplus U\oplus E_8(-2)&U(-4)\oplus U\oplus D_4\oplus D_4\\\hline
		\end{array}$$
		The computation of the transcendental lattice $T_X$ in the last case is done as follows: it is the orthogonal to the N\'eron--Severi group, which is necessarily an overlattice of $U(4)\oplus E_8(-2)$. Indeed the pullback of the two generators of $NS(\mathbb{P}^1\times\mathbb{P}^1)$ are two divisors $H_1,H_2\in NS(X)$, whose intersection form is $U(4)$. Moreover, $X$ is a K3 surface admitting a symplectic involution, $\sigma_3$, which acts trivially on $H_1$ and $H_2$ and as the opposite of the identity on a lattice isometric to $E_8(-2)$ (this follows by the general theory of the symplectic involutions on K3 surfaces, see e.g. \cite{vGS}). The sublattice $\langle H_1,H_2\rangle$ is invariant for all the involutions $\sigma_i$ (since it is the pullback of divisors on $\mathbb{P}^1\times\mathbb{P}^1=X/\langle \sigma_1,\sigma_2\rangle$). The involution $\sigma_i$, for $i=1,2$, fixes a curve of genus 5, hence $NS(X)^{\sigma_i}\simeq \langle 2\rangle\oplus\langle-2\rangle^5$. This implies that $NS(X)$ is a finite index overlattice of $U(4)\oplus\Lambda_1\oplus \Lambda_2$ such that: $\Lambda_1\simeq \Lambda_2$; $NS(X)^{\sigma_i}$ is a finite index overlattice of $U(4)\oplus \Lambda_i$ for $i=1,2$ ; $(NS(X)^{\sigma_3})^{\perp}\simeq E_8(-2)$ is an overlattice of $\Lambda_1\oplus \Lambda_2$. This allows to determine first $\Lambda_i$ (which is a lattice of rank 4 and discriminant group $(\Z/2\Z)^2\times(\Z/4\Z)^2$) and then $NS(X)$. One finds the discriminant form of $NS(X)$ and realizes that it is unique in its genus. In particular $NS(X)$ is isometric to $U(4)\oplus D_4\oplus D_4$ so $T_X\simeq U(-4)\oplus U\oplus D_4\oplus D_4$. }	
\end{rem}
For each of the cases in Table \ref{TableP1xP1 general}, we determine the transcendental lattices $T_{Y_i}$ of the K3 surfaces involved in the construction. We summarize the results in the following table, whose last column contains the number $m_X$ of parameters up to projective transformation attached to the choice of the branch locus.
A priori it is possible that some deformations of $X$ are not obtained as bidouble cover. 
\begin{table}[!h]
	\begin{tabular}{|c||c|c|c||c|c|c||c|c|c||c}\hline
		case&$T_{Y_1}$&$T_{Y_2}$&$T_{Y_3}$&$m_1$&$m_2$&$m_3$&$m_X$\\
		\hline
		a)&$-$&$-$&$\langle 2\rangle^{\oplus 2}\oplus \langle-2\rangle^{\oplus 6}$&$-$&$-$&$6$&$11$\\
		b)&$-$&$-$&$U(2)^{\oplus 2}$&$-$&$-$&$2$&$10$\\
		c)&$-$&$-$&$\langle2\rangle^{\oplus 2}\oplus\langle-2\rangle^{\oplus 8}$&$-$&$-$&$8$&$11$\\
		d)&$-$&$U(2)^{\oplus 2}\oplus \langle -2\rangle^{\oplus 6}$&$U(2)^{\oplus 2}\oplus \langle -2\rangle^{\oplus 6}$&$-$&$8$&$8$&$15$\\
		e)&$-$&$U^{\oplus 2}\oplus\langle-2\rangle^{\oplus 8}$&$U^{\oplus 2}\oplus\langle-2\rangle^{\oplus 8}$&$-$&$10$&$10$&$15$\\
		f)&$-$&$U^{\oplus 2}\oplus D_6\oplus\langle -2\rangle^{\oplus 4}$&$U^{\oplus 2}\oplus D_6\oplus\langle -2\rangle^{\oplus 4}$&$-$&$12$&$12$&$15$\\
		g)&$-$&$U(2)^{\oplus 2}\oplus \langle -2\rangle^{\oplus 4}$&$U(2)^{\oplus 2}\oplus \langle -2\rangle^{\oplus 4}$&$-$&$6$&$6$&$15$\\
		h)&$U^{\oplus 2}\oplus E_8(-2)$&$U^{\oplus 2}\oplus E_8(-2)$&$U^{\oplus 2}\oplus E_8(-2)$&$10$&$10$&$10$&$18$\\
		\hline
	\end{tabular}
	\caption{} \label{Table Tra P1xP1}
\end{table}

\subsection{Singular double covers} 
In Section \ref{subsect singular P2} we constructed surfaces $X$ of general type with $p_g(X)=3$ and $1\leq K_X^2\leq 9$ and with $p_g(X)=2$ and $1\leq K_x^2\leq 4$ by allowing singular bidouble covers of $\mathbb{P}^2$.
By allowing points of type $(1,1,1)$ in the configurations of bidouble cover data listed in Theorem \ref{theor: Y=P1xP1} we obtain the same invariants with a unique exception, that we now describe.
Let $D_i$, $i,1,2,3$, be three smooth divisors such that $D_1\simeq 3h_1+h_2$, $D_2\simeq D_3\simeq h_1+3h_2$, $D_1\cap D_2\cap D_3$ consists of exactly one point and the other singular points of $D_1\cup D_2\cup D_3$ are simple normal crossing. Then the minimal model $X$ of the bidouble cover of $\mathbb{P}^1\times \mathbb{P}^1$ branched on  $D_1\cup D_2\cup D_3$ is such that $p_g(X)=2$, $K_X^2=5$ and the Mumford--Tate and Tate conjectures holds.
\subsection{Iterated bidouble covers of $\mathbb{P}^1\times\mathbb{P}^1$}\label{subsec: iterated P1xP1}

\medskip

\medskip
We restrict ourself to  the construction of iterated bidouble covers of $\mathbb{P}^1 \times \mathbb{P}^1$ only in the case when $Y_2$ is a K3 surface and $Y_1$ is rational, i.e. in the case $p_g(X)=2$ and $X$ is of general type. These are the cases $d)$, $e)$, $f)$, $g)$ of Table \ref{TableP1xP1 general}.

The surface $\widetilde{Y_1}$ is the blow up of $Y_1$ in $n_2m_3+n_3m_2$ points. We denote by $E_j$ the exceptional divisors and $E$ their sum, by $A_1$ and $A_2$ the pull back of the generators $h_1$ and $h_2$ of $\textrm{Pic}(\mathbb{P}^1\times\mathbb{P}^1)$, by $\widetilde{B}$ the strict transform on $\widetilde{Y_1}$ of the branch curve of $X\ra Y_1$ so that $\widetilde{B}+E$ is the branch locus of $\widetilde{X}\ra \widetilde{Y_1}$. Hence we are in the following situation:
\begin{table}[!h]
\begin{tabular}{|c||c|c|c|c|}
	\hline
	case&$n_2m_3+n_3m_2$&$K_{\widetilde{Y_1}}$&$\widetilde{B}$&$E$\\
	\hline
	d)&$6$&$-A_1+A_2$&$3A_1+A_2$&$\sum_{j=1}^6 E_j$\\
	e)&$4$&$-A_1$&$3A_1+2A_2$&$\sum_{j=1}^4 E_j$\\
	f)&$2$&$-A_1-A_2$&$3A_1+3A_2$&$\sum_{j=1}^2E_j$\\
	g)&$8$&$-A_1+2A_2$&$3A_1$&$\sum_{j=1}^8E_j$\\
	\hline
	\end{tabular} 
		\caption{}\label{Table7}
	\end{table}

\begin{theorem}\label{theor: iterated P1xP1 case 1} It holds the following, where we refer to the cases in Theorem \ref{theor: Y=P1xP1}.
\begin{enumerate}
	\item It is not possible to construct an iterated bidouble cover as in Construction \ref{construction 2: iterated bidouble case 1}  for the bidouble cover of the cases $d)$ and $g)$.

	\item In case $e)$ there exists an iterated bidouble cover  W as in Construction \ref{construction 2: iterated bidouble case 1}  whose data are
	$$\Delta_1=2A_1,\ \ \Delta_2=0,\ \ \Delta_3=A_1+2A_2+\sum_jE_j.$$
	In this case $X$ is a bidouble cover of $\mathbb{P}^1\times \mathbb{P}^1$ branched along the union of two curves of bidegree $(1,0)$ and three curves of bidegree $(1,2)$.
	The surface $Z_3$ is a K3 surface which is a bidouble cover of $\mathbb{P}^1\times \mathbb{P}^1$ whose data are  $\Gamma_1=2h_1$,  $\Gamma_2=D_2+D_3$ ($D_2$ and $D_3$ linearly equivalent to $h_1+2h_2$)  and $\Gamma_3=0$.

	\item Up to possibly switching $A_1$ and $A_2$, in case $f)$ there exist three iterated bidouble covers W as in Construction \ref{construction 2: iterated bidouble case 1} whose data are
	$$\Delta_1=2A_1+yA_2,\ \ \Delta_2=(2-y)A_2,\ \ \Delta_3=A_1+(3-y)A_2+\sum_jE_j,\ \ \mbox{ with } y=0,1,2.$$
	In these cases $X$ is a bidouble cover of $\mathbb{P}^1\times \mathbb{P}^1$ branched along the union of one curve of bidegree $(2,y)$, one of curve of bidegree $(1,3-y)$ and two curves of bidegree $(1,1)$.
	The surface $Z_3$ is a bidouble cover of $\mathbb{P}^1\times \mathbb{P}^1$ whose data are  $\Gamma_1=C_1+C_2$ with $C_1=2h_1+yh_2$ and $C_2=yh_2$,  $\Gamma_2=D_2+D_3$ (where $D_2$ and $D_3$ linearly equivalent to $h_1+h_2$)  and $\Gamma_3=0$.
	\end{enumerate}
	
\end{theorem}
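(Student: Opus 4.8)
The plan is to treat the four cases $d)$, $e)$, $f)$, $g)$ uniformly and then specialise. In each case Construction \ref{construction 2: iterated bidouble case 1} asks for effective $\Delta_1,\Delta_2,\Delta_3$ on $\widetilde{Y_1}$ with $\Delta_1+\Delta_2=-2K_{\widetilde{Y_1}}$, with $\Delta_1+\Delta_3=\widetilde{B}+E=:2\Lambda_2$, with all pairwise sums divisible by $2$, and with $\dim|K_{\widetilde{Y_1}}+(\Delta_2+\Delta_3)/2|\leq 0$; moreover, by Remark \ref{rem: no exceptional in Delta1,2}, neither $\Delta_1$ nor $\Delta_2$ contains a component of $E$. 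First I would record two elementary facts on $\widetilde{Y_1}$. Since $A_1,A_2$ are the nef pullbacks of the two rulings, with $A_1^2=A_2^2=0$, $A_1A_2=2$ and $A_i\cdot E_j=0$, a class $\alpha A_1+\beta A_2$ containing no $E_j$ is effective if and only if $\alpha,\beta\geq 0$ (test by intersecting with the nef classes $A_1,A_2$). Second, writing $\Delta_1=x_1A_1+x_2A_2$ (free of $E_j$), the first two conditions give $\Delta_2=-2K_{\widetilde{Y_1}}-\Delta_1$ and $\Delta_3=\widetilde{B}+E-\Delta_1$, and substituting $\Delta_1+\Delta_2=-2K_{\widetilde{Y_1}}$ and $\Delta_1+\Delta_3=2\Lambda_2$ yields the clean reformulation
\[
K_{\widetilde{Y_1}}+\tfrac12(\Delta_2+\Delta_3)=\Lambda_2-\Delta_1 .
\]
Thus the whole problem reduces to finding the effective $\Delta_1\in\langle A_1,A_2\rangle$, with $\Delta_2,\Delta_3$ also effective, for which $\dim|\Lambda_2-\Delta_1|\leq 0$.

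For cases $d)$ and $g)$ there is nothing to compute, and this is where the non-existence comes from. By Table \ref{Table7}, $-2K_{\widetilde{Y_1}}=2A_1-2A_2$ in case $d)$ and $-2K_{\widetilde{Y_1}}=2A_1-4A_2$ in case $g)$; in both the $A_2$-coefficient is strictly negative. Since $\Delta_1+\Delta_2=-2K_{\widetilde{Y_1}}$ with $\Delta_1,\Delta_2$ effective and free of $E_j$, their $A_2$-coefficients are $\geq 0$, hence so is that of the sum — contradicting a negative value. Therefore no admissible data exist, which proves item (1).

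For cases $e)$ and $f)$ effectivity of $\Delta_2=-2K_{\widetilde{Y_1}}-\Delta_1$ and of $\Delta_3$ first confines $(x_1,x_2)$ to a small box: in case $e)$ one finds $x_2=0$ and $0\leq x_1\leq 2$, while in case $f)$ one finds $0\leq x_1,x_2\leq 2$. It then remains to impose $\dim|\Lambda_2-\Delta_1|\leq 0$, which is the only genuinely computational step and the main obstacle. I would evaluate $h^0(\widetilde{Y_1},\Lambda_2-\Delta_1)$ by pushing forward along the double cover $\widetilde{\pi_1}\colon\widetilde{Y_1}\to\widetilde{\mathbb{P}^1\times\mathbb{P}^1}$ and splitting $(\widetilde{\pi_1})_*\mathcal{O}$ into its invariant and anti-invariant eigensheaves, thereby reducing to explicit dimensions of $h^0(\mathbb{P}^1\times\mathbb{P}^1,\mathcal{O}(a,b))$ as tabulated in the proof of Theorem \ref{theor: Y=P1xP1}. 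A convenient bookkeeping device is that $h^0(K_{\widetilde{Y_1}}+\Lambda_2)=p_g(X)=2$ (from the double-cover canonical bundle formula and $\widetilde{Y_1}$ rational), together with $\Lambda_2-\Delta_1=(K_{\widetilde{Y_1}}+\Lambda_2)-K_{\widetilde{Y_1}}-\Delta_1$, so one can test the threshold by adding or subtracting the fibre classes $A_1,A_2$ to the two-dimensional system $|K_{\widetilde{Y_1}}+\Lambda_2|$. Carrying this out forces $x_1=2$ in case $e)$, giving the unique datum $\Delta_1=2A_1$, $\Delta_2=0$, $\Delta_3=A_1+2A_2+\sum_jE_j$; in case $f)$ it rules out every $(x_1,x_2)$ with both entries $\leq 1$ (there $\Lambda_2-\Delta_1$ dominates $K_{\widetilde{Y_1}}+\Lambda_2$ and keeps $h^0\geq 2$), and, using the symmetry $A_1\leftrightarrow A_2$ of the ruling-symmetric data of case $f)$, leaves exactly $x_1=2$ with $x_2=y\in\{0,1,2\}$. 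Divisibility of all three pairwise sums is then automatic once $\widetilde{B}+E=2\Lambda_2$ is invoked, so these are genuine bidouble data.

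Finally I would read off the geometry exactly as in the proof of Theorem \ref{theor: iterated bidouble GS2}. The identity $\Delta_1=2A_1=\widetilde{\pi_1}^*(2h_1)$ in case $e)$ (respectively $\Delta_1=2A_1+yA_2$ in case $f)$) forces the branch curve of $X\to Y_1$ to split downstairs, which specialises $D_1$ into a union of curves of the lower bidegrees stated and exhibits $X$ as the claimed bidouble cover of $\mathbb{P}^1\times\mathbb{P}^1$; the intermediate cover $Z_3$ is then recognised from its branch data $\Gamma_1,\Gamma_2,\Gamma_3$, and is a K3 surface because $\Delta_1+\Delta_2=-2K_{\widetilde{Y_1}}$ makes $Z_3$ satisfy the K3 criterion of Lemma \ref{lemma: IL LEMMA}(2). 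The accompanying transcendental Hodge and Chow-group decompositions, if needed, follow verbatim from Propositions \ref{prop: iterated in general} and \ref{prop: iterated in general, Chow}. I expect the sole delicate point to be the $h^0$-threshold computation of the previous paragraph; the remaining steps are lattice bookkeeping in the Picard group of $\widetilde{Y_1}$.
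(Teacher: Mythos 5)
Your proposal is correct and follows essentially the same route as the paper's proof: the same effectivity argument for item (1) (the $A_2$-coefficient of $-2K_{\widetilde{Y_1}}$ is negative in cases $d)$ and $g)$, so no effective $\Delta_1+\Delta_2$ can exist), the same parametrization $\Delta_1=x_1A_1+x_2A_2$ cut down by effectivity and by $\dim|K_{\widetilde{Y_1}}+(\Delta_2+\Delta_3)/2|\leq 0$, and the same identification of the splitting of $D_1$ and of $Z_3$ via the non-cyclic $4{:}1$-cover argument of Theorem \ref{theor: iterated bidouble GS2}. Your reformulation of the threshold as $h^0(\Lambda_2-\Delta_1)\leq 1$, tested against $h^0(K_{\widetilde{Y_1}}+\Lambda_2)=p_g(X)=2$ by adding or subtracting ruling classes, is a correct explicit way of carrying out the step the paper only asserts (``the request $\dim|\cdot|\leq 0$ implies $x=2$''), and it reproduces exactly the paper's solution sets in cases $e)$ and $f)$.
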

\begin{proof}
	We apply the construction in Section \ref{subsubsec: construction 2}, and we look for effective divisors $\Delta_1$, $\Delta_2$, $\Delta_3$ such that
	$\Delta_1+\Delta_2= -2K_{\widetilde{Y_1}}$, $\Delta_1+\Delta_3=  \widetilde{B}+E$, $\dim(|\Delta_1+\Delta_2|)\leq 0$ where $K_{\widetilde{Y_1}}$ and $\widetilde{B}$ and $D$ are as in the Table \ref{Table7}.

	{\bf (1)} In the cases $d)$ and $g)$ it is not possible to find two effective divisors $\Delta_1$ and $\Delta_2$ such that $\Delta_1+\Delta=-2K_{\widetilde{Y_1}}$.
	
	\medskip
	
	{\bf (2)} In case $e)$ one can find the effective divisors $\Delta_1$, $\Delta_2$ and $\Delta_3$ in such a way that $\Delta_1+\Delta_2=-2K_{\widetilde{Y_1}}$ and $\Delta_1+\Delta_3=\widetilde{B}+E$, which are 
	$$\Delta_1=xA_1,\ \ \Delta_2=(2-x)A_1,\ \ \Delta_3=(3-x)A_1+2A_2+\sum_jE_j,\ \ \mbox{ with } 0\leq x\leq 2.$$
The request that $\dim|K_{\widetilde{Y_1}}+(\Delta_2+\Delta_3)/2|\leq 0$  implies $x=2$.
	The curve $\widetilde{B}$ splits into the sum of $\Delta_1$ and $\Delta_3$. This means that the curve $D_1=3h_1+2h_2$ attached on the first bidouble cover $X\ra \mathbb{P}^1\times \mathbb{P}^1$ splits in the union of the two curves $2h_1$ and $h_1+2h_2$. So $Y_1$ and $Y_2$ are the double covers of $\mathbb{P}^1\times\mathbb{P}^1$ branched on the union of three curves, of bidegree $(2,0)$, $(1,2)$ and $(1,2)$ respectively. Since the curves of bidegree $(2,0)$ are necessarily the union of two irreducible curves of bidegree $(1,0)$, we obtain that $Y_2$ and $Y_3$ are bidouble covers of $\mathbb{P}^1\times\mathbb{P}^1$ branched on the union of four rational curves, two of bidegree $(1,0)$ and two of bidegree $(1,2)$. Hence there exists a non-symplectic involution of $Y_2$ and $Y_3$ which fixes 4 rational curves. So their transcendental lattices are isometric $U(2)^{\oplus 2}\oplus\langle -2\rangle^4$. 
	
	To describe the geometry of $Z_3$, let us reconsider more closely also the construction of $Y_1$. The surface $Y_1$ is the double cover of $\mathbb{P}^1\times \mathbb{P}^1$ branched on $D_2$ and $D_3$. Let us now consider the bidouble cover $U\ra \mathbb{P}^1\times \mathbb{P}^1$ whose data are the three effective divisors $\Gamma_1$, which is a curve of bidegree $(2,0)$, $\Gamma_2=D_2+D_3$ and $\Gamma_3=0$. The intermediate double covers are $Y_1$ (whose branch locus is $\Gamma_2+\Gamma_3$), a rational surface $V_2$ (whose branch locus is $\Gamma_1+\Gamma_3$) and a singular model of a K3 surface $V_3$ whose branch locus is $\Gamma_1+\Gamma_2$. The bidouble cover $U$ is a K3 surface, double cover of $V_3$ branched on its eight singular points. By construction $U$ is also a double cover of $Y_1$ branched on the pullback of $\Gamma_1$, so its construction induces on $\widetilde{Y_1}$ the double cover $Z_3\ra \widetilde{Y_1}$.
	In particular $U$ is a birational model of $Z_3$, 
	i.e. the following diagram holds 
	$$\xymatrix {&Z_3\ar[r]\ar[dl]&U\ar[dl]\ar[d]\ar[dr]&\\
		\widetilde{Y_1}\ar[r]&Y_1\ar[dr]&V_2\ar[d]&V_3\ar[dl]\\&&\mathbb{P}^1\times\mathbb{P}^1&&}$$
	where the horizontal arrows are birational transformations, the others are $2:1$ covers. 
	The surface $V_3$ is a K3 surface, double cover of $\mathbb{P}^1\times \mathbb{P}^1$ branched on 4 rational curves (two of type $(1,0)$, two of type $(1,2)$). It follows that it admits a non-symplectic involution fixing 4 rational curves and the N\'eron--Severi group of its resolution is isometric to $U\oplus M_{(2,2)}$ (where $M_{(2,2)}$ is described in \cite[Sections 6 and 7]{NikSym}). The K3 surface $Z_3$ is branched on 8 of the 12 singular points resolved by the curves in $M_{(2,2)}$. To compute the N\'eron--Severi group of $Z_3$ we start by considering the one of $V_3$. The surface $V_3$ admits an elliptic fibration with 12 fibers of type $I_2$, induced by the projection of $\mathbb{P}^1\times \mathbb{P}^1\ra\mathbb{P}^1$. The Mordell Weil group of this fibration is $(\Z/2\Z)^2$ and to consider the cover branched on 8 of the 12 exceptional divisor is equivalent (by \cite[Proposition 2.3]{G13})
 to consider the quotient of the elliptic fibration by the translation by a 2-torsion section. The quotient surface has an elliptic fibration with four fibers of type $I_4$ and Mordell--Weil group $\Z/2\Z$. This allows to determine the N\'eron--Severi group and hence the transcendental lattice: it is the unique (up to isometry) lattice with signature $(2,6)$, discriminant group $(\Z/2\Z)^2\times(\Z/4\Z)^2$ and discriminant form generated by $b_1$, $b_2$, $b_3$, $b_4$ with $b_1^2=b_2^2=0$, $b_1b_i=b_3^2=b_4^2=-1/2$ for $i=2,3,4$, $b_2b_i=0$, for $i=2,3,4$ and $b_3b_4=-1/4$.
	
	\medskip
	
	{\bf (3)} The case $f)$ is analogue;
	\begin{itemize}
		\item 
		if the data are
		$$\Delta_1=2A_1+2A_2,\ \ \Delta_2=0,\ \ \Delta_3=A_1+A_2+\sum_jE_j$$
		then $D_1$ splits in the union of a $(2,2)$ curve and a $(1,1)$ curve, the surfaces $Y_2$ and $Y_3$ are double cover of $\mathbb{P}^1\times\mathbb{P}^1$ branched on the union of a $(2,2)$ curve and two $(1,1)$ curves, so the transcendental lattices of $Y_2$ and $Y_3$ are $T_{Y_2}\simeq T_{Y_3}\simeq U\oplus\langle 2\rangle\oplus \langle -2\rangle^{\oplus 7}$ and $Z_3$ is birational to a bidouble cover of $\mathbb{P}^1\times \mathbb{P}^1$ whose data are $\Gamma_1=2h_1+2h_2$, $\Gamma_2=D_2+D_3$, $\Gamma_3=0$. The K3 surface $V_3$ (defined as above as an intermediate double cover of the bidouble cover $Z_3\ra\mathbb{P}^1\times\mathbb{P}^1$) has the same geometric properties as $Y_2$ and $Y_3$, so that its transcendental lattice is $T_{V_3}\simeq U\oplus\langle 2\rangle\oplus \langle -2\rangle^{\oplus 7}$ and its N\'eron--Severi group is $U\oplus N\oplus \langle -2\rangle^2$. Arguing as above, one obtains that the cover surface $Z_3$ is birational to the quotient of $V_3$ by a symplectic involution which is the translation by a 2-torsion section. In this way one obtains that $Z_3$ admits an elliptic fibration with $2I_4+4I_2$ as reducible fibers and $\Z/2\Z$ as Mordell--Weil group. So $T_{Z_3}$ is the unique even lattice with signature $(2,8)$, discriminant group $(\Z/2\Z)^2\times(\Z/4\Z)^2$ and discriminant form generated by $b_i$, $i=1,\ldots,4$ with the following intersection properties $b_1^2=b_2^2=1$, $b_1b_2=b_3b_4=1/2$, $b_3^2b_4^2=3/4$, the others intersections are zero.
		\item 
		if the data are
		$$\Delta_1=2A_1+A_2,\ \ \Delta_2=A_2,\ \ \Delta_3=A_1+2A_2+\sum_jE_j$$
		then $D_1$ splits in the union of a $(2,1)$ curve and a $(1,2)$ curve, the surfaces $Y_2$ and $Y_3$ are double cover of $\mathbb{P}^1\times\mathbb{P}^1$ branched on the union of a $(2,1)$ curve, a $(1,2)$ curve and a $(1,1)$ curve, so the transcendental lattices of $Y_2$ and $Y_3$ are $T_{Y_2}\simeq T_{Y_3}\simeq U(2)^{\oplus 2}\oplus \langle -2\rangle^{\oplus 5}$ and $Z_3$ is birational to a bidouble cover of $\mathbb{P}^1\times \mathbb{P}^1$ whose data are $\Gamma_1=C_1+C_2$, with $C_1=2h_1+h_2$, $C_2=h_2$, $\Gamma_2=D_2+D_3$, $\Gamma_3=0$. The N\'eron--Severi of the surface $V_3$ is $U\oplus M_{(\Z/2\Z)^2}$ and the considerations of the case $e)$ holds also in this case, so $T_{Z_3}$ is isometric to the one computed in case $e)$.
		\item 
		if the data are
		$$\Delta_1=2A_1,\ \ \Delta_2=2A_2,\ \ \Delta_3=A_1+3A_2+\sum_jE_j$$
		then $D_1$ splits in the union of two $(1,0)$ curves and a $(1,3)$ curve, the surfaces $Y_2$ and $Y_3$ are double cover of $\mathbb{P}^1\times\mathbb{P}^1$ branched on the union of two $(1,0)$ curves, a $(1,3)$ curve and a $(1,1)$ curve, so the transcendental lattices of $Y_2$ and $Y_3$ are $T_{Y_2}\simeq T_{Y_3}\simeq U(2)^{\oplus 2}\oplus \langle -2\rangle^{\oplus 4}$. The K3 surface $Z_3$ is a bidouble cover of $\mathbb{P}^1\times\mathbb{P}^1$ whose data are $\Gamma_1=C_1+C_2$, with $C_1=2h_1$, $C_2=2h_2$, $\Gamma_2=D_2+D_3$, $\Gamma_3=0$.  The K3 surface $V_3$ admits an elliptic fibration with $2I_0^*+6I_2$ as reducible fibers and $(\Z/2\Z)^2$ as Mordell--Weil group. The surface $Z_3$ is birational to the K3 surface obtained as quotient of $V_3$ by the translation by a 2-torsion section and hence admits an elliptic fibration with $2I_0^*+2I_4$ as reducible fibers and $(\Z/2\Z)$ as Mordell--Weil group.  So $T_{Z_3}$ is the unique even lattice with signature $(2,4)$, discriminant group $(\Z/2\Z)^2\times(\Z/4\Z)^2$ and discriminant form generated by $b_i$, $i=1,\ldots,4$ with the following intersection properties $b_2^2=1$, $b_1b_2=b_4^2=-1/2$, $b_3^2=-1/4$, $b_3b_4=-3/4$, the others intersections are zero.
	\end{itemize}

	 \end{proof}
We observe that the surfaces constructed in the previous Theorem satisfy the hypothesis of Proposition \ref{prop: iterated in general} and \ref{prop: iterated in general, Chow} on the decomposition of the transcendental Hodge structures and of the Chow groups.

\begin{theorem}\label{theor: iterated P1xP1 case 2} It holds the following, where we refer to the cases in the tables of Theorem \ref{theor: Y=P1xP1}.
	\begin{enumerate}
		\item It is not possible to construct an iterated bidouble cover as in Construction \ref{construction 2: iterated bidouble case 2} for the bidouble covers of the cases $d)$ and $g)$.
		
		\item In case $e)$ there exists an iterated bidouble cover  W as in Construction \ref{construction 2: iterated bidouble case 2}  whose data are
		$$\Delta_1=A_1,\ \ \Delta_2=A_1,\ \ \Delta_3=2A_1+2A_2+\sum_jE_j.$$
	
		In this case $X$ is a bidouble cover of $\mathbb{P}^1\times \mathbb{P}^1$ branched along the union of two curves of bidegree $(1,2)$, a curve of bidegree $(2,2)$ and a curve of bidegree $(1,0)$.
		The surface $Z_3$ is a K3 surface which is a bidouble cover of $\mathbb{P}^1\times \mathbb{P}^1$ whose data are  $\Gamma_1=2h_1$,  $\Gamma_2=D_2+D_3$ ($D_2$ and $D_3$ linearly equivalent to $h_1+2h_2$)  and $\Gamma_3=0$. In particular $p_g(W)=5$.

		\item In case $f)$ there exist two (up to a permutation of $A_1$ and $A_2$) iterated bidouble covers  W as in Construction \ref{construction 2: iterated bidouble case 2} whose data are
		$$\Delta_1=A_1+A_2,\ \ \Delta_2=A_1+A_2,\ \ \Delta_3=2A_1+2A_2+\sum_jE_j,$$
		or $$\Delta_1=A_1,\ \ \Delta_2=A_1,\ \ \Delta_3=2A_1+3A_2+\sum_jE_j.$$
		
		In the first case $X$ is branched on the union of three curves of bidegree $(1,1)$ and one curve of bidegree $(2,2)$. The surface $Z_3$ is a K3 surface which is a bidouble cover of $\mathbb{P}^1\times \mathbb{P}^1$ branched on the union of 4 curves of bidegree $(1,1)$. In particular $p_g(W)=5$.
		
		In the second case $X$ is branched on the union of two curves of bidegree $(1,1)$, one curve of bidegree $(2,3)$ and one of bidegree $(1,0)$ and $Z_3$ is a rational surface.  In particular $p_g(W)=4$.
		
	\end{enumerate}

\end{theorem}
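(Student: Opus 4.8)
The plan is to run the same analysis as in the proof of Theorem \ref{theor: iterated P1xP1 case 1}, replacing the conditions of Construction \ref{construction 2: iterated bidouble case 1} by those of Construction \ref{construction 2: iterated bidouble case 2}. For each of the cases $d)$, $e)$, $f)$, $g)$ of Theorem \ref{theor: Y=P1xP1} I would read off $K_{\widetilde{Y_1}}$, $\widetilde B$ and $E$ from Table \ref{Table7} and look for effective data $\{\Delta_1,\Delta_2,\Delta_3\}$ with $\Delta_1\simeq\Delta_2$, $\Delta_1+\Delta_3=\widetilde B+E$, and no exceptional component in $\Delta_1,\Delta_2$ (Remark \ref{rem: no exceptional in Delta1,2}). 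Since $\Delta_1$ carries no $E_j$, I write $\Delta_1=\Delta_2=aA_1+bA_2$ and $\Delta_3=\widetilde B+E-\Delta_1$, so that the entire search reduces to a finite problem in the integers $a,b\ge 0$.

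The key reduction is the following. The intermediate quotient $Z_3$ of $W\to\widetilde{Y_1}$ is the double cover branched on $\Delta_1+\Delta_2\simeq 2\Delta_1$, so $K_{Z_3}=\gamma^*(K_{\widetilde{Y_1}}+\Delta_1)$; Construction \ref{construction 2: iterated bidouble case 2} demands that $Z_3$ be a K3 surface or a surface with $p_g=0$ (case (2) of Section \ref{subsubsec: construction 2}). This forces $K_{\widetilde{Y_1}}+\Delta_1$ to be trivial (giving $Z_3$ a K3 surface) or anti--effective (giving $Z_3$ rational); equivalently $-K_{\widetilde{Y_1}}-\Delta_1$ is effective, and then the bound $\dim|K_{\widetilde{Y_1}}+\Delta_1|\le 0$ holds automatically. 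Hence the admissible $\Delta_1$ are exactly the nonzero effective classes with $\Delta_1\le -K_{\widetilde{Y_1}}$. In cases $d)$ and $g)$ one has $-K_{\widetilde{Y_1}}=A_1-A_2$, respectively $A_1-2A_2$, which are not effective, so no nonzero effective $\Delta_1\le -K_{\widetilde{Y_1}}$ exists and the construction is impossible, proving (1). In case $e)$, $-K_{\widetilde{Y_1}}=A_1$ and the only nonzero effective $\Delta_1\le A_1$ is $\Delta_1=A_1$, giving the stated data; in case $f)$, $-K_{\widetilde{Y_1}}=A_1+A_2$ and the nonzero effective $\Delta_1\le A_1+A_2$ are $A_1+A_2$, $A_1$ and $A_2$, i.e. two up to the $A_1\leftrightarrow A_2$ symmetry of the data, giving (2) and (3).

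For each solution the equality $\widetilde B=\Delta_1+(\Delta_3-E)$ means, via $A_i=\widetilde{\pi_1}^*h_i$, that the branch curve $D_1$ of the first cover splits into two curves whose bidegrees are those of $\Delta_1$ and $\Delta_3-E$; this reads off the branch data of $X$, shows that $Z_1$ (branched on $\Delta_2+\Delta_3\simeq\widetilde B+E$) is again a bidouble cover of $\mathbb{P}^1\times\mathbb{P}^1$ of the same bidegrees as $X$, and gives $Z_3$ (branched on $2\Delta_1$) the data $\Gamma_1\simeq 2\Delta_1$, $\Gamma_2=D_2+D_3$, $\Gamma_3=0$ pushed down to $\mathbb{P}^1\times\mathbb{P}^1$. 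For $p_g(W)$ I use the eigensheaf decomposition of $\omega_W$ (Proposition \ref{prop_InvaTang}(3)): $p_g(W)=\sum_{\chi}h^0(\omega_{\widetilde{Y_1}}\otimes\mathcal L_{\chi})$. Here $\mathcal L_{\chi_1}=\mathcal L_{\chi_2}=(\widetilde B+E)/2$, whence $h^0(\omega_{\widetilde{Y_1}}\otimes\mathcal L_{\chi_1})=h^0(\omega_{\widetilde{Y_1}}\otimes\mathcal L_{\chi_2})=p_g(X)=2$, while $\mathcal L_{\chi_3}=\Delta_1$, so that $p_g(W)=4+h^0(K_{\widetilde{Y_1}}+\Delta_1)=4+p_g(Z_3)$, equal to $5$ when $Z_3$ is a K3 surface and to $4$ when $Z_3$ is rational, as stated.

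Finally, the transcendental lattices: the surfaces $Y_2,Y_3$ and the K3 surface $Z_3$ occurring here are the very same covers of $\mathbb{P}^1\times\mathbb{P}^1$ that appear in the proof of Theorem \ref{theor: iterated P1xP1 case 1} (the data $\Gamma_1=2h_1,\ \Gamma_2=D_2+D_3,\ \Gamma_3=0$ in case $e)$, and the $(1,1)$--data in case $f)$, coincide), so their transcendental lattices are already known: one realizes $Z_3$ as a smooth model of the quotient of an explicit K3 surface $V_3$ by translation by a $2$--torsion section of the elliptic fibration induced by a projection $\mathbb{P}^1\times\mathbb{P}^1\to\mathbb{P}^1$, records the reducible fibers and the Mordell--Weil group after the quotient, and identifies $T_{Z_3}$ as the unique lattice in its genus with the prescribed signature and discriminant form. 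The decompositions of $T_W$ and $A^2_0(W)$ then follow from Propositions \ref{prop: iterated in general} and \ref{prop: iterated in general, Chow}. The step demanding the most care is precisely this lattice identification for $Z_3$ (the fixed--locus and Mordell--Weil bookkeeping for the non--symplectic involution), but since the relevant K3 surfaces are identical to those already treated in Theorem \ref{theor: iterated P1xP1 case 1}, that computation may be quoted; the remainder is the routine finite enumeration carried out above.
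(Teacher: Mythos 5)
Your overall skeleton does coincide with the paper's intended argument: the paper's own proof is literally ``analogous to Theorem \ref{theor: iterated P1xP1 case 1}'', i.e.\ one writes $\Delta_1=\Delta_2=aA_1+bA_2$ (no exceptional components, by Remark \ref{rem: no exceptional in Delta1,2}), sets $\Delta_3=\widetilde{B}+E-\Delta_1$, enumerates, reads off the forced splitting of $D_1$ from $\widetilde{B}=\Delta_1+(\Delta_3-E)$, identifies $Z_3$ as the bidouble cover of $\mathbb{P}^1\times\mathbb{P}^1$ with data $\Gamma_1\simeq 2\Delta_1$ (pushed down), $\Gamma_2=D_2+D_3$, $\Gamma_3=0$, and computes $p_g(W)=2+2+p_g(Z_3)$ via the eigensheaf decomposition. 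All of that part of your write-up is correct and matches the paper.

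However, your pivotal ``key reduction'' contains a genuine logical gap. Construction \ref{construction 2: iterated bidouble case 2} imposes $\dim|K_{\widetilde{Y_1}}+\Delta_1|\le 0$ (equivalently $p_g(Z_3)\le 1$, with $Z_3$ a K3 surface exactly when $K_{\widetilde{Y_1}}+\Delta_1=0$ and $p_g(Z_3)=0$ exactly when $|K_{\widetilde{Y_1}}+\Delta_1|=\emptyset$). You replace this by ``$-K_{\widetilde{Y_1}}-\Delta_1$ is effective'', claiming the two are equivalent. They are not: $\widetilde{Y_1}$ has Picard rank bigger than $1$, so a class can be neither effective nor anti-effective, and emptiness of $|K_{\widetilde{Y_1}}+\Delta_1|$ does not force $\Delta_1\le -K_{\widetilde{Y_1}}$. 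Your condition is only sufficient, so it proves the existence halves of (2) and (3), but it cannot prove the non-existence claim (1) nor the exactness of the counts. Concretely, in case $d)$ take $\Delta_1=\Delta_2=A_2$ and $\Delta_3=3A_1+E$ (i.e.\ $D_1$ of bidegree $(3,1)$ specializes to a $(0,1)$-curve plus three $(1,0)$-curves, an snc specialization of the kind the paper allows elsewhere): then $K_{\widetilde{Y_1}}+\Delta_1=-A_1+2A_2$ meets the nef class $A_2$ negatively, hence $|K_{\widetilde{Y_1}}+\Delta_1|=\emptyset$ and every bullet of Construction \ref{construction 2: iterated bidouble case 2} is satisfied, with $p_g(Z_3)=0$; yet your criterion rejects it because $A_1-2A_2$ is not effective either. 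The same happens with $\Delta_1=\Delta_2=2A_1$ in case $f)$ and $\Delta_1=\Delta_2=A_2$ in case $e)$. So either the theorem is to be read with the stronger (unstated) requirement $\Delta_1\le -K_{\widetilde{Y_1}}$ — in which case you must state it as a hypothesis rather than derive it — or the enumeration must be carried out against the literal fourth bullet, where these extra data survive and would have to be addressed. As written, your argument silently substitutes a strictly stronger condition for the one in the construction, and that is precisely where the proof of parts (1) and (3) breaks down.
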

\proof The proof is analogue to the one of Theorem \ref{theor: iterated P1xP1 case 1}.\endproof

\section{The rational surface is $Y=\mathbb{F}_n$}\label{sec: Fn}

In this last section we consider bidouble covers and in particular the constructions described in Sections \ref{subsubsec: construction 1}, \ref{subsubsec: construction 2}, \ref{subsubsec: singular bidouble} by choosing the rational surface $Y$ to be a Hirzebruch--Segre surface $\mathbb{F}_n$ with $n\geq 2$.

Let us write the classes in $\textrm{Pic}(\mathbb{F}_n)$ as $aC_n+bF_n$ where $F_n$ is a class of a fibre and $C_n$ is the unique curve such that $C_n^2=-n$. Recall that the canonical bundle of $\mathbb{F}_n$ is $$K_{\mathbb{F}_n}=-2C_n-(n+2)F_n.$$

Using the generators above  we can  write the bidouble cover divisor data as
\[ D_1=a_1C_n+b_1F_n, \quad  D_2=a_2C_n+b_2F_n, \quad D_3=a_3C_n+b_3F_n.
\]

We recall the following

\begin{lemma}\label{lemma: irredu on Fn}{\rm (See \cite[Chapter V, Corollary 2.18]{H77})} The class $aC_n+bF_n\in NS(\mathbb{F}_n)$ represents an irreducible curve on $\mathbb{F}_n$ if and only if one of the followings hold: either $a=0$, $b=1$ or $a=1, b=0$ or $b\geq an$.\end{lemma}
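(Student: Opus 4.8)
The plan is to prove both implications by elementary intersection theory on $\mathbb{F}_n$, using $C_n^2=-n$, $C_n\cdot F_n=1$, $F_n^2=0$ and the fact that the effective cone is generated by $C_n$ and $F_n$.

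First I would establish necessity. Let $D$ be an irreducible curve with $D\equiv aC_n+bF_n$. Since $F_n$ is nef (it is a fiber of the ruling $\mathbb{F}_n\to\PP^1$), we obtain $a=D\cdot F_n\ge 0$. If $a=0$, then $D\cdot F_n=0$, so $D$ is contained in a single fiber and, being irreducible, must coincide with it; hence $D\equiv F_n$, i.e. $a=0,\ b=1$. If $a\ge 1$, then either $D=C_n$, giving $a=1,\ b=0$, or $D\ne C_n$; in the latter case $D$ and $C_n$ are distinct irreducible curves, so they meet nonnegatively, $D\cdot C_n=-an+b\ge 0$, whence $b\ge an$. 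This shows an irreducible class is necessarily of one of the three listed types.

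For sufficiency, the classes $F_n$ and $C_n$ obviously contain irreducible curves (a fiber, respectively the negative section itself). The remaining case is $a\ge 1$, $b\ge an$. Here I would first note that $D$ is nef: indeed $D\cdot F_n=a\ge 0$ and $D\cdot C_n=b-an\ge 0$, and since $C_n$ and $F_n$ generate the cone of curves this places $D$ in the nef cone. Because $\mathbb{F}_n$ is a smooth toric surface, every nef class is globally generated, so $|D|$ is base-point free and a general member is smooth by Bertini. To upgrade smoothness to irreducibility I would use that $D$ is big: $D^2=2ab-na^2=a(2b-na)\ge a\cdot na=na^2>0$ since $n\ge 2$. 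Thus the morphism $\phi_{|D|}$ determined by $D$ has two-dimensional image, and the Bertini irreducibility theorem yields that the general member of $|D|$ is irreducible, establishing the claim.

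The hard part will be precisely this passage from smoothness to irreducibility in the range $a\ge1,\ b\ge an$: Bertini by itself only gives a smooth general member, and one must exclude a disconnected one. I would handle this by combining base-point-freeness (via the identity nef $=$ globally generated on the toric surface $\mathbb{F}_n$) with the bigness of $D$, so that $\phi_{|D|}$ is generically finite onto a surface and the irreducibility form of Bertini applies. Alternatively, connectedness of the general member can be obtained directly from $H^1(\mathbb{F}_n,\mathcal{O}_{\mathbb{F}_n})=0$ together with the vanishing of $H^1(\mathbb{F}_n,\mathcal{O}_{\mathbb{F}_n}(-D))$, which forces $h^0(\mathcal{O}_D)=1$.
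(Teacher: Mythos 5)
Your proof is correct, but there is nothing internal to compare it with: the paper does not prove this lemma at all, it simply quotes it from \cite[Chapter V, Corollary 2.18]{H77}. Your necessity direction is the standard one (and the one underlying Hartshorne's): pair against the nef fiber class $F_n$ to get $a\geq 0$, handle $a=0$ by noting an irreducible curve with $D\cdot F_n=0$ must be a fiber, and for $a\geq 1$, $D\neq C_n$ use that distinct irreducible curves meet non-negatively to get $b\geq an$. For sufficiency, Hartshorne proves very ampleness of $aC_n+bF_n$ when $b>an$ and applies Bertini, treating the boundary case $b=an$ separately through the morphism to the cone over a rational normal curve; your route instead checks that $D$ is nef (hence base-point free, either by the toric fact nef $=$ globally generated or, more elementarily, by writing $D=a(C_n+nF_n)+(b-an)F_n$ as a non-negative sum of the two base-point-free classes) and big, since $D^2=a(2b-na)\geq na^2>0$, so that $\phi_{|D|}$ has two-dimensional image and Jouanolou's irreducibility form of Bertini applies; this treats $b>an$ and $b=an$ uniformly, which is a small gain in economy. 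Your fallback argument for connectedness, $h^0(\mathcal{O}_D)=1$ from $H^0(\mathcal{O}_{\mathbb{F}_n}(-D))=H^1(\mathcal{O}_{\mathbb{F}_n}(-D))=0$ (Kawamata--Viehweg/Ramanujam, or toric vanishing), is also valid. One caution concerning the statement rather than your proof: the third condition must be read with $a\geq 1$, since for $a=0$, $b\geq 2$ the class $bF_n$ satisfies $b\geq an$ but is represented only by unions of fibers; you implicitly impose this restriction (``the remaining case is $a\geq 1$, $b\geq an$''), which matches Hartshorne's actual formulation, and the paper itself acknowledges the issue in the remark immediately following the lemma.
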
 
We observe that $C_n$ is rigid, and $F_n$ is not. In particular, the divisors $bF_n$ represents a smooth divisor which is the union of $b$ different fibres, but $aC_n$, $a>1$ represents necessarily a non reduced divisor.

\begin{prop}\label{prop_k321fn} Let $S$ be a surface with at worst ADE singularities, which is a double cover of $\mathbb{F}_n$ with branch divisor $B$. The minimal model of $S$ is a K3 surface if and only if $0\leq n\leq 4$ and $B=4C_n+2(n+2)F_n$. 
	
	If the minimal model of $S$ is a K3 surface and $n\geq 3$, then $B$  is the union of $C_n$ and a curve (possibly reducible) whose class is $3C_n+2(n+2)F_n$.
\end{prop}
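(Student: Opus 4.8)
The plan is to read off the class of $B$ from the numerical invariants of a surface whose minimal model is a K3, and then to extract the bound $n\le 4$ (and the shape of $B$ for $n\ge 3$) purely from reducedness of the branch divisor together with Lemma \ref{lemma: irredu on Fn}. First I would set up the double cover $\pi\colon S\to \mathbb{F}_n$. Writing $B=2L$ and $\pi_*\mathcal{O}_S=\mathcal{O}_{\mathbb{F}_n}\oplus \mathcal{L}^{-1}$, one has $K_S=\pi^*(K_{\mathbb{F}_n}+L)$, and since ADE points are canonical the minimal resolution $\mu\colon\widetilde S\to S$ satisfies $K_{\widetilde S}=\mu^*K_S$; hence $\widetilde S$ shares $\chi(\mathcal{O})$, $p_g$, $q$ and the Kodaira dimension with $S$ and with its minimal model. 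By the Enriques--Kodaira classification the minimal model is a K3 surface exactly when $\kappa=0$, $q=0$ and $\chi(\mathcal{O})=2$ (equivalently $p_g=1$), which reduces the whole statement to a computation on $\mathbb{F}_n$ via the projection formula.

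For the forward implication I would set $M:=K_{\mathbb{F}_n}+L$. The projection formula gives $p_g(S)=h^0(M)+h^0(K_{\mathbb{F}_n})=h^0(M)$, so the K3 hypothesis forces $h^0(M)=1$. Since every effective class on $\mathbb{F}_n$ has the form $pC_n+qF_n$ with $p,q\ge 0$ and $h^0(pC_n+qF_n)\ge q+1$, the condition $h^0(M)=1$ forces $q=0$, i.e. $M=pC_n$. Next, \eqref{equation: chi intermediate cover} gives $\chi(\mathcal{O}_S)=2+\tfrac12 L\cdot(L+K_{\mathbb{F}_n})=2+\tfrac12\,L\cdot M$, so $\chi=2$ is equivalent to $L\cdot M=M^2-K_{\mathbb{F}_n}\cdot M=0$. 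Substituting $M=pC_n$ turns this into $-np^2=(n-2)p$, whose only solution with $p\ge 0$ and $n\ge 2$ is $p=0$. Thus $M=0$, i.e. $L=-K_{\mathbb{F}_n}=2C_n+(n+2)F_n$ and $B=2L=4C_n+2(n+2)F_n$. Conversely, when $L=-K_{\mathbb{F}_n}$ one has $K_S=0$, hence $K_{\widetilde S}=0$, while $q=h^1(\mathcal{L}^{-1})=h^1(K_{\mathbb{F}_n})=0$ and $\chi=2$, so $\widetilde S$ is a minimal K3 surface.

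It remains to produce the bound $n\le 4$ and the description of $B$ for $n\ge 3$, and this is the step I expect to carry the geometric content. Because $S$ has at worst ADE singularities, $B$ must be reduced, so $C_n$ occurs in $B$ with multiplicity $m\in\{0,1\}$; writing $B=mC_n+B''$ with $B''$ reduced and having no $C_n$ component, every irreducible component of $B''$ meets $C_n$ non-negatively (as $C_n$ is the unique negative curve), whence $B''\cdot C_n\ge 0$. Using $B\cdot C_n=4C_n^2+2(n+2)=4-2n$ this reads $4+(m-2)n\ge 0$. For $n\ge 3$ one has $B\cdot C_n<0$, which forces $C_n\subset B$, hence $m=1$ and then $n\le 4$; for $n\le 2$ the inequality holds already with $m=0$. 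In all cases $0\le n\le 4$. Finally, for $3\le n\le 4$ the value $m=1$ exhibits $B=C_n+B'$ with $B'\in|3C_n+2(n+2)F_n|$, which by Lemma \ref{lemma: irredu on Fn} (here $2(n+2)\ge 3n$ precisely when $n\le 4$) may be taken irreducible but is allowed to be reducible; this is exactly the second assertion.

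The main obstacle, as I see it, is not the invariant bookkeeping but the careful control of the negative section $C_n$. One must argue that reducedness of $B$, which is forced by the ADE hypothesis, together with the negativity $B\cdot C_n=4-2n$, is what simultaneously yields $n\le 4$ and pins down the splitting $B=C_n+B'$; and one must note that for $n\ge 5$ the class $4C_n+2(n+2)F_n$ cannot be represented by a reduced curve at all (the inequality $4+(m-2)n\ge 0$ fails for both $m=0$ and $m=1$), which is the conceptual reason the construction stops at $n=4$.
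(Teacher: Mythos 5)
Your proposal is correct and proves the same statement, but the forward implication takes a genuinely different route from the paper's. The paper disposes of that step in one line: assuming $S$ is a K3 surface, it writes $B\simeq -2K_{\mathbb{F}_n}$ outright, which amounts to invoking triviality of $K_S=\pi^*\bigl(K_{\mathbb{F}_n}+L\bigr)$ and torsion-freeness of $\operatorname{Pic}(\mathbb{F}_n)$; everything else in the paper's proof --- the computation $B\cdot C_n=4-2n$, the remark that $C_n$ cannot be a multiple component of the branch locus, the resulting dichotomy giving $n\le 2$ (if $C_n\not\subset B$) or $n\le 4$ (if $B=C_n+B'$), and the converse via $2K_S=0$, $p_g=1$, $q=0$ --- coincides with your third and fourth paragraphs. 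What your detour through $p_g(S)=h^0(M)$, $\chi(\mathcal{O}_S)=2+\tfrac{1}{2}L\cdot M$, the effective cone of $\mathbb{F}_n$, and the equation $-np^2=(n-2)p$ buys is rigor exactly where the paper is glib: the hypothesis is that the \emph{minimal model} of $S$ is K3, not that $K_S$ is trivial, so one must rule out that $\widetilde S$ is a blown-up K3 with $K_{\mathbb{F}_n}+L$ effective and nonzero. Your argument does this using only birational invariants, whereas the paper's opening assertion silently identifies the two notions.

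A noteworthy byproduct of your computation: at $n=1$ your equation acquires the extra solution $p=1$, i.e. $M=C_1$, $L=3C_1+3F_1$, and this is not a defect of your method --- the double cover of $\mathbb{F}_1$ branched on a smooth member of $|6C_1+6F_1|=|6\ell|$ (the preimage of a smooth plane sextic avoiding the center of the blow-up) is a K3 blown up in two points, so the ``only if'' direction of the proposition as literally stated fails at $n=1$. Both your proof and the paper's are therefore really proofs for $n\ne 1$: you impose $n\ge 2$ explicitly in the key step (your argument also covers $n=0$, where the equation forces $p=0$), and the paper adopts the standing assumption $n>1$ immediately after this proposition. Flagging this restriction, rather than hiding it, is a point in your version's favor.
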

\proof Let us assume that $S$ is a K3 surface, then $B\simeq -2K_{\mathbb{F}_n}\simeq 4C_n+2(n+2)F_n$. The divisor $B$ has to be an effective divisor, and $BC_n=-2n+4$.  If $B$ does not contain $C_n$ as component then $-2n+4\geq 0$ and so $n\leq 2$. If $B$ is the union of $C_n$ and another curve, then the intersection of this curve with $C_n$ has to be positive and so $-n+4\geq 0$, i.e. $n\leq 4$. We observe that $C_n$ can not be a multiple component of the branch locus since the branch index is necessarily 1.

Viceversa, notice that if $S\ra \mathbb{F}_n$ is a double cover branched along a simple normal crossing  divisor $B\simeq 4C_n+2(n+2)F_n$ then $S$ is a K3 surface. Indeed, by the knowledge of the branch divisor one obtains that two times the canonical divisor is trivial, moreover one can calculate that $p_g(S)=1$ and $q(S)=0$ by standard formulae on the double cover. 
\endproof

From now on we assume $n>1$, since $\mathbb{F}_0\simeq \mathbb{P}^1\times \mathbb{P}^1$ and $\mathbb{F}_1$ is non minimal.

\begin{prop}\label{prop: Fn effective curves} Let $X\ra \mathbb{F}_n$ be a smooth bidouble cover as in Construction \ref{construction 1: first double cover}. Then $n\leq 4$ and 
	\begin{itemize}
		\item[a)] if both $D_1$  and $D_2$ do not have $C_n$ as component then $n=2$ and there are the following possibilities (up to possibly switching $D_1$ and $D_2$): 
 		$$(a_1,b_1,a_2,b_2)=(3,6,1,2), (2,4,2,4);$$	
		\item[b)]
	    if $D_1$ is the union of $C_n$ and other curves, $D_2$ does not contain $C_n$, then (up to possibly switching $D_1$ and $D_2$) $$2\leq n\leq 4,\ \ 1\leq a_1\leq 4\mbox{ and }b_1=na_1-n.$$

	\end{itemize}
\end{prop}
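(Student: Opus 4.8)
The plan is to reduce the whole statement to numerical conditions on the coefficients $(a_i,b_i)$ and then extract every constraint by intersecting with the unique negative section $C_n$, which by Lemma \ref{lemma: irredu on Fn} is the only irreducible curve with $C_n^2<0$. First I would record the global constraint coming from the hypothesis that $Y_3$ is a K3 surface: by the third bullet of Construction \ref{construction 1: first double cover} (equivalently Lemma \ref{lemma: IL LEMMA}(2)) one has $D_1+D_2=-2K_{\mathbb{F}_n}=4C_n+2(n+2)F_n$, so that $a_1+a_2=4$ and $b_1+b_2=2(n+2)$. Since $D_1\cup D_2$ is the simple normal crossing branch divisor of $Y_3\to\mathbb{F}_n$, it is reduced, hence $C_n$ is a component of at most one of $D_1,D_2$; moreover the mere existence of this K3 double cover forces $n\le 4$ by Proposition \ref{prop_k321fn}, so with the standing hypothesis $n>1$ we get $2\le n\le 4$.

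For part (a) I would use that, when neither $D_1$ nor $D_2$ has $C_n$ as a component, each $D_i$ meets the irreducible curve $C_n$ nonnegatively, i.e. $D_i\cdot C_n=b_i-a_in\ge 0$. Summing and using $a_1+a_2=4$, $b_1+b_2=2(n+2)$ gives $2(n+2)\ge 4n$, hence $n\le 2$ and so $n=2$; equality must then hold in both inequalities, forcing $b_1=2a_1$ and $b_2=2a_2$. Up to switching $D_1,D_2$ the pairs with $a_1+a_2=4$ are $(a_1,a_2)=(2,2),(3,1),(4,0)$, yielding $(2,4,2,4)$, $(3,6,1,2)$ and the degenerate $(4,8,0,0)$. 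The remaining task is to discard $(4,8,0,0)$: here $D_2=0$ and $D_1=-2K_{\mathbb{F}_2}$, so the condition $\dim\big(|K_{\mathbb{F}_2}+(D_1+D_3)/2|\big)\le 0$ of Construction \ref{construction 1: first double cover} becomes $\dim|D_3/2|\le 0$, i.e. $h^0(\mathbb{F}_2,D_3/2)\le 1$. A direct computation on $\mathbb{F}_2$ shows that an effective nonzero class $\alpha C_2+\beta F_2$ with $h^0\le 1$ must have $\beta=0$, whence $D_3\sim 2\alpha C_2$ with $\alpha\ge 1$; such a divisor is non-reduced, contradicting the simple normal crossing hypothesis. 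Thus no $D_3$ completes $(4,8,0,0)$ to a Construction \ref{construction 1: first double cover} cover, and only $(3,6,1,2)$ and $(2,4,2,4)$ survive.

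For part (b) the key point is that smoothness of $D_1$ does all the work. If $C_n$ is a component of $D_1$ while $D_2$ is not, then $a_1\ge 1$, and since $D_1$ is smooth its component $C_n$ is disjoint from the residual divisor $D_1-C_n$; hence $C_n\cdot(D_1-C_n)=0$, i.e. $C_n\cdot D_1=C_n^2=-n$. Writing this out, $-a_1n+b_1=-n$, so $b_1=na_1-n$. The range $1\le a_1\le 4$ is then immediate from $a_1\ge 1$ and $a_1=4-a_2\le 4$, while $2\le n\le 4$ was already established. I would also note that $C_n$ cannot be a component of both $D_1$ and $D_2$, since then $D_1\cup D_2$ would contain $2C_n$ and fail to be reduced; this is exactly what makes cases (a) and (b) (up to switching) exhaustive, in accordance with the second part of Proposition \ref{prop_k321fn}, which forces $C_n\subset D_1\cup D_2$ as soon as $n\ge 3$.

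The step I expect to be the main obstacle is the exclusion of $(4,8,0,0)$ in part (a). All the other constraints follow from the single principle ``intersect with $C_n$ and use that it is the only negative curve,'' but this one genuinely requires invoking the $p_g(Y_i)\le 1$ hypothesis together with the explicit behaviour of sections of $\alpha C_2+\beta F_2$ on $\mathbb{F}_2$, namely that having at most one section forces the class to be a multiple of the rigid negative curve $C_2$, which then clashes with reducedness of the branch locus. Once this is in hand, the enumeration is complete, and the realizability of the surviving data is witnessed by the explicit examples listed in the $\mathbb{F}_n$ tables.
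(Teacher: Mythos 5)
Your proof is correct and follows essentially the same route as the paper: both reduce everything to the numerical data $(a_i,b_i)$ using $D_1+D_2=-2K_{\mathbb{F}_n}$ (so $a_1+a_2=4$, $b_1+b_2=2n+4$), Proposition \ref{prop_k321fn} for $n\le 4$, and intersection with the negative section $C_n$ — the paper phrases the inequality $b_i\ge na_i$ in case (a) via Lemma \ref{lemma: irredu on Fn}, you via nonnegativity of $D_i\cdot C_n$, and in case (b) both impose that smoothness of $D_1$ forces $C_n\cdot(D_1-C_n)=0$, giving $b_1=na_1-n$. If anything, your write-up is slightly tighter than the paper's at two points: your intersection-theoretic argument covers reducible $D_i$ not containing $C_n$ (the paper assumes $D_1,D_2$ irreducible), and your $\dim|D_3/2|\le 0$ computation supplies the justification behind the paper's terse exclusion of the degenerate case $D_2=0$, which it dismisses by asserting that it would force $D_2=D_3=0$.
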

\proof We recall that by Construction \ref{construction 1: first double cover}, $Y_3$ is a K3 surface. So the condition on $n$ follows by the previous Proposition \ref{prop_k321fn}. Moreover, since 
$D_1=a_1C_n+b_1F_n$ and $D_2=a_2C_n+b_2F_n$, it hold $a_2=(4-a_1)$, $b_2=(2n+4-b_1)$. If $D_1$ and $D_2$ are irreducible by Lemma \ref{lemma: irredu on Fn}, $b_1\geq na_1$ and $2n+4-b_1\geq 4n-a_1$ which implies $n\leq 2$ and $b_1=2a_1$. We exclude the case $a_1=8$, because it would implies $D_2=D_3=0$.

Similarly, if one assumes that $D_1=C_n\cup B$ where $B=(a_1-1)C_n+b_1F_n$ and $D_2=(4-a_1)C_n+(2n+4-b_1)F_n$, one has to require that $BC_n= 0$ because we are assuming that the divisor $D_1$ is smooth. So
$BC_n=n-a_1n+b_1= 0\mbox{ then }b_1=a_1n-n.$\endproof

\begin{theorem}\label{theor: Y=Fn}
	Let $n>1$ and $X\ra \mathbb{F}_n$ be a smooth bidouble cover as in Construction \ref{construction 1: first double cover}.
	
	If all the divisors $D_i$ are irreducible, then $n=2$ and the  possibilities are listed in Table \ref{TableF2 irrid}.	
	\begin{table}[!h]
	\begin{tabular}{|c||c|c|c||c|c|c||c|c|c|c|c|c|c|}\hline
		case&$(a_1,b_1)$&$(a_2,b_2)$&$(a_3,b_3)$&$Y_1$&$Y_2$&$Y_3$&$p_g(X)$&$q(X)$&$K^2_X$&MTC&ITP\\
		\hline
		a)&$(3,6)$&$(1,2)$&$(1,2)$&{\rm rat}&{\rm K3}&{\rm K3}&$2$&$0$&$2$&&$\checkmark$\\
		b)&$(2,4)$&$(2,4)$&$(0,0)$&{\rm rat}&{\rm rat}&{\rm K3}&$1$&$0$&$0$&$\checkmark$&$\checkmark$\\
		c)&$(2,4)$&$(2,4)$&$(0,\geq 2)$&{\rm rat}&{\rm rat}&{\rm K3}&$1$&$0$&$0$&&\\
		d)&$(2,4)$&$(2,4)$&$(2,2)$&{\rm rat}&{\rm rat}&{\rm K3}&$1$&$0$&$0$&&\\
		e)&$(2,4)$&$(2,4)$&$(2,4)$&{\rm K3}&{\rm K3}&{\rm K3}&$3$&$0$&$ 8$&&\\
	\hline
	\end{tabular} 
		\caption{Bidouble cover of $\mathbb{F}_2$ with irreducible $D_i$'s}\label{TableF2 irrid}
	\end{table}
	
	Otherwise $D_1$ splits in the union of $C_n$ and an irreducible smooth curve, $n\leq 4$ and there are the following possibilities:
		
	$n=2$ and the admissible cases are listed in Table \ref{TableF2 rid}
	\begin{table}[!h]
	\begin{tabular}{|c||c|c|c||c|c|c||c|c|c|c|c|c|c|}\hline
		case&$(a_1,b_1)$&$(a_2,b_2)$&$(a_3,b_3)$&$Y_1$&$Y_2$&$Y_3$&$p_g(X)$&$q(X)$&$K^2_X$&MTC&ITP\\
		\hline
		f)&$(4,6)$&$(0,2)$&$(0,0)$&{\rm rat}&{\rm rat}&{\rm K3}&$1$&$0$&$0$&$\checkmark$&$\checkmark$\\
		g)&$(4,6)$&$(0,2)$&$(0,2)$&{\rm rul}&{\rm K3}&{\rm K3}&{\rm 2}&{\rm 1}&{\rm 0}&&\\
		h)&$(3,4)$&$(1,4)$&$(1,2)$&{\rm rat}&{\rm rat}&{\rm K3}&$1$&$0$&$2$&$\checkmark$&\\
		i)&$(3,4)$&$(1,4)$&$(1,4)$&{\rm rat}&{\rm K3}&{\rm K3}&$2$&$0$&$6$&$\checkmark$&\\
		j)&$(2,2)$&$(2,6)$&$(0,0)$&{\rm rat}&{\rm rat}&{\rm K3}&$1$&$0$&$0$&$\checkmark$&$\checkmark$\\
		k)&$(2,2)$&$(2,6)$&$(0,\geq 2)$&{\rm rat}&{\rm rat}&{\rm K3}&$1$&$0$&$0$&&\\
		\hline
	\end{tabular} 
		\caption{Bidouble cover of $\mathbb{F}_2$ with $D_1$ reducible}\label{TableF2 rid}
	\end{table}

$n=3$ and the admissible cases are listed in Table \ref{TableF3}	
\begin{table}[!h]
	\begin{tabular}{|c||c|c|c||c|c|c||c|c|c|c|c|c|c|}\hline
		case&$(a_1,b_1)$&$(a_2,b_2)$&$(a_3,b_3)$&$Y_1$&$Y_2$&$Y_3$&$p_g(X)$&$q(X)$&$K^2_X$&MTC&ITP\\
		\hline
		l)&$(4,9)$&$(0,1)$&$(0,1)$&{\rm rat}&{\rm K3}&{\rm K3}&$2$&$0$&$0$&$\checkmark$&\\
		m)&$(3,6)$&$(1,4)$&$(1,4)$&{\rm rat}&{\rm K3}&{\rm K3}&$2$&$0$&$5$&$\checkmark$&\\
		n)&$(2,3)$&$(2,7)$&$(0,\geq 1)$&{\rm rat}&{\rm rat}&{\rm K3}&$1$&$0$&$0$&&\\
		\hline
	\end{tabular} 
		\caption{Bidouble cover of $\mathbb{F}_3$ with $D_1$ reducible}\label{TableF3}
	\end{table}

$n=4$ and the admissible cases are listed in Table \ref{TableF4}.	
	\begin{table}[!h]
	\begin{tabular}{|c||c|c|c||c|c|c||c|c|c|c|c|c|}\hline
		case&$(a_1,b_1)$&$(a_2,b_2)$&$(a_3,b_3)$&$Y_1$&$Y_2$&$Y_3$&$p_g(X)$&$q(X)$&$K^2_X$&MTC&ITP\\
		\hline
		o)&$(3,8)$&$(1,4)$&$(1,4)$&{\rm rat}&{\rm K3}&{\rm K3}&$2$&$0$&$4$&$\checkmark$&\\
		p)&$(2,4)$&$(2,8)$&$(0,0)$&{\rm rat}&{\rm rat}&{\rm K3}&$1$&$0$&$0$&$\checkmark$&$\checkmark$\\
		q)&$(2,4)$&$(2,8)$&$(0,\geq 2)$&{\rm rat}&{\rm rat}&{\rm K3}&$1$&$0$&$0$&&\\
		\hline
	\end{tabular} 
		\caption{Bidouble cover of $\mathbb{F}_4$ with $D_1$ reducible}\label{TableF4}
	\end{table}
	
	The surface $X$ is minimal and in the cases  $b)$, $f)$, $j)$ and $p)$, it is a K3 surface; in cases $a)$, $e)$, $h)$, $i)$, $m)$, $o)$ it is of general type; otherwise it is properly elliptic.

	The surface $X$ has the following properties:
	$$T_X\otimes \Q\simeq (T_{Y_1}\oplus T_{Y_2}\oplus T_{Y_3})\otimes \Q,\ A_0(X)\otimes \Q\simeq (A_0(Y_1)\oplus A_0(Y_2)\oplus A_0(Y_3))\otimes \Q,$$
	$$h(X)_{\rm{tra}}\otimes \Q=(h(Y_1)_{\rm{tra}}\oplus h(Y_2)_{\rm{tra}}\oplus h(Y_3)_{\rm{tra}}) \otimes \Q.$$
	
	The Mumford--Tate conjecture and the Tate conjecture hold in the cases $b)$, $f)$, $h)$, $i)$, $j)$, $m)$, $o)$, $p)$.

	The Infinitesimal Torelli Property holds in the cases $a)$, $b)$, $f)$, $j)$ and $p)$.
\end{theorem}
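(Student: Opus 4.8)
The plan is to run the same machine as in Theorems~\ref{theorem: bidouble P2} and~\ref{theor: Y=P1xP1}; the only genuinely new ingredient is the geometry of effective divisors on $\mathbb{F}_n$, already packaged in Lemma~\ref{lemma: irredu on Fn} and Propositions~\ref{prop_k321fn},~\ref{prop: Fn effective curves}. First I would carry out the enumeration. Writing $D_i = a_iC_n + b_iF_n$, the requirement in Construction~\ref{construction 1: first double cover} that $Y_3$ be a K3 surface forces, by Lemma~\ref{lemma: IL LEMMA}(2), the relation $D_1 + D_2 = -2K_{\mathbb{F}_n} = 4C_n + 2(n+2)F_n$, whence $a_1 + a_2 = 4$, $b_1 + b_2 = 2(n+2)$, and Proposition~\ref{prop_k321fn} yields $2 \le n \le 4$ together with the dichotomy ``$C_n$ is, or is not, a branch component''. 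Proposition~\ref{prop: Fn effective curves} then pins down the admissible pairs $(D_1, D_2)$ in each regime ($D_1, D_2$ both irreducible, which forces $n = 2$; or $D_1 = C_n \cup (\text{irreducible})$). For every such pair I would list the possible $D_3$: the divisibility relations $2L_k = D_i + D_j$ impose the parities $a_i + a_j \equiv b_i + b_j \equiv 0 \pmod 2$, while $p_g(Y_1), p_g(Y_2) \le 1$ translates via Lemma~\ref{lemma: IL LEMMA}(3) into $\dim|K_{\mathbb{F}_n} + L_i| \le 0$, an inequality I evaluate with the explicit formula for $h^0(\mathcal{O}_{\mathbb{F}_n}(aC_n+bF_n))$. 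Keeping the branch reduced and smooth by Lemma~\ref{lemma: irredu on Fn} and discarding $D_2 = D_3 = 0$ by Lemma~\ref{lemma: IL LEMMA}(1) should leave exactly the entries of Tables~\ref{TableF2 irrid}--\ref{TableF4}.

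Next I would read off the geometry of $X$. Since $2K_X = \pi^*(2K_{\mathbb{F}_n} + D) = \pi^* D_3$, the surface $X$ is a K3 precisely when $D_3 = 0$ (Lemma~\ref{lemma: IL LEMMA}(4)), it is minimal of general type exactly when $D_3$ is nef and big, and otherwise, when $D_3$ is a nonzero sum of fibres $bF_n$, the ruling $\mathbb{F}_n \to \mathbb{P}^1$ pulls back to a genus $1$ fibration exhibiting $X$ as a minimal properly elliptic surface, exactly as in the elliptic part of Theorem~\ref{theor: Y=P1xP1}. The numerical invariants then follow from \eqref{eq_chiBiDoCo}, \eqref{eq_K2BiDoCo} and the character count $p_g(X) = \sum_i p_g(Y_i)$ (using $p_g(\mathbb{F}_n) = 0$), which also explains why $p_g(X)$ equals the number of K3 intermediate covers; the distinction rational versus ruled among the non-K3 quotients is read off from $\chi(Y_i)$ via \eqref{equation: chi intermediate cover} and the plurigenera \eqref{eq: pulrigeberaYi}, as in Theorem~\ref{theor: Y=P1xP1}. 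The splittings of $T_X$, $A_0(X)$ and $h^2_{\mathrm{tra}}(X)$ are immediate from Propositions~\ref{prop: Hodge structure}, \ref{prop: chow groups}, \ref{prop: motives}, valid for any bidouble cover of a rational surface.

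For the Mumford--Tate and Tate conjectures I would split into the same three regimes used before: when $X$ is itself a K3 surface (cases $b)$, $f)$, $j)$, $p)$) they hold by Andr\'e and Tankeev; when $X$ is of general type with $p_g = 1$ and a single K3 quotient (case $h)$), so that $T_X \otimes \Q$ is the transcendental structure of one K3, they follow from \cite[Theorem~9.1]{Moonenh1}; and when $p_g(X) \in \{2,3\}$ with two or three K3 quotients I would invoke Theorem~\ref{Theo_MTC}, which applies once the hypothesis $\rho(Y_i) = \rho(\mathbb{F}_n) = 2$ is verified. The Infinitesimal Torelli Property holds for the K3 cases by local Torelli for K3 surfaces, and for the remaining case $a)$ (two K3 quotients, $p_g = 2$) I would apply Theorem~\ref{theo: Infinitesimal Torelli}: there $D_1 = 3(C_2 + 2F_2)$ and $D_2 = D_3 = C_2 + 2F_2$ have proportional classes, $D_2, D_3$ are rational (adjunction gives arithmetic genus $0$), and the vanishing of $H^0$ of $(\omega_{\mathbb{F}_2}(D_{\chi_i}) \otimes \mathcal{L}_{\chi_i})^{-1}$ is a short cohomology check.

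The main obstacle is the verification of the Picard-number condition $\rho(Y_i) = 2$ required by Theorem~\ref{Theo_MTC}, which I would carry out, as in the $\mathbb{P}^2$ and $\mathbb{P}^1 \times \mathbb{P}^1$ proofs, by counting the moduli of the family of double covers $Y_i \to \mathbb{F}_n$ and using $\rho(\widetilde{Y_i}) = 20 - m_{\widetilde{Y_i}}$. The subtlety peculiar to $\mathbb{F}_n$ is that the rigid section $C_n$ and the reducible fibre components $bF_n$ in the branch locus are rational curves that generically contribute extra $(-2)$-classes to $NS(Y_i)$, so for several cases (notably $a)$ and $e)$) the generic intermediate K3 has $\rho(Y_i) > 2$ and Theorem~\ref{Theo_MTC} does not apply --- precisely the cases left without a checkmark in the tables. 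Pinning down these Picard numbers, together with the explicit $h^0(\mathcal{O}_{\mathbb{F}_n})$ bookkeeping driving the whole enumeration, is where the real work lies.
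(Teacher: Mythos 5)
Your proposal is correct in outline and runs on exactly the same machinery as the paper's proof: enumeration via Propositions \ref{prop_k321fn} and \ref{prop: Fn effective curves} together with the parity and $p_g(Y_i)\leq 1$ constraints, invariants from \eqref{eq_chiBiDoCo} and \eqref{eq_K2BiDoCo}, the three decompositions from Propositions \ref{prop: Hodge structure}, \ref{prop: chow groups} and \ref{prop: motives}, Mumford--Tate via Andr\'e--Tankeev for the K3 cases, \cite{Moonenh1} for case h), and Theorem \ref{Theo_MTC} for i), m), o) with $\rho(Y_i)$ computed by the same moduli count, and Infinitesimal Torelli via K3 theory in cases b), f), j), p) and via Theorem \ref{theo: Infinitesimal Torelli} in case a), whose hypotheses you verify correctly.

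Two details, however, need repair. First, your mechanism for the extra Picard classes is backwards: a component $C$ of the branch locus never raises $\rho(Y_i)$, because its reduced preimage is $\frac{1}{2}\pi_i^*C$ and hence lies in $\pi_i^*NS(\mathbb{F}_n)\otimes\Q$. What forces $\rho(Y_2)>\rho(\mathbb{F}_2)$ in every case of Table \ref{TableF2 irrid} (in particular a) and e)) is the opposite phenomenon: $C_2$ is disjoint from the branch locus of $\pi_2$, so it splits into two non-linearly-equivalent rational curves upstairs --- this is precisely the paper's remark following the theorem. Since the affected cases are ones for which the theorem claims nothing, and since your moduli count would in any case detect the failure numerically, this does not break the proof, but the stated reason is wrong. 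Second, your trichotomy for the type of $X$ ($D_3=0$ gives K3; $D_3$ nef and big gives minimal general type; $D_3$ a nonzero union of fibres gives properly elliptic) is not exhaustive: in case d) one has $D_3\sim 2C_2+2F_2$, which is nonzero, not nef (it meets $C_2$ negatively, so any effective representative contains $C_2$) and not a union of fibres, so that case is left unclassified by your argument; note that there $C_2$ is a branch component disjoint from $D_1\cup D_2$, and its preimage in $X$ consists of two $(-1)$-curves, so this case genuinely requires separate care. Relatedly, concluding ``K3'' from $D_3=0$ needs $p_g(X)=1$, $q(X)=0$ and minimality on top of Lemma \ref{lemma: IL LEMMA}(4), which only yields $\kappa(X)=0$. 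The paper's own write-up is equally terse on these last points, so they are gaps you share with it rather than deviations from its approach.
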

\proof The proof is similar to the ones of Theorems \ref{theorem: bidouble P2} and \ref{theor: Y=P1xP1} and the tables are constructed by Proposition \ref{prop_k321fn}, observing that $C_n$ can be a component of $D=D_1\cup D_2\cup D_3$ with multiplicity at most 1 and so cannot be a component of $D_2$ or $D_3$ if it is already a component of $D_1$. To prove the minimality of $X$ we argue as in Theorems \ref{theorem: bidouble P2} and \ref{theor: Y=P1xP1} and we recall that the nef cone of $\mathbb{F}_n$ is spanned by $F_n$ and $C_n+nF_n$, see \cite{BS}. Moreover, applying \eqref{eq_K2BiDoCo} and \eqref{eq_chiBiDoCo} to our particular context, we obtain
\[
K^2_X=(2K_{\mathbb{F}_n}+D)^2=2\left(\sum^3_{i=1} a_i -4\right)\left(\sum^3_{i=1} b_i-2n-4\right)-n\left(\sum^3_{i=1} a_i -4\right)^2,
\]
and
\[
\begin{split}
\chi(X)=4+\frac{1}{8} & \big[(2a_2b_2+2a_2b_3+2a_2n-4a_2+2a_3b_2+2a_3b_3+2a_3n-4a_3-4b_2-4b_3-na_2^2-2na_2a_3-na_3^2)+\\
& (2a_1b_1+2a_1b_3+2a_1n-4a_1+2a_3b_1+2a_3b_3+2a_3n-4a_3-4b_1-4b_3-na_1^2-2na_1a_3-na_3^2)+ \\
& (2a_2b_2+2a_2b_1+2a_2n-4a_2+2a_1b_2+2a_1b_1+2a_1n-4a_1-4b_2-4b_1-na_2^2-2na_2a_1-na_1^2)\big].
\end{split}
\]
Notice that the rational fibration on $\mathbb{F}_n$, induces a genus 1 fibration on the K3 surface $Y_3$. In cases $c)$, $j)$, $k)$, $m)$ and $p)$ such a fibration induces a genus 1 fibration on $X$. Therefore $X$ is not of general type. Since some fibers of the genus 1 fibration on $Y_3$ are in the branch locus of the double cover $X\ra Y_3$, $X$ can not be of trivial Kodaira dimension. 

In cases $b)$, $f)$, $j)$, $p)$, $X$ is a K3 surface hence the Mumford-Tate conjecture holds. Similarly it holds in case $h)$ by \cite{Moonenh1}. In cases $i)$, $m)$, $o)$ it follows by Theorem \ref{Theo_MTC} because the hypothesis holds. Indeed, to compare $\rho(Y_i)$ and $\rho(Y)$, we compute the dimension of the family of $Y_i$, $i=2,3$. By Riemann-Roch theorem, if $b\geq an$, 
$$h^0(aC_n+bF_n)=1+\frac{1}{2}(aC_n+bF_n)((a+2)C_n+(b+n+2)F_n)=1+\frac{1}{2}(-na(a+1)+2ab+2a+2b).$$
The curve $C_n$ is rigid. The branch locus of $\pi_2\colon Y_2\ra Y$ consists of three curves whose classes are $C_n$, $(a_1-1)C_n+b_1F_n$, $a_3C_n+b_3F_n$. So the choice of the branch locus of $Y_2$ depends on $$\frac{-n(a_1-1)a_1+2(a_1-1)b_1+2(a_1-1)+2b_1}{2}+\frac{-na_3(a_3+1)+2a_3b_3+2a_3+2b_3}{2}$$
projective parameters. Since the dimension of the automorphisms of $\mathbb{F}_n$ is $n+5$, see e.g \cite{B12}, the family of $Y_2$ depends on $$\frac{-n(a_1-1)a_1+2(a_1-1)b_1+2(a_1-1)+2b_1}{2}+\frac{-na_3(a_3+1)+2a_3b_3+2a_3+2b_3}{2}-(n+5)$$ 
parameters. This allows to compute the Picard number of $Y_2$ and similarly of $Y_3$ as in proof of Theorem \ref{theorem: bidouble P2}.
We summarize the results on the Hodge decompositions of the surfaces for which the Mumford--Tate conjecture hold in the following table.
\begin{table}[!h]
	\begin{tabular}{|c||c|c|c|c|c|c|}\hline
		case&$h^{1,1}(X)$&$\rho(Y_1)$&$\rho(Y_2)$&$\rho(Y_3)$&$(h^{2,0}(Y_i),h^{11}(Y_i),h^{0,2}(Y_i))_{\rm{tra}}$& for $i=$\\
		\hline
		i) & 24 & 8 & 2 & 2 & $(1,8,1)$& 2,3 \\
		m) & 25 & 7 & 2 & 2 & $(1,9,1)$& 2,3 \\
		o) & 26 & 6 & 2 & 2 & $(1,10,1)$& 2,3 \\
		
		\hline
	\end{tabular}
\end{table}
 \endproof
\begin{rem}{\rm The surfaces constructed in Table \ref{TableF2 irrid} do not satisfy the hypothesis of Corollary \ref{corollary hodge structures and trascendental}, indeed $\rho(Y_2)>\rho(Y)$ for all these surfaces. The curve $C_2\subset \mathbb{F}_2$ is a rational curve not contained in the branch locus of the double cover $\pi_2:Y_2\ra Y$ and not intersecting the branch locus. Hence it splits in the double cover, i.e. $\pi_2^{-1}(C_2)$ consists of two disjoint rational curves on $Y_2$. These curves are not linearly equivalent to each other and this implies that $\rho(Y_2)\geq \rho(Y)+1$.}\end{rem}

\subsection{Iterated bidouble covers of $\mathbb{F}_n$}\label{subsec: iterated Fn}
Finally, we consider the iterated bidouble covers. Moreover, we restrict ourselves to the case where $X$ is of general type and $p_g(X)=2$. 

We set $\Gamma:=p_1^*(C_n)$ and $\Phi:=p_1^*F_n$ and we denote by the same letter the divisors on $Y_1$ and their strict transform on $\widetilde{Y_1}$. Then it holds 
$$\begin{array}{|c||c|c|c|c|}
	\hline
	\mbox{case}&-na_2a_3+a_2b_3+a_3b_2&K_{\widetilde{Y_1}}&\widetilde{B}&E\\
	\hline
	a)&2&-\Gamma-2\Phi&3\Gamma+6\Phi&E_1+E_2\\
	i)&6&-\Gamma&3\Gamma+4\Phi&\sum_{i=1}^6E_i\\	
	m)&5&-\Gamma-\Phi&3\Gamma+6\Phi&\sum_{i=1}^5E_i\\
	o)&4&-\Gamma-2\Phi&3\Gamma+8\Phi&\sum_{i=1}^4E_i\\
	\hline
	\end{array}
$$

The possible data for the iterated bidouble covers as in Construction \ref{construction 2: iterated bidouble case 1} and \ref{construction 2: iterated bidouble case 2} are listed in the following table.
$$\begin{array}{|c||c|c|c|c|c|c|c|c|c|c}
	\hline
\mbox{ case }&\Delta_1&\Delta_2&\Delta_3&\mbox{Construction}&p_g(W)\\
\hline
a)&2\Gamma+3\Phi&\Phi&\Gamma+3\Phi+E&\ref{construction 2: iterated bidouble case 1}&3\\
a)&2\Gamma+4\Phi&\emptyset&\Gamma+2\Phi+E&\ref{construction 2: iterated bidouble case 1}&3\\
a)&\Phi&\Phi&3\Gamma+5\Phi+E&\ref{construction 2: iterated bidouble case 2}&4\\
a)&2\Phi&2\Phi&3\Gamma+4\Phi+E&\ref{construction 2: iterated bidouble case 2}&4\\
a)&\Gamma+2\Phi&\Gamma+2\Phi&2\Gamma+4\Phi+E&\ref{construction 2: iterated bidouble case 2}&5\\
o)&2\Gamma+4\Phi&\emptyset&\Gamma+4\Phi+E&\ref{construction 2: iterated bidouble case 1}&3\\
\hline
\end{array}$$

We observe that there are no iterated bidouble covers as in Constructions \ref{construction 2: iterated bidouble case 1} and \ref{construction 2: iterated bidouble case 2} in cases $i)$ and $m)$. The list is obtained by a tedious but elementary case by case analysis, requiring that $\Delta_i$, $i=1,2,3$, satisfy the conditions given in Constructs \ref{construction 2: iterated bidouble case 1} or \ref{construction 2: iterated bidouble case 2} and that their image in $\mathbb{F}_n$ corresponds either to irreducible curves or to the union of a certain number of fibers (cf. proof of Proposition \ref{prop: Fn effective curves}).



\smallskip

Alice Garbagnati  Universit\`a degli Studi di Milano,  Dipartimento di Matematica \emph{''Federigo Enriques"}, I-20133 Milano, Italy \\
\emph{e-mail} \verb|alice.garbagnati@unimi.it|

\smallskip

Matteo Penegini, Universit\`a degli Studi di Genova, DIMA Dipartimento di Matematica, I-16146 Genova, Italy \\
\emph{e-mail} \verb|penegini@dima.unige.it|

\end{document}